\documentclass[a4paper, oneside,11pt,reqno]{amsart}
\usepackage{amsmath}
\usepackage{amsfonts}
\usepackage{amsmath}
\usepackage{amssymb}
\usepackage{amsthm}
\usepackage{geometry}
\usepackage{mathrsfs} 
\usepackage[dvipsnames]{xcolor}
\allowdisplaybreaks
\usepackage{dsfont}
 
\usepackage[pagebackref=true,colorlinks=true, linkcolor=blue, citecolor=blue]{hyperref}
\renewcommand*\backref[1]{\ifx#1\relax \else (Cited on #1) \fi}

\usepackage{enumerate}

\usepackage{tikz}
\usetikzlibrary{arrows, arrows.meta}

\theoremstyle{plain}
\newtheorem{definition}{Definition}
\newtheorem{proposition}[definition]{Proposition}
\newtheorem{lemma}[definition]{Lemma}

\newtheorem{theorem}[definition]{Theorem}

\theoremstyle{definition}
\newtheorem{example}[definition]{Example}
\newtheorem{remark}[definition]{Remark}

\numberwithin{definition}{section}
\numberwithin{equation}{section}

\DeclareMathOperator{\argmin}{arg\,min}

\DeclareMathOperator{\supp}{supp}

\newcommand*{\R}{\mathbb{R}}

\newcommand*{\Z}{\mathbb{Z}}

\newcommand*{\N}{\mathbb{N}}

\newcommand*{\Fcal}{\mathcal{F}}

\newcommand*{\Ncal}{\mathcal{N}}
\newcommand*{\PP}{\mathcal{P}_{\theta}}
\newcommand*{\PPP}{\mathcal{P}_{1,\theta}}

\newcommand{\norm}[1]{\left \lVert  #1 \right \rVert}

\newcommand{\abs}[1]{\left\lvert #1 \right\rvert}

\newcommand{\dd}{\,\mathrm d}
\newcommand{\Om}{\Omega}
\newcommand{\Oms}{\Omega_{\rm s}}
\newcommand{\Omst}{\Omega^*}
\newcommand{\Omsst}{\Omega_{\rm s}^*}

\newcommand*{\GG}{\mathscr{G}}

\newcommand*{\LL}{\mathscr{L}}

\newcommand*{\1}{\mathds{1}}
\newcommand{\x}{\boldsymbol{x}}
\newcommand*{\y}{\boldsymbol{y}}

\renewcommand*{\L}{\Lambda}

\renewcommand*{\d}{\mathrm{d}}
\newcommand*{\e}{\mathrm{e}}

\newcommand*{\range}{R}

\newcommand*{\Ent}{\operatorname{Ent}}

\newcommand{\Ssup}[1]{^{\scriptscriptstyle{({#1}})}}

\renewcommand*{\theta}{\vartheta}
\renewcommand*{\epsilon}{\varepsilon}

\makeatletter
\newcommand{\subalign}[1]{%
  \vcenter{%
    \Let@ \restore@math@cr \default@tag
    \baselineskip\fontdimen10 \scriptfont\tw@
    \advance\baselineskip\fontdimen12 \scriptfont\tw@
    \lineskip\thr@@\fontdimen8 \scriptfont\thr@@
    \lineskiplimit\lineskip
    \ialign{\hfil$\m@th\scriptstyle##$&$\m@th\scriptstyle{}##$\hfil\crcr
      #1\crcr
    }%
  }%
}
\makeatother

\usepackage{zref-clever}
\zcsetup{cap}
\zcRefTypeSetup{equation}{
Name-sg = {} ,
name-sg = {},
Name-pl = {},
name-pl = {}
}
\zcsetup{nameinlink=true}

\AddToHook{env/remark/begin}{%
\zcsetup{countertype={definition=remark}}}
\AddToHook{env/conjecture/begin}{%
\zcsetup{countertype={definition=conjecture}}}
\AddToHook{env/corollary/begin}{%
\zcsetup{countertype={definition=corollary}}}
\AddToHook{env/theorem/begin}{%
\zcsetup{countertype={definition=theorem}}}
\AddToHook{env/lemma/begin}{%
\zcsetup{countertype={definition=lemma}}}
\AddToHook{env/proposition/begin}{%
\zcsetup{countertype={definition=proposition}}}
\AddToHook{env/example/begin}{%
\zcsetup{countertype={definition=example}}}

\title{Space-time large deviations for continuum birth-and-death processes}

\author[B. Jahnel]{Benedikt Jahnel}
\author[J. K\"oppl]{Jonas K\"oppl}
\author[Y. Steenbeck]{Yannic Steenbeck}
\author[A. Zass]{Alexander Zass}

\address[Benedikt Jahnel]{TU Braunschweig, Institut für Mathematische Stochastik, %Universitätsplatz 2, 38106 Braunschweig,
Germany, and Weierstrass Institute, Berlin, Germany.}
\email{jahnel@wias-berlin.de}
\address[Jonas K\"oppl]{Universität Münster, Germany.}
\email{jonas.koeppl@uni-muenster.de}
\address[Yannic Steenbeck]{TU Braunschweig, Institut für Mathematische Stochastik, %Universitätsplatz 2, 38106 Braunschweig, 
Germany.}
\email{yannic.steenbeck@tu-braunschweig.de}
\address[Alexander Zass]{Weierstrass Institute, Berlin,
Germany.}
\email{zass@wias-berlin.de}
\date{\today}

\keywords{Gibbs measures, spatial birth and death processes, space-time large deviation principle, point processes, relative entropy, empirical process, path space.}
\subjclass[2020]{Primary 82C21, 82B21, 60F10; Secondary 60K35, 60G55, 60J25} 

\date{\today}

\definecolor{amethyst}{rgb}{0.6, 0.4, 0.8} 
\usepackage[textwidth=20mm,textsize=small]{todonotes}

\begin{document}

\begin{abstract}
    We establish a space-time large deviation principle for the empirical process of spatial birth-and-death dynamics in the continuum with area interaction at arbitrary inverse temperature, including the free evolution. The good rate function is the dynamical specific relative entropy with respect to the dynamics, and the result holds on the space of space-time stationary laws on the Skorokhod path space, equipped with the topology of weak convergence. For the time marginals of the empirical process, we identify the rate function in terms of the Hellinger distance between the birth and the death measure of the observed point process, which vanishes exactly at the translation-invariant Gibbs point processes. This proves a continuum version of conjectures of Dai Pra and Handa, and connects space-time large deviations with entropy production.
\end{abstract}

\maketitle
\setcounter{tocdepth}{1}
\hypersetup{bookmarksdepth=2}
{
  \hypersetup{linkcolor=black}
  \tableofcontents
}

\section{Introduction}\label{section:introduction}
We study spatial birth-and-death processes in the continuum, that is, Markovian dynamics on the space $\Om$ of locally finite point configurations in $\R^d$ in which particles appear or disappear. Formally, their generator is given by
\begin{equation*}
    (\LL f)(\eta) = \int_{\R^d} \d x \ b(x,\eta)\big( f(\eta+\delta_x) - f(\eta) \big) + \sum_{x\in\eta} \big( f(\eta-\delta_x) - f(\eta)\big), \qquad \eta \in \Om,
\end{equation*}
so that a new particle is added at $x\in \R^d$ at rate $b(x,\eta)$ when the system is in state $\eta$, and every particle present in $\eta$ dies at rate $1$. We focus on the case where the birth rate $b$ is the Papangelou intensity associated with a Hamiltonian, see e.g.~\cite{Georgii1976Canonical,xanh1979integral}. The corresponding Gibbs point processes then satisfy the detailed balance condition, so that the dynamics can be interpreted as a continuum analogue of the Glauber dynamics of lattice spin systems, a well-studied object in the discrete setting~\cite{liggett_interacting_2005}. Concretely, we consider the area interaction at inverse temperature $\beta\in\R$, for which $b(x,\eta)=\e^{-\beta\mathfrak{h}(x,\eta)}$, where $\mathfrak{h}(x,\eta)$ is the volume that a ball of fixed radius around $x$ adds to the union of the balls of the same radius around the points of $\eta$. For $\beta=0$ we obtain the free evolution, in which births form a space-time Poisson point process and particles die independently. Continuum birth-and-death dynamics have recently also proven useful as a tool for equilibrium questions: in~\cite{bowen_2026}, the associated Glauber dynamics is used to establish a phase transition for the hard-sphere model in the hyperbolic plane.
Our aim is to establish a space-time large deviation principle for the empirical process of these dynamics and to characterise the cost of observing a prescribed time marginal. 

Spatial birth-and-death processes were introduced by Preston~\cite{Preston_1975_Announcement} in finite volume. On the analytic side, a substantial body of work studies infinite-volume dynamics via Dirichlet forms on configuration spaces~\cite{AlbeverioKondratievRoeckner1998}, both for birth-and-death dynamics of Glauber type~\cite{KondratievLytvynov2005} and for other jump dynamics such as Kawasaki dynamics~\cite{KondratievLytvynovRoeckner2007}, and via the evolution of correlation functions~\cite{FinkelshteinKondratievKutoviy2012,FinkelshteinKondratievKozitsky2013,Finkelshtein2014Dynamical}. The Dirichlet-form approach is naturally adapted to the reversible case, where it yields the semigroup and an associated process for quasi-every initial configuration; non-equilibrium evolutions are accessible through the correlation functions, but not at the level of paths. Complementary probabilistic constructions of the infinite-volume dynamics were given in~\cite{HS78,Garcia1995Birth,garcia2006spatial}. In particular, \cite{Garcia1995Birth,garcia2006spatial} construct the process pathwise, as a projection of a higher-dimensional Poisson process and as the unique solution of a stochastic equation driven by a space-time Poisson random measure, respectively. It is this pathwise picture that we work with, since space-time large deviations are a statement about the law on path space.

Large deviation principles for empirical measures of Markov processes go back to Donsker and Varadhan~\cite{DonskerVaradhan1975I}; see~\cite{LDPbook,denHollander2000} for textbook accounts, including large deviations of empirical processes. For Gibbsian random fields on the lattice and for Gibbs point processes in the continuum, the empirical field satisfies a large deviation principle whose rate function is a specific relative entropy~\cite{follmerLectureNotes,georgii_solo_93,GeorgiiZessin1993}. For interacting particle systems in discrete time, such as probabilistic cellular automata with positive transition probabilities, the law of the space-time evolution on $\Z^d$ is a Gibbs measure on $\Z^{d+1}$~\cite{LebowitzMaesSpeer1990}, so that large deviation principles for the dynamics reduce to those for $(d+1)$-dimensional Gibbs measures. Motivated by this, Dai Pra studied interacting particle systems on the lattice in continuous time: in~\cite{DaiPra1993}, he established a large deviation principle for the space-time empirical process, with a relative entropy per unit space-time volume with respect to the dynamics as rate function, and in~\cite{DaiPra1993_2,daipra1994minimum} he studied the resulting large deviations of the time marginals.

For interacting particle systems on the lattice, the study of entropy production goes back to Holley~\cite{holley_free_1971} and has been developed further in~\cite{higuchi_results_1975,kunsch_non_1984,jahnel_attractor_2019,jahnel_dynamical_2023}.  In the continuum, entropy-dissipation identities for independent spatial birth-and-death dynamics have been established in~\cite{Dello2024Wasserstein,huesmann2025}, but the study of entropy production in the interacting infinite-volume case has been initiated only recently~\cite{JKSZ25,JKSZ26}. Related ideas also enter~\cite{bowen_2026}, albeit in a geometry where the boundary of a box is comparable to its volume, so that arguments based on entropy per unit volume, and the space-time scaling used here, do not transfer. Another topic of recent interest is the non-local geometry that spatial birth-and-death processes induce on configuration spaces, where the natural transport distances and gradient-flow structures are of jump type rather than diffusive, see e.g.~\cite{Dello2024Wasserstein,huesmann2025}.\medskip

\textbf{Main contributions.}
The main contributions of this manuscript are twofold. First, we prove a space-time large deviation principle for the empirical process of the birth-and-death dynamics with area interaction, for every inverse temperature $\beta\in\R$, and for all initial configurations whose number of particles in $\Lambda_n=[-n/2,n/2)^d$ is of smaller order than $n^{d+1}$, see \zcref{theorem:full-ldp}. The large deviation principle holds at speed $n^{d+1}$, the space-time volume of the observation window $\Lambda_n\times[0,n]$, on the space of space-time stationary laws on $D(\R;\Om)$, equipped with the topology of weak convergence induced by the local Skorokhod $J_1$ topology on the path space. The good rate function $H^\beta$ is the dynamical specific relative entropy with respect to the interacting dynamics. For $\beta=0$, it is the relative entropy per unit space-time volume with respect to the free evolution, and for $\beta\neq0$ an energetic term accounts for the interaction. This extends the results of~\cite{DaiPra1993} to the continuum. The new difficulty is not primarily that the number of particles in a box fluctuates, but that the local state space is non-compact and the number of jumps in a space-time window is unbounded. Controlling these two features is essential for establishing exponential tightness. Moreover, in the continuum there are various natural topologies in which one may work, and the choice turns out to be quite delicate. Two features of our setting are essential: we allow several points at the same location, and we work with the topology of weak convergence. On the subspace of simple configurations, and in a stronger topology that also requires the convergence of intensities, we only obtain weak large deviation principles, see \zcref{theorem:weak-ldps}, and show by counterexamples that, already for the free evolution, they cannot be strengthened to full ones.

Second, we identify the cost of observing a prescribed time marginal. By the contraction principle, the time-zero marginals of the empirical process satisfy a large deviation principle with rate function $\mathfrak{I}^\beta(\mu)=\min\{H^\beta(P)\colon P_0=\mu\}$. We solve this minimum entropy problem and show that $\mathfrak{I}^\beta(\mu)$ is twice the squared Hellinger distance between the birth measure $b(0,\cdot)\mu$ and the reduced Palm measure $\mu_0^!$ of $\mu$. In particular, the rate function vanishes exactly at the translation-invariant Gibbs point processes, see \zcref{thm:marg-ldp}. This establishes the continuum version of a conjecture of Dai Pra~\cite{daipra1994minimum}, formulated for reversible spin-flip systems on the lattice, and of a closely related conjecture of Handa~\cite{handa1996entropy}. It also connects the rate function of the space-time large deviation principle with the entropy production studied in~\cite{holley_free_1971,higuchi_results_1975,JKSZ25,JKSZ26}. In finite volume, a minimiser is given by a reversible birth-and-death process whose birth measure is the geometric mean of the birth and the death measure of $\mu$, and this construction works even if $\mu$ has no Papangelou intensity. We expect that the method also applies to the lattice setting in which the conjecture was originally formulated, but we do not pursue this here.\medskip

\textbf{Organisation of the manuscript.} In \zcref{section:setting-notation} we introduce the setting and state our main results, \zcref{theorem:full-ldp,thm:marg-ldp}. \zcref[S]{section:outlook} contains comments on boundary conditions, the weak large deviation principles in other topologies and further remarks. In \zcref{section:proof-strategy} we explain the strategy of the proofs, state the intermediate results and deduce the main theorems from them. The remaining sections contain the proofs of the intermediate results: \zcref{section:rate-function} collects properties of the dynamical specific entropy, \zcref{section:proof-lower-bound,section:proof-upper-bound} contain the lower and the upper bound for the free evolution, \zcref{section:interacting-case} treats the interacting dynamics, \zcref{section:contraction} the minimum entropy problem, and \zcref{section:counterexamples} the weak large deviation principles and the counterexamples. The appendices collect facts about the topology of the path space and the graphical construction of the dynamics, as well as a sketch of the proof of the one-cell representation from \zcref{subsection:one-cell} of the dynamical specific entropy.

\section{Setting and main results}\label{section:setting-notation}
In this section we introduce the path space on which the dynamics live, the birth-and-death dynamics with area interaction at inverse temperature $\beta\in\R$, the empirical process, and the dynamical specific entropy. We then state our two main results: a space-time large deviation principle for the empirical process, and the identification of the cost of observing a prescribed time marginal.

\subsection{Configurations and paths}\label{subsection:configurations-paths}
Let $\Om$ denote the space of locally finite point configurations on $\R^d$, that is, of locally finite counting measures on $\R^d$; several points may sit at the same location. We equip $\Om$ with the vague topology, generated by the maps $\eta \mapsto \int \eta(\d x)\, g(x)$ for compactly supported continuous functions $g\colon\R^d \to \R$, which makes $\Om$ a Polish space. For a Borel set $B\subset\R^d$ and $\eta\in\Om$, we write $\abs{B}$ for the Lebesgue volume of $B$, $\eta(B)$ for the number of points of $\eta$ in $B$, counted with multiplicity, $\eta_B$ for the restriction of $\eta$ to $B$, and $\Om_B:=\{\eta_B\colon \eta\in\Om\}$. For open $B$, $\Omega_B$ carries the vague topology on $B$, see \zcref{section:appendix-topology}. For $x\in\R^d$, the shifted configuration $\theta_x\eta$ is given by $\theta_x\eta(B):=\eta(B+x)$; it is the configuration $\eta$ seen from $x$.

Let $E=D(\R; \Om)$ denote the space of $\Om$-valued c\`adl\`ag paths $\omega=(\omega_t)_{t\in\R}$, equipped with the local Skorokhod $J_1$ topology, which makes $E$ a Polish space; a metric for this topology and the elementary facts we use about it are collected in \zcref{section:appendix-topology}. For a space-time volume $\Delta\subset\R^d\times\R$, we write $\mathcal{F}_\Delta$ for the $\sigma$-algebra generated by the restrictions of the paths to $\Delta$, that is, by the maps $\omega\mapsto\omega_t(B)$ with $B\times\{t\}\subset\Delta$, and $\mathcal{F}_{(-\infty, t]}:=\mathcal{F}_{\R^d \times (-\infty, t]}$ for $t\in\R$. Space-time volumes will usually be denoted by $\Delta$ and spatial volumes by $\Lambda$. For $(x,s)\in\R^d\times\R$, the space-time translation $\theta_{(x,s)}\colon E\to E$ is given by $(\theta_{(x,s)}\omega)_t:=\theta_x\omega_{s+t}$, so that $\theta_{(x,s)}\omega$ is the path $\omega$ seen from the space-time point $(x,s)$. We write $P[f]$ for the expected value of a function $f$ with respect to a probability measure $P$.

We denote by $\PP(E)$ the set of all probability measures $P$ on $E$ that are invariant under all space-time translations, and we equip it with the topology of weak convergence. We call its elements space-time stationary laws.

\subsection{Birth-and-death dynamics with area interaction}\label{subsection:dynamics}
The dynamics we study are specified by the formal generator
\begin{equation}\label{eq:generator}
    (\LL f)(\eta) := \int_{\R^d} \d x \ b(x,\eta)\big( f(\eta+\delta_x) - f(\eta) \big) + \sum_{x\in\eta} \big( f(\eta-\delta_x) - f(\eta)\big), \quad \eta \in \Om,
\end{equation}
where a new particle is added at $x\in \R^d$ at rate $b(x,\eta)$ when the system is in state $\eta$, and each particle present in $\eta$ dies at constant rate $1$. Throughout, $b$ is the birth rate of the area interaction at inverse temperature $\beta\in\R$, which we now describe. Fix an interaction range $\range>0$ and let $B_r(x)\subset\R^d$ denote the closed ball with radius $r>0$ centred at $x\in\R^d$. For a finite configuration $\eta$, the Hamiltonian
\begin{equation*}
    \mathscr{H}(\eta):=\abs{B_{\range/2}(\eta)} := \Big\lvert \bigcup_{x\in \eta}B_{\range/2}(x) \Big\rvert
\end{equation*}
is the volume covered by the balls of radius $\range/2$ around the points of $\eta$. The conditional energy of a point $x\in\R^d$ in a configuration $\eta\in\Om$,
\begin{equation*}
    \mathfrak{h}(x, \eta) := \Big|B_{\range/2}(x) \setminus \bigcup_{y \in \eta} B_{\range/2}(y)\Big|,
\end{equation*}
is the volume that a point at $x$ adds to $B_{\range/2}(\eta)$; indeed, $\mathfrak{h}(x,\eta)=\mathscr{H}(\eta_\Lambda+\delta_x)-\mathscr{H}(\eta_\Lambda)$ for every bounded $\Lambda\supset B_\range(x)$. For $\beta\in\R$, the \emph{birth rate} at $x$ given $\eta$ is
\begin{equation}\label{eq:birth-rate}
    b(x, \eta) := \e^{-\beta \mathfrak{h}(x, \eta)}.
\end{equation}
In the terminology of the GNZ equations (named after Georgii, Nguyen and Zessin), $b$ is the \emph{Papangelou intensity} associated with the Hamiltonian $\beta\mathscr{H}$, see \zcref{subsection:marginals} and e.g.~\cite{Georgii1976Canonical,xanh1979integral}. Since $0\le\mathfrak{h}\le\abs{B_{\range/2}(0)}$, the birth rate is bounded from above and away from zero; it is continuous, translation covariant in the sense that $b(x,\eta)=b(0,\theta_x\eta)$, and has finite range, since $b(x,\eta)$ depends on $\eta$ only through $\eta_{B_\range(x)}$. See~\zcref{section:interaction-assumptions} for a remark on how these assumptions are used. For $\beta>0$ the interaction is attractive: a point close to existing points adds little volume, so births are more likely there and points tend to cluster. For $\beta<0$ the interaction is repulsive. For $\beta=0$ we have $b\equiv1$, and we call the corresponding dynamics the \emph{free evolution}: births form a space-time Poisson point process with intensity $\d x\,\d t$, and every particle, including those present initially, has an independent unit-exponential lifetime.

For each $\beta\in\R$ and each initial configuration $\xi\in\Om$, the dynamics started from $\xi$ at time $0$ can be constructed pathwise as a thinning of a space-time Poisson random measure~\cite[Theorem~2.13]{garcia2006spatial}; we recall this graphical construction in \zcref{section:appendix-graphical}. We write $Q^\beta_\xi$ for the law of the resulting process on $D(\R_+;\Om)$ and $Q_\xi:=Q^0_\xi$ for the law of the free evolution, which serves as our reference measure. Slightly abusing notation, for a cube $A\subset\R^d$, which may be open, closed or half-open, we write $Q^A_\xi$ for the free evolution in $A$ started from $\xi_A$ at time zero, in which births only occur in $A$.

The paths of the dynamics are almost surely legal birth-and-death paths in the following sense. A path $\omega\in E$ is called a \emph{legal birth-and-death path} if, in every bounded spatial region and over every finite time interval, the configuration changes only at finitely many distinct times, and each change consists of adding one particle (a birth) or removing one particle present immediately beforehand (a death). Particles remain at fixed locations between their respective birth and death. We use the same terminology for paths in $D(\R_+;\Om)$. We write $\mathsf B_\omega(B,(s,t])$ and $\mathsf D_\omega(B,(s,t])$ for the number of births and deaths of $\omega$ in a bounded Borel set $B\Subset\R^d$ during the time interval $(s,t]\subset\R$, and we omit the subscript $\omega$ when the path is clear from the context.

\subsection{The empirical process}\label{subsection:empirical-process}
For $n\in\N$, let $\Lambda_n := [-n/2, n/2)^d$ and $\Delta_n := \Lambda_n \times [0,n]$. For a legal birth-and-death path $\omega \in D(\R_+;\Om)$, we define its \emph{$n$-periodic version} $\omega^{(n)}\in E$ by
\begin{equation*}
    \omega^{(n)}_{t+hn}(B + k\overline{n}) := \omega_t(B), \qquad B\subset\Lambda_n \text{ Borel},\ 0 \leq t < n,\ h\in \Z,\ k \in \Z^d,
\end{equation*}
where $\overline{n} := (n,\dots, n) \in \R^d$. In words, we copy the restriction of the path to $\Lambda_n \times [0, n)$ into all its disjoint translates by $(k\overline{n}, h n)$, $h\in\Z$, $k \in \Z^d$. The path $\omega^{(n)}$ is c\`adl\`ag, but in general it is not a legal birth-and-death path: at the temporal resets, that is, at the times $t\in n\Z$, several particles may change simultaneously, and spatial periodisation repeats every event in all spatial copies. The same construction applied to a configuration $\eta\in\Om$ yields its $n$-periodic version $\eta^{(n)}\in\Om$.

The \emph{$n$-th empirical process} of $\omega$ is the space-time average
\begin{equation}\label{eq:empirical-process}
    R_{n, \omega}
    := \frac{1}{n \abs{\Lambda_n}} \int_{0}^{n} \d s \,\int_{\Lambda_n} \d x \,\, \delta_{\theta_{(x, s)} \omega^{(n)}}.
\end{equation}
Thus $R_{n,\omega}$ is the law of the periodised path seen from a uniformly chosen point of $\Delta_n$, and since $\omega^{(n)}$ is periodic in space and time, $R_{n,\omega}\in\PP(E)$. When $\omega$ is the random path of the dynamics, we simply write $R_n$ for $R_{n,\omega}$.

\subsection{The rate function}\label{subsection:rate-function}
For two probability measures $\mathbb{P},\mathbb{Q}$ on a measurable space $(X, \mathcal{X})$, the \emph{relative entropy} of $\mathbb{P}$ with respect to $\mathbb{Q}$ is
\begin{equation*}
    \Ent(\mathbb{P}\lvert \mathbb{Q}):=
    \begin{cases}
        \int_X \dd\mathbb{P} \log \frac{\dd\mathbb{P}}{\dd\mathbb{Q}} \quad &\text{if } \mathbb{P} \ll \mathbb{Q},
        \\
        \infty &\text{otherwise.}
    \end{cases}
\end{equation*}
For a probability measure $P$ on $E$, a cube $A\subset \R^d$ and $T>0$, let $P_{A,T}$ be the law under $P$ of the restriction $(\omega_t|_A)_{0\le t\le T}$ of the path to $A \times [0,T]$, and let $P_{A,0}$ be its $A\times \{0\}$-marginal. We write $P_{A,0}Q^A := \int P_{A,0}(\d\eta)\,Q_{\eta}^A$ and define
\begin{equation}\label{eq:rate}
    h_{A,T}(P):=\Ent(P_{A,T}\lvert P_{A,0}Q^A),
    \qquad
    H(P):=\sup_{a,T>0}\frac{h_{(0,a)^d,T}(P)}{a^dT}.
\end{equation}
The relative entropy $h_{A,T}(P)$ compares the evolution of $P$ in $A\times[0,T]$ with the free evolution started from the same initial distribution. Consequently, $H$ only charges \emph{dynamical} deviations, for instance keeping particles alive for a long time, and gives no cost to the initial conditions themselves. In analogy with the usual specific entropy of point processes~\cite{GeorgiiZessin1993}, we call $H$ the \emph{dynamical specific entropy}. In \zcref{section:rate-function} we show that the supremum in \eqref{eq:rate} is also a limit along growing space-time boxes, and that $H(P)<\infty$ forces $P$ to be concentrated on legal birth-and-death paths.

For the free evolution, $H$ will be the rate function. In the interacting case there is an additional energetic contribution, whose form can be heuristically derived by a Girsanov-type computation. Consider the dynamics in $\Lambda_n$ with periodic boundary conditions, that is, with birth rate $b(x,\eta^{(n)})$ at $x\in\Lambda_n$, and the free evolution in $\Lambda_n$, both started in the same initial condition and observed during the time interval $[0, n]$. The log-density of the former with respect to the latter is
\begin{equation}\label{eq:girsanov-heuristic}
    \mathfrak{L}_{n}(\omega)
    := \int_0^n \d t\int_{\Lambda_n} \d x\ [1 - b(x, \omega^{(n)}_{t-})]
    + \sum_{\substack{(t, x) \text{ birth}\colon (t, x) \in (0, n] \times \Lambda_n}} \log b(x, \omega_{t-}^{(n)}),
\end{equation}
see \zcref{proposition:change_of_measure_a_la_girsanov}. Up to boundary terms near the times $0$ and $n$, where the periodised path is reset, $\mathfrak{L}_n$ is a sum of contributions of unit space-time cells, so that, heuristically, $\mathfrak{L}_n\approx n\abs{\Lambda_n}R_n[F]$, where, for a legal birth-and-death path $\omega$, the observable $F$ is defined by
\begin{equation}\label{eq:observable-F}
    F(\omega) := 1 - b(0, \omega_0) + \sum_{\substack{(t, x) \text{ birth} \colon (t, x) \in (0, 1] \times [0, 1)^d}} \log b(x, \omega_{t-}).
\end{equation}
Since $\log b=-\beta\mathfrak{h}$, the observable equals $1-\e^{-\beta\mathfrak{h}(0,\omega_0)}-\beta\sum\mathfrak{h}(x,\omega_{t-})$, where the sum runs over the same births as in~\eqref{eq:observable-F}. It is defined only on legal birth-and-death paths, and for $\beta\neq0$ it is neither bounded nor continuous, since it counts births; this is the main technical difficulty of the interacting case, see \zcref{subsection:strategy-interacting}. If $H(P)<\infty$, then $F$ is $P$-integrable, see \zcref{lem:finite-entropy-intensities,lemma:interacting_case_nice_things_from_finite_free_entropy}. This motivates the definition of $H^\beta\colon\PP(E)\to (-\infty,\infty]$ by
\begin{equation}\label{equation:interacting_case_rate_function}
    H^\beta(P) :=
    \begin{cases}
        H(P) - P[F] &\text{if } H(P)<\infty, \\
        \infty &\text{otherwise.}
    \end{cases}
\end{equation}
For $\beta=0$ we have $b\equiv1$, hence $F\equiv0$ and $H^0=H$.

\subsection{Space-time large deviations}\label{section:main-results}
Our main result holds for all initial configurations whose number of points in $\Lambda_n$ is negligible on the scale of the space-time volume $n\abs{\Lambda_n}$ of $\Delta_n$. Denote by $\Omst$ the set of all such \emph{space-time volume-regular} configurations, i.e.
\begin{equation*}
    \Omst := \Big\{\xi \in \Om\colon \lim_{n\to \infty} \frac{\xi(\Lambda_n)}{n|\Lambda_n|} = 0\Big\}.
\end{equation*}

\begin{theorem}[Space-time LDP]\label{theorem:full-ldp}
    Let $\beta\in\R$ and $\xi \in \Omst$. Under $Q^\beta_\xi$, the empirical processes $(R_n)_{n\in\N}$ satisfy a large deviation principle on $\PP(E)$, equipped with the topology of weak convergence, with speed $n\abs{\Lambda_n}$ and good rate function $H^\beta$. That is, the sublevel sets $\{P\in \PP(E)\colon H^\beta(P) \leq c\}$, $c\in\R$, are weakly compact; moreover, for every weakly closed set $F \subset \PP(E)$,
    \begin{equation*}
        \limsup_{n\to \infty}\frac{1}{n|\Lambda_n|} \log Q^\beta_{\xi}(R_n \in F) \leq - \inf_{P\in F} H^\beta (P),
    \end{equation*}
    and for every weakly open set $G \subset \PP(E)$,
    \begin{equation*}
        \liminf_{n\to\infty}\frac1{n|\Lambda_n|} \log Q^\beta_{\xi}(R_n\in G) \geq - \inf_{P\in G}H^\beta(P).
    \end{equation*}
\end{theorem}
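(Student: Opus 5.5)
We prove the theorem first for the free evolution $\beta=0$ and then transfer it to general $\beta$ by a change of measure. For $\beta=0$ we follow Dai Pra's scheme for lattice systems, adapted to the continuum.

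\textbf{Goodness of $H$.} Lower semicontinuity is immediate from \eqref{eq:rate}. Each $h_{(0,a)^d,T}$ is a relative entropy whose reference measure $P_{A,0}Q^A$ depends continuously on $P$, by the Feller property of the free evolution in a box. Hence each $h_{(0,a)^d,T}$ is weakly lower semicontinuous, by the Donsker--Varadhan variational formula, and so is the supremum $H$. For compactness of sublevel sets we control the one-dimensional marginal and the jumps. A bound $H(P)\le c$ bounds the intensity of births, hence of points (by stationarity points are births that have not yet died), and it also bounds the modulus of Skorokhod continuity in bounded windows. Both follow from the entropy inequality $P[f]\le \Ent+\log Q[\e^{f}]$, applied to $f=\lambda\cdot(\text{number of births in a cell})$ and to indicators of several events occurring in a short time interval. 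Via the local-$J_1$ compactness criteria of \zcref{section:appendix-topology}, this gives tightness.

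\textbf{Upper bound for $\beta=0$.} We prove the upper bound first for compact sets, then upgrade it using exponential tightness. For compact sets we use the Donsker--Varadhan route. For bounded continuous local $f$ measurable with respect to $\mathcal F_{(0,a)^d\times[0,T]}$, we split $\Delta_n$ into blocks of size $a^d T$, separated by corridors. Independence of the free evolution on disjoint blocks, given the initial condition, then yields
\[\limsup_n \tfrac{1}{n|\Lambda_n|}\log Q_\xi\big[\e^{n|\Lambda_n|R_n[f]}\big]\le \tfrac1{a^dT}\sup_\eta \log Q^A_\eta[\e^{a^dTf}]\]
up to boundary corrections. Here the hypothesis $\xi\in\Omst$ ensures that the initial points, which die after unit-exponential times, and the periodisation resets at $t\in n\Z$ cost nothing at speed $n^{d+1}$. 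The identity $\sup_f\{P[f]-\log\ldots\}=h_{A,T}(P)/(a^dT)$ then gives the local upper bound, up to averaging over block shifts. Exponential tightness follows from exponential Chebyshev bounds on births (Poisson) and on the number of jumps per cell. This requires a compact set $K_L$ built from the tightness criterion above, with $Q_\xi(R_n\notin K_L)\le \e^{-Ln|\Lambda_n|}$.

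\textbf{Lower bound for $\beta=0$.} We use the standard tilting argument. Fix $P$ with $H(P)<\infty$, which by \zcref{section:rate-function} is concentrated on legal paths. First approximate $P$ by ergodic laws with finite entropy; these are obtained by gluing independent blocks with free-evolution corridors, at a cost to $H$ that vanishes as the block size grows. For such a law $P$, construct the law $\tilde Q_n$ of a process on $\Delta_n$ that follows the blocks of $P$ and is free near the initial time. The ergodic theorem gives $R_n\to P$ under $\tilde Q_n$, and $\frac{1}{n|\Lambda_n|}\Ent(\tilde Q_n|Q_\xi)\to H(P)$. The entropy inequality then yields $\liminf\frac{1}{n|\Lambda_n|}\log Q_\xi(R_n\in G)\ge -H(P)$.

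\textbf{Interacting case.} By \zcref{proposition:change_of_measure_a_la_girsanov}, under periodic boundary conditions $\d Q^\beta/\d Q=\e^{\mathfrak L_n}$, and $\mathfrak L_n=n|\Lambda_n|R_n[F]+o(n^{d+1})$ up to boundary terms. Non-periodic boundary effects are controlled by the finite range $R$. If $F$ were bounded and continuous, Varadhan's lemma would give the LDP with rate function $H-P[F]=H^\beta$. Since $F$ counts births, it is neither bounded nor continuous, and we handle this with truncations. Replace $F$ by a truncation $F_M$ that is bounded and continuous: smooth the count of births in a cell over a short time window and cap it at $M$. Two estimates are then needed. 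First, the superexponential estimate $\limsup_M\limsup_n\frac1{n^{d+1}}\log Q_\xi[\e^{\lambda n|\Lambda_n|\,|R_n[F]-R_n[F_M]|}]=0$, obtained from Poisson bounds on births. Second, $\sup_{H(P)\le c}|P[F]-P[F_M]|\to0$, obtained from the entropy inequality together with \zcref{lemma:interacting_case_nice_things_from_finite_free_entropy}. These combine into the LDP by an exponential-approximation argument: apply Hölder's inequality to the tilts, then let $M\to\infty$. Goodness of $H^\beta$ follows because $P[F]\le C(1+\text{intensity})$ is dominated by $\epsilon H+C_\epsilon$.

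\textbf{Main obstacle.} We expect the hardest part to be this exponential control of the unbounded, discontinuous observable $F$ together with the periodisation boundary terms. A close second is exponential tightness in the local Skorokhod topology, where jumps accumulate in the non-compact local state space.
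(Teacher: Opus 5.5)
Your upper bound matches the paper's route: blocks combined with the Markov property, averaging over lattice offsets, and the variational identity for $h_{A,T}$. The supremum over $\eta$ in your display is harmless, because $f-g_f(\omega_0)$ is itself an admissible test function. Your transfer to $\beta\neq0$ also follows the paper's architecture. The lower bound for $\beta=0$, however, has a genuine gap: nothing in your construction makes $\tilde Q_n$ absolutely continuous with respect to $Q_\xi$ at a cost of order $H(P)\,n\abs{\Lambda_n}$.

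If you glue independent space-time blocks of $P$ in the time direction, the terminal configuration of one block and the independent initial configuration of the next differ by many particles. The glued paths therefore make multi-particle jumps, and by \zcref{lem:finite-entropy-intensities} such a law has $H=\infty$. A free-evolution corridor in between does not repair this. It produces a free-evolved configuration rather than a sample of $P_{A,0}$, and $h_{A,T}(P)$ controls the cost of following $P$ only from the initial law $P_{A,0}$.

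The same obstruction defeats making $\tilde Q_n$ ``free near the initial time'', because $P_{A,0}$ may be singular with respect to every Poisson process. For instance, the frozen randomly shifted lattice $\omega_t=\sum_{z\in\Z^d}\delta_{z+X}$, with $X$ uniform on $[0,1)^d$, has $H=2$. Yet its marginal cannot be reached from $\xi$, or from any free-evolved configuration, at finite entropic cost.

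The missing idea is a regularisation of the initial configurations. Condition on $\omega_0(A)\le K$ and displace every particle uniformly within its cell of an $\epsilon$-grid. This kernel maps the free evolution onto itself, so it does not increase $h_{A,T}$. It also gives the initial law a bounded density with respect to a Poisson process, so a preparation phase from the empty configuration costs a finite amount $C_{\rm prep}$ that does not depend on $T$. With this in place, the paper concatenates block paths that start and end empty: a preparation phase, a bulk phase, and a clearance phase costing $\abs{A}+c\,\eta(A)$. It then chooses the parameters in the order $a,K,\epsilon,T$ and replaces the ergodic theorem by a law of large numbers over independent blocks. You could instead let spatial tiles run over the whole time window, which needs only a single preparation, but the truncation and smoothing remain indispensable.

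Two further steps are wrongly justified, although both can be repaired.

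First, in your direct proof of goodness, the intensity is not controlled through births. In the lattice example above, $\chi=0$ while $\rho=1$, because under $P$ particles need not die. The correct bound comes from the cost of suppressing deaths, $H(P)\ge(\sqrt{\rho(P)}-1)^2$. In any case, goodness follows from exponential tightness and the lower bound, so no direct argument is needed.

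Second, in the interacting case, a count of births is not continuous in the local $J_1$ topology. A path jumping at one instant from $\eta+\delta_y$ to $\eta+\delta_{x_j}$, with $x_j\to y$, converges uniformly to the constant path $\eta+\delta_y$, which has no birth. The continuous proxy therefore has to be the positive increment of a mollified particle count, as in the paper. This increment misses births that have a death nearby in space and time, and deaths of older particles do not form a Poisson process. Poisson bounds on births alone therefore do not give your superexponential estimate. You need a separate control of births near deaths, as in \zcref{lemma:interacting_case_approximation_of_rate_function_nearby_events}.
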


\zcref[S]{theorem:full-ldp} extends to the continuum the space-time large deviation principle of Dai Pra~\cite{DaiPra1993} for interacting particle systems on the lattice. The speed $n\abs{\Lambda_n}$ is the space-time volume of the observation window $\Delta_n$, and the rate function measures the entropy cost per unit space-time volume of a stationary deviation of the dynamics. The interaction enters only through the energetic term $P[F]$. The initial configuration does not appear in the rate function: under the assumption $\xi\in\Omst$, the cost of removing or ignoring the initial particles is negligible on the scale $n\abs{\Lambda_n}$. Let us stress that the result holds in regimes of uniqueness as well as non-uniqueness of the Gibbs measures introduced in \zcref{subsection:marginals}, and that it also holds for the dynamics in $\Lambda_n$ with suitable boundary conditions, see \zcref{subsection:boundary-conditions}. Two features of the setting are essential: we allow for several points at the same location, and we work with the topology of weak convergence. On the subspace of simple configurations, and in the stronger topology that additionally requires convergence of intensities, we only obtain weak large deviation principles, and for the free evolution no full large deviation principle holds there with any rate function (on simple configurations for $\xi=\emptyset$, and in the stronger topology for every $\xi\in\Omst$), see \zcref{section:outlook}.

The rate function has the following further properties. For a bounded Borel set $\Lambda\Subset\R^d$, we write $Q^{\Lambda,\beta}$ for the Markov kernel of the birth-and-death dynamics in $\Lambda$ with birth rate $b$, death rate $1$ and empty boundary configuration. As for the free evolution, we write $P_{\Lambda,0}Q^{\Lambda,\beta}:=\int P_{\Lambda,0}(\d\eta)\,Q^{\Lambda,\beta}_\eta$, where $Q^{\Lambda,\beta}_\eta$ is the law started from $\eta$ at time zero, and $Q^{\Lambda,0}=Q^\Lambda$. We recall that $P_{\Lambda_n, [0,n]}$ denotes the law of the restriction of the path to $\Lambda_n\times[0,n]$ under $P$.

\begin{proposition}[Properties of the rate function]\label{proposition:rate-function-properties}
    Let $\beta\in\R$. The rate function $H^\beta$ has the conditional-entropy representation
    \begin{equation}\label{equation:interacting_case_rate_function_conditional_entropy_representation}
        H^\beta(P)
        = \lim_{n \to \infty} \frac{1}{n \abs{\Lambda_n}} \Ent\big(P_{\Lambda_n, [0,n]} \lvert P_{\Lambda_n, 0} Q^{\Lambda_n,\beta}\big), \qquad P\in\PP(E),
    \end{equation}
    %\bjtd{$P_{\Lambda_n, 0} Q^{\Lambda_n,\beta}$ should be globally replaced by $Q^{\Lambda_n,\beta}$, right?}
    and is consequently non-negative; moreover, its minimum is zero. Finally, for every $\delta \in (0,1)$ there is a $C_\delta > 0$ such that
    \begin{equation}\label{equation:interacting_case_lower_bound_via_free_entropy}
        H^\beta(P) \geq (1-\delta)H(P) - C_\delta, \qquad P\in\PP(E).
    \end{equation}
\end{proposition}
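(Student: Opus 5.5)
The plan is to obtain the conditional-entropy representation from a chain rule for relative entropy, together with the Girsanov-type density from \zcref{proposition:change_of_measure_a_la_girsanov}. The other two claims then follow from it.

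\textbf{Step 1: chain rule.} For $P\in\PP(E)$ with $H(P)<\infty$, write
\begin{equation*}
\Ent\big(P_{\Lambda_n,[0,n]}\lvert P_{\Lambda_n,0}Q^{\Lambda_n,\beta}\big)=\Ent\big(P_{\Lambda_n,[0,n]}\lvert P_{\Lambda_n,0}Q^{\Lambda_n}\big)-P\Big[\log\frac{\dd Q^{\Lambda_n,\beta}_{\omega_0}}{\dd Q^{\Lambda_n}_{\omega_0}}\Big].
\end{equation*}
This is legitimate because both reference measures share the initial law and are mutually absolutely continuous, since $b$ is bounded above and away from zero.

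The log-density is the finite-volume, empty-boundary analogue of $\mathfrak{L}_n$ in \eqref{eq:girsanov-heuristic}. It is built from $b(x,\omega_{t-}|_{\Lambda_n})$ instead of $b(x,\omega^{(n)}_{t-})$, and it is evaluated under $P$ rather than along a periodised path. By space-time stationarity of $P$ it splits into unit cells. Each cell fully inside $\Lambda_n$ (at distance more than $\range$ from the boundary) contributes exactly $P[F]$, where $F$ is the observable in \eqref{eq:observable-F}.

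\textbf{Step 2: boundary cells.} The cells within distance $\range$ of $\partial\Lambda_n$ must be controlled. There $|\log b|\le|\beta||B_{\range/2}(0)|$ and $|1-b|$ is bounded, so their contribution is at most a constant times $n|\partial\Lambda_n|\cdot(1+\text{birth intensity of }P)$. The birth intensity is finite when $H(P)<\infty$, by \zcref{lem:finite-entropy-intensities,lemma:interacting_case_nice_things_from_finite_free_entropy}. After dividing by $n|\Lambda_n|$ this is $o(1)$.

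\textbf{Step 3: identify the limit.} The free term divided by $n|\Lambda_n|$ converges to $H(P)$, because the supremum in \eqref{eq:rate} is a limit along growing boxes (\zcref{section:rate-function}). Combining Steps 1 and 2 gives the representation \eqref{equation:interacting_case_rate_function_conditional_entropy_representation}.

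\textbf{Step 4: the case $H(P)=\infty$.} Here we need the left side of the representation to diverge too. I would prove the inequality \eqref{equation:interacting_case_lower_bound_via_free_entropy} at finite $n$ directly, using Hölder/Donsker--Varadhan. For $\delta\in(0,1)$,
\begin{equation*}
\Ent(\mathbb{P}\lvert\mathbb{Q}_\beta)\ge(1-\delta)\Ent(\mathbb{P}\lvert\mathbb{Q}_0)-\delta\log\mathbb{Q}_0\Big[\Big(\frac{\dd\mathbb{Q}_0}{\dd\mathbb{Q}_\beta}\Big)^{(1-\delta)/\delta}\Big],
\end{equation*}
which comes from the entropy inequality applied to the function $\tfrac{1-\delta}{\delta}\log\tfrac{\dd\mathbb{Q}_0}{\dd\mathbb{Q}_\beta}$. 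The power $\big(\dd\mathbb{Q}_0/\dd\mathbb{Q}_\beta\big)^{\gamma}$ is a product over births of bounded factors times an exponential of a bounded time integral. Its expectation under the free evolution is therefore bounded by $\e^{C n|\Lambda_n|}$: the birth process of $Q^{\Lambda_n}$ is a Poisson process of intensity $\dd x\,\dd t$, and the exponential moment $\mathbb{E}[c^{N}]$ of a Poisson count $N$ grows exponentially in the volume. Dividing by $n|\Lambda_n|$ yields \eqref{equation:interacting_case_lower_bound_via_free_entropy} uniformly in $n$.

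This bound forces the limit in the representation to be $+\infty$ when $H(P)=\infty$, which matches the definition of $H^\beta$ in \eqref{equation:interacting_case_rate_function}. Passing to the limit also gives \eqref{equation:interacting_case_lower_bound_via_free_entropy} itself. The existence of the limit in this case follows since the liminf is already $+\infty$.

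\textbf{Step 5: nonnegativity and minimum zero.} Nonnegativity is immediate, since relative entropies are nonnegative. For the minimum, take $P$ to be the stationary law of the dynamics started from a translation-invariant Gibbs point process, i.e.\ a stationary, reversible solution of the infinite-volume dynamics. Its box restriction differs from $P_{\Lambda_n,0}Q^{\Lambda_n,\beta}$ only through the boundary influence in a layer of width $\range$. Applying the chain rule once more, with the dynamics of $P$ given the outside, bounds the entropy by $O(n|\partial\Lambda_n|)$, again using bounded rates and finite intensity. Hence $H^\beta(P)=0$.

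\textbf{Main obstacle.} The delicate part is the boundary estimate in Steps 2 and 5. The birth count is unbounded, so it must be controlled via integrability under $P$. In Step 5 this requires writing $P$'s conditional evolution in $\Lambda_n$ as a birth-and-death process whose birth rate depends on the outside configuration. I would get this from the graphical construction in \zcref{section:appendix-graphical}.
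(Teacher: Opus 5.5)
\paragraph{Step 4: the interpolation inequality is inverted.}
Steps 1--3 are the paper's argument, and so is the idea of handling $H(P)=\infty$ by H\"older's inequality against a Girsanov moment bound. However, the inequality you wrote in Step 4 has the density inverted and is false as stated.

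Take two points, $\mathbb Q_0=(1-\varepsilon,\varepsilon)$, $\mathbb Q_\beta=(\tfrac12,\tfrac12)$ and $\mathbb P=\mathbb Q_\beta$. The left side is $0$. Since $\d\mathbb Q_0/\d\mathbb Q_\beta\le 2$, the right side is at least $(1-\delta)\bigl(\Ent(\mathbb Q_\beta\lvert\mathbb Q_0)-\log 2\bigr)$, and this tends to $\infty$ as $\varepsilon\to0$. The underlying reason is that testing $\frac{1-\delta}{\delta}\log\frac{\d\mathbb Q_0}{\d\mathbb Q_\beta}$ in the entropy inequality bounds $\Ent(\mathbb P\lvert\mathbb Q_\beta)$ from above, not from below.

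The correct version uses the variational formula and H\"older's inequality with exponents $1/(1-\delta)$ and $1/\delta$. With $Z=\d\mathbb Q_\beta/\d\mathbb Q_0$ and bounded $f$, one has $\mathbb Q_\beta[\e^{(1-\delta)f}]=\mathbb Q_0[\e^{(1-\delta)f}Z]\le\mathbb Q_0[\e^{f}]^{1-\delta}\,\mathbb Q_0[Z^{1/\delta}]^{\delta}$. Hence
\[
\Ent(\mathbb P\lvert\mathbb Q_\beta)\ge(1-\delta)\Ent(\mathbb P\lvert\mathbb Q_0)-\delta\log\mathbb Q_0\bigl[Z^{1/\delta}\bigr],
\]
and this avoids subtracting possibly infinite quantities. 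The moment required is then a positive moment of the interacting density under the free evolution. This is exactly \eqref{equation:interacting_case_girsanov_moment_bound} with $p=1/\delta$, uniformly in the initial law, and your Poisson computation covers it as well. The paper uses $\delta=\tfrac12$ for the divergence when $H(P)=\infty$.

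With this fix, passing to the limit gives \eqref{equation:interacting_case_lower_bound_via_free_entropy}. That is a different route from the paper, which obtains it from $\abs{P[F]}\le\delta H(P)+C_\delta$, i.e.\ from the birth-intensity bound \eqref{equation:interacting_case_bound_free_entropy_beta}. Both routes are valid. The paper's version also yields the integrability of $F$, which it needs elsewhere anyway.

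\paragraph{Step 5: the minimum.}
Here you take a genuinely different and heavier route. The paper argues softly: it applies the upper bound of \zcref{theorem:full-ldp}, whose proof does not use this property, to the closed set $\PP(E)$ to get $\inf H^\beta\le 0$. Attainment then follows from the goodness of $H^\beta$.

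Your construction needs two inputs that you have not supplied.
\begin{enumerate}[(a)]
    \item Every $\nu\in\GG^\beta_\theta$ must be invariant for the infinite-volume dynamics, so that a space-time stationary $P$ with $P_0=\nu$ exists. This is not proved anywhere in the paper; you would have to import it or prove it by finite-volume approximation.
    \item The boundary estimate cannot come from conditioning on the outside path. Births outside $\Lambda_n$ near $\partial\Lambda_n$ depend on the inside configuration. So the conditional law of the inside path given the outside path is tilted by their likelihood, and it is not a birth-and-death process with boundary-dependent rates.
\end{enumerate}

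What does work for (b) is data processing from the full-space law on $[0,n]$ to $\Lambda_n$. Compare against the modified dynamics with rate $b(x,\eta_{\Lambda_n})$ in $\Lambda_n$ and $b(x,\eta)$ outside. Its $\Lambda_n$-component is exactly $\nu_{\Lambda_n}Q^{\Lambda_n,\beta}$, as in the proof of \zcref{lemma:interacting_case_boundary_conditions_play_no_role}. The Girsanov log-density then only involves $\partial^\range\Lambda_n\times[0,n]$. Its expectation is $O(n\abs{\partial^\range\Lambda_n})$, since $\chi(P)\le\Vert b\Vert_\infty$.

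With (a) and (b) in place, your route gives an explicit minimiser. Alternatively, \zcref{lemma:contraction_time_marginal_minimum_entropy_problem_hellinger_existence_of_minimizer_in_infinite_volume} applied to $\mu\in\GG^\beta_\theta$ already gives a zero of $H^\beta$ without needing (a).
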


By~\eqref{equation:interacting_case_rate_function_conditional_entropy_representation}, $H^\beta$ is the dynamical specific entropy relative to the interacting dynamics, in the same way as $H$ is relative to the free evolution.

\subsection{Costs of prescribed marginals and entropy production}\label{subsection:marginals}
We finally study which time marginals of the empirical process are observed, and at which cost. By the contraction principle, this amounts to minimizing $H^\beta(P)$ under the constraint that the time-zero marginal of $P$ equals a given $\mu\in\PP(\Om)$. We show that the minimal cost is a Hellinger distance that vanishes exactly at the Gibbs measures for the area interaction. The time-zero marginal of $P\in\PP(E)$ is the law $P_0$ of $\omega_0$ under $P$; it belongs to the set $\PP(\Om)$ of translation-invariant probability measures on $\Om$, which we also equip with the topology of weak convergence. For $\mu\in\PP(\Om)$ we write $\rho(\mu):=\mu[\eta([0,1)^d)]\in[0,\infty]$ for its \emph{intensity}, so that $\mu[\eta(B)]=\rho(\mu)\abs{B}$ for every bounded Borel set $B\Subset\R^d$. To state our result precisely, we recall the relevant equilibrium objects.

For $\Lambda\Subset\R^d$ and $\eta\in\Om$, the \emph{(conditional) energy in $\Lambda$},
\begin{equation*}
    \mathscr{H}_\L(\eta):=\lim_{n\to\infty}\big(\mathscr{H}(\eta_{\L_n})-\mathscr{H}(\eta_{\L_n\setminus\L})\big),
\end{equation*}
is well defined, since the sequence is eventually constant by the finite range of the interaction. \emph{Finite-volume Gibbs point processes} in $\L$ with \emph{boundary condition} $\omega\in \Om$ are defined as
\begin{equation*}
    \nu\Ssup{\omega}_\L(\d\eta_\L):=Z_\L(\omega)^{-1}\e^{-\beta\mathscr{H}_\L(\eta_\L\omega_{\L^c})}\pi_\L(\d \eta_\L),
\end{equation*}
where $\eta_\L\omega_{\L^c}$ is the configuration that agrees with $\eta_\L$ in $\L$ and with $\omega$ outside $\L$, the partition function $Z_\L(\omega)$ is the normalization constant that makes \(\nu\Ssup{\omega}_\L\) a probability measure on $\Omega_\L$, and the reference measure $\pi_\L$ is the homogeneous Poisson point process with intensity \(1\) on \(\Omega_\L\). A probability measure $\nu$ on $\Om$ is called an \emph{infinite-volume Gibbs point process} if it satisfies the \emph{DLR equations}
\begin{equation*}
    \int\nu(\d \omega)f(\omega)
    =
    \int \nu(\d \omega) \int\nu_\L\Ssup{\omega}(\d \eta_\L) f(\eta_\L\omega_{\L^c}),\quad \Lambda \Subset \R^d, \, f\ge0 \text{ measurable}.
\end{equation*}
We denote by $\GG^\beta_\theta$ the set of all translation-invariant infinite-volume Gibbs point processes for the area interaction at inverse temperature $\beta$. By standard results for Gibbs point processes, see e.g.~\cite{dereudre2019introduction}, the set $\GG^\beta_\theta$ is never empty. The area-interaction model is known to exhibit a phase transition, in the sense that $\GG^\beta_\theta$ contains more than one element for $d\ge2$, $\beta=1$ and sufficiently large $\range$. Indeed, for $\beta=1$ the area-interaction process is the one-type marginal of the symmetric Widom--Rowlinson model, and a large interaction range corresponds to a large activity, see e.g.~\cite{Ruelle1971WRPT,CCK95,Giacomin1995Agreement,dereudre2019introduction,DereudreHoudebert2021}.

For two \(\sigma\)-finite measures \(\mathfrak{M}\), \(\mathfrak{N}\) on a common measurable space \(\mathcal{X}\), the squared \emph{Hellinger distance} of \(\mathfrak{M}\) and \(\mathfrak{N}\) is
\begin{equation*}
    \mathfrak{H}^2(\mathfrak{M}, \mathfrak{N})
    := \frac{1}{2} \int_{\mathcal{X}} \bigg(\sqrt{\frac{\d\mathfrak{M}}{\d (\mathfrak{M} + \mathfrak{N})}}(x) - \sqrt{\frac{\d\mathfrak{N}}{\d (\mathfrak{M} + \mathfrak{N})}}(x) \bigg)^2 \, (\mathfrak{M} + \mathfrak{N})(\d x) \in [0, \infty].
\end{equation*}
For $\mu\in\PP(\Om)$ we define its \emph{reduced Palm measure} \(\mu_0^!\), without further normalisation, by
\begin{equation*}
    \mu_0^![f]
    := \int \mu(\d\eta) \sum_{x \in \eta_{[0, 1)^d}} f(\theta_x(\eta - \delta_x)), \quad f \geq 0 \text{ measurable}.
\end{equation*}
The measure \(\mu_0^!\) is characterised by the property
\begin{equation*}
    \int \mu(\d\eta) \sum_{x \in \eta} g(x, \theta_x(\eta - \delta_x))
    = \int_{\R^d}\d x \, \mu_0^![g(x, \cdot)], \quad g \geq 0 \text{ measurable}.
\end{equation*}
A measurable function $\kappa\colon\R^d\times\Om\to[0,\infty)$ is called a \emph{Papangelou intensity} of $\mu$ if the \emph{GNZ equation}
\begin{equation*}
    \int \mu(\d\eta) \sum_{x \in \eta} g(x, \eta - \delta_x)
    = \int \mu(\d\eta) \int_{\R^d}\d x \, \kappa(x, \eta)\, g(x, \eta),
\end{equation*}
holds for any measurable $g\geq 0$. The Gibbs measures in $\GG^\beta_\theta$ are characterised as the translation-invariant laws with Papangelou intensity $b$, see \zcref{subsection:strategy-contraction}. With this notation at hand, we can state the large deviation principle for the time-zero marginals of the empirical process.

\begin{theorem}[Marginal LDP and entropy production]\label{thm:marg-ldp}
    Let $\beta\in\R$ and \(\xi \in \Omst\). Under $Q^\beta_\xi$, the time-zero marginals \(((R_n)_0)_{n \in \mathbb{N}}\) satisfy a large deviation principle on \(\PP(\Om)\), equipped with the topology of weak convergence, with speed \(n \abs{\Lambda_n}\) and good rate function
    \begin{equation}\label{equation:contraction_rate_function}
        \mathfrak{I}^\beta(\mu)
        := \min\{H^\beta(P) \colon P \in \PP(E),\, P_0 = \mu \}
        = 2 \mathfrak{H}^2(b(0, \cdot) \mu, \mu_0^!),
    \end{equation}
    if \(\rho(\mu)<\infty\), and $\mathfrak{I}^\beta(\mu)=\infty$ otherwise. In particular, $\mathfrak{I}^\beta$ is non-negative, its minimum is zero, and its minimisers are exactly the translation-invariant Gibbs measures, that is, $\argmin \mathfrak{I}^\beta = \GG^\beta_\theta$.
\end{theorem}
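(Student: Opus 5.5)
The LDP itself follows from \zcref{theorem:full-ldp} by the contraction principle. The map $P\mapsto P_0$ from $\PP(E)$ to $\PP(\Om)$ is weakly continuous, because $\omega\mapsto\omega_0$ is continuous for the local $J_1$ topology at paths without a jump at time $0$. Under a stationary $P$, the set of paths with a jump at the fixed time $0$ is $P$-null. If continuity fails on this small set, I would instead use the average $\frac1\epsilon\int_0^\epsilon \delta_{\omega_s}\,\d s$ of the time marginals. This gives the LDP with good rate function $\mathfrak{I}^\beta(\mu)=\inf\{H^\beta(P)\colon P_0=\mu\}$. The infimum is attained, since sublevel sets of $H^\beta$ are compact and the constraint set is closed. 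The real work is to identify this minimum with $2\mathfrak{H}^2(b(0,\cdot)\mu,\mu_0^!)$.

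\emph{Lower bound.} Let $H^\beta(P)<\infty$ with $P_0=\mu$. Then $H(P)<\infty$ by \eqref{equation:interacting_case_lower_bound_via_free_entropy}, so $P$ lives on legal paths and $\rho(\mu)<\infty$. Conversely, $\rho(\mu)=\infty$ gives $\mathfrak{I}^\beta=\infty$. I would use the conditional-entropy representation \eqref{equation:interacting_case_rate_function_conditional_entropy_representation}. By a Girsanov / Markov-kernel disintegration, the specific entropy of a stationary jump process relative to $Q^{\cdot,\beta}$ is bounded below by the entropy of the instantaneous jump rates. Concretely, let $\lambda^+$ be the space-time intensity measure of births under $P$ (Palm-type, indexed by the pre-birth configuration seen from the birth point) and $\lambda^-$ the analogous death intensity. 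Then
\[
H^\beta(P)\ \ge\ \int\Big(\frac{\d\lambda^+}{\d\,b\mu}\log\frac{\d\lambda^+}{\d\,b\mu}-\frac{\d\lambda^+}{\d\,b\mu}+1\Big)\d(b(0,\cdot)\mu)+\text{(same for deaths relative to }\mu_0^!).
\]
Stationarity of the marginal forces the flux balance $\lambda^+=\lambda^-=:\lambda$, after identifying a birth at $x$ into $\eta$ with a death from $\eta+\delta_x$. Minimising pointwise over $\lambda$ the convex functional $\int[\lambda\log\frac{\lambda}{\mathfrak M}-\lambda+\mathfrak M]+\int[\lambda\log\frac{\lambda}{\mathfrak N}-\lambda+\mathfrak N]$ gives $\lambda=\sqrt{\mathfrak M\mathfrak N}$. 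The value is $\int(\sqrt{\d\mathfrak M}-\sqrt{\d\mathfrak N})^2=2\mathfrak H^2(\mathfrak M,\mathfrak N)$, with $\mathfrak M=b(0,\cdot)\mu$ and $\mathfrak N=\mu_0^!$.

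\emph{Upper bound.} I need a stationary $P$ attaining this value. In a box $\Lambda_n$ with periodic boundary, I would take the reversible birth-and-death process whose birth kernel is the geometric mean $\sqrt{\d\mathfrak M\,\d\mathfrak N}$. More precisely, the birth rate at $x$ into $\eta$ is $\sqrt{b(x,\eta)\kappa_\mu^{-1}}$-type, the death rate is $\sqrt{\cdot}$, and $\mu$ restricted to the box is reversible by construction; this works even without a Papangelou intensity, by defining the rates via densities against $\mathfrak M+\mathfrak N$. Its entropy equals the Hellinger expression plus boundary errors of order $|\partial\Lambda_n|\,n$. I would then pass to a limit of the spatially averaged, periodised stationary laws. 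Lower semicontinuity of $H^\beta$ and the approximation of $\mu$ by periodised finite-volume restrictions, together with convexity and lower semicontinuity of $\mu\mapsto\mathfrak H^2$ along this approximation, give the matching upper bound. I expect this step to be the main obstacle, because the finite-volume minimisers and the boundary error terms must be controlled uniformly, and unbounded jump rates appear when $\kappa$ degenerates.

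\emph{Minimisers.} $\mathfrak I^\beta(\mu)=0$ if and only if $b(0,\cdot)\mu=\mu_0^!$. By the Palm characterisation, this says $b$ is a Papangelou intensity of $\mu$, that is, the GNZ equation holds, which is equivalent to $\mu\in\GG^\beta_\theta$. Since $\GG^\beta_\theta\neq\emptyset$, the minimum of $\mathfrak I^\beta$ is zero.
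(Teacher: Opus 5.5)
\textbf{The gap is in your lower bound.} Your bound of the entropy by the rate functional $\int\ell(\d\lambda^+/\d\mathfrak{M})\,\d\mathfrak{M}+\int\ell(\d\lambda^-/\d\mathfrak{N})\,\d\mathfrak{N}$ is fine (Girsanov for the compensators plus joint convexity of $\ell$). But stationarity of the time marginal does not force $\lambda^+=\lambda^-$. Testing $\frac{\d}{\d t}P[f(\omega_t)]=0$ with bounded local $f$ only gives $\int(\lambda^+-\lambda^-)(\d\eta,\d x)\,[f(\eta+\delta_x)-f(\eta)]=0$, i.e.\ the net current is divergence-free. The identity $\lambda^+=\lambda^-$ is a detailed-balance condition, and it fails for stationary laws with circulating currents. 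For example, take sparse pairs in which a second particle is born to the right of an isolated one and the left one then dies first: births see a neighbour on the left, deaths leave one on the right.

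The minimum runs over all stationary $P$ with $P_0=\mu$, so your constrained pointwise minimisation does not bound it. Without the constraint the integrand can even vanish pointwise ($\lambda^+=\mathfrak{M}$, $\lambda^-=\mathfrak{N}$), so the bound has to come from the global divergence-free condition.

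The missing idea is to use the dual bound $\ell(a\lvert m)\ge a\phi-m(\e^{\phi}-1)$ only with gradient test functions: $\phi=\log u(\eta+\delta_x)-\log u(\eta)$ for births and $-\phi$ for the corresponding deaths. The $\lambda$-terms then cancel exactly by the divergence-free condition with $f=\log u$. What remains is the Donsker--Varadhan functional $\mu_\Lambda[-\mathscr{L}_\Lambda u/u]$, in finite volume, where the conditional-entropy representation lives.

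You then still need $\sup_u\mu_\Lambda[-\mathscr{L}_\Lambda u/u]=2\mathfrak{H}^2(B^{(\Lambda)}_\mu,D^{(\Lambda)}_\mu)$. This holds because $b(x,\eta_\Lambda)=\e^{-\beta(\mathscr{H}(\eta_\Lambda+\delta_x)-\mathscr{H}(\eta_\Lambda))}$ is a ratio, i.e.\ the reference dynamics is reversible with respect to $\gamma_\Lambda$. Hence the optimal ratio $\sqrt{\d D^{(\Lambda)}_\mu/\d B^{(\Lambda)}_\mu}$ is of the form $u(\eta+\delta_x)/u(\eta)$. For singular $\mu_\Lambda$ this additionally needs a dyadic smoothing and truncation argument.

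The paper does exactly this without ever introducing $\lambda^\pm$. The exponential martingale $Z_T=\frac{u(\omega_T)}{u(\omega_0)}\exp\bigl(-\int_0^T\frac{\mathscr{L}_\Lambda u}{u}(\omega_s)\,\d s\bigr)$ has mean one under $\mu_\Lambda Q^{\Lambda,\beta}$. The entropy inequality and stationarity of the marginals then give $\Ent(P_{[0,T]}\lvert\mu_\Lambda Q^{\Lambda,\beta})\ge T\mu_\Lambda[-\mathscr{L}_\Lambda u/u]$. This is lifted to $H^\beta$ by the conditional-entropy representation and the thermodynamic limit of the Hellinger distances.

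\textbf{The attainment step.} It follows the paper's route: geometric-mean reversible dynamics defined through densities against $B^{(\Lambda)}_\mu+D^{(\Lambda)}_\mu$, stationarised blocks, compactness of sublevel sets and lower semicontinuity of $H^\beta$. Three points need fixing.
\begin{enumerate}
\item If by ``periodised'' you mean repeating the same box path in every tile, the resulting law is singular with respect to the free evolution, which runs independently in disjoint cubes. It then has $H=\infty$ as soon as births occur. The copies in different tiles must be independent; then $H$ of the stationarised law is bounded by the finite-volume entropy per unit space-time volume.
\item At the end you need the upper bound $\limsup_n\abs{\Lambda_n}^{-1}\mathfrak{H}^2(B^{(\Lambda_n)}_\mu,D^{(\Lambda_n)}_\mu)\le\mathfrak{H}^2(B_\mu,D_\mu)$. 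It follows from the contraction of the Hellinger distance under $\eta\mapsto\eta_{\Lambda_n}$ plus a boundary-strip estimate; lower semicontinuity points the wrong way.
\item Non-explosion of the optimal process, which you flag as the obstacle, follows from reversibility and finiteness of the $\mu_\Lambda$-expected total jump rate. That rate equals $2\sqrt{B^{(\Lambda)}_\mu D^{(\Lambda)}_\mu}[1]\le\Vert b\Vert_\infty\abs{\Lambda}+\mu_\Lambda[\eta(\Lambda)]$.
\end{enumerate}
For the minimisers, also note that every $\mu\in\GG^\beta_\theta$ has $\rho(\mu)\le\Vert b\Vert_\infty$ by the GNZ equation, so the Hellinger formula applies to it.
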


The minimisers are characterised by the equation $b(0, \cdot) \mu=\mu_0^!$, which, by the characterisation of $\mu_0^!$ and translation invariance, is the GNZ equation with Papangelou intensity $b$. It expresses detailed balance: under $\mu$, the rate at which a particle is born at $x$ in a configuration $\eta$ equals the rate at which this particle dies in the configuration $\eta+\delta_x$.

The first result in this direction is contained in~\cite[Theorem~7]{DaiPra1993_2}, where, in the setting of spin-flip processes on the hypercubic lattice, the minimisers of the corresponding contracted rate function are characterised as the stationary measures of the underlying process; the additional characterisation of these stationary measures as Gibbs measures was still missing. For reversible spin-flip systems, Dai Pra~\cite[Theorem~6]{daipra1994minimum} solved the corresponding minimum entropy problem for a dense set of translation-invariant probability measures on the configuration space, by identifying an explicit Markovian minimiser, and he conjectured that the resulting formula holds for all of them. Later, Handa~\cite[Remark]{handa1996entropy} conjectured, by a formal argument and in the discrete setting of~\cite{DaiPra1993_2}, a representation of the contracted rate function very similar to~\eqref{equation:contraction_rate_function}. \zcref[S]{thm:marg-ldp} establishes the continuum version of these conjectures. If $\mu$ has a Papangelou intensity $\kappa$, then the rate function takes the form
\begin{equation*}
\mathfrak{I}^\beta(\mu)=\mu\Big[\int_{[0, 1)^d} \d x\, \Big(\sqrt{b(x, \eta)} - \sqrt{\kappa(x,\eta)}\Big)^2 \Big],
\end{equation*}
see \zcref{lemma:contraction_time_marginal_hellinger_distance_identities}. This expression is comparable to the entropy-dissipation functional of the dynamics studied in~\cite{JKSZ25,JKSZ26}, and in this sense \zcref{thm:marg-ldp} connects the rate function of the space-time large deviation principle with entropy production, see also~\cite{holley_free_1971,higuchi_results_1975}.

\section{Outlook and comments}\label{section:outlook}
This section collects complementary comments and results: a remark on the properties of the interaction that we need, boundary conditions, weak large deviation principles in other topologies, a remark on the topology generated by tame local functions, and a one-cell representation of the dynamical specific entropy. It is not needed for the proofs of \zcref{theorem:full-ldp,thm:marg-ldp} and can be skipped on a first reading.

\subsection{Assumptions on the interaction}\label{section:interaction-assumptions}
We restricted ourselves to the area interaction in order to reduce the technical load and to convey the conceptual ideas more clearly. The properties used are translation covariance; uniform boundedness of the birth rate from above and away from zero; finite range; continuity in the configuration; and uniform continuity in the birth location, uniformly over configurations. 
To be more precise, the fact that the birth rate is translation-covariant and uniformly bounded from above is used throughout. The upper bound and positive lower bound on the birth rate allow us to control the interacting term of the rate function with the free entropy, see~\zcref{lemma:interacting_case_nice_things_from_finite_free_entropy}. Finite range helps control the influence of boundary conditions in~\zcref{section:interacting_case_girsanov_and_irrelevance_of_boundary_configurations,lemma:interacting_case_rate_function_conditional_entropy_representation}, and ensures that only events in a fixed spatial neighbourhood affect the birth rate in the smoothing arguments. Continuity of the birth rate in the configuration is used, for example, to show continuity of the approximants in~\zcref{lemma:interacting_case_approximants_of_rate_function_are_continuous}, while uniform continuity in the birth location, uniformly over configurations, ensures that the spatial smoothing error of~\zcref{lemma:interacting_case_approximation_of_rate_function_deterministic_error} tends to zero. Finally, the upper bound on the birth rate and its finite range are also used for the graphical representation of~\zcref{section:appendix-graphical}.

\subsection{Boundary conditions}\label{subsection:boundary-conditions}
The large deviation principle of \zcref{theorem:full-ldp} also holds for the dynamics in $\Lambda_n$ with periodic boundary conditions, and more generally with any time-dependent boundary conditions outside $\Lambda_n$ that make the birth rates in $\Lambda_n$ predictable; the boundary conditions may also depend on $n$. For $\beta=0$, this is immediate, since the free evolution in $\Lambda_n$ does not depend on the boundary conditions and $R_n$ only depends on the restriction of the path to $\Delta_n$. For $\beta\neq0$, it follows from \zcref{lemma:interacting_case_boundary_conditions_play_no_role}, which shows that boundary conditions play no role on the exponential scale; in fact, our proof of \zcref{theorem:full-ldp} for $\beta\neq0$ proceeds via the periodic dynamics.

\subsection{Weak large deviation principles in other topologies}\label{subsection:other-topologies}
Two modifications of the setting of \zcref{theorem:full-ldp} suggest themselves. First, let $\Oms\subset\Om$ be the subspace of simple configurations, equipped with the induced topology, which is again Polish, and let $E_{\rm s}=D(\R;\Oms)$, which we identify with its canonical image in $E$; its local $J_1$ topology agrees with the subspace topology inherited from $E$. If the initial configuration is simple, the dynamics stay in $E_{\rm s}$, and one may ask for a large deviation principle on the set $\PP(E_{\rm s})$ of space-time stationary laws on $E_{\rm s}$, equipped with the topology of weak convergence. Second, one may strengthen the topology so that it also controls the mean number of particles. Recall the intensity $\rho(P)=\rho(P_0)=P[\omega_0([0,1)^d)]$ of $P\in \PP(E)$. Let $\PPP(E)$ be the set of all $P\in\PP(E)$ with $\rho(P)<\infty$, equipped with the topology $\mathcal{T}_{\rm{occ}}$, which is the topology of weak convergence augmented by requiring continuity of the intensity map $P\mapsto\rho(P)$. Analogously, $\PPP(\Om)$ denotes the set of all $\mu\in\PP(\Om)$ with $\rho(\mu)<\infty$, equipped with the topology of weak convergence augmented by convergence of intensities. Finally, let $\Omsst:=\{\xi\in \Omst\colon \xi\text{ is simple}\}$.

\begin{theorem}[Weak LDPs for simple configurations and with intensity]\label{theorem:weak-ldps}
    Let $\beta\in\R$. The following assertions hold.
    \begin{enumerate}[(i)]
        \item\label{item:weak-ldp-simple} If $\xi \in \Omsst$, then under $Q^\beta_\xi$ the empirical processes $(R_n)_{n\in\N}$ satisfy a weak large deviation principle on $\PP(E_{\rm s})$, equipped with the topology of weak convergence, with speed $n\abs{\Lambda_n}$ and rate function $H^\beta$. That is, for every weakly compact set $F \subset \PP(E_{\rm s})$ and every weakly open set $G \subset \PP(E_{\rm s})$,
        \begin{equation*}
        \begin{split}
            \limsup_{n\to \infty}\frac{1}{n|\Lambda_n|} \log Q^\beta_{\xi}(R_n \in F) &\leq - \inf_{P\in F} H^\beta(P), \\
            \liminf_{n\to\infty}\frac1{n|\Lambda_n|} \log Q^\beta_{\xi}(R_n\in G) &\geq - \inf_{P\in G}H^\beta(P).
        \end{split}
        \end{equation*}
        \item\label{item:weak-ldp-Tocc} If $\xi \in \Omst$, then under $Q^\beta_\xi$ the empirical processes $(R_n)_{n\in\N}$ satisfy a weak large deviation principle on $(\PPP(E),\mathcal{T}_{\rm{occ}})$ with speed $n\abs{\Lambda_n}$ and rate function $H^\beta$. That is, the bounds in~(\ref{item:weak-ldp-simple}) hold for every $\mathcal{T}_{\rm{occ}}$-compact set $F\subset\PPP(E)$ and every $\mathcal{T}_{\rm{occ}}$-open set $G\subset\PPP(E)$.
        \item\label{item:weak-ldp-marginals} Under $Q^\beta_\xi$, the time-zero marginals $((R_n)_0)_{n\in\N}$ satisfy a weak large deviation principle with speed $n\abs{\Lambda_n}$ and rate function $\mathfrak{I}^\beta$ from \eqref{equation:contraction_rate_function} on $\PP(\Oms)$, equipped with the topology of weak convergence, if $\xi\in\Omsst$, and on $\PPP(\Om)$ if $\xi\in\Omst$.
    \end{enumerate}
\end{theorem}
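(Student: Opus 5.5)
The plan is to derive all three weak LDPs from the ingredients of \zcref{theorem:full-ldp}, namely the local upper bound and the local lower bound. A weak LDP requires no exponential tightness, and the upper bound is only needed on compact sets. So the strategy is to show that the local estimates survive in the new spaces.

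For the lower bound, I will use the following fact: the lower bound for weakly open sets in $\PP(E)$ is proved by exhibiting, for each $P$ with $H^\beta(P)<\infty$ and each neighbourhood $G\ni P$, a tilted dynamics under which $R_n\to P$ with entropy cost $\approx H^\beta(P)$.

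In (i), $G$ is open in $\PP(E_{\rm s})$, so $G=G'\cap\PP(E_{\rm s})$ with $G'$ open in $\PP(E)$. Since $\xi$ is simple and the dynamics almost surely produce simple configurations, $R_n\in\PP(E_{\rm s})$ almost surely. The lower bound for $G'$ from \zcref{theorem:full-ldp} therefore transfers directly. Here I use that $\inf_G H^\beta=\inf_{G'}H^\beta$ restricted to simple $P$, and that $H(P)<\infty$ forces legal paths.

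In (ii), I must additionally check that the intensity converges along the tilted construction. I plan to get this from a law-of-large-numbers argument for the particle count under the tilted measure, using that $\rho(P)<\infty$ is implied by $H(P)<\infty$ (\zcref{lem:finite-entropy-intensities}). The ergodic decomposition approximation used in the lower bound should preserve intensities, since it is affine and $\rho$ is affine. The statement for $\rho(\mu)<\infty$ in (iii) then follows similarly.

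For the upper bound on compact sets, I will cover a compact $F$ by finitely many small open neighbourhoods. The key point is that the local upper bound
\[\lim_{\epsilon\to0}\limsup_n\frac1{n\abs{\Lambda_n}}\log Q^\beta_\xi(R_n\in B_\epsilon(P))\le -H^\beta(P)\]
was proved in weak-convergence balls via the variational formula $H=\sup_f\{P[f]-\Lambda(f)\}$ over bounded continuous local $f$, and exponential Chebyshev. That argument is local and uses only weak continuity of $P\mapsto P[f]$.

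Since $E_{\rm s}$ carries the subspace topology and $\mathcal{T}_{\rm occ}$ is finer than the weak topology, each point has neighbourhoods contained in weak balls. The local bound therefore holds verbatim, and the standard covering argument gives the upper bound on compacts.

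The main obstacle is the interacting case $\beta\ne0$. There the energetic term $P[F]$ is neither bounded nor continuous, and the proof of \zcref{theorem:full-ldp} handles it by smoothing and truncation estimates, controlled by exponential tightness of birth counts. I would check that these superexponential estimates (via \zcref{lemma:interacting_case_nice_things_from_finite_free_entropy} and \eqref{equation:interacting_case_lower_bound_via_free_entropy}) are statements about $Q^\beta_\xi$ alone, independent of topology, so they carry over.

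For (iii), I would apply the contraction principle in its weak form. The projection $P\mapsto P_0$ is continuous from $\PP(E_{\rm s})$ to $\PP(\Oms)$, and from $(\PPP(E),\mathcal T_{\rm occ})$ to $\PPP(\Om)$. The lower bound transfers directly through this continuous map.

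For the upper bound, compactness is lost under preimages. The workaround is to use the full LDP of \zcref{thm:marg-ldp} on $\PP(\Om)$: a compact set in $\PP(\Oms)$ or in $\PPP(\Om)$ is weakly compact, hence weakly closed in $\PP(\Om)$. The upper bound then follows with rate function $\mathfrak I^\beta$, which coincides on these subspaces.
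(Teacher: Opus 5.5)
Your arguments for (i), for the upper bounds, and for the transfer to $\beta\neq0$ are sound.

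\begin{itemize}
\item For (i), the trace argument is what is needed. It uses that bounded Lipschitz functions on $E_{\rm s}$ extend to $E$, so that open sets of $\PP(E_{\rm s})$ are traces of weakly open sets.
\item For the upper bounds, the local covering is unnecessary. The embeddings of $\PP(E_{\rm s})$ and of $(\PPP(E),\mathcal T_{\rm occ})$ into $\PP(E)$ are continuous, so compact sets become weakly compact, hence weakly closed. The closed-set upper bound of \zcref{theorem:full-ldp} (resp.\ \zcref{thm:marg-ldp}) then applies directly for every $\beta$.
\end{itemize}

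The genuine gap is the lower bound in (ii), which is the only part that needs new work. A $\mathcal T_{\rm occ}$-neighbourhood of $P$ also requires $|\rho(\widetilde P)-\rho(P)|<\delta$, whereas $\rho$ is only weakly lower semicontinuous. Moreover, the tilted laws of the lower bound do not make $R_n$ converge to $P$ itself. They make it converge to a stationary block law $\bar P$ that is merely weakly close to $P$. Every block starts and ends empty, has preparation and clearance phases of unit length, and during a bulk phase of length $T$ follows $P$ conditioned on $\{\omega_0(A)\le K\}$; this conditioning is what makes the preparation cost finite.

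You therefore need two facts: $\rho(R_n)\to\rho(\bar P)$ in probability, and $|\rho(\bar P)-\rho(P)|$ small. Your law of large numbers covers the first, provided you truncate (the block occupation integrals are only integrable) and use $\xi(\Lambda_n)/(n|\Lambda_n|)\to0$ for the initial clearance. Your justification of the second does not apply. You argue that the approximation is affine and $\rho$ is affine, but there is no ergodic decomposition in this construction. Conditioning on $\{\omega_0(A)\le K\}$ is not a convex decomposition of $P$, and it changes the mean occupation at all later times.

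The missing ingredient is a bound that is uniform in time,
\[
e_K:=\sup_{t\ge0}\Bigl|P[\omega_t(A)\mid\omega_0(A)\le K]-\rho(P)|A|\Bigr|\le\frac{P[\omega_0(A)\1_{\{\omega_0(A)>L\}}]+(L+\rho(P)|A|)P(\omega_0(A)>K)}{P(\omega_0(A)\le K)}.
\]
You obtain it by splitting $P[\omega_t(A)\1_{\{\omega_0(A)>K\}}]$ according to $\{\omega_t(A)>L\}$ and using stationarity to move the uniform integrability of $\omega_0(A)$ to time $t$. The right-hand side vanishes as $K\to\infty$ and then $L\to\infty$.

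Combine this with the bound $K+|A|+\rho(P)|A|/P(\omega_0(A)\le K)$ on the mean occupation of the preparation and clearance phases of one block. This gives $|\rho(\bar P)-\rho(P)|\le e_K/|A|+O(1/T)$. Uniformity in $t$ is essential, because $K$ has to be fixed before the bulk length $T$ is sent to infinity.

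Finally, in (iii), contracting the lower bounds of (i) and (ii) produces the rates $\inf\{H^\beta(P)\colon P_0=\mu\}$ over $\PP(E_{\rm s})$ and over $\PPP(E)$. You still have to identify these with $\mathfrak I^\beta(\mu)$.
\begin{itemize}
\item Over $\PPP(E)$ this holds because $H^\beta(P)<\infty$ forces $\rho(P)<\infty$.
\item On $\PP(\Oms)$ it is simpler to apply the trace argument of (i) directly to $(R_n)_0$ with \zcref{thm:marg-ldp}. Otherwise you must show that every finite-entropy $P$ with simple time marginal is supported by $E_{\rm s}$.
\end{itemize}
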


The upper bounds in~(\ref{item:weak-ldp-simple}) and~(\ref{item:weak-ldp-Tocc}) follow from \zcref{theorem:full-ldp}, and those in~(\ref{item:weak-ldp-marginals}) from \zcref{thm:marg-ldp}, by a soft topological argument. If $\xi$ is simple, then $R_n\in\PP(E_{\rm s})$ almost surely, and the canonical embedding of $\PP(E_{\rm s})$ into $\PP(E)$ is continuous, so that it maps compact sets to compact sets. Similarly, $R_n\in\PPP(E)$ almost surely, and the embedding of $(\PPP(E),\mathcal{T}_{\rm{occ}})$ into $\PP(E)$ with the weak topology is continuous. Since $\PP(E_{\rm s})$ carries the topology induced by the weak topology on $\PP(E)$, the lower bound in~(\ref{item:weak-ldp-simple}) is also a soft consequence of \zcref{theorem:full-ldp}: every open subset of $\PP(E_{\rm s})$ is the trace of a weakly open subset of $\PP(E)$, and $R_n\in\PP(E_{\rm s})$ almost surely. The same applies to the time marginals on $\PP(\Oms)$. The lower bound in~(\ref{item:weak-ldp-Tocc}) requires additional work, since $\mathcal{T}_{\rm{occ}}$ is finer than the weak topology: the intensity of the stationary block law $\bar P$ constructed in \zcref{subsection:strategy-lower-bound} has to approximate $\rho(P)$, and that of the empirical process has to converge to $\rho(\bar P)$. We carry this out in \zcref{section:counterexamples}.

The weak large deviation principles in~(\ref{item:weak-ldp-simple}) and~(\ref{item:weak-ldp-Tocc}) cannot be strengthened to full ones in general. The obstruction comes from the temporal resets of the periodised path. A genuine birth or death changes the configuration by a single particle, whereas at a reset all particles alive at time $n$ are replaced at once by the initial configuration. Since the resets occupy a vanishing fraction of space-time, every rate function for which the lower bound holds must be infinite on laws that charge such large downward jumps, see \zcref{lem:periodisation-jumps}. On $\PP(E_{\rm s})$, the empirical processes of paths in which pairs of particles sit at distance of order $1/n$ until the next reset form a closed set whose probability decays only exponentially, because their natural weak limit, a law of paths with double points, lies outside $E_{\rm s}$. In $\mathcal{T}_{\rm{occ}}$, the same happens for empirical processes whose particles are concentrated in a small region: their weak limit, the law of the empty path, is excluded by the constraint on the intensity. In both cases all laws in the closed set charge the large downward jumps created by the resets, which contradicts the upper bound. Hence, already for the free evolution, the empirical processes admit no full large deviation principle at speed $n\abs{\Lambda_n}$, with any rate function, on $\PP(E_{\rm s})$ when $\xi=\emptyset$, and on $(\PPP(E),\mathcal{T}_{\rm{occ}})$ for every $\xi\in\Omst$, see \zcref{example:no-full-ldp-simple,example:no-full-ldp-occupation}. This is compatible with \zcref{theorem:full-ldp}: in the weak topology on $\PP(E)$, these sets are not closed, and their closures contain the limiting laws, which have finite rate. We expect that the weak large deviation principles for $\beta\neq0$ cannot be strengthened in general either. For the time marginals in~(\ref{item:weak-ldp-marginals}) the question remains open, since a single time marginal does not see the resets.

\subsection{The topology of tame local functions}\label{subsection:tau-L}
We call a function $f\colon E\to\R$ \emph{local and tame} if it is $\mathcal{F}_\Delta$-measurable for some bounded space-time volume $\Delta \Subset \R^d \times \R$ and satisfies $\abs{f} \leq c\,(1+\#\{\text{particles present in } \Delta\})$ for some constant $c>0$ that may depend on $f$. Consider the smallest topology on the set of laws $P\in\PP(E)$ under which the expected number of particles present in $\Delta$ is finite for every bounded $\Delta$, such that all evaluation maps $P \mapsto P[f]$ with tame local functions $f$ are continuous. This topology naturally arises in large deviation principles for empirical fields in a purely spatial context, see e.g.~\cite{GeorgiiZessin1993}. A natural first goal would therefore be a large deviation principle in this topology instead of the weak topology. However, the counterexamples in \zcref{section:counterexamples} also rule out a full large deviation principle in this topology. Indeed, by~\eqref{eq:lb-cylinder-agreement}, every bounded Lipschitz function on $E$ is a uniform limit of bounded local functions, and the intensity is the evaluation of a tame local function, so that this topology is finer than $\mathcal{T}_{\rm{occ}}$; hence the argument of \zcref{example:no-full-ldp-occupation} applies verbatim.

\subsection{A one-cell representation of the dynamical specific entropy}\label{subsection:one-cell}
The dynamical specific entropy $H$ coincides with a one-cell conditional entropy, which extends~\cite[Definition~2.3]{DaiPra1993_2} to the continuum. For a probability measure $P'$ on $E$, a space-time box $\Delta=A\times[0,T]$ with a cube $A\subset\R^d$, and $\eta\in\Om$, we write $h_\Delta(P'\lvert Q_\eta):=\Ent(P'_{A,[0,T]}\lvert Q^A_\eta)$ for the relative entropy of the restriction of $P'$ to $\Delta$ with respect to the free evolution in $A$ started from $\eta_A$. Let $C=(0,1)^d$, let $\prec$ denote the strict lexicographic order on $\Z^d$, put $C_z=z+C$ for $z\in\Z^d$, and define
\begin{equation*}
    \mathcal G^-_T=\mathcal F_{(-\infty,0]} \vee\bigvee_{z\prec0}\mathcal F_{C_z\times[0,T]}, \qquad P_\omega=P(\,\cdot\lvert \mathcal G^-_1)(\omega), \qquad P^p_\omega=P(\,\cdot\lvert \mathcal F_{(-\infty,0]})(\omega).
\end{equation*}
Thus $P_\omega$ conditions on the entire past and on the cells preceding $C$ in the lexicographic order during the time interval $[0,1]$, and $P^p_\omega$ only on the past.

\begin{proposition}[One-cell conditional entropy]
\label{prop:cond-entropy-representation}
For every $P\in\PP(E)$ and $T>0$,
 \begin{align*}
 H(P)
 &=
 P\big[ h_{C\times[0,1]}(P_\omega\lvert  Q_{\omega_0})\big]
 =\lim_{n\uparrow\infty}\frac1{n\abs{\Lambda_n}}
       P\big[ h_{\Delta_n}(P^p_\omega\lvert  Q_{\omega_0})\big]=\lim_{n\uparrow\infty}\frac1{T|\Lambda_n|}
       P\big[ h_{\Lambda_n\times[0,T]}
                         (P^p_\omega\lvert  Q_{\omega_0})\big].
\end{align*}
\end{proposition}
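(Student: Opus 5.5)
The plan is the standard chain-rule decomposition of relative entropy, as in the lattice case of Dai Pra, combined with space-time stationarity. Fix the box $\Delta_{m,k}=\Lambda\times[0,k]$, where $\Lambda$ is a union of $m^d$ unit cells $C_z$ and $k\in\N$. I will view the restriction of the path to $\Delta_{m,k}$ as the ordered sequence of pieces $\omega|_{C_z\times[j-1,j]}$, with $j=1,\dots,k$ in increasing order and, inside each time slab, $z$ in the lexicographic order $\prec$. The reference measure $P_{\Lambda,0}Q^\Lambda$ factorises exactly along this decomposition. The free evolution has independent births in disjoint cells and independent deaths of distinct particles. Hence, given the initial configuration, it is the product over cells $z$ of $Q^{C_z}_{\eta_{C_z}}$, and it is Markov in time. (Boundaries of the cells are null sets for births, so open or half-open cubes make no difference.) The chain rule for relative entropy then writes $\Ent(P_{\Lambda,[0,k]}\lvert P_{\Lambda,0}Q^\Lambda)$ as a sum of $m^dk$ conditional entropies. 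Each term is the entropy of the law of piece $(z,j)$, given all previous pieces inside the box, relative to $Q^{C_z}$ started from $\omega_{j-1}|_{C_z}$. This is where I use that the reference kernel for one piece depends only on its own initial state.

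The key step is monotonicity under conditioning. Conditioning on a larger $\sigma$-algebra, here the whole past $\mathcal F_{(-\infty,j-1]}$ and all cells $\prec z$ in the slab, never decreases the conditional relative entropy with respect to a kernel that is measurable with respect to the smaller information, namely $Q^{C_z}_{\omega_{j-1}}$. By translation invariance, each such fully conditioned term equals $I:=P[h_{C\times[0,1]}(P_\omega\lvert Q_{\omega_0})]$. This gives $h_{\Lambda,k}(P)\le m^dk\,I$, hence $H(P)\le I$ for integer boxes. Rescaling the unit cell, or checking directly that the supremum in \eqref{eq:rate} is attained along integer boxes (the limit statement from \zcref{section:rate-function}), removes the integrality restriction. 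For the reverse inequality, the number of terms whose conditioning information is missing a lot is small. A piece $(z,j)$ with $z$ at lexicographic distance $\ge \ell$ from the lower boundary and $j\ge \ell$ sees a neighbourhood of its "past" of size $\ell$ in the box. The martingale convergence theorem for conditional entropies, via lower semicontinuity plus monotonicity, shows that these terms converge to $I$ as $\ell\to\infty$. Boundary pieces contribute a nonnegative amount, so $\liminf h_{\Lambda_n,n}/(n|\Lambda_n|)\ge I$. Together with the limit statement for $H$, this gives the first identity.

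The second and third expressions condition only on the past $\mathcal F_{(-\infty,0]}$, and on boxes growing in both space and time, or only in space with $T$ fixed. I will sandwich them. The unconditioned entropy $h_{\Delta_n}(P)$ is bounded above by $P[h_{\Delta_n}(P^p_\omega\lvert Q_{\omega_0})]$, since conditioning on more increases entropy and the reference depends only on $\omega_0$. Conversely, $P[h_{\Delta_n}(P^p_\omega\lvert Q_{\omega_0})]$ is itself decomposed by the same chain rule into pieces conditioned on strictly less than $\mathcal G^-_1$ shifted, hence is $\le n|\Lambda_n|\,I$. Dividing and using the first identity closes both limits. For the spatial limit at fixed $T$, one chains over time slabs; time is now not growing, but the conditioning already includes the full past, so only the spatial boundary effect must vanish, by the same martingale argument.

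The main obstacle I expect is the martingale/approximation step in the lower bound. It needs lower semicontinuity of $h_{C\times[0,1]}(\cdot\lvert Q_{\omega_0})$ along increasing filtrations when the local state space is noncompact and jumps are unbounded. I would handle it via the Donsker--Varadhan variational formula with bounded local test functions, where martingale convergence applies to each test function. When $I=\infty$, a truncation argument is also needed.
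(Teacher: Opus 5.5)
Your route is the paper's: chain rule over unit cells ordered chronologically and then lexicographically, factorisation of the free evolution, monotonicity of the mean conditional entropy for $H(P)\le I$, truncated $\sigma$-algebras plus martingale convergence for $H(P)\ge I$, and the sandwich between unconditioned and past-conditioned entropies.

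\textbf{Gap: the third identity for arbitrary $T>0$.} Everything you prove uses unit time slabs, so you only identify the unit-slab quantity $I$ with $H(P)$.

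For integer $T$, chaining $\Lambda_n\times[0,T]$ over unit slabs does work. The full past supplies the temporal history needed for the truncated conditioning, which gives
$(n-2m)^dT\,c_{m,1}(P)\le P[h_{\Lambda_n\times[0,T]}(P^p_\omega\lvert Q_{\omega_0})]\le n^dT\,I$.

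For non-integer $T$ this decomposition is unavailable. Either a fractional slab is left over, whose contribution is not sharply controlled by $I$. Or you decompose into cells over the single slab $[0,T]$. Then the terms are bounded by $c_T(P)$, the mean entropy of the path in $C\times[0,T]$ given $\mathcal G^-_T$ relative to $Q^C_{\omega_0}$, and by its truncations $c_{m,T}(P)$. You never show $c_T(P)=T\,H(P)$. The fix, which is what the paper does, has two steps. First run your space-time sandwich with time slabs of length $T$ on $(0,n)^d\times[0,nT]$, which gives $h_{(0,n)^d,nT}(P)/(n\abs{\Lambda_n})\to c_T(P)$. Then compare with the case $\tau=T$ of \eqref{eq:specific-entropy-limit}.

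\textbf{Second point: path reconstruction from the open cells.} Your parenthetical remark about births concerns only the reference measure, which is not enough. The identity $h_{\Lambda,k}(P)=\sum(\text{chain-rule terms})$, used for $H(P)\le I$, needs the open-cell pieces to generate the box $\sigma$-algebra modulo null sets under $P$ itself. The same fact is needed to know that your truncated $\sigma$-algebras increase to $\mathcal G^-_1$, the $\sigma$-algebra the martingale step must reach. For general $P\in\PP(E)$ this is not automatic; for example, if $H(P)=\infty$ the paths need not be legal. It follows from stationarity:
\begin{itemize}
\item At each fixed time the law of $\omega_t$ is translation invariant.
\item Averaging over translates of the grid of cell boundaries shows that the grid carries no particle almost surely.
\item This holds simultaneously on a countable dense set of times containing the endpoints.
\item Right-continuity then recovers the path from the open cells.
\end{itemize}

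\textbf{Minor point.} In the lower bound, the interior pieces must be at distance at least $\ell$ from all faces of the box, not only the lower one. Lexicographic predecessors within distance $\ell$ can have larger later coordinates. This only changes the $O(\ell/n)$ boundary fraction.
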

The two limits correspond to~\cite[Propositions~4.1 and~5.2]{DaiPra1993_2}, specialised to the free reference measure. As this result is not the main focus of this work, we only sketch its proof in \zcref{section:appendix-one-cell}; the argument follows~\cite[Proof of Proposition~4.1]{DaiPra1993_2} and uses the superadditivity of the local conditional entropies from \zcref{lem:entropy-superadditivity}.

\section{Proof strategy}\label{section:proof-strategy}
The proofs of our main results can roughly be split into the following three major steps.
\begin{enumerate}[(1)]
    \item We first establish the large deviation principle for $(R_n)_{n \in \N}$ in the case without interactions, that is, at inverse temperature $\beta = 0$. Here the lower bound is obtained by an entropic change of measure built from independent space-time blocks, and the upper bound by a variational representation of the local entropies together with exponential tightness.
    \item We then use this as input for a Girsanov-type transformation to deduce the result for $\beta\neq0$.
    \item Finally, we contract the large deviation principle to the time-zero marginals and solve the resulting minimum entropy problem explicitly: its value is a Hellinger distance, whose zeros are precisely the translation-invariant Gibbs measures.
\end{enumerate}
Let us now lay out the strategy for each step in more detail. The proofs of the technical results stated in this section are deferred to \zcref{section:rate-function,section:proof-lower-bound,section:proof-upper-bound,section:interacting-case,section:contraction}.

\subsection{LDP lower bound in the non-interacting case}\label{subsection:strategy-lower-bound}
The main goal of this subsection is the large deviation lower bound for the free evolution, that is, for $\beta = 0$. Recall the notation $\Lambda_n = [-n/2, n/2)^d$ for $n\in \N$.
\begin{theorem}[LDP lower bound, non-interacting]\label{thm:ldp-lower-bound-free}
    Let $\xi\in\Omst$. For every weakly open set $G \subset \PP(E)$ we have
    \begin{equation*}
        \liminf_{n\to \infty}\frac{1}{n|\Lambda_n|} \log Q_{\xi}(R_n \in G) \geq - \inf_{P\in G} H(P).
    \end{equation*}
\end{theorem}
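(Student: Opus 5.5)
It suffices to show that for every $P\in G$ with $H(P)<\infty$ and every weakly open neighbourhood $U\subset G$ of $P$,
\[
\liminf_{n\to\infty}\frac{1}{n\abs{\Lambda_n}}\log Q_\xi(R_n\in U)\ge -H(P).
\]
I will use the standard entropic change of measure. The inequality $\log \mathbb Q(A)\ge -\frac{\Ent(\mathbb P\lvert\mathbb Q)+\log 2}{\mathbb P(A)}$ holds for any $\mathbb P$ with $\mathbb P(A)\ge\frac12$. The plan is therefore to build, for each $n$, a law $\mathbb P_n$ on $D(\R_+;\Om)$ such that:
\begin{enumerate}[(a)]
\item $\mathbb P_n(R_n\in U)\to1$;
\item $\limsup_n \frac{1}{n\abs{\Lambda_n}}\Ent(\mathbb P_{n,\Delta_n}\lvert Q_{\xi,\Delta_n})\le H(P)+\epsilon$.
\end{enumerate}
Only the restriction to $\Delta_n$ matters, since $R_n$ depends on the path only through $\Delta_n$.

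\textbf{Construction.} Fix a block size $a$ and a time length $T$ such that $h_{(0,a)^d,T}(P)/(a^dT)\le H(P)+\epsilon$. This is possible since, by \zcref{section:rate-function}, the supremum in \eqref{eq:rate} is also a limit. Partition $\Delta_n$ into space-time blocks $(z+(0,a)^d)\times[jT,(j+1)T)$, discarding a boundary layer of vanishing relative volume.

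\emph{Spatial layer.} On the first block in time, I kill the initial particles of $\xi$ at a cost. Under $Q_\xi$, the event that all of $\xi_{\Lambda_n}$ die before a short time $\tau$ and that nothing unusual happens otherwise has probability of order $(1-\e^{-\tau})^{\xi(\Lambda_n)}\e^{-O(\tau\abs{\Lambda_n})}$. This costs $o(n\abs{\Lambda_n})$ because $\xi\in\Omst$.

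\emph{Block layer.} In each block I then run an independent copy of $P_{(0,a)^d,T}$, with initial configuration drawn from $P_{(0,a)^d,0}$, glued in time. The glueing is the delicate point. The free evolution started from the terminal configuration of one block is not the initial law of the next. I handle this by inserting thin time corridors of length $\delta T$ between blocks. In each corridor the measure $\mathbb P_n$ asks the free evolution to kill all current particles and produce a fresh sample of $P_{(0,a)^d,0}$ by births. The relative entropy of such a corridor is finite per block: it is bounded by $\Ent(P_{(0,a)^d,0}\lvert\text{free birth law in time }\delta T)$ plus death costs linear in the particle number. After truncating $P$ to configurations with at most $M$ particles per block, which is harmless for $H$ up to $\epsilon$ by lower semicontinuity and approximation, this is $C(M,\delta)a^d$. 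Hence its contribution per volume is $O(1/T)$ and vanishes as $T\to\infty$.

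\textbf{Entropy bound.} By independence across blocks and the chain rule, and since the free evolution factorises over disjoint spatial cells, (b) follows, with entropy per unit volume approximately $h_{(0,a)^d,T}(P)/(a^dT)+o(1)$. Under $\mathbb P_n$, the blocks form an i.i.d.\ array. Let $\bar P$ be the space-time stationarisation of the block law, obtained by averaging over shifts in $[0,a)^d\times[0,T)$. Choosing $a,T$ large and $\delta$ small puts $\bar P$ in $U$, because $\bar P$ agrees with $P$ on local events up to a boundary fraction.

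\textbf{Concentration and the main obstacle.} An ergodic theorem for i.i.d.\ arrays, applied to bounded Lipschitz local functions, shows $R_n\to\bar P$ weakly in $\mathbb P_n$-probability. The periodisation resets affect only a vanishing fraction, and local functions approximate bounded Lipschitz ones by \eqref{eq:lb-cylinder-agreement}. This gives (a). I expect the main obstacle to be the glueing/truncation step: controlling the entropy of the corridor resampling uniformly, and showing that the truncation in particle number changes both $H$ and the weak neighbourhood only slightly. I would first try to handle this using \zcref{lem:finite-entropy-intensities}, which gives integrability of particle counts under finite $H$.
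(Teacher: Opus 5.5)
\textbf{Verdict: there is a genuine gap.} Your architecture is the paper's: a Sanov-type change of measure, an initial clearance of $\xi_{\Lambda_n}$ at cost $|\Lambda_n|+c\,\xi(\Lambda_n)=o(n|\Lambda_n|)$, independent blocks joined by connecting corridors, the stationarised block law $\bar P$, and a law of large numbers. The step that fails is the corridor. There you ask the free evolution to produce, by births, a fresh sample of $P_{(0,a)^d,0}$, and you bound the cost by $\Ent(P_{(0,a)^d,0}\lvert \Pi)$. Here $\Pi$ is the Poisson law of the newborn particles still alive at the end of the corridor.

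This entropy is in general infinite, and truncating the particle number does not help. The condition $H(P)<\infty$ only controls the dynamics relative to $P_{A,0}Q^A$; it says nothing about the time marginal. For example, take the frozen doubled lattice: $\omega_t=2\sum_{z\in\Z^d}\delta_{z+X}$ for all $t$, with $X$ uniform in $[0,1)^d$. It is space-time stationary, and $h_{A,T}(P)=P[(|A|+\omega_0(A))T]$, so $H(P)=3$. Yet $P_{A,0}$ consists of double points, which no Poisson process (and no free evolution started empty) ever produces. Hence your $\mathbb P_n$ is not absolutely continuous with respect to $Q_\xi$, and (b) fails.

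The missing idea is the paper's \emph{smoothing} step. Displace every particle, initial or newly born, independently and uniformly within its cell of an $\epsilon$-grid, and keep the displaced position for the particle's whole lifetime. This does three things.
\begin{enumerate}
\item The kernel maps the free evolution to itself, since it preserves Lebesgue measure and hence the marked Poisson birth process. By data processing, the bulk cost therefore stays at most $h_{A,T}(P)/P(\omega_0(A)\le K)$.
\item After conditioning on $\omega_0(A)\le K$, the smoothed initial law has density at most $\e^{\upsilon|A|}K!\max\{1,(\upsilon\epsilon^d)^{-K}\}$ with respect to $\Pi^\upsilon_A$. So the preparation cost is finite and independent of $T$.
\item The displacement moves the shifted paths by at most $\epsilon L$ in $d_E$, where $L$ counts the lifelines meeting the observation window. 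The mean of $L$ is controlled by $\rho(P)$ and $\chi(P)$ from the finite-intensity lemma.
\end{enumerate}
This is what forces the parameter order $a,K,\epsilon,T$.

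Separately, your justification of the truncation ``by lower semicontinuity'' points the wrong way. Lower semicontinuity of $H$ gives lower bounds on the cost of approximants, whereas you need an upper bound. What works is three estimates.
\begin{enumerate}
\item The chain-rule bound $\Ent(P^{(K)}_{A,T}\lvert P^{(K)}_{A,0}Q^A)\le h_{A,T}(P)/P(\omega_0(A)\le K)$ for the conditioned law.
\item The total-variation error $2P(\omega_0(A)>K)$ of the conditioning in $\bar P$, uniform over shifts.
\item The bound $\rho(P)|A|/P(\omega_0(A)\le K)$ on the mean terminal population, which controls the clearance cost uniformly in $T$.
\end{enumerate}
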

The principal idea behind the lower bound is the same as in the classical proofs of Sanov's theorem for empirical measures~\cite{LDPbook}, and it is also used in many other cases for establishing large deviation principles for empirical processes~\cite{georgii_solo_93,GeorgiiZessin1993,follmerLectureNotes}.
Given a measure $P \in \PP(E)$ and an open neighbourhood $G \subset \PP(E)$ of $P$, we want to perform a change of measure from $Q_\xi$ to a law $\widehat{Q}_n$, such that $\widehat{Q}_n$ is absolutely continuous with respect to $Q_\xi$ and the event $\{R_n \in G\}$ is typical under $\widehat{Q}_n$, that is,
\begin{equation*}
    \lim_{n\to \infty}\widehat{Q}_n(R_n \in G) = 1.
\end{equation*}
Interpreting $\1_{\{R_n \in G\}}$ as a Bernoulli random variable and using that the relative entropy can only decrease when the underlying $\sigma$-algebra is reduced, an elementary calculation shows that
\begin{equation*}
    \begin{split}
    \Ent(\widehat{Q}_n\lvert Q_\xi)
    &\geq
    \widehat{Q}_n(R_n \in G)\log\frac{\widehat{Q}_n(R_n \in G)}{Q_\xi(R_n \in G)}
    +
    \widehat{Q}_n(R_n \notin G)\log\frac{\widehat{Q}_n(R_n \notin G)}{Q_\xi(R_n \notin G)}
    \\
    &\geq
    \widehat{Q}_n(R_n \in G) \log\frac{1}{Q_\xi(R_n \in G)} - \log 2,
    \end{split}
\end{equation*}
where we discarded the non-negative term $\widehat{Q}_n(R_n \notin G)\log(1/Q_\xi(R_n \notin G))$ and used that the entropy of a Bernoulli random variable is at most $\log 2$. Whenever $\widehat Q_n(R_n\in G)>0$, rearranging gives
\begin{equation}\label{eq:sanov-entropy-inequality}
\log Q_\xi(R_n\in G)
\ge
-\frac{
\Ent(\widehat Q_n\lvert Q_\xi)+\log 2
}{
\widehat Q_n(R_n\in G)
},
\end{equation}
so to conclude the lower bound it suffices to control the entropic cost of the change of measure in terms of the dynamical specific entropy $H(P)$. The following intermediate result shows that this can be done.

\begin{proposition}[Change of measure]\label{proposition:change-of-measure-sanov}
    Fix an initial condition $\xi \in \Omst$. Let $P\in \PP(E)$ be such that $H(P)<\infty$, let $G$ be an open neighbourhood of $P$ in $\PP(E)$, and let $\alpha >0$. Then there exists a sequence $(\widehat{Q}_n)_{n \in \N}$ of probability measures on $D(\R_+; \Om)$ with the following properties.
    \begin{enumerate}[(i)]
        \item For every $n\in \N$ we have $\widehat{Q}_n \ll Q_\xi$.
        \item The event $\{R_n \in G\}$ is asymptotically typical under $\widehat{Q}_n$.
        \item The entropic cost of the change of measure is controlled by $H(P)$ up to an error of size $\alpha$, that is,
        \begin{equation*}
            \limsup_{n\to \infty}\frac{1}{n \abs{\Lambda_n}}\Ent(\widehat{Q}_n \lvert Q_\xi) \leq H(P) + \alpha.
        \end{equation*}
    \end{enumerate}
\end{proposition}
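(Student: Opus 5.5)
The construction is the continuum, space-time analogue of the block construction in the proofs of Sanov's theorem for empirical fields. Fix $P$ with $H(P)<\infty$, an open neighbourhood $G$ and $\alpha>0$. By the supremum/limit property of $H$ from \zcref{section:rate-function}, I choose a block size $a,T>0$ with
\[
\frac{h_{(0,a)^d,T}(P)}{a^dT}\le H(P)+\alpha/2 ,
\]
with $a,T$ large. Let $A=[0,a)^d$. Tile $\Lambda_n\times[0,n]$ by the translates $A_k\times[jT,(j+1)T)$, leaving a boundary layer of relative volume $O(a/n+T/n)$. Under $\widehat Q_n$ the path evolves as follows.

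\begin{enumerate}[(1)]
\item Initial particles. At time $0$, every particle of $\xi$ in $\Lambda_n$ is killed almost immediately, say during $[0,\epsilon_n]$. This costs at most $C\xi(\Lambda_n)(1+\log(1/\epsilon_n))$ in entropy. Since $\xi\in\Omst$, this is $o(n\abs{\Lambda_n})$ for a suitable choice of $\epsilon_n\to0$.
\item Blocks. In each block, independently, I run a path drawn from the law $P_{A,T}$, translated to the block. To make this absolutely continuous with respect to the free evolution started from the particles present at time $jT$, I use the following chaining. At the start of each temporal block, all particles inherited from the previous block (inside $A_k$) are killed quickly. Births are then used to build the initial configuration sampled from $P_{A,0}$, during a short time window of length $\epsilon$. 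After that, the $P$-evolution is followed.
\item Outside the blocks. In the boundary layer the free evolution is used.
\end{enumerate}

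Absolute continuity of $\widehat Q_n$ with respect to $Q_\xi$ then follows block by block. The relative entropy splits by the chain rule into three pieces: the block costs $h_{A,T}(P)$; the cost of creating and destroying the initial configurations; and the vanishing boundary terms.

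Creating an initial configuration $\eta\sim P_{A,0}$ within time $\epsilon$ from the free evolution costs roughly $\Ent(P_{A,0}\lvert\pi_A)$ plus terms of order $\rho(P)a^d\log(1/\epsilon)$. To keep this finite I first argue, via \zcref{lem:finite-entropy-intensities}, that $H(P)<\infty$ implies $\rho(P)<\infty$ and that $P$ sits on legal paths. If $P_{A,0}$ has infinite entropy relative to Poisson (for instance if $P$ has double points), I would first approximate $P$ weakly by a law with the same dynamics after smoothing positions at time $0$. Alternatively, and preferably, I would avoid generating $\eta$ at all: instead of killing everything, keep the particles inherited from the previous block. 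The cost per block is then $O(\rho(P)a^d)$ for the killing/initialisation and $h_{A,T}(P)$ for the dynamics. Dividing by $a^dT$, the initialisation overhead is $O(\rho(P)/T)$ plus smoothing errors, which is below $\alpha/2$ for $T$ large. This gives (iii).

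For (ii) I compare $R_n$ with the stationary block law $\bar P$. This is the space-time stationarisation of the periodic product of independent copies of $P_{A,T}$ on the lattice of blocks, averaged over shifts in $[0,a)^d\times[0,T)$. By an ergodic theorem/law of large numbers for the independent blocks (the empirical averages of bounded Lipschitz local functions concentrate; variances are $O(\text{number of blocks}^{-1})$), $R_n\to\bar P$ weakly in $\widehat Q_n$-probability. The boundary layers, the short initialisation windows and the periodisation resets contribute a vanishing fraction of space-time. Finally, $\bar P\to P$ weakly as $a,T\to\infty$, because local functions of $\bar P$ and $P$ agree except on the fraction of shifts that straddle a block boundary, using \eqref{eq:lb-cylinder-agreement}. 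Taking $a,T$ large therefore puts $\bar P$ in $G$, and with it $R_n$ with probability tending to one.

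The main obstacle I expect is the gluing step at block boundaries. It has to be absolutely continuous with respect to the free evolution, at entropy cost $o(a^dT)$ per block, even though $P_{A,0}$ may be singular with respect to Poisson and the configuration inherited from the previous block is random and unrelated to the next block's sample. I would first try the quick-kill-then-rebirth scheme after a preliminary spatial smoothing of $P$ that keeps $H$ within $\alpha/4$.
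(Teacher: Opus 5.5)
Your architecture (independent space-time blocks glued through empty configurations, the entropy inequality, a law of large numbers for the stationarised block law $\bar P$, and local comparison for $\bar P\approx P$) is the paper's. The gluing step, however, has a genuine gap.

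Creating a block's initial configuration with law $P_{A,0}$ from the free evolution started empty costs at least $\Ent(P_{A,0}\lvert\Pi^\upsilon_A)$. Neither $\rho(P)<\infty$ nor smoothing of positions makes this finite: up to finite terms, this entropy contains $P[\log\omega_0(A)!]$, which can diverge although $P[\omega_0(A)]<\infty$.

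For a concrete example, mix over $\lambda\sim\nu$ the stationary dynamics with constant birth rate $\sqrt\lambda$ and death rate $1/\sqrt\lambda$, whose time marginal is the Poisson process with intensity $\lambda$. A Girsanov computation gives each component dynamical specific entropy $(\sqrt\lambda-1)^2$. Convexity then yields $H(P)\le\int(\sqrt\lambda-1)^2\,\nu(\d\lambda)<\infty$ whenever $\int\lambda\,\nu(\d\lambda)<\infty$. Yet if $\int\lambda\log\lambda\,\nu(\d\lambda)=\infty$, then $\Ent(P_{A,0}\lvert\Pi^\upsilon_A)=\infty$ for every cube $A$ and every $\upsilon>0$. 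Moreover, this mixed Poisson law is invariant under cell-wise smoothing, so your preliminary smoothing changes nothing.

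The missing idea is truncation: condition the block on $\omega_0(A)\le K$. This has three effects, all uniform in $T$:
\begin{enumerate}[(a)]
\item by the chain rule, it only multiplies the bulk cost $h_{A,T}(P)$ by $1/P(\omega_0(A)\le K)$;
\item it bounds the mean number of particles to be cleared at the end of the block by $\rho(P)\abs{A}/P(\omega_0(A)\le K)$;
\item it perturbs the approximation of $P$ by at most $2P(\omega_0(A)>K)$ in total variation.
\end{enumerate}
Combine this with smoothing of entire lifelines, by a kernel that maps the free evolution to itself so that data processing controls the bulk cost. The preparation cost is then at most $\upsilon\abs{A}+\log K!+K\log^+(1/(\upsilon\epsilon^d))$. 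This depends on $K$ and $\epsilon$ but not on $T$, so the parameters must be chosen in the order $a$, $K$, $\epsilon$, $T$. Fixing $a$ and $T$ at the outset, as you do, fails, since $\log K!/(a^dT)$ need not be small.

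Your preferred alternative, keeping the inherited particles, is inconsistent with the rest of the proposal. If the next block is an independent sample of $P_{A,T}$, its initial configuration is unrelated to the inherited one and still has to be created, so nothing is saved. It becomes consistent if you draw the next block from $P_{A,T}(\cdot\lvert\eta)$ given the inherited configuration $\eta$. By time stationarity the block marginals then remain $P_{A,0}$, and the gluing costs nothing. But this brings new problems:
\begin{enumerate}[(a)]
\item the blocks in a tile form a Markov chain that need not mix (static paths have $H(P)=1+\rho(P)<\infty$);
\item $\bar P$ is no longer a product of independent blocks, so the concentration of $R_n$ must come from independence across spatial tiles alone;
\item you still need one creation of an initial configuration per tile, which again requires truncation and smoothing.
\end{enumerate}
The paper sidesteps all of this by letting every block start and end empty, with preparation and clearance phases of unit length whose costs are bounded uniformly in $T$.
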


Using the calculation preceding the proposition, we can conclude the proof of the lower bound in the non-interacting case.

\begin{proof}[Proof of \zcref{thm:ldp-lower-bound-free}]
    Fix an open set $G\subset \PP(E)$ and let $\alpha>0$. If there is no $P\in G$ with $H(P)<\infty$, the lower bound is trivial. Otherwise, fix such a $P \in G$ and let $(\widehat Q_n)_{n\in\N}$ be the sequence from \zcref{proposition:change-of-measure-sanov}. By properties~(i) and~(ii), the estimate~\eqref{eq:sanov-entropy-inequality} applies for all sufficiently large $n$. Dividing both sides by $n\abs{\Lambda_n}$ and taking the limit $n\to \infty$ yields, by properties~(ii) and~(iii),
\begin{equation*}
    \liminf_{n\to\infty}\frac{1}{n\abs{\Lambda_n}} \log Q_\xi(R_n \in G) \geq - H(P) - \alpha.
\end{equation*}
Since this holds for every $\alpha>0$ and every $P\in G$ with $H(P)<\infty$, sending $\alpha \downarrow 0$ and optimising over $P$ yields the claimed lower bound.
\end{proof}

In the classical Sanov theorem, the empirical measure of independent samples from $P$ is close to $P$ by the law of large numbers, so it is clear how to obtain the desired measures $\widehat Q_n$. Our construction is inspired by the one used in~\cite{georgii_solo_93,GeorgiiZessin1993} for lattice random fields and spatial point processes, where the configuration is sampled independently in disjoint boxes. In space-time, the spatial directions cause no problem, since, conditionally on the initial configuration, the free evolutions in disjoint cubes are independent. However, we cannot simply sample the path independently in consecutive space-time blocks with the same spatial base: the configuration at the end of one block would in general not agree with the configuration at the beginning of the next, so that the concatenated path would make jumps in which many particles change at once. Such paths are not legal birth-and-death paths, and their law would not be absolutely continuous with respect to $Q_\xi$. We therefore let every block path start and end with the empty configuration.

As in~\cite{georgii_solo_93,GeorgiiZessin1993,follmerLectureNotes}, an additional difficulty is that $P$ is stationary but not necessarily ergodic. Sampling the path in a single large space-time window according to $P$ would produce an empirical process close to a random ergodic component of $P$ rather than to $P$ itself. Sampling independently in many blocks avoids this: each block draws afresh from a truncated version of $P$, so that, by a law of large numbers, the empirical process is close to a deterministic stationary law $\bar{P}$, which lies in the neighbourhood $G$ of $P$ for suitable parameters.

More precisely, fix $a,T>0$, write $A=(0,a)^d$ and $S=T+2$; the two additional units of time are used to fill an empty block at its beginning and to empty it at its end. The main idea behind the construction of the measures $\widehat Q_n$ is to cover the observation window $\Delta_n$ with translated copies of the space-time box $A\times[0,S]$, indexed by a finite set $I_n$ of block indices, and to sample the paths $\zeta^{(i)}$, $i\in I_n$, in these blocks independently from a law $\mathcal{C}_{\rm block}$ that behaves like $P$ during the time interval $[1, T+1]$ but starts and ends with the empty configuration. Putting together all the pieces $\zeta^{(i)}$, plus an initial period that removes the particles of the fixed initial condition $\xi$, defines a path on all of $\Delta_n$; outside of $\Delta_n$ we simply let the process evolve according to the free evolution. Each block path consists of three phases, see \zcref{fig:blockpath}: during the \emph{preparation phase} $[0,1]$, particles are created from the empty configuration; during the \emph{bulk phase} $[1,1+T]$, the path follows $P$; and during the \emph{clearance phase} $[1+T,S]$, all particles are removed.

To start a block from the empty configuration, the configuration that $P$ has at the beginning of the bulk phase has to be created by the free evolution during the preparation phase, at finite entropic cost. Under $P$, however, the number of particles in $A$ is unbounded, and their positions may be singular with respect to the Lebesgue measure. We therefore condition the initial configuration of $P$ on having at most $K\in\N$ particles in $A$, and we displace the position of every particle, initial or newly born, independently and uniformly within its cell of a grid of side length $\epsilon\in(0,1]$, where $a/\epsilon\in\N$; the displaced position is kept for the whole lifetime of the particle. This makes the initial law absolutely continuous with respect to a Poisson point process, with a bounded density, at the price of a small error in the approximation of $P$. The following lemma collects all the properties of the resulting block law $\mathcal{C}_{\rm block}$ that we will need. The parameters $a,K,\epsilon,T$ are chosen later, and they remain fixed as $n\to\infty$. Recall that $Q^A_\emptyset$ denotes the free evolution in $A$ started from the empty configuration, and that $\rho(P)=P[\omega_0([0,1)^d)]$ denotes the intensity of $P$, see \zcref{subsection:other-topologies}.

\begin{lemma}[Block paths with empty endpoints]\label{lemma:block-law}
    Let $P \in \PP(E)$ be such that $H(P)<\infty$. Fix $a,T>0$, $K\in\N$ with $P(\omega_0(A) \leq K)>0$, and $\epsilon\in(0,1]$ with $a/\epsilon\in\N$. There exists a law $\mathcal{C}_{\rm block}$ on $D([0,S]; \Om_A)$ that satisfies the following properties.
    \begin{enumerate}[(i)]
        \item The initial and terminal configurations are $\mathcal{C}_{\rm block}$-almost surely empty, that is,
        \begin{equation*}
            \mathcal{C}_{\rm block}(\omega_0 = \omega_S = \emptyset) = 1.
        \end{equation*}
        \item Every finite concatenation of independent paths sampled from $\mathcal{C}_{\rm block}$ is c\`adl\`ag, and its law is absolutely continuous with respect to the free evolution $Q^A_\emptyset$ in $A$ over the same time interval.
        \item The entropic cost of $\mathcal{C}_{\rm block}$ with respect to the free evolution $Q^A_{\emptyset}$, observed during $[0,S]$, is bounded by
        \begin{equation}\label{eq:lb-cycle-entropy}
            \Ent(\mathcal{C}_{\rm block}\lvert Q^A_{\emptyset}) \leq \frac{h_{A,T}(P)}{P(\omega_0(A) \leq K)} + C_{\rm prep} + |A| + \frac{c\, \rho(P)\abs{A}}{P(\omega_0(A) \leq K)},
        \end{equation}
        where $c=-\log(1-\e^{-1})$, and where the preparation cost $C_{\rm prep}$, that is, the relative entropy of the truncated and smoothed initial law with respect to the Poisson point process on $A$ with intensity $1-\e^{-1}$, see~\eqref{eq:lb-preparation-entropy}, satisfies
        \begin{equation*}
            C_{\rm prep}\leq (1-\e^{-1})|A|+\log K! +K\log^+\!\frac1{(1-\e^{-1}) \epsilon^d}<\infty,
        \end{equation*}
        with $\log^+x=\max\{\log x,0\}$. In particular, $C_{\rm prep}$ does not depend on $T$.
    \end{enumerate}
\end{lemma}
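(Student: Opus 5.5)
The plan is to build $\mathcal{C}_{\rm block}$ as the concatenation of three explicit pieces. Each piece is a Markovian modification of the free evolution, so the chain rule for relative entropy (disintegrating at the times $1$ and $1+T$) splits $\Ent(\mathcal{C}_{\rm block}\lvert Q^A_\emptyset)$ into a preparation cost, a bulk cost and a clearance cost. These should match the terms $C_{\rm prep}$, $h_{A,T}(P)/P(\omega_0(A)\le K)$ and $|A|+c\rho(P)|A|/P(\omega_0(A)\le K)$ in~\eqref{eq:lb-cycle-entropy}.

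\emph{Bulk phase.} Write $B_K=\{\omega_0(A)\le K\}$. Let $P^K$ be the law of $(\omega_t|_A)_{0\le t\le T}$ under $P(\cdot\lvert B_K)$; since $H(P)<\infty$, these are legal birth-and-death paths. Disintegrating $P_{A,T}=\int P_{A,0}(\d\eta)P_\eta$ gives $h_{A,T}(P)=\int P_{A,0}(\d\eta)\Ent(P_\eta\lvert Q^A_\eta)$. Restricting to $\eta(A)\le K$ drops only non-negative terms, so
\begin{equation*}
\Ent(P^K\lvert P^K_0Q^A)\le \frac{h_{A,T}(P)}{P(B_K)}.
\end{equation*}
Next apply the smoothing kernel $\Phi_\epsilon$, which moves every particle (initial or newborn) independently and uniformly within its $\epsilon$-grid cell for its whole lifetime. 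The key observation is that $Q^A_\eta\Phi_\epsilon=\int \Phi_\epsilon(\eta,\d\eta')\,Q^A_{\eta'}$. This holds because the birth intensity $\d x\,\d t$ is invariant under uniform redistribution within cells, and lifetimes are untouched. Hence $(P^K_0Q^A)\Phi_\epsilon=\mu\, Q^A$, where $\mu:=P^K_0\Phi_\epsilon$. The data-processing inequality then gives the same bound for $P^{K,\epsilon}:=P^K\Phi_\epsilon$ relative to $\mu Q^A$. I expect this step to be the main technical obstacle: one has to label particles measurably along a legal path and check the commutation identity carefully.

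\emph{Preparation phase.} Under $Q^A_\emptyset$, the configuration at time $1$ is a Poisson process $\pi^\lambda_A$ with $\lambda=1-\e^{-1}$. On $[0,1]$, let the path be the free evolution from $\emptyset$ conditioned on its time-$1$ value, with that value drawn from $\mu$ rather than from $\pi^\lambda_A$. This piece costs exactly $C_{\rm prep}=\Ent(\mu\lvert\pi^\lambda_A)$. To bound it, note that on $\{\eta(A)=m\}$, $m\le K$, the $m$-point density of $\mu$ with respect to Lebesgue measure is at most $\epsilon^{-dm}$ (counting each ordered labelling once). The Poisson density on this set is $\e^{-\lambda|A|}\lambda^m/m!$. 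Hence
\begin{equation*}
\log\frac{\d\mu}{\d\pi^\lambda_A}\le \lambda|A|+\log K!+K\log^+\frac1{\lambda\epsilon^d},
\end{equation*}
which gives the stated bound, independent of $T$. On $[1,1+T]$, continue with $P^{K,\epsilon}$ started from the time-$1$ configuration. The conditional law of $P^{K,\epsilon}$ given its initial configuration is compared with $Q^A_{\omega_1}$, so by the Markov property of $Q^A_\emptyset$ this contributes the bulk cost.

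\emph{Clearance phase.} On $[1+T,S]$, run the free evolution conditioned on the event that there are no births in $A$ and every particle present at time $1+T$ dies before time $S$. This event has probability $\e^{-|A|}(1-\e^{-1})^N$, where $N=\omega_{1+T}(A)$. Conditioning costs $|A|+cN$, and by stationarity
\begin{equation*}
\mathcal{C}_{\rm block}[N]\le \frac{P[\omega_T(A)]}{P(B_K)}=\frac{\rho(P)|A|}{P(B_K)},
\end{equation*}
since smoothing does not change particle counts. The resulting path ends empty and is c\`adl\`ag, so (i) holds. Summing the three costs via the chain rule gives (iii).

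For (ii), each block starts and ends at $\emptyset$, so a concatenation of independent blocks only joins empty configurations and has no multi-particle jumps. By the Markov property of $Q^A_\emptyset$ at the junction times, the density of the concatenation is the product of the block densities. Each block density is finite because each block's relative entropy is finite, which gives absolute continuity.
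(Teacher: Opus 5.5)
Your proposal is correct and follows the paper's proof essentially step by step. It uses the same three-phase construction: a time-one tilt of the free evolution from $\emptyset$, the conditional law of the truncated and smoothed restriction of $P$, and clearance by conditioning on no births and all deaths. The truncation bound, smoothing commutation with data processing, clearance cost $|A|+c\,\omega_{1+T}(A)$, and chain-rule/Markov bookkeeping are the same. The only cosmetic difference is how you bound $C_{\rm prep}$: you use the $m$-point density with respect to the image of Lebesgue measure on $A^m$, while the paper conditions on cell-count vectors, and both give the identical density bound.
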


\begin{figure}
    \centering
    \includegraphics[width=0.35\linewidth]{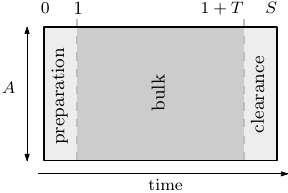}
    \caption{A block path in the space-time box $A\times[0,S]\subset\R^d\times\R$. It starts empty, creates a configuration of at most $K$ particles during the preparation phase $[0,1]$, follows the truncated and smoothed dynamics of $P$ during the bulk phase $[1,1+T]$, and removes all particles during the clearance phase $[1+T,S]$.}
    \label{fig:blockpath}
\end{figure}

In more detail, during the preparation phase we run the free evolution from the empty configuration and tilt its law at time $1$, so that the configuration at time $1$ has the truncated and smoothed initial law of $P$. Without the tilt, the configuration at time $1$ would be a Poisson point process with intensity $1-\e^{-1}$, and the cost of the tilt is $C_{\rm prep}$. During the bulk phase, we follow the conditional law of the truncated and smoothed restriction of $P$ to $A\times[0,T]$, given its initial configuration. Its entropic cost is at most $h_{A,T}(P)$ divided by the conditioning probability $P(\omega_0(A)\le K)$; the smoothing does not increase the cost, since it maps the free evolution onto itself. During the clearance phase, we condition the free evolution on having no births and on all present particles dying; since a particle dies within a unit time interval with probability $1-\e^{-1}$, this costs $|A|$ plus $c:=-\log(1-\e^{-1})$ per particle present at time $1+T$. The costs of the preparation and clearance phases are bounded uniformly in $T$: the preparation cost $C_{\rm prep}$ does not involve $T$ at all, and the clearance cost only depends on the expected number of particles at time $1+T$, which is at most $\rho(P)\abs{A}/P(\omega_0(A)\le K)$ by stationarity. Since a block has space-time volume $\abs{A}(T+2)$, these two costs contribute at most a constant multiple of $1/(T+2)$ per unit space-time volume, see~\eqref{eq:lb-global-entropy} below, and this contribution is small once $T$ is large.

\begin{figure}
    \centering
    \includegraphics[width=0.7\linewidth]{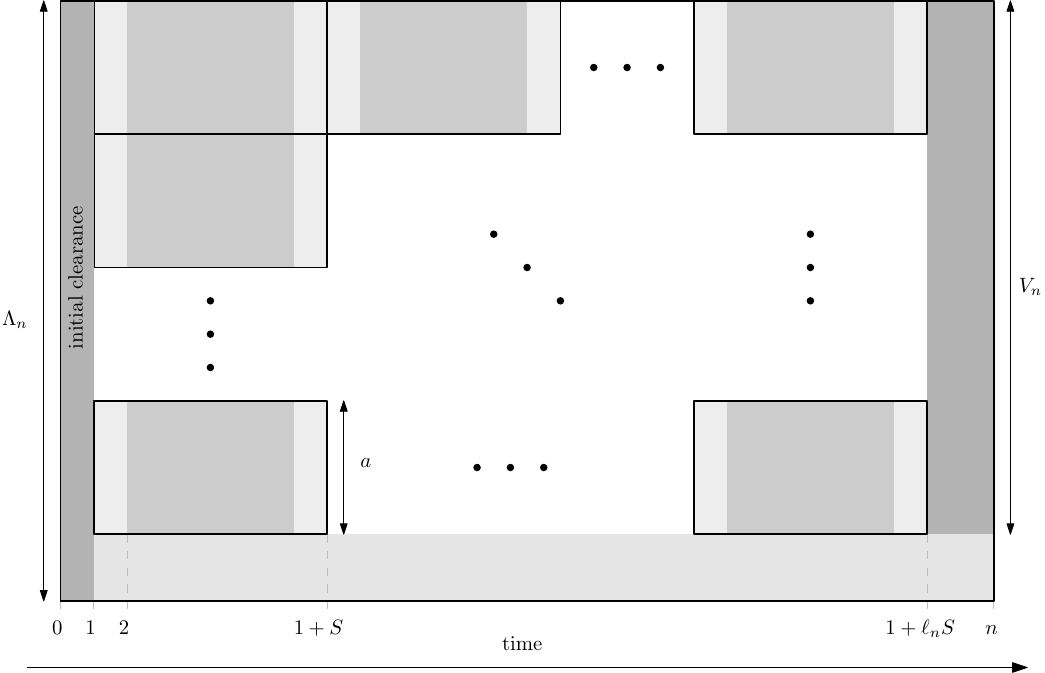}
    \caption{Construction of the auxiliary law $\widehat Q_n$ in the observation window $\Delta_n = \Lambda_n\times[0,n]$, with space drawn vertically. After an initial clearance during $[0,1]$, independent block paths are concatenated in the translates of $A\times[0,S]$ that tile $V_n\times[1,1+\ell_nS]$; the remaining corridor $\Lambda_n\setminus V_n$ and the final time interval stay empty.}
    \label{fig:window}
\end{figure}

Given these building blocks, we construct $\widehat{Q}_n$, still depending on the parameters $a,K,\epsilon,T$, as follows; see \zcref{fig:window}. During the time interval $[0,1]$, we condition the evolution in $\Lambda_n$ on having no births and on all particles of the initial condition $\xi$ in $\Lambda_n$ dying before time $1$. Let $-\overline{n/2}=(-n/2,\ldots,-n/2)$ and $V_n=-\overline{n/2}+[0,\lfloor n/a\rfloor a)^d$, and tile $V_n$ by the half-open cubes $-\overline{n/2}+ak+[0,a)^d$, $k\in\{0,\ldots,\lfloor n/a\rfloor-1\}^d$. In the open interior of each tile, we concatenate $\ell_n=\lfloor(n-1)/S\rfloor$ independent block paths sampled from $\mathcal C_{\mathrm{block}}$, starting at time $1$; block paths in distinct space-time blocks are independent. After time $1$, the corridor $\Lambda_n\setminus V_n$ is kept empty, and births in $V_n$ are suppressed during the final time interval $[1+\ell_nS,n]$. Outside $\Lambda_n$, and after time $n$, we simply continue with the free evolution. The resulting process is defined on the entire positive time line, and we denote its law by $\widehat Q_n$. These laws satisfy the following entropy estimate with respect to $Q_\xi$.

\begin{lemma}[Entropic cost of the auxiliary laws]\label{lemma:auxiliary-entropy}
    Let $\xi\in\Omst$, and let $P$, $a$, $K$, $\epsilon$ and $T$ be as in \zcref{lemma:block-law}. Then $\widehat Q_n\ll Q_{\xi}$ for every $n\in\N$, and
\begin{equation}\label{eq:lb-global-entropy}
\begin{split}
 	\limsup_{n\to\infty}\frac1{n|\Lambda_n|} \Ent(\widehat Q_n\lvert  Q_{\xi})
 	%&\le \frac1{|A|(T+2)}\Big(\frac{h_{A,T}(P)}{P(\omega_0(A)\leq K)}+C_{\mathrm{prep}} +|A|+\frac{c\rho(P)|A|}{P(\omega_0(A)\leq K)} \Big)\\
 	&\le \frac{H(P)}{P(\omega_0(A)\leq K)} + \frac{1}{T+2}\Big(\frac{C_{\mathrm{prep}}}{|A|}+1+\frac{c\rho(P)}{P(\omega_0(A)\leq K)}\Big).
\end{split}
\end{equation}
\end{lemma}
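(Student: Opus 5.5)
The plan is to decompose $\widehat Q_n$ and $Q_\xi$ along the natural product structure of the construction and use the chain rule for relative entropy. Absolute continuity is handled first. Outside $\Lambda_n$, and after time $n$, both laws evolve by the free evolution from the same conditional states, so these parts contribute nothing to the entropy. Hence $\Ent(\widehat Q_n\lvert Q_\xi)$ equals the entropy of the restrictions to $\Lambda_n\times[0,n]$, the law of the path at time $n$ being then transported by the same kernel. On $\Lambda_n$, conditionally on the initial configuration, the free evolutions in the disjoint open tiles and in the corridor are independent. Moreover, $Q_\xi$ restricted to the time interval $[1,n]$ is Markov. Since the event ``no births and all particles of $\xi_{\Lambda_n}$ die during $[0,1]$'' leaves the empty configuration at time $1$, the reference law from time $1$ on is $Q^{\Lambda_n}_\emptyset$. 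Absolute continuity then follows from property (ii) of \zcref{lemma:block-law}, applied in each tile, together with the positivity of the probabilities of the conditioning events.

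The entropy itself splits into four contributions, which I would bound one at a time.

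\begin{enumerate}[(a)]
\item \emph{Initial clearance.} Conditioning on an event $E$ has entropy $-\log Q(E)$. Here this gives $|\Lambda_n|+c\,\xi(\Lambda_n)$ with $c=-\log(1-\e^{-1})$. Since $\xi\in\Omst$, this is $o(n|\Lambda_n|)$.
\item \emph{Blocks.} By independence across tiles and the Markov property at the concatenation times, where the configuration is empty by \zcref{lemma:block-law}(i), the entropy is additive. It is at most $\lfloor n/a\rfloor^d\,\ell_n\,\Ent(\mathcal C_{\rm block}\lvert Q^A_\emptyset)$. One subtlety needs care: the tiles are half-open, but the blocks live on open interiors, and the boundary of a tile is Lebesgue-null. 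Under $Q$, births on the boundaries a.s.\ do not occur, so conditioning on their absence costs nothing.
\item \emph{Corridor.} Keeping the corridor $\Lambda_n\setminus V_n$ empty during $[1,n]$ costs $|\Lambda_n\setminus V_n|(n-1)=O(n^d)\cdot n$ relative to $n|\Lambda_n|$, that is, a fraction $O(a/n)$, which is $o(n|\Lambda_n|)$.
\item \emph{Final interval.} Suppressing births during $[1+\ell_nS,n]$, which has length at most $S$, costs at most $S|V_n|=o(n|\Lambda_n|)$. Under $\widehat Q_n$ no particles are present there, so nothing else is conditioned.
\end{enumerate}

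Dividing by $n|\Lambda_n|$ and using $\lfloor n/a\rfloor^d\ell_n\sim n^{d+1}/(a^dS)$, I get a $\limsup$ bounded by $\Ent(\mathcal C_{\rm block}\lvert Q^A_\emptyset)/(|A|(T+2))$. Inserting \eqref{eq:lb-cycle-entropy} and using $h_{A,T}(P)\le |A|T\,H(P)\le|A|(T+2)H(P)$, from the definition of $H$ as a supremum in \eqref{eq:rate}, yields \eqref{eq:lb-global-entropy}.

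The main obstacle is making the chain-rule decomposition rigorous on path space. Specifically, I need to justify that the density of $\widehat Q_n$ with respect to $Q_\xi$ factorizes as the product of the clearance density, the block densities and the indicator-type densities of the corridor and final-interval events. This requires verifying that the concatenation of block paths in adjacent tiles, together with the kept-empty corridor, reproduces exactly the conditional structure of $Q^{\Lambda_n}_\emptyset$. I would do this via the graphical construction: births are a Poisson process on $\Lambda_n\times[0,n]$ and deaths are i.i.d.\ exponential marks, so disjoint space-time regions are independent. The Markov property at empty times then lets the entropies add.
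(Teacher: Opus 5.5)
Your proposal is correct and follows the paper's proof essentially step by step. The paper also combines, by the chain rule and the Markov property, the same four costs $|\Lambda_n|+c\,\xi(\Lambda_n)$, $\lfloor n/a\rfloor^d\ell_n\Ent(\mathcal C_{\mathrm{block}}\lvert Q^A_\emptyset)$, $(n-1)|\Lambda_n\setminus V_n|$ and $r_n|V_n|$, and then finishes with \eqref{eq:lb-cycle-entropy} and $h_{A,T}(P)\le|A|T\,H(P)$. One small slip: the corridor has volume $|\Lambda_n\setminus V_n|\le da\,n^{d-1}$, not $O(n^d)$, and it is this bound that gives your fraction $O(a/n)$.
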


The bound in~\eqref{eq:lb-global-entropy} has a transparent origin: the window $\Delta_n$ contains about $n\abs{\Lambda_n}/(|A|S)$ blocks, each of cost~\eqref{eq:lb-cycle-entropy}; the initial clearance costs $|\Lambda_n|+c\,\xi(\Lambda_n)$, which is negligible on the scale $n\abs{\Lambda_n}$ precisely because $\xi\in\Omst$; and keeping the corridor and the final time interval empty costs a vanishing fraction of $n\abs{\Lambda_n}$. If we first choose $K$ so large that $P(\omega_0(A)\le K)$ is close to one, and then $T$ so large that the second term in~\eqref{eq:lb-global-entropy} is small, the asymptotic entropic cost is at most $H(P)+\alpha$. This takes care of properties~(i) and~(iii) in \zcref{proposition:change-of-measure-sanov}. We now turn towards property~(ii).

We first define the stationary law $\bar{P}$ that the empirical process approaches under $\widehat Q_n$. Let $(\zeta^{(i)})_{i\in\mathbb Z^{d+1}}$ be independent paths with common law $\mathcal C_{\mathrm{block}}$, placed in the blocks
\begin{equation*}
 	(A+ak)\times[\ell S,(\ell+1)S], \quad i=(k,\ell)\in\Z^d\times\Z.
\end{equation*}
Since each of these paths starts and ends with the empty configuration, the paths can be concatenated in time within each spatial tile. Combining the resulting paths over all tiles gives an infinite path $\widetilde\omega$, which belongs to $E$, since the spatial faces of the tiles carry no particles and every bounded space-time box meets only finitely many blocks, each with finitely many particles and events, almost surely. Denote the law of this path by $\mathbf{P}$ and define
\begin{equation}\label{eq:lb-stationary-cycle-law}
	\bar{P} := \frac1{|A|S}\int_A \dd x \int_0^S \dd t \ \mathbf{P} \circ \theta_{(x,t)}^{-1}.
\end{equation}
Since $\mathbf{P}$ is by construction invariant under the shifts by $a\Z^d\times S\Z$, the law $\bar P$ is invariant under all space-time translations. For large $a$ and $T$, most space-time points of a block are far away from its spatial boundary and from the preparation and clearance phases. Seen from such a point, within an observation window of fixed size, a path of $\bar P$ is distributed like a path of $P$ conditioned on $\omega_0(A)\le K$, with particle positions displaced by at most $\epsilon$; this is how $\bar P$ approximates $P$.

To make this precise, we use that the weak topology on $\PP(E)$ is generated by bounded Lipschitz test functions $f\colon E\to \R$, that is, by testing against the family
\begin{equation}\label{eq:BL1}
    \mathrm{BL}_1(E) := \left\{ f\colon E\to\mathbb R\colon \|f\|_\infty\leq 1,\ \abs{f(\omega) - f(\omega')}\leq d_E(\omega,\omega')\ \text{for all } \omega,\omega'\in E \right\},
\end{equation}
where $d_E$ is the bounded metric inducing the local Skorokhod $J_1$ topology on $E$ defined in~\eqref{eqn:definition-j1-metric}. Consequently, every open neighbourhood $G$ of $P$ contains an open set of the form
\begin{equation}\label{eqn:open-u-lipschitz-test}
    U(P;f_1,\dots, f_p;\delta) := \big\{\Tilde{P}\in\PP(E)\colon  \big|\Tilde{P}[f_i]-P[f_i]\big| < \delta, \ 1\leq i \leq p \big\}
\end{equation}
for some $p\in \N$, $\delta >0$, and $f_1, \dots, f_p \in \mathrm{BL}_1(E)$, see \zcref{lemma:open-neighbourhoods}.

The following lemma summarises the needed input about $\bar{P}$: for fixed parameters, $R_n[f]$ converges to $\bar{P}[f]$ in $\widehat{Q}_n$-probability for every $f\in \mathrm{BL}_1(E)$, and the parameters can be chosen such that $\bar P$ approximates $P$ while the entropic cost of $\widehat{Q}_n$ stays below $H(P)+\alpha$. The parameters are chosen in the order $a$, $K$, $\epsilon$, $T$. The side length $a$ comes first, so that most points of a block are far from its spatial boundary on the scale of the test functions $f_1,\dots,f_p$. For this $a$, we choose $K$ so large that $P(\omega_0(A)\le K)$ is close to one, which controls the conditioning error and the factor in front of $H(P)$ in~\eqref{eq:lb-global-entropy}, and then $\epsilon$ so small that the smoothing error is small. Finally, $T$ is chosen so large that the preparation and clearance costs, which depend on $a$, $K$ and $\epsilon$ but not on $T$, are negligible per unit time. This order is possible because the conditioning and smoothing errors are controlled uniformly in $T$, see \zcref{subsec:lb-approximation}.

\begin{lemma}[Law of large numbers and choice of parameters]\label{lemma:G-typical-and-parameters}
Let $\xi\in\Omst$ and let $P \in \PP(E)$ be such that $H(P)<\infty$. Then the following assertions hold.
    \begin{enumerate}[(a)]
        \item For every fixed choice of the parameters $a,K,\epsilon,T$ as in \zcref{lemma:block-law} and every \(f\in\mathrm{BL}_1(E)\),
    \begin{equation*}
           \lim_{n\to \infty}\widehat{Q}_n\big(\abs{R_n[f]-\bar{P}[f]} > \delta\big) = 0 \qquad\text{for every } \delta>0.
    \end{equation*}
    \item For every $\alpha>0$ and every open neighbourhood $G$ of $P$, we can choose the parameters $a,K,\epsilon,T$ such that $\bar{P} \in U \subset G$ for an open set $U = U(P;f_1,\dots, f_p;\delta)$ of the form~\eqref{eqn:open-u-lipschitz-test}, and such that
    \begin{equation*}
        \limsup_{n \to \infty}\frac{1}{n\abs{\Lambda_n}}\Ent(\widehat{Q}_n \lvert Q_\xi) \leq H(P) + \alpha.
    \end{equation*}
    \end{enumerate}
\end{lemma}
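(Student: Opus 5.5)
For part~(a) I will use a second-moment (Chebyshev) argument with near-independence of distant blocks. Fix $f\in\mathrm{BL}_1(E)$ and the parameters $a,K,\epsilon,T$. The first step is to localise $f$. By the form of the metric $d_E$ (see~\eqref{eq:lb-cylinder-agreement} and \zcref{section:appendix-topology}), for every $\gamma>0$ there are a bounded space-time box $\Delta_\gamma=[-r,r]^d\times[-r,r]$ and a bounded $\mathcal F_{\Delta_\gamma}$-measurable $f_\gamma$ with $\norm{f-f_\gamma}_\infty\le\gamma$. It therefore suffices to prove the claim for local $f$.

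For local $f$, I write $R_n[f]$ as the space-time average of $(x,s)\mapsto f(\theta_{(x,s)}\omega^{(n)})$ over $\Delta_n$. I discard three kinds of points, each of which contributes $o(1)$ uniformly in $\omega$ since $\abs f\le1$:
\begin{itemize}
\item points within distance $r$ of the temporal resets $\{0,n\}$ or of the spatial boundary of $\Lambda_n$, where periodisation matters;
\item points in the corridor $\Lambda_n\setminus V_n$;
\item points in the initial interval $[0,1]$ and the final interval $[1+\ell_nS,n]$.
\end{itemize}
Each of these sets has volume fraction $O(1/n)$. On the remaining points, under $\widehat Q_n$ the window $(x,s)+\Delta_\gamma$ sees exactly the concatenation of independent $\mathcal C_{\rm block}$ paths in the tiling. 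Hence $f(\theta_{(x,s)}\omega)$ has the same law as under $\mathbf P$ at the corresponding point modulo $a\Z^d\times S\Z$.

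Grouping the points by their offset $(y,t)\in A\times[0,S)$, the mean of the bulk average converges to $\frac1{|A|S}\int\!\int\mathbf P[f\circ\theta_{(y,t)}]=\bar P[f]$. For the variance I use the finite range of dependence: $f(\theta_{(x,s)}\omega)$ and $f(\theta_{(x',s')}\omega)$ are independent under $\widehat Q_n$ as soon as the two windows meet disjoint sets of blocks. This holds once $\abs{x-x'}_\infty>2r+2a$ or $\abs{s-s'}>2r+2S$. The variance is then $O(1/(n\abs{\Lambda_n}))$, and Chebyshev's inequality gives~(a).

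For part~(b) I follow the order of parameter choices announced before the lemma. First choose $U=U(P;f_1,\dots,f_p;\delta)\subset G$ by \zcref{lemma:open-neighbourhoods}, and approximate each $f_i$ by a local $\tilde f_i$ to accuracy $\delta/4$ with window radius $r$.

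\begin{enumerate}
\item \textbf{Choice of $a$.} Take $a\gg r$. Then the fraction of offsets $y\in A$ whose $r$-window leaves the tile is $O(r/a)$.
\item \textbf{Choice of $K$.} Choose $K$ so that $p_K:=P(\omega_0(A)\le K)$ is close to $1$. Then the conditioned law satisfies $\norm{P(\cdot\mid\omega_0(A)\le K)-P}_{\rm TV}\le 2(1-p_K)$, and $H(P)/p_K\le H(P)+\alpha/2$.
\item \textbf{Choice of $\epsilon$.} Choose $\epsilon$ small, so that displacing every particle by at most $\epsilon$ changes $\tilde f_i$ in expectation by less than $\delta/8$.
\item \textbf{Choice of $T$.} Finally choose $T$ large. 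Then the time offsets within distance $r+1$ of the preparation or clearance phase have fraction $O((r+2)/S)$, and the second term of~\eqref{eq:lb-global-entropy} is below $\alpha/2$.
\end{enumerate}

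For bulk offsets $(y,t)$, the law of the $r$-window under $\mathbf P\circ\theta_{(y,t)}^{-1}$ is that of the smoothed, conditioned $P$ restricted to $A\times[0,T]$, seen from $(y,t-1)$. By stationarity of $P$, its expectation of $\tilde f_i$ is within $2(1-p_K)+\delta/8$ of $P[\tilde f_i]$. Averaging over offsets gives $\abs{\bar P[f_i]-P[f_i]}<\delta$, i.e.\ $\bar P\in U$. The entropy bound then comes from \zcref{lemma:auxiliary-entropy}.

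The main obstacle is the $\epsilon$-step, which must hold uniformly in $T$. The continuity of $\tilde f_i$ under displacements must be controlled through $d_E$, not only at fixed time. The key observation is that each particle is displaced once, for its whole lifetime, by at most $\sqrt d\,\epsilon$, while the event times are unchanged. So in the $J_1$ metric the displaced and original paths are close on the window as long as no particle crosses the window boundary under displacement, and as long as the particle count in the window stays bounded. I would handle this with a truncation on the number of particles in the window, using $H(P)<\infty$ through \zcref{lem:finite-entropy-intensities}. The boundary-crossing event has probability $O(\epsilon)$ times the intensity, again by stationarity. Both estimates depend only on $r$ and not on $T$.
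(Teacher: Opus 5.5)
Your proposal is correct and follows essentially the paper's route. Part~(a) is the same finite-range-dependence and second-moment argument: the paper discretises into block integrals $Y_i(f)$ and their localised versions $Y_i^{[M]}(f)$ rather than computing continuum covariances. You also have to discard every point whose window meets the corridor or the intervals $[0,1]$ and $[1+\ell_nS,n]$, not only the points lying in them, but this is still an $O(1/n)$ fraction. Part~(b) uses the same order $a,K,\epsilon,T$, with the smoothing error made uniform in $T$ through $\rho(P),\chi(P)<\infty$.

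For the $\epsilon$-step, take the local approximant to be $\tilde f_i=f_i\circ\pi$, where $\pi$ restricts legal paths to the window, so that $\tilde f_i$ stays $1$-Lipschitz in the restricted path. A generic $\mathcal F_\Delta$-measurable approximant to accuracy $\delta/4$ only inherits stability up to $\delta/2$, which exceeds your error budget. Alternatively, as the paper does, skip the localisation and apply \zcref{lem:lb-local-metric}(ii) to $f_i$ directly. There, the margin between the supports of $g_1,\dots,g_J$ and $B_r$, together with $\epsilon\le1$, removes your boundary-crossing event. The linear bound $\epsilon L$, with $P[L\mid\omega_0(A)\le K]\le C_{r,R}/P(\omega_0(A)\le K)$, removes the need for truncation.
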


With this last technical tool in place, we can now assemble the proof of \zcref{proposition:change-of-measure-sanov}.

\begin{proof}[Proof of \zcref{proposition:change-of-measure-sanov}]
Fix $\alpha >0$ and an open neighbourhood $G$ of $P$. By \zcref{lemma:G-typical-and-parameters}(b), we can choose the parameters $a,K,\epsilon,T$ such that $\bar{P}\in U = U(P;f_1,\dots,f_p;\delta) \subset G$ for some $f_1, \dots, f_p \in \mathrm{BL}_1(E)$ and $\delta>0$, and simultaneously
\begin{equation*}
        \limsup_{n \to \infty}\frac{1}{n\abs{\Lambda_n}}\Ent(\widehat{Q}_n \lvert Q_\xi) \leq H(P) + \alpha.
\end{equation*}
For this fixed choice of parameters, $\widehat{Q}_n \ll Q_\xi$ by \zcref{lemma:auxiliary-entropy}. Since $U$ is open and $\bar P\in U$, there is $\delta'>0$ such that $|R_n[f_i]-\bar P[f_i]|<\delta'$ for all $1\le i\le p$ implies $R_n\in U$. Applying \zcref{lemma:G-typical-and-parameters}(a) to the finitely many test functions $f_1,\dots,f_p$, we therefore obtain
\begin{equation*}
    \lim_{n\to\infty}\widehat{Q}_n(R_n \in G)\ge \lim_{n\to\infty}\widehat{Q}_n(R_n \in U) = 1.
\end{equation*}
So the constructed sequence $(\widehat{Q}_n)_{n \in \N}$ indeed satisfies all of the properties~(i)--(iii).
\end{proof}

The proofs of \zcref{lemma:block-law,lemma:auxiliary-entropy,lemma:G-typical-and-parameters} are given in \zcref{section:proof-lower-bound}.

\subsection{LDP upper bound in the non-interacting case}\label{subsection:strategy-upper-bound}
Let us now move towards the upper bound for the free evolution.
\begin{theorem}[LDP upper bound, non-interacting]\label{thm:ldp-upper-bound}
    Let $\xi\in\Omst$. For every weakly closed set $F \subset \PP(E)$ we have
    \begin{equation*}
        \limsup_{n\to \infty}\frac{1}{n|\Lambda_n|} \log Q_{\xi}(R_n \in F) \leq - \inf_{P\in F} H(P).
    \end{equation*}
\end{theorem}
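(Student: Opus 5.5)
The upper bound follows the usual scheme: a compact-set upper bound from a variational representation of the local entropies, then exponential tightness to pass to closed sets. Fix a cube $A=(0,a)^d$, a time $T>0$, and a bounded continuous local function $g$ that is $\mathcal F_{A\times[0,T]}$-measurable. By the Donsker--Varadhan variational formula,
\begin{equation*}
h_{A,T}(P)=\sup_{g}\Big\{P[g]-P\big[\log Q^A_{\omega_0}[\e^{g}]\big]\Big\}.
\end{equation*}
Here the supremum can be restricted to bounded functions $g$ that are continuous in the local $J_1$ topology, since the conditional expectation $\eta\mapsto Q^A_\eta[\e^g]$ is continuous in $\eta$ (births are independent of $\eta$, and the initial particles die independently). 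For such $g$ we consider the space-time averaged exponential functional
\begin{equation*}
\Phi_g(\omega):=g(\omega)-\log Q^A_{\omega_0}[\e^{g}],
\end{equation*}
which is a bounded continuous function on $E$. We then want the exponential estimate
\begin{equation*}
\limsup_{n}\frac{1}{n\abs{\Lambda_n}}\log Q_\xi\Big[\exp\Big(\frac{n\abs{\Lambda_n}}{a^dT}R_n[\Phi_g]\Big)\Big]\le 0.
\end{equation*}

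To prove it, we split $\Lambda_n\times[0,n]$ into $a^dT$-many sublattices of translates of $A\times[0,T]$: we average over the offset $(x,s)\in A\times[0,T]$ and apply Hölder/Jensen in the offset. On each sublattice, the blocks with the same spatial base are chained in time via the Markov property of the free evolution, and blocks with disjoint spatial bases are independent given the initial configuration. Iterating the conditional expectation $Q_{\omega_{kT}}^A[\e^{g\circ\theta}]$ then telescopes to at most $1$. The error comes from blocks that meet the spatial boundary of $\Lambda_n$ or the temporal resets, where $\omega^{(n)}$ differs from the true path. These are a fraction $O(a/n+T/n)$ of the blocks, and since $\Phi_g$ is bounded by $2\|g\|_\infty$ they cost $o(n\abs{\Lambda_n})$. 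Chebyshev's inequality, optimisation over $g$ and the supremum over $a,T$ then give the upper bound $-\inf_{K}H$ for compact $K$, by the standard covering argument and lower semicontinuity of $H$ (the supremum of continuous functionals).

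Next, exponential tightness. We construct compacts $\mathcal K_L\subset\PP(E)$ with
\begin{equation*}
\limsup_n\frac{1}{n\abs{\Lambda_n}}\log Q_\xi(R_n\notin\mathcal K_L)\le -L.
\end{equation*}
The plan is to use level sets of $R_n[\Psi]$ for a tame functional $\Psi$: for instance $\Psi(\omega)=\omega_0([0,1)^d)+\mathsf B([0,1)^d,(0,1])+\mathsf D([0,1)^d,(0,1])$. Weighted versions of $\Psi$ over larger cells, with a slowly growing weight, are needed so that the sets $\{P\colon P[\Psi_j]\le c_j\ \forall j\}$ are tight in $\PP(E)$. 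Tightness in $E$ requires control of the number of points in bounded regions (compactness in the vague topology, multiplicities allowed) and a modulus of continuity for $J_1$, which is controlled by the number of jumps in small time windows. Stationarity reduces all of this to a single cell. Under $Q_\xi$, births form a Poisson process and the number of particles alive at a time is bounded by initial particles plus births. Hence $\exp(\lambda\sum_{\text{cells}}\Psi)$ has exponential moment $\e^{Cn\abs{\Lambda_n}}$, plus the initial contribution $\xi(\Lambda_n)$, which is $o(n\abs{\Lambda_n})$ since $\xi\in\Omst$. The periodisation resets add at most the particles alive at time $n$ as extra jumps, which are also controlled by births. Chebyshev's inequality gives the bound with arbitrary $L$ as $\lambda\to\infty$.

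The main obstacle is this exponential tightness step, and specifically the $J_1$-modulus. Jumps at the temporal resets are macroscopic, but they sit at deterministic times $n\Z$. Under the space-time shift, however, they become a randomly located large jump, and we must check that compactness criteria in $D(\R;\Om)$ tolerate a single jump of unbounded size per period. My first attempt is to measure the modulus via the count of jump times rather than jump sizes. Since multiple simultaneous changes count as one jump time, the vague-topology local modulus only needs finitely many jump times per window, and it gets that from the bound on the expected number of distinct jump times per unit space-time volume. The reset contributes $O(1/n)$ to this expectation, which is negligible.
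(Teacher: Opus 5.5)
Your upper bound for compact sets is essentially the paper's argument. It uses normalised local functionals, an exact identity for the exponential moment on each offset lattice (from the Markov property and spatial independence given the configuration at the block times), Jensen over the offset, and an $O(n^d)$ error from boxes leaving $\Lambda_n\times[0,n)$. Two small corrections there:
\begin{itemize}
\item $\Phi_g$ is not continuous on all of $E$, since restriction to $[0,T]$ is discontinuous at paths jumping at time $0$ or $T$. What is true, and suffices, is that $P\mapsto P[\Phi_g]$ is continuous on $\PP(E)$, because stationary laws do not charge jumps at fixed times.
\item Restricting the supremum to continuous $g$ follows from Jensen's inequality $P_{A,0}[\log Q^A_\eta[\e^g]]\le\log(P_{A,0}Q^A)[\e^g]$, which reduces everything to the ordinary variational formula on $D([0,T];\Om_A)$. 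Continuity of $\eta\mapsto Q^A_\eta[\e^g]$ is needed for the continuity of $P\mapsto P[\Phi_g]$, not for this step.
\end{itemize}

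The genuine gap is in exponential tightness, exactly at the step you call the main obstacle. A bound on the expected number of jump times per unit space-time volume does not control the $J_1$ modulus. Compactness in $J_1$ requires that, uniformly over the compact set, jump times in each bounded window be separated by some $\delta>0$. A path with two jumps $1/k$ apart and an intermediate state does not converge in $J_1$ to the path with one merged jump, since the intermediate state stays at fixed $d_\Omega$-distance from both limiting states. Separation is a second-order property. By stationarity, a first-moment bound only says that a time window of length $\delta$ contains two jumps with probability $O(\delta)$. Summed over the roughly $1/\delta$ such windows in a unit interval, this gives nothing as $\delta\downarrow0$.

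Concretely, your sets $\{P\colon P[\Psi_j]\le c_j\ \forall j\}$ contain, for every $k$, the stationary law in which births arrive in pairs at the same location, $1/k$ apart in time, with independent exponential lifetimes. All $P[\Psi_j]$ are bounded uniformly in $k$, but this family is not tight in $E$. So these level sets are not compact, whatever weights you use.

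What is missing is twofold: a superexponential estimate for the fraction of space-time points of $\Delta_n$ whose window contains two genuine jumps within time $\delta$, and compact sets of paths defined by such separation constraints, like the paper's $\Xi_{B,[u,v]}(M,\delta)$, instead of level sets of linear functionals. The paper also keeps jumps $\delta$ away from the window endpoints. Note the contrast: that endpoint defect really is controlled by counts alone, since each jump affects space-time volume $O(\delta)$; the close-pair defect is not.

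The close-pair estimate is not a counting estimate. The total death rate in a window is proportional to its population, so jumps cannot be compared with a fixed-rate Poisson clock directly. The paper proceeds in three steps, and $M$ must be chosen before $\delta$:
\begin{enumerate}
\item It fixes a population cutoff $M$ via the exponential moments of occupation time and births.
\item It realises the deaths in each unit cell through a Poisson measure with an auxiliary coordinate, so that in windows with at most $M$ particles all jumps are points of a Poisson clock of rate $q_r(1+M)$.
\item It bounds $\log\mathbb E[\e^{\psi Z_\delta}]\le c(n\delta+\delta^2)$ for the number $Z_\delta$ of clock points having another point within $\delta$, using a colouring and H\"older argument for the overlapping windows.
\end{enumerate}

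The resets, by contrast, need no special tolerance from the compact set. The space-time points whose window meets a reset form a fraction $O(1/n)$ and can simply be counted as bad. The finitely many small $n$ at each scale are covered by ordinary tightness of $Q_\xi[R_n]$.
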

As usual for large deviation upper bounds, the proof can be decomposed into two separate parts: an upper bound for compact sets, and exponential tightness.
\begin{proposition}[Upper bound for compact sets]\label{prop:compact-upper-bound}
    Let $\xi\in\Om$. For every weakly compact set $F\subset \PP(E)$ we have
    \begin{equation*}
        \limsup_{n\to \infty}\frac{1}{n|\Lambda_n|} \log Q_{\xi}(R_n \in F) \leq - \inf_{P\in F} H(P).
    \end{equation*}
\end{proposition}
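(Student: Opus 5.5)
We follow the classical route for Sanov-type upper bounds: an exponential Chebyshev inequality for local test functions, combined with a variational (Donsker--Varadhan) representation of the local entropies $h_{A,T}$, and then a covering argument over the compact set $F$.

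\textbf{Step 1: Variational representation of $H$.} For a cube $A=(0,a)^d$, $T>0$ and a bounded continuous $\mathcal F_{A\times[0,T]}$-measurable $g$, put
\[
\Phi_{A,T,g}(P):=P[g]-P\big[\log Q^A_{\omega_0}[\e^{g}]\big].
\]
By Donsker--Varadhan applied to the conditional law given $\omega_0$, we have $h_{A,T}(P)=\sup_g \Phi_{A,T,g}(P)$. Hence
\[
H(P)=\sup_{a,T,g}\frac{\Phi_{A,T,g}(P)}{a^dT}.
\]
The map $\eta\mapsto Q^A_\eta[\e^g]$ depends on $\eta$ only through $\eta_A$. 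It is positive and bounded, and we will need it to be continuous so that each $\Phi$ is a weakly continuous functional of $P$. The free evolution from $\eta_A$ is the superposition of independent exponential deaths of the initial points and a Poisson birth process. So if $\eta_A$ is varied vaguely, the law $Q^A_\eta$ varies weakly on $D([0,T];\Om_A)$, up to points near $\partial A$. To handle these boundary points we restrict to $g$ depending only on $\bar A'\times[0,T]$ for a slightly smaller cube $A'$, and we check that the supremum is unchanged in the limit. This is where continuity issues arise, and the appendix facts on the topology are used.

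\textbf{Step 2: Exponential Chebyshev bound for a single functional.} Fix $(A,T,g)$ and tile $\Delta_n$ by the translates $(A+ak)\times[jT,(j+1)T]$. For each shift $(x,s)\in A\times[0,T]$, the tiles shifted by $(x,s)$ split into $a^dT$-families, and the empirical average $R_n[g]$ decomposes into the average over $(x,s)$ of sums over tiles. The tiles are spatially disjoint, and the free evolution in disjoint cubes is conditionally independent given the initial configuration. Moreover, time-consecutive tiles are handled by the Markov property: we condition successively on $\mathcal F_{(-\infty,jT]}$, and each factor $\e^{g-\log Q^A_{\omega_{jT}}[\e^g]}$ has conditional expectation $1$. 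Hence, for each fixed shift,
\[
Q_\xi\Big[\exp\Big(\sum_{\text{tiles}}\big(g-\log Q^A_{\omega_{\rm start}}[\e^{g}]\big)\circ\theta\Big)\Big]\le 1.
\]
Tiles cut by the periodisation boundary (near $\partial\Lambda_n$ and near time $n$) contribute $O(n^{d+1}/n)$ because $g$ and $\log Q^A[\e^g]$ are bounded, so they are negligible. Averaging over shifts with Jensen/H\"older gives
\[
\limsup_n \frac1{n|\Lambda_n|}\log Q_\xi\big(R_n\in F\big)\le -\inf_{P\in F}\frac{\Phi_{A,T,g}(P)}{a^dT}
\]
for closed $F$. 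The initial condition enters only through conditioning at time $0$, so no regularity of $\xi$ is needed, which is consistent with the statement allowing any $\xi\in\Om$.

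\textbf{Step 3: Covering argument and the main obstacle.} For compact $F$ and $\gamma>0$, each $P\in F$ has some $(A,T,g)$ with $\Phi_{A,T,g}(P)/(a^dT)>\min\{H(P),1/\gamma\}-\gamma$. By continuity of $\Phi$, this inequality persists on an open neighbourhood of $P$. Finitely many such neighbourhoods cover $F$, and applying Step 2 on each closed neighbourhood together with the standard maximum of finitely many exponential rates yields the bound.

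The main obstacle is making Step 2 rigorous. The periodised path resets at times in $n\Z$ and copies space periodically, so $R_n[g]$ is not exactly a sum over tiles of the true path. We must check that tiles straddling $\partial\Lambda_n$ or the time $n$ are only an $O(1/n)$ fraction. Since $g$ is bounded, their contribution to the exponent is $O(n^d)=o(n|\Lambda_n|)$, and this is what makes the argument go through.
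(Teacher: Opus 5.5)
Your proposal follows the paper's proof essentially step by step: the normalised functionals $P\mapsto P[f]-P\big[\log Q^A_{\omega_0}[\e^{f}]\big]$ give the variational formula for $h_{A,T}$; averaging the block lattice over its offsets, together with the Markov property and the conditional spatial independence of the free evolution, gives an exponential moment bound of exponential order zero, valid for every $\xi\in\Om$ and with an $O(n^d)$ periodisation error; and a finite cover of the compact set $F$ concludes. The one step you leave loose, the weak continuity of these functionals, needs no shrinking of the cube: the paper gives $\Om_A$ the vague topology of the open cube $A$, so that restriction is continuous and $\eta\mapsto Q^A_\eta[\e^{f}]$ is continuous by coupling paired initial particles, and it handles the time endpoints by observing that a space-time stationary law charges no jump at the fixed times $0$ and $T$, so that the integrand is $P$-almost surely continuous and the Portmanteau theorem applies.
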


\begin{proposition}[Exponential tightness]\label{prop:exponential-tightness}
    Let $\xi\in\Omst$. For every $\alpha,\varepsilon>0$, there is a compact set $\mathscr K\subset E$ with
    \begin{equation}\label{eq:pathcompact}
    Q_{\xi}(R_n(\mathscr K^c)>\varepsilon)\le \e^{-\alpha n|\Lambda_n|}
    \qquad\text{for every }n\ge1.
    \end{equation}
    Consequently, the empirical processes $R_n$ are exponentially tight on $\PP(E)$, that is, for every $\alpha \in (0,\infty)$ there is a weakly compact set $K_\alpha \subset \PP(E)$ such that
    \begin{equation}\label{eq:exponential-tightness}
        \limsup_{n \to \infty}\frac{1}{n|\Lambda_n|}\log Q_{\xi}(R_n \not\in K_\alpha) \leq -\alpha.
    \end{equation}
\end{proposition}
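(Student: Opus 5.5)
The plan is to prove the path-level estimate \eqref{eq:pathcompact} first and then deduce \eqref{eq:exponential-tightness} by a standard Prokhorov argument. For the deduction, fix $\alpha$ and, for each $j\in\N$, apply \eqref{eq:pathcompact} with $\varepsilon=1/j$ and rate $\alpha+j$ to obtain a compact set $\mathscr K_j\subset E$. Put
\begin{equation*}
K_\alpha:=\{P\in\PP(E)\colon P(\mathscr K_j^c)\le 1/j \text{ for all } j\}.
\end{equation*}
This set is tight, and it is weakly closed because $\mathscr K_j^c$ is open and $P\mapsto P(\mathscr K_j^c)$ is lower semicontinuous. It is also closed inside the weakly closed set $\PP(E)$ of stationary laws, so it is compact by Prokhorov's theorem. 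A union bound then gives $Q_\xi(R_n\notin K_\alpha)\le\sum_j \e^{-(\alpha+j)n|\Lambda_n|}$, which yields \eqref{eq:exponential-tightness}.

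For \eqref{eq:pathcompact}, I would build $\mathscr K$ from a compactness criterion for $D(\R;\Om)$ with the local $J_1$ topology (see \zcref{section:appendix-topology}). The criterion needs two ingredients. First, for each integer space-time box $\Delta_{k}=\Lambda_k\times[-k,k]$, the number of particles ever present in $\Delta_k$ must be at most $M_k$. This gives compactness of the configurations, since multiple points are allowed and bounded counting measures on compact sets form a compact set. Second, a modulus-of-continuity condition must hold: within $\Delta_k$, distinct jump times of the path must be separated by at least $\gamma_k$, except for jumps at the reset times $n\Z$. The resets are the delicate part. I would handle them by noting that a shifted periodised path $\theta_{(x,s)}\omega^{(n)}$ has reset times in a lattice of spacing $n\ge 1$. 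A single jump of arbitrary size is harmless for $J_1$ compactness, so the modulus condition only needs to exclude two distinct jump times that are close to each other. The number of particles ever present in a unit space-time cell then controls everything. This number is $\omega_{t}(C)$ at the start, plus the births in the cell; initial particles from $\xi$ enter only through the cells at time $0$.

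The probabilistic input is a block decomposition. Split $\Delta_n$ into unit cells $c$. Let $N_c$ be the number of particles of the free evolution present in $c$; it is bounded by $\xi$-survivors plus a Poisson number of births from a space-time Poisson process. Let $G_c$ indicate that two distinct events in a neighbourhood of $c$ are $\gamma$-close in time. Then $R_n(\mathscr K^c)$ is bounded by a constant times the fraction of cells $c$ in which some $N$ on a neighbouring $k$-block exceeds $M_k$, or some $G$ fails. Cells at distance more than $2k$ from each other involve nearly independent Poisson variables: births are exactly independent, and deaths are independent per particle. Splitting the cells into finitely many sublattices of independent families and applying a Chernoff/Cramér bound for sums of i.i.d.\ Bernoulli variables gives
\begin{equation*}
P\big(\text{fraction}>\varepsilon 2^{-k}\big)\le \exp\big(-n|\Lambda_n|\,c\,\varepsilon 2^{-k}\log(1/p_k)\big),
\end{equation*}
where $p_k\to0$ is the single-cell failure probability. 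Choosing $M_k\uparrow\infty$ and $\gamma_k\downarrow0$ so that $\varepsilon2^{-k}\log(1/p_k)\ge\alpha+k$ makes the sum over $k$ summable. The initial particles contribute $\xi(\Lambda_n)=o(n|\Lambda_n|)$, so they affect only a vanishing fraction of the cells at times near $0$ and their periodic copies.

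I expect the main obstacle to be the interplay between periodisation and the $J_1$ modulus. Near the reset times, the configuration jumps from the state at time $n$ to $\omega_0$. If this jump came close in time to ordinary birth or death events, the modulus condition would fail. The reset times, however, occupy only a fraction of order $\gamma_k$ of the time slice, so I would simply count the cells within $\gamma_k$ of a reset among the bad cells. This adds a deterministic fraction of order $\gamma_k$, which is below $\varepsilon2^{-k}$ after adjusting $\gamma_k$.
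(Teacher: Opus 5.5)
Your deduction of \eqref{eq:exponential-tightness} from \eqref{eq:pathcompact} is the same as the paper's. Describing $\mathscr K$ through a two-sided Skorokhod compactness criterion with nested windows is a legitimate alternative to the paper's explicit sets $\Xi_j(M,\delta)$. With nested windows, the separation condition at scale $k+1$ already controls jumps approaching $\pm k$, so the paper's separate endpoint defect $\Upsilon^{\mathrm{end}}$ is not needed, provided you invoke or prove the two-sided analogue of the Ethier--Kurtz criterion. The probabilistic part, however, has two genuine gaps.

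First, the Chernoff bound needs independence that is not there. Bad-window indicators at the same spatial location and at times more than $2k$ apart are not independent under $Q_\xi$. Particles born in the earlier window may survive into the later one, so the population and the death times in the later window depend on the earlier one. Births in disjoint regions and the lifetimes of distinct particles are independent, but which particles are alive at the start of a window depends on the whole past at that location. \emph{Nearly independent} does not give a bound at speed $n|\Lambda_n|$ for a sum of dependent indicators. You need a device that avoids or restores independence.
\begin{itemize}
\item For the count defect, the paper bounds the fraction of windows with more than $M$ particles pathwise by $q_r(A_n+2RB_n)/(Mn|\Lambda_n|)$. Here $A_n$ is the total occupation time and $B_n$ the number of births in $\Delta_n$, whose joint exponential moment is explicit, see \eqref{eq:exactmgf}.
\item For close jumps, the paper works only on windows with population at most $M$. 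It realises the deaths through an auxiliary Poisson random measure, so that all genuine jumps are points of a Poisson clock of rate $q_r(1+M)$. These clocks have independent increments in time and are independent across disjoint unions of cells, so \zcref{lem:poisson} and a colouring argument apply.
\end{itemize}
The same objection applies to your treatment of $\xi$. Initial particles are not confined to times near $0$. The probability that their total lifetime exceeds $\sigma n|\Lambda_n|$ can be of order $\e^{-\sigma n|\Lambda_n|}$. Their contribution must therefore be absorbed with a threshold that grows with $M_k$, as the occupation time $A_n$ does, rather than dismissed as a vanishing fraction of cells near time $0$.

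Second, your scale-$k$ estimate can only hold for $n\ge n_k$ with $n_k\to\infty$. The sublattice decomposition needs many disjoint $k$-windows on the torus. Also, the space-time points whose window contains a reset form a fraction of order $k/n$, not of order $\gamma_k$: what matters is whether the reset lies in the window, not whether the base point is close to it. This fraction is small only when $n\gg k$. Since $\mathscr K$ imposes all scales at once, for every fixed $n$ infinitely many scales remain uncontrolled. Summing your bounds over $k$ therefore gives neither \eqref{eq:pathcompact} for all $n\ge1$ nor even the $\limsup$ in \eqref{eq:exponential-tightness}. The paper closes this by using $\overline G_j\cup\mathscr L_j$ at scale $j$. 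Here $\mathscr L_j$ is a compact set chosen by ordinary tightness of the finitely many mean empirical processes $Q_\xi[R_n]$, $n<n_j$, so that $Q_\xi[R_n(\mathscr L_j^c)]\le\varepsilon_j2^{-j}\e^{-(\alpha+j)n|\Lambda_n|}$; Markov's inequality then handles these $n$. Some such step is missing from your plan.
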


We first show how \zcref{thm:ldp-upper-bound} follows from these two parts via standard arguments.
\begin{proof}[Proof of~\zcref{thm:ldp-upper-bound}]
    Let $F \subset \PP(E)$ be closed and let $\alpha >0$. For every $n\in \N$,
    \begin{equation*}
        Q_{\xi}(R_n \in F) \leq Q_{\xi}(R_n \in F \cap K_\alpha) + Q_\xi(R_n \not\in K_\alpha),
    \end{equation*}
    where $K_\alpha$ is the compact set from \zcref{prop:exponential-tightness}. The set $F\cap K_\alpha$ is compact, so we can apply \zcref{prop:compact-upper-bound} to obtain
    \begin{equation*}
        \limsup_{n \to \infty}\frac{1}{n|\Lambda_n|}\log Q_{\xi}(R_n \in F) \leq \max\Big\{-\inf_{F\cap K_\alpha}H, -\alpha\Big\} \leq \max\Big\{-\inf_F H, -\alpha\Big\}.
    \end{equation*}
    The claimed upper bound follows by sending $\alpha$ to infinity.
\end{proof}
Note that \zcref{prop:compact-upper-bound} holds for every $\xi\in\Om$; the assumption $\xi\in\Omst$ enters the upper bound only through exponential tightness.

For the upper bound on compact sets, we follow the general strategy of~\cite{DaiPra1993,varadhanLectureNotes}, the main input being a variational representation of the local entropies $h_{A,T}$. For $A=(0,a)^d$, $T>0$ and a bounded continuous function $f$ on $D([0,T];\Om_A)$, let
\begin{equation*}
    g_f(\eta):=\log Q^A_\eta[\e^f],\qquad
    \psi_f(\omega):=f(\omega|_{A\times[0,T]})-g_f(\omega_0|_A).
\end{equation*}
Then $h_{A,T}(P)=\sup_f P[\psi_f]$, and the maps $P\mapsto P[\psi_f]$ are weakly continuous, so that $H$ is a supremum of weakly continuous affine functionals (\zcref{lemma:continuity-properties,lemma:nice-variational-representation}). The subtraction of $g_f$ normalises the exponential moments, $Q^A_\eta[\e^{f-g_f(\eta)}]=1$ for every $\eta$. Conditionally on the configuration at a given time, the free evolutions in disjoint cubes are independent, and the free evolution is Markovian. Hence the exponential moment of the sum of the functions $\psi_f$, evaluated at the path seen from the corners of a lattice of disjoint space-time blocks in $\Delta_n$, equals one, and averaging over the offsets of this lattice yields
\begin{equation*}
    \limsup_{n\to\infty}\frac{1}{n|\Lambda_n|}
    \log Q_{\xi}\left[\exp\left\{\frac{n|\Lambda_n|}{|A|T}R_n[\psi_f]\right\}\right]\le0,
\end{equation*}
see \zcref{lemma:upper-bound-for-such-laplace-transforms}. An exponential Chebyshev inequality in a neighbourhood of each law in the compact set, together with a finite covering, then proves \zcref{prop:compact-upper-bound}.

Exponential tightness is proved by quite different means, adapted to the continuum setting, where the number of points in a volume of fixed size is not uniformly bounded. A bound on the number of particles alone does not give compactness in the $J_1$ topology on $E$: we also have to make sure that no two jumps occur arbitrarily close together in time, and that no jumps occur too close to the endpoints of an observation interval, since $J_1$ time changes fix the endpoints, so that jumps accumulating at an endpoint would destroy compactness. For a bounded open set $B\Subset\R^d$, a time interval $[u,v]$, $M\in\N$ and $\delta>0$, the set of piecewise constant paths in $D([u,v];\Om_B)$ with at most $M$ points at every time, at most $2M$ jumps, consecutive jump times separated by at least $\delta$, and all jump times in $[u+\delta,v-\delta]$, is compact (\zcref{lemma:compact-sets-building-blocks}). Imposing such constraints on a growing sequence of space-time boxes defines compact subsets of $E$ (\zcref{lem:compactness}). It remains to show that, outside an event of probability at most $\e^{-\alpha n\abs{\Lambda_n}}$, where $\alpha$ can be made arbitrarily large by tightening the constraints, $R_n$ puts little mass on paths violating these constraints. In other words, only a small fraction of the space-time points in $\Delta_n$ may have a local window with too many particles, with two jumps that are too close in time, or with a jump too close to the endpoints. These three local defects are controlled by elementary properties of Poisson random variables: exponential moments of the total occupation time and of the total number of births in $\Delta_n$, and a comparison of the jumps in windows of bounded population with Poisson clocks, see \zcref{subsection:probabilistic-control}.

The proofs of \zcref{prop:compact-upper-bound,prop:exponential-tightness} are given in \zcref{section:proof-upper-bound}.

The lower and upper bounds of \zcref{thm:ldp-lower-bound-free,thm:ldp-upper-bound} are the large deviation principle of \zcref{theorem:full-ldp} for $\beta=0$, except for the goodness of the rate function. The latter follows from exponential tightness and the lower bound by a standard argument, see e.g.~\cite[Lemma~1.2.18]{LDPbook}, which we include for completeness.

\begin{proof}[Proof of \zcref{theorem:full-ldp} for $\beta=0$]
The lower and upper bounds are \zcref{thm:ldp-lower-bound-free,thm:ldp-upper-bound}. By \zcref{lemma:nice-variational-representation}, $H$ is weakly lower semicontinuous, so its sublevel sets are closed. Let $c\in\R$ and $\alpha>\max\{c,0\}$, and let $K_\alpha$ be the compact set from \zcref{prop:exponential-tightness}. If $P\notin K_\alpha$, then $G=\PP(E)\setminus K_\alpha$ is an open neighbourhood of $P$, and \zcref{thm:ldp-lower-bound-free} gives
\begin{equation*}
    -H(P)\le-\inf_G H\le\liminf_{n\to\infty}\frac{1}{n|\Lambda_n|}\log Q_\xi(R_n\notin K_\alpha)\le-\alpha<-c.
\end{equation*}
Hence $\{H\le c\}\subset K_\alpha$, and $\{H\le c\}$ is compact as a closed subset of a compact set.
\end{proof}

\subsection{LDP in the interacting case}\label{subsection:strategy-interacting}
For $\beta\neq0$, we deduce \zcref{theorem:full-ldp} from the free case. Our strategy follows the architecture of the corresponding large deviation principle for lattice spin-flip systems in~\cite{DaiPra1993,DaiPra1993_2}. Throughout this subsection and in \zcref{section:interacting-case}, we suppress the dependence of all laws on the initial condition $\xi$, because we never compare dynamics with different initial conditions; we write $Q^\beta_n$ for the law of the dynamics in $\Lambda_n$ with periodic boundary conditions, and $Q^0_n=Q^{\Lambda_n}_\xi$ for the free evolution in $\Lambda_n$, both started from $\xi_{\Lambda_n}$.

The centrepiece of the transfer is the following explicit expression for the density of one birth-and-death dynamics with respect to another with the same death rate. It also shows that $\mathfrak{L}_n$ from~\eqref{eq:girsanov-heuristic} is the log-density of $Q^\beta_n$ with respect to $Q^0_n$ on the time interval $[0,n]$.
\begin{proposition}[Change of measure à la Girsanov]\label{proposition:change_of_measure_a_la_girsanov}
    Let \(u_t(x, \omega)\), \(v_t(x, \omega)\) be two predictable birth rates in a bounded volume \(\Lambda \Subset \R^d\). Assume that they are uniformly elliptic, that is, bounded from below by \(m > 0\) and bounded from above by \(M < \infty\). For \(T > 0\), consider the associated path laws \(Q^u, Q^v\) on \(\Lambda \times [0, T]\), both with death rate \(1\) and the same initial distribution.
    Then
    \begin{equation*}
        \frac{\d Q^u}{\d Q^v}
        = \exp\bigg(\int_0^T\d t \int_{\Lambda} \d x\, [v_t(x, \cdot) - u_t(x, \cdot)] + \sum_{\substack{(t, x) \text{ birth}\colon (t, x) \in (0, T] \times \Lambda}} \log\frac{u_t(x, \cdot)}{v_t(x, \cdot)} \bigg).
    \end{equation*}
    If \(u = v\) off a set \(D \subset \Lambda\), then for all \(p > 1\) we have the moment bound
    \begin{equation}\label{equation:interacting_case_girsanov_moment_bound}
        Q^v\Big[\Big(\frac{\d Q^u}{\d Q^v}\Big)^p \Big]
        \leq \e^{C_p T\abs{D}},
        \qquad C_p = \sup_{s, t \in [m, M]} \{s^p t^{1-p} - ps + (p-1) t\}.
    \end{equation}
    In particular, $\mathfrak{L}_n=\log(\d Q^\beta_n/\d Q^0_n)$ on the time interval $[0,n]$, and
    \begin{equation}\label{equation:interacting_case_girsanov_moment_bound_periodic_interacting_wrt_free}
        Q_{n}^0[\e^{p \mathfrak{L}_{n}}]
        \leq \e^{c_p n \abs{\Lambda_n}},
        \qquad c_p = \sup_{s \in [1/\Vert 1/b \Vert_\infty, \Vert b\Vert_\infty]} \{s^p - 1 - p(s-1)\},
        \qquad p > 1.
    \end{equation}
\end{proposition}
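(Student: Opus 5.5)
The plan is to treat the birth events as a marked point process on $(0,T]\times\Lambda$. Under both $Q^u$ and $Q^v$ the death mechanism is identical: each present particle dies at rate $1$. The two laws therefore differ only in the compensator of the birth process, which is $u_t(x,\omega)\,\d x\,\d t$ under $Q^u$ and $v_t(x,\omega)\,\d x\,\d t$ under $Q^v$. The density formula is then the standard Girsanov (Jacod) formula for point processes. I would set
\begin{equation*}
    Z_t := \exp\bigg(\int_0^t\d s \int_{\Lambda} \d x\, [v_s(x, \cdot) - u_s(x, \cdot)] + \sum_{\substack{(s, x) \text{ birth}\colon (s, x) \in (0, t] \times \Lambda}} \log\frac{u_s(x, \cdot)}{v_s(x, \cdot)} \bigg),
\end{equation*}
and proceed in three steps.

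\textbf{Step 1: $Z$ is a true martingale.} By It\^o's formula for pure-jump processes, $Z$ is a positive local martingale under $Q^v$. To show that $Q^v[Z_T]=1$, I would use the dominating Poisson construction of \zcref{section:appendix-graphical}. The number of births in $\Lambda\times[0,T]$ is stochastically dominated by a Poisson variable with mean $M T\abs{\Lambda}$, and the deterministic part of the exponent is bounded by $MT\abs{\Lambda}$. Hence $\sup_{t\le T}Z_t\le \e^{MT\abs{\Lambda}}(M/m)^{N}$ with $N$ Poisson, and this bound has all moments. A localisation with dominated convergence then gives the martingale property.

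\textbf{Step 2: identify $Z_T\,Q^v$ with $Q^u$.} By Girsanov's theorem for random measures, under $Z_T\,Q^v$ the birth compensator becomes $u$ and the death compensator is unchanged, because $Z$ does not jump at deaths. The law of a finite-volume birth-and-death process with bounded predictable rates and given initial distribution is uniquely determined by these compensators. One route is the uniqueness of the solution of the Poisson-driven stochastic equation; another is to construct the process jump by jump, which is possible since there is no explosion when births have bounded total rate. Either way, $Z_T\,Q^v=Q^u$.

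I expect Step 2 to be the main technical obstacle, since it requires checking that the path law is characterised by the compensators, in particular for predictable and possibly path-dependent rates. The fallback is an explicit finite-dimensional check. Conditionally on the initial configuration, one writes the Janossy densities of the jump sequence on $[0,T]$ under both laws and verifies directly that their ratio equals $Z_T$. In these densities the death contributions cancel and the survival factors yield the $\int(v-u)$ term.

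\textbf{Step 3: the moment bound.} Set $w:=u^pv^{1-p}$, which is predictable and takes values in $[m^pM^{1-p},M^pm^{1-p}]\subset(0,\infty)$. Rearranging the exponent of $Z_T^p$ gives
\begin{equation*}
    \Big(\frac{\d Q^u}{\d Q^v}\Big)^p=\frac{\d Q^w}{\d Q^v}\cdot\exp\Big(\int_0^T\d t\int_\Lambda\d x\,\big[w-pu+(p-1)v\big]\Big),
\end{equation*}
where the first factor is the density from Steps 1 and 2 applied to the pair $(w,v)$. Its $Q^v$-expectation is $1$. The integrand equals $s^pt^{1-p}-ps+(p-1)t$ with $s=u$ and $t=v$, so it is at most $C_p$, and it vanishes wherever $u=v$, in particular off $D$. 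This yields \eqref{equation:interacting_case_girsanov_moment_bound}.

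For the periodic dynamics, take $\Lambda=\Lambda_n$, $T=n$, $u_t(x,\omega)=b(x,\omega^{(n)}_{t-})$ and $v\equiv1$. These rates are predictable and elliptic with $m=1/\Vert 1/b\Vert_\infty$ and $M=\Vert b\Vert_\infty$. The exponent of $Z_n$ is then exactly $\mathfrak{L}_n$ from \eqref{eq:girsanov-heuristic}. Applying the moment bound with $D=\Lambda_n$ and $t=1$ gives \eqref{equation:interacting_case_girsanov_moment_bound_periodic_interacting_wrt_free} with the stated constant $c_p$.
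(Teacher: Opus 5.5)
Your proposal is correct and follows essentially the paper's proof. Both use the same exponential density process $Z$, and your factorisation through the pair $(u^pv^{1-p},v)$ is exactly the paper's auxiliary process $L$, giving the same $\e^{C_pT\abs{D}}$ bound. Both also identify $Z_T\,Q^v=Q^u$ by the same appeal to Jacod's theorem on compensators.

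The only difference is how you show that $Z$ is a true martingale. You dominate $\sup_{t\le T}Z_t$ by $\e^{MT\abs{\Lambda}}(M/m)^{N}$ with $N$ Poisson. The paper instead gets this for free from the bound on $Q^v[Z_\sigma^p]$ at all stopping times $\sigma\le T$ (uniform integrability), using only that the positive local martingale $L$ is a supermartingale.
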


By the heuristics in \zcref{subsection:rate-function}, $\mathfrak{L}_n\approx n\abs{\Lambda_n}R_n[F]$, so that the large deviations of the interacting dynamics should be obtained from those of the free evolution by tilting with the density $\e^{n\abs{\Lambda_n}R_n[F]}$; Varadhan's lemma would then give the rate function $H-P[F]=H^\beta$. It cannot be applied directly, for two reasons that are already present for the lattice systems in~\cite{DaiPra1993,DaiPra1993_2}: the identity $\mathfrak{L}_n= n\abs{\Lambda_n}R_n[F]$ holds only up to boundary terms, and $F$ is neither bounded nor continuous, since it counts births in a unit space-time cell. We therefore approximate $F$ by bounded continuous functions $F_j$, in which the birth count is replaced by a smoothed increment of the number of particles near the origin, and we replace Varadhan's lemma by three estimates: a superexponential approximation of $\mathfrak{L}_n/(n\abs{\Lambda_n})$ by $R_n[F_j]$ under the free evolution (\zcref{proposition:interacting_case_approximation_of_rate_function_approximately_functional_of_R_n}), the convergence $P[F_j]\to P[F]$ uniformly on the sublevel sets of $H$ (\zcref{proposition:interacting_case_approximation_of_rate_function}), and the moment bound~\eqref{equation:interacting_case_girsanov_moment_bound_periodic_interacting_wrt_free}. In contrast to the lattice setting, births may occur arbitrarily close to each other in space and time, and the number of particles per volume is unbounded, so that the superexponential approximation requires a probabilistic control of births that occur close to other events.

\begin{definition}[Approximation scheme]\label{definition:interacting_case_approximation_of_rate_function}
    Fix, for every \(\delta > 0\), a non-negative function \(\varphi_\delta \in C_c(\R^d)\) with \(\supp \varphi_\delta \subset B_\delta(0)\) and \(\int \varphi_\delta(x) \, \d x = 1\).
    For \(\epsilon, \delta, K > 0\), set
    \begin{equation*}
        U_{\epsilon, \delta, K}(\omega)
        := \int_0^1 \d t\, \log(b(0, \omega_t)) \,[D_{\epsilon, \delta}(\omega, t) \land K],
        \quad
        D_{\epsilon, \delta}(\omega, t)
        := \frac{1}{\epsilon}\Big(\sum_{x \in \omega_{t+\epsilon}} \varphi_\delta(x) - \sum_{x \in \omega_{t}} \varphi_\delta(x) \Big)_+,
    \end{equation*}
    and write \(U_{\epsilon, \delta} := U_{\epsilon, \delta, \infty}\) for the same expression without clipping \(D_{\epsilon, \delta}\). Moreover, set
    \begin{equation*}
        F_{\epsilon, \delta, K}(\omega)
        := \int_{0}^{1}\d t\, [1 - b(0, \omega_t)] + U_{\epsilon, \delta, K}(\omega),
    \end{equation*}
    and define \(F_{\epsilon, \delta}\) analogously, with \(U_{\epsilon, \delta}\) in place of \(U_{\epsilon, \delta, K}\).
\end{definition}

Here $D_{\epsilon,\delta}(\omega,t)$ is the increase during $[t,t+\epsilon]$ of the number of particles near the origin, weighted by $\varphi_\delta$ and divided by $\epsilon$. For a legal path whose births near the origin are well separated in time, $U_{\epsilon,\delta}(\omega)$ is therefore approximately the $\varphi_\delta$-weighted sum of $\log b(y,\omega_{s-})$ over the births $(s,y)$ with $s\in(0,1]$ and $y$ close to the origin. After averaging over space-time shifts, as under $R_n$ or under a stationary law, this becomes the birth term of $F$, since $\int\varphi_\delta(x)\,\d x=1$. The time integral and the mollifier $\varphi_\delta$ make $U_{\epsilon,\delta,K}$ continuous, the clipping at level $K$ makes it bounded, and the time integral in the first term of $F_{\epsilon,\delta,K}$ replaces the evaluation at time zero in $F$, which has the same expectation under stationary laws but is not continuous. (The parameters $\epsilon$, $\delta$ and $K$ have a different meaning in this subsection than in \zcref{subsection:strategy-lower-bound}.) The approximants $F_{\epsilon,\delta,K}$ are bounded and continuous (\zcref{lemma:interacting_case_approximants_of_rate_function_are_continuous}). The following two propositions show in which sense they approximate $F$: superexponentially well under the free evolution along the empirical process, and in expectation, uniformly on the sublevel sets of $H$.

\begin{proposition}[The Girsanov density is approximately a functional of \(R_n\)]\label{proposition:interacting_case_approximation_of_rate_function_approximately_functional_of_R_n}
    There is a choice of \(\epsilon_j \to 0\), \(\delta_j \to 0\) and \(K_j \to \infty\) such that, for
    \(F_{j} := F_{\epsilon_j, \delta_j, K_j}\) and every tolerance \(\tau > 0\),
    \begin{equation}\label{equation:interacting_case_approximation_of_rate_function_approximately_functional_of_R_n}
        \lim_{j \to \infty} \limsup_{n \to \infty} \frac{1}{n \abs{\Lambda_n}} \log Q^0_n\Big(\big| \tfrac{1}{n\abs{\Lambda_n}}\mathfrak{L}_n - R_n[F_j] \big| > \tau \Big)
        = -\infty.
    \end{equation}
    Moreover, the sequences can be chosen such that
    \begin{equation}\label{equation:interacting_case_particular_apprximation_parameters}
        \frac{\epsilon_j K_j}{\Vert \varphi_{\delta_j} \Vert_\infty} \xrightarrow[j \to \infty]{} \infty.
    \end{equation}
\end{proposition}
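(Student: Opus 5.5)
The plan is to write the difference $\frac{1}{n\abs{\Lambda_n}}\mathfrak{L}_n-R_n[F_j]$ as a sum of a few error terms, and to show that each error exceeds $\tau/4$ only with superexponentially small probability under $Q^0_n$ once $j$ is large. First, a bookkeeping identity. By periodicity and Fubini, $n\abs{\Lambda_n}R_n[G]=\int_0^n\d s\int_{\Lambda_n}\d x\,G(\theta_{(x,s)}\omega^{(n)})$ for any $G$. For the term $\int_0^1[1-b(0,\omega_t)]\,\d t$, the time integral in $F_j$ averages out, and this identity reproduces the first term of $\mathfrak{L}_n$ exactly. For the birth part, fix a birth $(s,y)\in(0,n]\times\Lambda_n$ of the periodised path. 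It increases $\sum_{z\in\omega_{t+\epsilon}}\varphi_\delta(z-x)$ by $\varphi_\delta(y-x)$ for $t\in[s-\epsilon,s)$. Integrating against $\frac1\epsilon\,\d t\,\d x$, it therefore contributes exactly $\log b(y,\omega_{s-})$ to $n\abs{\Lambda_n}R_n[U_{\epsilon,\delta}]$, provided three conditions hold. First, $\log b(x,\omega^{(n)}_t)$ may be replaced by $\log b(y,\omega_{s-})$ for $\abs{x-y}\le\delta$ and $t\in[s-\epsilon,s)$. Second, no other event inside $B_{\delta}(y)$ during $[s-\epsilon,s+\epsilon]$ interferes with the positive part. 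Third, the clipping at $K$ is inactive. This isolates the error terms: (E1) reset and boundary effects near $t\in\{0,n\}$; (E2) the spatial continuity error; (E3) births having another event within spatial distance $\range+2\delta$ and temporal distance $\epsilon$; (E4) the clipping.

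For (E1), the only non-birth upward jumps of $\omega^{(n)}$ in $\Delta_n$ are the reset at time $n$, where the configuration jumps back to $\xi_{\Lambda_n}$. Its contribution is at most $\abs{\log b}_\infty\,\xi(\Lambda_n)=o(n\abs{\Lambda_n})$ because $\xi\in\Omst$; downward jumps are killed by the positive part. The birth terms lost or distorted in the last $\epsilon$ units of time are bounded by $C\,\mathsf B(\Lambda_n,(n-1,n])$, a Poisson variable with mean $\abs{\Lambda_n}$. Its tail at level $\tau n\abs{\Lambda_n}$ is $\exp(-c\,\tau n\abs{\Lambda_n}\log n)$, which is superexponential. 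For (E2), uniform continuity of $\log b$ in the birth location, uniformly in the configuration, gives an error at most $o_\delta(1)\cdot\mathsf B(\Delta_n)$. Moreover $\mathsf B(\Delta_n)$ is Poisson with mean $n\abs{\Lambda_n}$, so $Q^0_n(\mathsf B(\Delta_n)>Ln\abs{\Lambda_n})\le\e^{-c(L)n\abs{\Lambda_n}}$ with $c(L)\to\infty$. For (E4): a single birth raises $D_{\epsilon,\delta}$ by at most $\Vert\varphi_\delta\Vert_\infty/\epsilon$, and this is smaller than $K$ once $\epsilon K/\Vert\varphi_\delta\Vert_\infty$ is large, which is exactly~\eqref{equation:interacting_case_particular_apprximation_parameters}. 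So clipping can only act at times when at least two births occur in the same small window. Hence (E4) is dominated by the count in (E3) times a bound on the clipped mass, and that clipped mass is in turn controlled by the total number of births.

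The main obstacle is (E3): showing that the number $N_n$ of births with another event within distance $(\range+2\delta,\epsilon)$ exceeds $\tau' n\abs{\Lambda_n}$ with superexponentially small probability as $j\to\infty$. For nearby births this is easy. In the graphical construction of \zcref{section:appendix-graphical} (with $\beta=0$), births form a Poisson process of intensity $\d x\,\d t$. For each birth, the number of other births in the neighbourhood is Poisson with mean $\kappa_j:=2\epsilon_j\abs{B_{\range+2\delta_j}}\to0$. Nearby deaths are harder, because their number depends on the local population. I would first restrict to the event that the total occupation time $\int_0^n\omega_t(\Lambda_n)\,\d t\le Ln\abs{\Lambda_n}$, which has superexponentially large probability by the exponential moments used in \zcref{subsection:probabilistic-control}. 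On this event, I would dominate the deaths near a birth by a Cox process driven by the local occupation. By a Chebyshev argument, at most $\tau'n\abs{\Lambda_n}/2$ births see a local occupation exceeding $M$ in their window. The remaining bad births are then dominated by a marked Poisson count with success probability $p_j(M)\le C(1+M)\epsilon_j\abs{B_{\range+2\delta_j}}\to0$. An exponential Chebyshev inequality with parameter $\lambda$ gives $\log Q^0_n(N_n>\tau'n\abs{\Lambda_n})\le n\abs{\Lambda_n}(-\lambda\tau'+C(\e^\lambda-1)p_j(M))$ plus superexponentially negligible terms. Letting $j\to\infty$ first and then $\lambda\to\infty$ yields $-\infty$. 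Each bad birth contributes an error of at most $C(1+\text{local events})$, and the number of local events is again controlled by the total birth and occupation bounds. Finally, I would choose the sequences $(\epsilon_j,\delta_j,K_j)$ diagonally, so that $\kappa_j\to0$, the continuity error vanishes and~\eqref{equation:interacting_case_particular_apprximation_parameters} holds. Combining the four estimates then gives~\eqref{equation:interacting_case_approximation_of_rate_function_approximately_functional_of_R_n}.
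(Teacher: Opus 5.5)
Your decomposition matches the paper's. (E1)--(E3) are the terms $B^{\mathrm{seam}}_{n,\epsilon}+\xi(\Lambda_n)$, $w_b(\delta)B_n$ and $B^{\mathrm{bad}}_{n,\epsilon}$ of \zcref{lemma:interacting_case_approximation_of_rate_function_deterministic_error}, and the diagonal choice of parameters is the same.

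Your reduction of (E4) to bad births is correct and simpler than the paper's, once made precise. If $\epsilon K\ge\Vert\varphi_\delta\Vert_\infty$, then $D_{\epsilon,\delta}>K$ forces two births (or the reset) into one window. Every birth carries total mass at most $1$ in $\int\d u\int\d x\,D_{\epsilon,\delta}(\theta_x\omega^{(n)},u)$. Hence the clipping error is at most $\Vert\log b\Vert_\infty$ times the number of births having another birth within distance $(2\delta,\epsilon)$, plus seam and $\xi(\Lambda_n)$ terms, and \zcref{lemma:interacting_case_approximation_of_rate_function_exponential_clipping} becomes unnecessary. (Births in $(0,\epsilon)$ are seam births too, since by periodicity their windows wrap around the reset.)

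The gap is in (E3), which is the actual content of the proposition; three steps there fail as written.

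First, the total-occupation bound does not give, by Chebyshev, that few births see a local population above $M$. It controls the space-time \emph{volume} where the population is large, not the number of \emph{births} there (unlike in \zcref{lem:counttail}, which bounds a volume fraction), and births are positively correlated with high local population. A path whose births come in tight clusters has occupation of order $n\abs{\Lambda_n}$, yet every birth sees a large population. Going through $\#\{b\colon L_b>M\}\le M^{-1}\sum_b L_b$ does not help either. This sum is quadratic in the birth process, and one cluster of about $\sqrt{n\abs{\Lambda_n}}$ births makes it exceed $c\,n\abs{\Lambda_n}$ at cost only $\exp(-O(\sqrt{n\abs{\Lambda_n}}\log n))$, which is invisible at speed $n\abs{\Lambda_n}$.

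Second, the same objection applies to your per-birth bound $C(1+\text{local events})$. You need the sharp bound $2\Vert\log b\Vert_\infty$ per bad birth, which the unit-mass observation above gives.

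Third, domination by a marked Poisson count with success probability $p_j(M)$ presumes an independence that is absent. Whether a birth is bad depends on the death times of earlier particles, and these are driven by the same birth process.

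A repair uses predictability; take $r=\range+2\delta$.
\begin{enumerate}[(i)]
    \item Put the cutoff on the population at time $s_b-$. The high-population births are then $\int\1\{\omega_{t-}(B_r(y))>M\}\,\mathsf B(\d y,\d t)$, whose compensator is at most $M^{-1}\abs{B_r(0)}\int_0^n\omega_t(\Lambda_n)\,\d t$.
    \item Births within time $\epsilon$ after a nearby death also have a predictable integrand, with compensator at most $\epsilon\abs{B_r(0)}(\xi(\Lambda_n)+B_n)$.
    \item Deaths of older particles after a low-population birth are points of the Poisson death clocks from the proof of \zcref{lem:gaptail}, with the cutoff taken on the unit cells meeting the window. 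These clocks are independent of the births.
\end{enumerate}
Exponential supermartingales and H\"older's inequality then give the bound you wrote.

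The paper avoids any cutoff. It realises the births as one colour of a Poisson process of twice the rate and uses the colour-switching domination $M_{1,1}\preceq M_{1,2}+M_{2,1}$ of \zcref{lemma:interacting_case_color_switching_domination}. After this, conditioning on one colour leaves the other an independent Poisson process.
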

Since a single birth raises $D_{\epsilon,\delta}$ by at most $\Vert\varphi_\delta\Vert_\infty/\epsilon$, condition~\eqref{equation:interacting_case_particular_apprximation_parameters} ensures that the clipping at level $K_j$ only affects time intervals with many births.

\begin{proposition}[Approximation of \(F\)]\label{proposition:interacting_case_approximation_of_rate_function}
    Let $(F_j)_{j\in\N}$ be as in \zcref{proposition:interacting_case_approximation_of_rate_function_approximately_functional_of_R_n}. For every \(P\in\PP(E)\) with \(H(P) < \infty\), we have
    \begin{equation}\label{equation:interacting_case_pointwise_regarding_measure_convergence_of_rate_function_approximation}
        P[F_j]
        \xrightarrow[j \to \infty]{} P[F].
    \end{equation}
    The convergence is uniform on every sublevel set \(\{H \leq \alpha\}\), \(\alpha \geq 0\), so that in particular the map \(P \mapsto P[F]\) is continuous on each such sublevel set with respect to the relative weak topology.
    Moreover, for every \(\mathcal{K} \subset \PP(E)\),
    \begin{equation}\label{equation:interacting_case_rate_function_approximation_upper_bound_set_of_measures}
        \limsup_{j \to \infty} \sup_{P \in \mathcal{K}} \big(P[F_j] - H(P)\big)
        \leq -\inf_{P \in \mathcal{K}} H^\beta(P).
    \end{equation}
\end{proposition}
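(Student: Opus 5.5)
The plan is to compare $F_j$ and $F$ under a fixed stationary law $P$ with $H(P)<\infty$ in three steps. First replace the time-integral term, then the smoothed birth term by the actual birth sum, and finally remove the clipping at $K_j$. Uniformity in $P$ on $\{H\le\alpha\}$ comes from entropy inequalities that convert bounds under the free evolution into bounds under $P$.

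For the first term, stationarity gives $P[\int_0^1[1-b(0,\omega_t)]\,\d t]=P[1-b(0,\omega_0)]$ exactly, so no error arises. For the birth term, I would use that $H(P)<\infty$ forces $P$ to live on legal birth-and-death paths with integrable birth intensity. By stationarity and the Campbell-type identity for the space-time birth point process of $P$, the expected birth sum in $F$ equals $\int P^{\mathrm{birth}}_0(\d\omega)\log b(0,\omega_{0-})$, where $P^{\mathrm{birth}}_0$ is the (unnormalised) Palm measure of the births. Let $\mathcal U_{\epsilon,\delta}$ denote the same expression as $U_{\epsilon,\delta}$ with the weight $\log b(0,\omega_t)$ replaced by $\log b(x,\omega_{s-})$ at the birth $(s,x)$. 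Writing $D_{\epsilon,\delta}$ as the $\varphi_\delta$-weighted births minus deaths in $(t,t+\epsilon]$ divided by $\epsilon$ (positive part), and using stationarity in space and time, $P[\mathcal U_{\epsilon,\delta}]$ equals the birth term of $P[F]$ up to three errors:
\begin{itemize}
\item[(a)] replacing $\log b(0,\omega_t)$ by $\log b(x,\omega_{s-})$ at a birth $(s,x)$ with $|x|<\delta$, $s\in(t,t+\epsilon]$;
\item[(b)] the positive part, which only matters if a death or a second birth in $B_\delta(0)$ occurs within time $\epsilon$ of a birth;
\item[(c)] the clipping at $K_j$.
\end{itemize}
Error (a) is controlled by the uniform continuity of $b$ in the location and its finite range. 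It is bounded by $\|\log b\|_\infty$ times the expected number of births near $0$ at which some event occurs within $B_{\range+\delta}$ during a window of length $\epsilon$, plus the modulus of continuity times the birth intensity.

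The key tool for uniformity is the entropy inequality $P[G]\le \frac1\lambda\big(h_{\Delta}(P)+\log P_0Q[\e^{\lambda G}]\big)$ on a space-time box $\Delta$, combined with $h_\Delta(P)\le |\Delta|\,H(P)\le|\Delta|\alpha$. Apply it with $G$ the number of births in a box that have another event within distance $\delta$ in space and $\epsilon$ in time, and similarly with $G$ the number of births in windows where $D_{\epsilon,\delta}>K_j$. Under the free evolution, conditionally on the initial configuration, births form a Poisson process and deaths are independent. Hence near-coincidences have exponential moments of order $\e^{\lambda c(\epsilon,\delta)\,\cdot(\text{occupation})}$ with $c(\epsilon,\delta)\to0$. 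The initial occupation is tamed by a further entropy inequality, which bounds particle numbers in terms of $H$; this is what \zcref{lem:finite-entropy-intensities} is presumably for. Condition \eqref{equation:interacting_case_particular_apprximation_parameters} ensures clipping requires roughly $\epsilon_jK_j/\|\varphi_{\delta_j}\|_\infty\to\infty$ births in a tiny window, which is superexponentially rare under $Q$. Choosing $\lambda$ large first and then $j$ large makes all errors smaller than $\alpha/\lambda+o(1)$, uniformly on $\{H\le\alpha\}$. This gives \eqref{equation:interacting_case_pointwise_regarding_measure_convergence_of_rate_function_approximation} with uniform convergence. Continuity of $P\mapsto P[F]$ on sublevel sets then follows, because each $P[F_j]$ is weakly continuous (\zcref{lemma:interacting_case_approximants_of_rate_function_are_continuous}) and uniform limits of continuous maps are continuous.

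For \eqref{equation:interacting_case_rate_function_approximation_upper_bound_set_of_measures}, fix a large $M$ and split $\mathcal K$ into $\{H\le M\}$ and $\{H>M\}$. On $\{H\le M\}$, uniform convergence gives $\sup(P[F_j]-H(P))\le \sup(P[F]-H(P))+o(1)=-\inf H^\beta+o(1)$. On $\{H>M\}$, I would use the bound $|F_j|\le \|1-b\|_\infty+\|\log b\|_\infty\int_0^1 D_{\epsilon_j,\delta_j}\wedge K_j$. Via the entropy inequality above, this gives $P[F_j]\le \delta' H(P)+C_{\delta'}$ uniformly in $j$, in the spirit of \eqref{equation:interacting_case_lower_bound_via_free_entropy}. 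Hence $P[F_j]-H(P)\le-(1-\delta')M+C_{\delta'}$, which is below $-\inf_{\mathcal K}H^\beta$ once $M$ is large (when the infimum is finite; otherwise this argument shows the left side tends to $-\infty$).

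The main obstacle is error (b), together with the uniform entropy bound on near-coincident events, since both the birth count and the occupation are unbounded. I would handle it by a two-scale truncation on the particle number in $B_{\range+1}$ before applying the Poisson exponential moments.
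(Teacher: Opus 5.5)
Your proposal is correct. Its pointwise part and its proof of \eqref{equation:interacting_case_rate_function_approximation_upper_bound_set_of_measures} coincide with the paper's. There too the birth term is rewritten via the birth Palm law $P^+$. The error is bounded by $\chi(P)w_b(\delta)$ plus multiples of $\chi(P)P^+(A_\epsilon)$, where $A_\epsilon$ is the event of another birth or death in $B_{\range+2}(0)\times[-\epsilon,\epsilon]$, and of $\chi(P)P^+\big(\mathsf B(B_2(0),(-1,1])>\epsilon K/\Vert\varphi_\delta\Vert_\infty\big)$. Then $\mathcal K$ is split along a sublevel set of $H$ using $\abs{P[F_j]}\le\Vert 1-b\Vert_\infty+\Vert\log b\Vert_\infty\chi(P)$ and \eqref{equation:interacting_case_bound_free_entropy_beta}.

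You differ in how you get uniformity on $\{H\le\alpha\}$. The paper never bounds $\chi(P)P^+(A_{\epsilon_j})$ uniformly. Instead it proves by contradiction that $(P[F_j])_j$ is uniformly Cauchy. If $\abs{P[F_j]-P[F_k]}>2a$ for some $P$ with $H(P)\le\alpha$, then the weakly open set $\{\widetilde P\colon\abs{\widetilde P[F_j]-\widetilde P[F_k]}>2a\}$ has $Q^0_n$-probability at least $\e^{-(\alpha+o(1))n\abs{\Lambda_n}}$ by \zcref{thm:ldp-lower-bound-free}. On that set, one of the two approximation errors of \zcref{proposition:interacting_case_approximation_of_rate_function_approximately_functional_of_R_n} exceeds $a$, which has probability at most $\e^{-(\alpha+1+o(1))n\abs{\Lambda_n}}$ for large $j,k$. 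That argument is very short, but it rests on the full free lower bound and on the superexponential approximation, including the sketched nearby-events and clipping lemmas. Your route is self-contained and quantitative. After Campbell, each error is $P[G]$ for a count $G$ of births in a unit cell that see a bad local event. The entropy inequality on a fixed box then bounds it uniformly, using $h_{A,T}(P)\le\abs{A}T\alpha$ and $\rho(P)\le(1+\sqrt\alpha)^2$ from \zcref{lem:finite-entropy-intensities}. Because the box is fixed, you need no exponential moments at volume scale, and hence no colour switching.

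Two details need care when you write it out.

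First, your inequality as stated uses $\log(P_{A,0}Q^A)[\e^{\lambda G}]$. Combined with a moment bound exponential in $\eta(A)$, this would need exponential moments of $\omega_0(A)$ under $P$, which $H$ does not control. Your truncation is unnecessary, though. The count $G$ is dominated by the number $N$ of births in the cell, which is Poisson with mean one under every $Q^A_\eta$. Cauchy--Schwarz and Mecke's formula then give $\log Q^A_\eta[\e^{\lambda G}]\le C_\lambda\,Q^A_\eta(G\ge1)^{1/2}\le C_\lambda\sqrt{c\,\epsilon(1+\eta(A))}$ for the near-coincidence count. The disintegrated form $P[G]\le\lambda^{-1}\big(h_{A,T}(P)+P_{A,0}[\log Q^A_\eta[\e^{\lambda G}]]\big)$ then involves only $\rho(P)$.

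Second, pass the clipping error through Campbell before applying the entropy inequality, so that the weight $\varphi_{\delta_j}/\epsilon_j$ is integrated out. Your phrase ``number of births in windows where $D_{\epsilon,\delta}>K_j$'' suggests you intend this. If you apply the inequality to $\int_0^1(D_{\epsilon,\delta}(\omega,t)-K)_+\,\d t$ directly, the prefactor $\Vert\varphi_{\delta_j}\Vert_\infty/\epsilon_j$ would spoil the estimate.
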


Finally, the same change of measure shows that boundary conditions play no role on the exponential scale.
\begin{lemma}[Irrelevance of boundary conditions]\label{lemma:interacting_case_boundary_conditions_play_no_role}
    Let \((A_n)_{n \in \N}\) be a sequence of events with \(A_n \in \mathcal{F}_{\Lambda_n \times [0, n]}\) for every \(n \in \N\). The limits
    \begin{equation*}
        \liminf_{n \to \infty} \frac{\log P_n(A_n)}{n\abs{\Lambda_n}}
        \qquad\text{and}\qquad \limsup_{n \to \infty} \frac{\log P_n(A_n)}{n\abs{\Lambda_n}}
    \end{equation*}
    each have identical values for \((P_n)_{n \in \N}\) being any of the laws of the dynamics in $\Lambda_n$ with periodic boundary conditions, of the dynamics in $\Lambda_n$ with time-dependent boundary conditions as in \zcref{subsection:boundary-conditions}, or of the infinite-volume dynamics, all started from the same initial configuration \(\xi\).
\end{lemma}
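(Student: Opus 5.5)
The plan is to compare every law in the list with a single common reference law via the Girsanov formula of \zcref{proposition:change_of_measure_a_la_girsanov}, and then remove the densities with Hölder's inequality. Restricted to $\Lambda_n\times[0,n]$, the periodic, boundary-driven and infinite-volume dynamics are all birth-and-death processes in $\Lambda_n$ with death rate $1$, the same initial configuration $\xi_{\Lambda_n}$, and predictable birth rates bounded in $[1/\Vert 1/b\Vert_\infty,\Vert b\Vert_\infty]$. For the infinite-volume dynamics, the birth rate inside $\Lambda_n$ is $b(x,\omega_{t-})$, with the exterior part of the path acting as a random, predictable time-dependent boundary condition. Since $A_n\in\mathcal F_{\Lambda_n\times[0,n]}$, only these restricted laws matter. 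By finite range, any two of these birth rates agree for $x\in\Lambda_n$ with $B_\range(x)\subset\Lambda_n$. Hence, for any pair $u,v$ of such rates, the difference is supported on the boundary corridor $D_n=\{x\in\Lambda_n\colon \mathrm{dist}(x,\Lambda_n^c)\le\range\}$, whose volume is $O(n^{d-1})$.

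Fix two of the laws, $P_n=Q^u$ and $P'_n=Q^v$, restricted to $\Lambda_n\times[0,n]$. By Hölder's inequality with $p>1$ and conjugate exponent $q$,
\begin{equation*}
P_n(A_n)=Q^v\Big[\tfrac{\d Q^u}{\d Q^v}\1_{A_n}\Big]\le Q^v\Big[\Big(\tfrac{\d Q^u}{\d Q^v}\Big)^p\Big]^{1/p}P'_n(A_n)^{1/q}\le \e^{C_p n\abs{D_n}/p}\,P'_n(A_n)^{1/q},
\end{equation*}
using \eqref{equation:interacting_case_girsanov_moment_bound} with $T=n$ and $D=D_n$. Since $n\abs{D_n}=o(n\abs{\Lambda_n})$, this gives
\[
\limsup\tfrac{1}{n\abs{\Lambda_n}}\log P_n(A_n)\le \tfrac1q\limsup\tfrac{1}{n\abs{\Lambda_n}}\log P'_n(A_n),
\]
and the same holds for $\liminf$. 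Letting $p\to\infty$, so that $q\downarrow1$, gives one inequality for both limits whenever they are finite. If a limit equals $-\infty$, the bound with fixed $q$ already forces the other one to be $-\infty$. Exchanging the roles of $u$ and $v$ gives equality.

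The main obstacle is the infinite-volume case. There the Girsanov statement is formulated for predictable rates in a bounded volume, while the exterior path is itself random and coupled to the interior. I would handle this by conditioning on the exterior evolution, or by using the graphical construction of \zcref{section:appendix-graphical}. Conditionally on the driving Poisson noise outside $\Lambda_n$, or more carefully via the predictable compensator of births in $\Lambda_n$, the interior process is a birth-and-death process with predictable elliptic rates. Since the density bound \eqref{equation:interacting_case_girsanov_moment_bound} is uniform in the rates, it survives the averaging. The exterior feeds back on the interior only through $D_n$, so the marginal law on $\Lambda_n\times[0,n]$ is that of a process with predictable birth rate $b(x,\omega_{t-})$ in $\Lambda_n$. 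The standard martingale characterisation of the density for such point processes then lets us apply the bound verbatim. One more routine check is that the time-dependent boundary conditions may depend on $n$; this is harmless because the constant $C_p$ depends only on $m=1/\Vert 1/b\Vert_\infty$ and $M=\Vert b\Vert_\infty$.
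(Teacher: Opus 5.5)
Your treatment of the finite-volume boundary conditions is the paper's argument: the Girsanov moment bound on the corridor $D_n$, then H\"older, then $q\downarrow1$.

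For the infinite-volume dynamics you take a different route. The paper never describes the $\Lambda_n$-marginal. Instead it introduces an auxiliary full-space dynamics with birth rate $\widetilde b_n$, equal to $b(x,\eta^{(n)})$ in $\Lambda_n$ and to $b(x,\eta)$ outside. Under this dynamics the configuration in $\Lambda_n$ evolves autonomously as the periodic dynamics. The paper then compares two Markov laws on the full path space whose rates differ only on the bounded strip $\partial^\range\Lambda_n$, so the same Girsanov density and moment bound apply.

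You instead identify the marginal directly, and one statement there needs correcting. For $x\in D_n$, the rate $b(x,\omega_{t-})$ is not a functional of the interior path, so the marginal is not a process with that birth rate. What does hold is the following:
\begin{itemize}
\item Its birth compensator with respect to the interior filtration is the predictable projection of $b(x,\omega_{t-})$. This still lies in $[1/\Vert 1/b\Vert_\infty,\Vert b\Vert_\infty]$ and equals the periodic rate off $D_n$.
\item Its death compensator is unchanged, by memorylessness.
\end{itemize}
By Jacod's uniqueness theorem the marginal is then $Q^u$ for this $u$, and your H\"older argument goes through verbatim. If you use the conditioning variant instead, you need two further ingredients: a well-posedness statement for the exterior dynamics driven by a given interior path, and Jensen's inequality for $x\mapsto x^{1/q}$ to average the reverse bound over the exterior noise.

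Your route treats all three cases uniformly as dynamics in $\Lambda_n$ with random predictable boundary conditions, at the cost of the compensator characterisation of a non-Markovian marginal. The paper's auxiliary process avoids that, at the cost of a Girsanov formula on the infinite-volume path space.
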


We can now assemble the proof of the main result for $\beta\neq0$.

\begin{proof}[Proof of \zcref{theorem:full-ldp} for $\beta\neq0$]
Since $\{R_n\in G\}\in\mathcal{F}_{\Lambda_n\times[0,n]}$ for every Borel set $G\subset\PP(E)$, \zcref{lemma:interacting_case_boundary_conditions_play_no_role} shows that it suffices to prove the large deviation principle under the periodic dynamics $Q^\beta_n$. The goodness of $H^\beta$ is proved in \zcref{lemma:interacting_case_rate_function_further_properties}.

\emph{Step 1: Exponential tightness.} Let $K_\alpha$ be the compact set from \zcref{prop:exponential-tightness}; since $R_n$ only depends on the restriction of the path to $\Delta_n$, on which $Q_\xi$ and $Q^0_n$ agree, \eqref{eq:exponential-tightness} also holds under $Q^0_n$. Let $p,q>1$ with $1/p+1/q=1$. By \zcref{proposition:change_of_measure_a_la_girsanov} and H\"older's inequality,
\begin{equation*}
\begin{split}
    Q^\beta_n(R_n\notin K_\alpha)&=Q^0_n\big[\e^{\mathfrak{L}_n}\1_{\{R_n\notin K_\alpha\}}\big]\le Q^0_n[\e^{p\mathfrak{L}_n}]^{1/p}\,Q^0_n(R_n\notin K_\alpha)^{1/q}\\
    &\le\exp\Big\{n\abs{\Lambda_n}\Big(\frac{c_p}{p}+\frac{1}{q}\cdot\frac{\log Q^0_n(R_n\notin K_\alpha)}{n\abs{\Lambda_n}}\Big)\Big\},
\end{split}
\end{equation*}
so that $\limsup_{n}(n\abs{\Lambda_n})^{-1}\log Q^\beta_n(R_n\notin K_\alpha)\le c_p/p-\alpha/q$, which tends to $-\infty$ as $\alpha\to\infty$. Hence the empirical processes are exponentially tight under $Q^\beta_n$ as well, and it suffices to prove the upper bound for compact sets.

\emph{Step 2: Lower bound.} Let $G$ be open, $P\in G$ with $H(P)<\infty$, and $\tau>0$. By \zcref{proposition:interacting_case_approximation_of_rate_function_approximately_functional_of_R_n}, we can choose $j$ so large that
\begin{equation*}
    \limsup_{n \to \infty} \frac{1}{n \abs{\Lambda_n}} \log Q^0_n\Big(\abs{ \tfrac{1}{n\abs{\Lambda_n}}\mathfrak{L}_n - R_n[F_j] } > \tau \Big)<-H(P).
\end{equation*}
Since $F_j$ is bounded and continuous, $G_j:=G\cap\{\widetilde P\colon \widetilde P[F_j]>P[F_j]-\tau\}$ is an open neighbourhood of $P$. On the event $\{R_n\in G_j\}\cap\{\abs{\mathfrak{L}_n/(n\abs{\Lambda_n})-R_n[F_j]}\le \tau\}$, we have $\mathfrak{L}_n\ge n\abs{\Lambda_n}(P[F_j]-2\tau)$, and therefore
\begin{equation*}
    Q^\beta_n(R_n\in G)\ge \e^{n\abs{\Lambda_n}(P[F_j]-2\tau)}\Big(Q^0_n(R_n\in G_j)-Q^0_n\big(\big| \tfrac{1}{n\abs{\Lambda_n}}\mathfrak{L}_n - R_n[F_j] \big| > \tau \big)\Big).
\end{equation*}
By \zcref{thm:ldp-lower-bound-free}, the first probability in the bracket decays at most at exponential rate $H(P)$, whereas the second decays strictly faster by the choice of $j$. Hence
\begin{equation*}
    \liminf_{n \to \infty} \frac{1}{n \abs{\Lambda_n}} \log Q_n^\beta(R_n \in G)
    \geq  -H(P) + P[F_j] - 2\tau.
\end{equation*}
Letting $j \to \infty$, using $P[F_j]\to P[F]$ from \zcref{proposition:interacting_case_approximation_of_rate_function}, then $\tau \downarrow 0$, and finally optimising over $P \in G$ yields the lower bound with rate function $H^\beta$.

\emph{Step 3: Upper bound for compact sets.} Let $\mathcal{K}\subset\PP(E)$ be compact and $\tau>0$. We decompose
\begin{equation*}
    Q_n^\beta(R_n \in \mathcal{K})
    \leq Q_n^\beta\big(R_n \in \mathcal{K},  \big\vert \tfrac{1}{n\abs{\Lambda_n}}\mathfrak{L}_n - R_n[F_j] \big\vert \leq \tau\big) + Q_n^\beta\big(\big\vert \tfrac{1}{n\abs{\Lambda_n}}\mathfrak{L}_n - R_n[F_j] \big\vert > \tau\big).
\end{equation*}
The first term is at most $Q^0_n[\exp\{n\abs{\Lambda_n}(R_n[F_j]+\tau)\}\1_{\{R_n\in \mathcal{K}\}}]$. Since $F_j$ is bounded and continuous, the map $\widetilde P\mapsto \widetilde P[F_j]$ is weakly continuous. Hence, for every $\eta>0$, the set $\mathcal{K}$ is covered by finitely many compact sets of the form $\{\widetilde P\in\mathcal{K}\colon|\widetilde P[F_j]-P_i[F_j]|\le\eta\}$ with $P_i\in\mathcal{K}$, and applying \zcref{prop:compact-upper-bound} to each of them and letting $\eta\downarrow0$ shows that the exponential rate of the first term is at most $\tau+\sup_{P\in \mathcal{K}}(P[F_j]-H(P))$. By H\"older's inequality, \eqref{equation:interacting_case_girsanov_moment_bound_periodic_interacting_wrt_free} and \eqref{equation:interacting_case_approximation_of_rate_function_approximately_functional_of_R_n}, the second term satisfies
\begin{equation*}
    \delta_j(\tau)
    := \limsup_{n \to \infty} \frac{1}{n\abs{\Lambda_n}} \log Q_n^\beta\big(\big\vert \tfrac{1}{n\abs{\Lambda_n}}\mathfrak{L}_n - R_n[F_j] \big\vert > \tau\big)
    \xrightarrow[j \to \infty]{} -\infty.
\end{equation*}
Altogether, using~\eqref{equation:interacting_case_rate_function_approximation_upper_bound_set_of_measures},
\begin{equation*}
\begin{split}
    \limsup_{n \to \infty} \frac{1}{n \abs{\Lambda_n}} \log Q_n^\beta(R_n \in \mathcal{K})
    &\leq \limsup_{\tau \downarrow 0} \limsup_{j \to \infty }\max\Big\{\tau + \sup_{P \in \mathcal{K}}\big(P[F_j] - H(P)\big), \delta_j(\tau) \Big\} \leq -\inf_{P \in \mathcal{K}} H^\beta(P).
\end{split}
\end{equation*}
Together with Step~1, this proves the upper bound for all closed sets.
\end{proof}

\begin{proof}[Proof of \zcref{proposition:rate-function-properties}]
The conditional-entropy representation~\eqref{equation:interacting_case_rate_function_conditional_entropy_representation} is \zcref{lemma:interacting_case_rate_function_conditional_entropy_representation}, and the remaining assertions are contained in \zcref{lemma:interacting_case_rate_function_further_properties}.
\end{proof}

The proofs of the results stated in this subsection are given in \zcref{section:interacting-case}. The proof of \zcref{lemma:interacting_case_boundary_conditions_play_no_role}, as well as the proofs of three auxiliary lemmas used for \zcref{proposition:interacting_case_approximation_of_rate_function_approximately_functional_of_R_n}, see \zcref{lemma:interacting_case_approximation_of_rate_function_deterministic_error,lemma:interacting_case_approximation_of_rate_function_nearby_events,lemma:interacting_case_approximation_of_rate_function_exponential_clipping}, are only sketched there.

\subsection{Prescribed time marginals}\label{subsection:strategy-contraction}
The large deviation principle in \zcref{thm:marg-ldp} follows from \zcref{theorem:full-ldp} by the contraction principle, since the map $P\mapsto P_0$ is continuous from $\PP(E)$ to $\PP(\Om)$: the evaluation at time zero is continuous at every path without a jump at time zero, and a stationary law charges a jump at a fixed time with probability zero, see the proof of \zcref{lemma:continuity-properties}. The work lies in the identification of the contracted rate function, that is, in the minimum entropy problem
\begin{equation}\label{equation:contraction_time_marginal_minimum_entropy_problem}
    \min\{H^\beta(P)\colon P \in \PP(E),\, P_0 = \mu \}
    = 2 \mathfrak{H}^2(b(0, \cdot) \mu, \mu_0^!), \qquad \mu\in\PP(\Om).
\end{equation}

It is natural to look for a minimiser among reversible birth-and-death processes with time marginal $\mu$: for these, the stationarity of $\mu$ reduces to a pointwise detailed balance relation, so that the cost can be minimised pointwise. We first compute the optimal birth and death rates formally. Suppose that $P$ is a space-time stationary Markov law with $P_0 = \mu$ and with uniformly elliptic, translation-covariant birth rate $q(x, \eta)$ and death rate $d(x, \eta)$. The conditional-entropy representation~\eqref{equation:interacting_case_rate_function_conditional_entropy_representation} and a finite-volume Girsanov computation similar to \zcref{proposition:change_of_measure_a_la_girsanov} show that
\begin{equation}\label{equation:contraction_time_marginal_entropy_for_B_D_process}
    H^\beta(P)
    = \mu\Big[\int_{[0, 1)^d}\d x \, \ell(q(x, \eta) \vert b(x, \eta))  \Big] + \mu\Big[\sum_{x \in \eta_{[0, 1)^d}} \ell(d(x, \eta) \vert 1) \Big],
\end{equation}
where \(\ell(s \vert t) = s \log(s / t) - s + t\).
If we further assume that \(\mu\) has a Papangelou intensity \(\kappa(x, \eta)\) in the sense of \zcref{subsection:marginals}, which for the sake of this argument is bounded away from zero and infinity, then the GNZ equation allows us to write \eqref{equation:contraction_time_marginal_entropy_for_B_D_process} as
\begin{equation*}
    H^\beta(P)
    = \mu\Big[\int_{[0, 1)^d}\d x \, \big[ \ell(q(x, \eta) \vert b(x, \eta)) + \kappa(x, \eta) \ell(d(x, \eta + \delta_x) \vert 1) \big]  \Big].
\end{equation*}
For $P$ to be time-stationary and reversible, the rates have to satisfy the detailed balance relation \(q(x, \eta) = \kappa(x, \eta) d(x, \eta + \delta_x)\), and substituting it gives
\begin{equation*}
    H^\beta(P)
    = \mu\Big[\int_{[0, 1)^d}\d x \, \big[ \ell(q(x, \eta) \vert b(x, \eta)) + \ell(q(x, \eta) \vert \kappa(x, \eta))  \big]  \Big].
\end{equation*}
For fixed \(b, \kappa > 0\), the minimum \((\sqrt{b} - \sqrt{\kappa})^2\) of \(s \mapsto \ell(s\vert b) + \ell(s \vert \kappa)\) is attained at \(s = \sqrt{b\kappa}\). Consequently, the candidate birth and death rates are
\begin{equation}\label{equation:contraction_time_marginal_entropy_for_B_D_process_optimal_rates}
    q(x, \eta) = \sqrt{b(x, \eta) \kappa(x, \eta)},
    \qquad d(x, \eta + \delta_x)
    = \frac{q(x, \eta)}{\kappa(x, \eta)}
    = \sqrt{\frac{b(x, \eta)}{\kappa(x, \eta)}},
\end{equation}
and with these rates
\begin{equation*}
    H^\beta(P)
    = \mu\Big[\int_{[0, 1)^d} \d x\, \Big(\sqrt{b(x, \eta)} - \sqrt{\kappa(x,\eta)}\Big)^2\Big].
\end{equation*}

To turn this heuristics into a proof for general $\mu$, which need not have a Papangelou intensity, we work with the measures \(B_\mu\) and \(D_\mu\) on \(\Omega \times [0, 1)^d\) given by
\begin{equation*}
\begin{split}
    \int B_\mu(\d \eta , \d x)\, f(\eta, x)  &= \int\mu(\d\eta) \int_{[0,1)^d} \d x \, b(x, \eta) \, f(\eta, x), \\
    \int D_\mu(\d \eta, \d x)\, f(\eta, x) &=  \int\mu(\d\eta)  \sum_{x \in \eta_{[0,1)^d}} f(\eta - \delta_x, x)
\end{split}
\end{equation*}
for measurable \(f \geq 0\), where the sum over \(x \in \eta\) counts multiplicities. Thus $B_\mu$ describes the births of the original dynamics in a unit cell, and $D_\mu$ the deaths, both in terms of the configuration without the particle in question. If \(\mu\) has a Papangelou intensity \(\kappa\), then $D_\mu(\d\eta,\d x)=\mu(\d\eta)\kappa(x,\eta)\1_{[0,1)^d}(x)\,\d x$, and the heuristic minimal cost is $2\mathfrak{H}^2(B_\mu,D_\mu)$.

\begin{lemma}[Hellinger distance identities]\label{lemma:contraction_time_marginal_hellinger_distance_identities}
    Let $\mu\in\PP(\Om)$. Then
    \begin{equation}\label{equation:contraction_time_marginal_hellinger_distance_identities}
        \mathfrak{H}^2(b(0, \cdot) \mu, \mu_0^!)
        = \mathfrak{H}^2(B_\mu, D_\mu).
    \end{equation}
    If \(\mu\) has a Papangelou intensity \(\kappa(x, \eta)\), then
    \begin{equation}\label{equation:contraction_time_marginal_hellinger_distance_identity_with_papangelou}
        2 \mathfrak{H}^2(B_\mu, D_\mu )
        = \mu\Big[\int_{[0, 1)^d}\d x \, \Big(\sqrt{b(x, \eta)} - \sqrt{\kappa(x,\eta)}\Big)^2 \Big].
    \end{equation}
\end{lemma}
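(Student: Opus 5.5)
The plan is to realise both sides of \eqref{equation:contraction_time_marginal_hellinger_distance_identities} as images of one another under a measurable bijection. The Hellinger distance is invariant under pushing both measures forward by the same injective measurable map. It is also invariant under reweighting both measures by a common positive density that depends only on a coordinate both measures share in the same way. So the first identity should reduce to a change of variables.

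Concretely, consider the map $\Phi\colon\Om\times[0,1)^d\to\Om\times[0,1)^d$, $\Phi(\eta,x):=(\theta_x\eta,x)$. It is a measurable bijection with inverse $(\zeta,x)\mapsto(\theta_{-x}\zeta,x)$, and it pushes the pair $(B_\mu, D_\mu)$ forward as follows.
\begin{itemize}
\item For $B_\mu$: translation covariance $b(x,\eta)=b(0,\theta_x\eta)$ and translation invariance of $\mu$ give
\begin{equation*}
\int B_\mu\, f\circ\Phi^{-1}=\int_{[0,1)^d}\d x\int\mu(\d\eta)\, b(0,\eta)\, f(\eta,x).
\end{equation*}
Hence $\Phi_*B_\mu=(b(0,\cdot)\mu)\otimes \mathrm{Leb}_{[0,1)^d}$.
\item For $D_\mu$: we get $\int\mu(\d\eta)\sum_{x\in\eta_{[0,1)^d}}f(\theta_x(\eta-\delta_x),x)$. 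By the characterisation of $\mu_0^!$ through the Campbell-type identity, applied to $g(x,\zeta)=\1_{[0,1)^d}(x)f(\zeta,x)$, this equals $\int_{[0,1)^d}\d x\,\mu_0^![f(\cdot,x)]$. Hence $\Phi_*D_\mu=\mu_0^!\otimes\mathrm{Leb}_{[0,1)^d}$.
\end{itemize}

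Next I would use two elementary facts.
\begin{itemize}
\item $\mathfrak{H}^2$ is invariant under bijective measurable pushforward. The Radon–Nikodym derivatives with respect to $\mathfrak{M}+\mathfrak{N}$ transform by composition.
\item $\mathfrak{H}^2(\mathfrak{M}\otimes\lambda,\mathfrak{N}\otimes\lambda)=\lambda(\mathcal{Y})\,\mathfrak{H}^2(\mathfrak{M},\mathfrak{N})$ for a finite measure $\lambda$. The densities of $\mathfrak{M}\otimes\lambda$ and $\mathfrak{N}\otimes\lambda$ with respect to $(\mathfrak{M}+\mathfrak{N})\otimes\lambda$ are the densities of $\mathfrak{M}$ and $\mathfrak{N}$ with respect to $\mathfrak{M}+\mathfrak{N}$, so Fubini applies.
\end{itemize}
Here $\lambda$ is Lebesgue measure on the unit cube, so $\lambda(\mathcal{Y})=1$, and the first identity follows. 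All measures involved are $\sigma$-finite: $b(0,\cdot)\mu$ is finite since $b$ is bounded, and $\mu_0^!$ has total mass $\rho(\mu)$. For $\rho(\mu)=\infty$, $\mu_0^!$ is still $\sigma$-finite, using the sets $\{\eta([0,1)^d)\le k\}$, so no case distinction is needed.

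For \eqref{equation:contraction_time_marginal_hellinger_distance_identity_with_papangelou}, I would apply the GNZ equation with $g(x,\eta)=\1_{[0,1)^d}(x)f(\eta,x)$. This gives $D_\mu(\d\eta,\d x)=\kappa(x,\eta)\,\mu(\d\eta)\,\d x$ on $\Om\times[0,1)^d$, as noted before the lemma. Both $B_\mu$ and $D_\mu$ are then absolutely continuous with respect to $\mathfrak{R}:=\mu\otimes\mathrm{Leb}_{[0,1)^d}$, with densities $b$ and $\kappa$.

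It remains to use the standard fact that the Hellinger integrand can be computed with respect to any dominating $\sigma$-finite measure. Writing $\mathfrak{M}+\mathfrak{N}=(b+\kappa)\mathfrak{R}$, the densities with respect to $\mathfrak{M}+\mathfrak{N}$ are $b/(b+\kappa)$ and $\kappa/(b+\kappa)$, defined where $b+\kappa>0$. Then
\begin{equation*}
\Big(\sqrt{b/(b+\kappa)}-\sqrt{\kappa/(b+\kappa)}\Big)^2(b+\kappa)=(\sqrt b-\sqrt\kappa)^2 .
\end{equation*}
Since $b>0$ everywhere, no set is lost, and integrating against $\mathfrak{R}$ yields the formula with the factor $2$ cancelling the $\tfrac12$.

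The only step needing care is the push-forward of $D_\mu$. It requires correctly matching the sum over $x\in\eta_{[0,1)^d}$ with multiplicities to the Palm characterisation, and checking measurability of $(\eta,x)\mapsto\theta_x(\eta-\delta_x)$. Both are standard in the vague topology.
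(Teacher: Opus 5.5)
Your proposal is correct and is essentially the paper's proof. The paper writes $B_\mu$ and $D_\mu$ as images of $(b(0,\cdot)\mu)\otimes\1_{[0,1)^d}(x)\,\d x$ and $\mu_0^!\otimes\1_{[0,1)^d}(x)\,\d x$ under $(\eta,x)\mapsto(\theta_{-x}\eta,x)$, which is the inverse of your $\Phi$, and then reads off the Papangelou formula from the densities $b$ and $\kappa$ with respect to $\mu(\d\eta)\,\d x$.

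Two small corrections.
\begin{enumerate}
\item The push-forward identity should read $\int f\,\d(\Phi_*B_\mu)=\int f\circ\Phi\,\d B_\mu$; with $f\circ\Phi^{-1}$ as written, the change of variables does not give the stated formula.
\item For $d\ge2$ and $\rho(\mu)=\infty$, the sets $\{\zeta([0,1)^d)\le k\}$ can all have infinite $\mu_0^!$-measure, for instance for isolated clusters on short anti-diagonal segments with heavy-tailed sizes. Use $\{\zeta((-1,1)^d)\le k\}$ instead: since $x+(-1,1)^d\supset[0,1)^d$ for $x\in[0,1)^d$, their $\mu_0^!$-measure is at most $k+1$.
\end{enumerate}
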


To lift finite-volume computations and bounds, we also use, for \(\Lambda \Subset \R^d\), the measures \(B^{(\Lambda)}_\mu\) and \(D^{(\Lambda)}_\mu\) on \(\Omega_{\Lambda} \times \Lambda\) given by
\begin{equation*}
\begin{split}
    \int \, f(\eta, x) \, B^{(\Lambda)}_\mu(\d \eta , \d x) &= \int\mu_\Lambda(\d\eta_\Lambda) \int_\Lambda \d x \, b(x, \eta_\Lambda) \, f(\eta_\Lambda, x), \\
    \int \, f(\eta, x) \, D^{(\Lambda)}_\mu(\d \eta, \d x) &=  \int\mu_\Lambda(\d\eta_\Lambda)  \sum_{x \in \eta_\Lambda} f(\eta_\Lambda - \delta_x, x)
\end{split}
\end{equation*}
for measurable \(f \geq 0\), where $\mu_\Lambda$ denotes the law of $\eta_\Lambda$ under $\mu$, and similarly $\nu_\Lambda$ for other probability measures $\nu$ on $\Om$. They describe the births and deaths of the dynamics in $\Lambda$ with empty boundary configuration, and their Hellinger distance per unit volume converges to the infinite-volume one.

\begin{lemma}[Thermodynamic limit]\label{lemma:contraction_time_marginal_minimum_entropy_problem_hellinger_thermodynamic_limit}
    Let \(\mu \in \PP(\Om)\).
    Then
    \begin{equation}\label{equation:contraction_time_marginal_minimum_entropy_problem_hellinger_thermodynamic_limit}
        \mathfrak{H}^2(B_\mu, D_\mu)
        = \lim_{n \to \infty} \frac{1}{\abs{\Lambda_n}} \mathfrak{H}^2(B_\mu^{(\Lambda_n)}, D_\mu^{(\Lambda_n)}).
    \end{equation}
    If \(\mu\) is locally absolutely continuous with respect to the Poisson point process and \(\nu \in \GG^\beta_\theta\), then
    \begin{equation}\label{equation:contraction_time_marginal_dirichlet_form_identity_hellinger_distance}
        2\mathfrak{H}^2(B_\mu, D_\mu)
        = \lim_{n \to \infty} \frac{1}{\abs{\Lambda_n}} \mathcal{E}^{\nu}_{\Lambda_n}\Big(\sqrt{\tfrac{\d \mu_{\Lambda_n}}{\d \nu_{\Lambda_n}}}, \sqrt{\tfrac{\d \mu_{\Lambda_n}}{\d \nu_{\Lambda_n}}}\Big),
    \end{equation}
    with the formal Dirichlet form
    \begin{equation*}
        \mathcal{E}^\nu_\Lambda(g, g)
        = \nu\big[g(-\mathscr{L}g) \big]
        = \int \nu_\Lambda(\d\eta_\Lambda) \sum_{x \in \eta_\Lambda} [g(\eta_\Lambda - \delta_x) - g(\eta_\Lambda)]^2
        \in [0, \infty], \quad g \geq 0, g \in L^2(\nu_\Lambda).
    \end{equation*}
\end{lemma}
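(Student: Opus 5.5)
The plan is to prove both parts of the Thermodynamic limit lemma by comparing the finite-volume birth and death measures with their infinite-volume counterparts, relying on the scaling and convexity properties of the Hellinger distance.

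\emph{First identity.} The squared Hellinger distance is jointly convex and positively homogeneous. It is also additive over disjoint parts of the underlying space, and it does not increase under measurable maps, since it is an $f$-divergence. Tile $\Lambda_n$ by the unit cells $C_z=z+[0,1)^d$. Because $b$ has finite range, the birth density $b(x,\eta_{\Lambda_n})$ coincides with $b(x,\eta)$ for every $x\in C_z$ with $B_\range(C_z)\subset\Lambda_n$. For such an \emph{interior} cell, translation invariance identifies the restriction of $(B^{(\Lambda_n)}_\mu,D^{(\Lambda_n)}_\mu)$ to births and deaths located in $C_z$ with the image of $(B_\mu,D_\mu)$ under the projection $\eta\mapsto\theta_{-z}(\eta)_{\Lambda_n}$. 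Monotonicity under this map bounds each interior cell's contribution by $\mathfrak{H}^2(B_\mu,D_\mu)$. The boundary cells number $O(n^{d-1})$ and each contributes at most $\tfrac12(\|b\|_\infty+\rho(\mu))$, since $\mathfrak{H}^2(\mathfrak M,\mathfrak N)\le\frac12(\mathfrak M+\mathfrak N)(\mathcal X)$. Together this gives the $\limsup$ bound when $\rho(\mu)<\infty$.

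For the matching $\liminf$, I use the martingale/approximation property of $f$-divergences. The $\sigma$-algebras generated by $\eta_{\Lambda_m}$ increase to the full $\sigma$-algebra on $\Om\times[0,1)^d$. Hence $\mathfrak{H}^2$ of the projections onto $\Om_{\Lambda_m}\times[0,1)^d$ increases to $\mathfrak{H}^2(B_\mu,D_\mu)$, by lower semicontinuity and monotonicity. For fixed $m$ and interior cells at distance at least $m$ from $\partial\Lambda_n$, the finite-volume projection dominates this $m$-projection, because it sees $\Lambda_n\supset z+\Lambda_m$. Superadditivity over cells then gives the $\liminf$ bound. If $\rho(\mu)=\infty$, the same lower bound still applies, and both sides are infinite: on $[0,1)^d$ the death measure has infinite mass while the birth measure has mass at most $\|b\|_\infty$.

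\emph{Second identity.} If $\mu_{\Lambda_n}\ll\pi_{\Lambda_n}$, then $\mu_{\Lambda_n}\ll\nu_{\Lambda_n}$, because $\nu_{\Lambda_n}$ has a density bounded away from zero with respect to $\pi_{\Lambda_n}$. Write $\phi=\d\mu_{\Lambda_n}/\d\nu_{\Lambda_n}$. By the GNZ equation for $\nu$ in finite volume, with its Papangelou intensity equal to $b$ up to boundary effects, $D^{(\Lambda_n)}_\mu$ has density $\phi(\eta+\delta_x)\,\tilde b(x,\eta)$ with respect to $\nu_{\Lambda_n}(\d\eta)\,\d x$. Here $\tilde b(x,\eta)=b(x,\eta_{\Lambda_n}\omega_{\Lambda_n^c})$, averaged over the boundary condition. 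Likewise $B^{(\Lambda_n)}_\mu$ has density $\phi(\eta)\,b(x,\eta)$. Both densities share a common factor $b$ away from a boundary layer, so
\[
2\mathfrak{H}^2\big(B^{(\Lambda_n)}_\mu,D^{(\Lambda_n)}_\mu\big)\approx\int\nu_{\Lambda_n}(\d\eta)\int_{\Lambda_n}\d x\, b(x,\eta)\big(\sqrt{\phi(\eta+\delta_x)}-\sqrt{\phi(\eta)}\big)^2 .
\]
A second application of GNZ converts the right-hand side exactly into $\mathcal{E}^\nu_{\Lambda_n}(\sqrt\phi,\sqrt\phi)$.

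\emph{Main obstacle.} The hard step is controlling the boundary discrepancy between $b(x,\eta_{\Lambda_n})$ and the true conditional intensity of $\nu_{\Lambda_n}$ near $\partial\Lambda_n$. To handle it, I bound the contribution of $x$ in the boundary layer of width $\range$ by the crude mass bound above, which is $O(n^{d-1})$ whenever $\rho(\mu)<\infty$. The boundary density ratios of $\nu$ lie within $[\|1/b\|_\infty^{-1}/\|b\|_\infty,\ \|b\|_\infty\|1/b\|_\infty]$, so $\mathfrak{H}^2$ changes by at most a constant times the boundary mass. The case $\rho(\mu)=\infty$ again makes both sides infinite. Dividing by $|\Lambda_n|$ and combining with the first identity completes the proof.
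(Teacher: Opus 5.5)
Your proposal is correct and follows essentially the same route as the paper. For the $\limsup$ you split off the boundary strip $\partial^\range\Lambda_n$ and use contraction plus stationarity; for the $\liminf$ you use cellwise superadditivity with martingale convergence of the $\Lambda_m$-projections; and for the Dirichlet form you apply GNZ with the conditionally averaged rate $\tilde b$, so that only the boundary layer contributes an error of order $(\|b\|_\infty+\rho(\mu))|\partial^\range\Lambda_n|$.

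One point of precision: the second GNZ step is exact only when $\tilde b$, not $b(x,\eta_{\Lambda_n})$, appears in your displayed integrand. This is why the paper introduces the auxiliary measure $g_\Lambda(\eta)\,\tilde b(x,\eta)\,\nu_\Lambda(\d\eta)\,\d x$, which makes $\mathcal{E}^\nu_\Lambda(\sqrt{g_\Lambda},\sqrt{g_\Lambda})$ exactly twice a Hellinger distance. Your boundary-layer comparison then handles the remaining discrepancy.
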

The identity~\eqref{equation:contraction_time_marginal_dirichlet_form_identity_hellinger_distance} might remind the reader of the rate function in the classical large deviation principle for the empirical measure of continuous-time Markov chains, see e.g.~\cite[Theorem~IV.14 and Exercise~IV.24]{denHollander2000}.

The heuristics suggests that $2\mathfrak{H}^2(B_\mu,D_\mu)$ is the minimal cost. The lower bound is proved in finite volume, via a variational characterisation of the Hellinger distance in terms of the generator of the dynamics in $\Lambda$ and an exponential martingale, and then lifted by \zcref{lemma:contraction_time_marginal_minimum_entropy_problem_hellinger_thermodynamic_limit}. Recall that $Q^{\Lambda,\beta}$ denotes the dynamics in $\Lambda$ with birth rate $b(x,\eta_\Lambda)$, that is, with empty boundary configuration.

\begin{lemma}[Finite-volume lower bound]\label{lemma:contraction_time_marginal_minimum_entropy_problem_hellinger_finite_volume_lower_bound}
    Let \(\mu \in \PP(\Om)\), \(\Lambda \Subset \R^d\) and \(T > 0\), and let \(P_{[0, T]}\) be a probability measure on \(D([0, T]; \Omega_\Lambda)\) whose time marginals are all equal to \(\mu_\Lambda\).
    Then
    \begin{equation}\label{equation:contraction_time_marginal_minimum_entropy_problem_hellinger_finite_volume_lower_bound}
        \Ent(P_{[0, T]} \vert \mu_\Lambda Q^{\Lambda,\beta})
        \geq 2T \mathfrak{H}^2(B_\mu^{(\Lambda)}, D_\mu^{(\Lambda)}).
    \end{equation}
\end{lemma}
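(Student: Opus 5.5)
We plan to prove \zcref{lemma:contraction_time_marginal_minimum_entropy_problem_hellinger_finite_volume_lower_bound} via the Donsker--Varadhan variational formula for relative entropy, tested against exponential martingales of the finite-volume dynamics $Q^{\Lambda,\beta}$. We may assume $\Ent(P_{[0,T]}\vert\mu_\Lambda Q^{\Lambda,\beta})<\infty$. Since both measures have the same initial law $\mu_\Lambda$, we have $\Ent(P_{[0,T]}\vert \mu_\Lambda Q^{\Lambda,\beta}) \ge P[\Phi] - \log \mu_\Lambda Q^{\Lambda,\beta}[\e^{\Phi}]$ for every measurable $\Phi$ for which the right-hand side makes sense. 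Let $g\colon\Omega_\Lambda\to[c,C]$ with $0<c<C$ be a bounded measurable function, bounded away from zero. We choose
\begin{equation*}
    \Phi_g(\omega)=\log g(\omega_T)-\log g(\omega_0)-\int_0^T \frac{(\mathscr{L}_\Lambda g)(\omega_t)}{g(\omega_t)}\,\d t,
\end{equation*}
where $\mathscr{L}_\Lambda$ is the generator of $Q^{\Lambda,\beta}$. Since $b$ is bounded and the jump rate out of a configuration with $k$ particles is at most $\norm{b}_\infty\abs{\Lambda}+k$, the process $\e^{\Phi_g}$ is a positive local martingale, hence a supermartingale, under $Q^{\Lambda,\beta}_\eta$ for every $\eta$. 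Therefore $\mu_\Lambda Q^{\Lambda,\beta}[\e^{\Phi_g}]\le 1$, and we obtain $\Ent\ge P[\Phi_g]$.

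Because all time marginals of $P_{[0,T]}$ equal $\mu_\Lambda$, the boundary terms cancel: $P[\log g(\omega_T)]=P[\log g(\omega_0)]$, since $\log g$ is bounded. Fubini then gives
\begin{equation*}
    P[\Phi_g]=-T\,\mu_\Lambda[\mathscr{L}_\Lambda g/g].
\end{equation*}
We expand the right-hand side, writing $u(\eta,x):=g(\eta+\delta_x)/g(\eta)$:
\begin{equation*}
    -\mu_\Lambda\Big[\frac{\mathscr{L}_\Lambda g}{g}\Big]
    =\int \mu_\Lambda(\d\eta)\Big(\int_\Lambda b(x,\eta)\big(1-u(\eta,x)\big)\d x+\sum_{x\in\eta}\Big(1-\frac{g(\eta-\delta_x)}{g(\eta)}\Big)\Big).
\end{equation*}
In the death sum we substitute $\eta\mapsto\eta-\delta_x$, which is exactly the definition of $D^{(\Lambda)}_\mu$. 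The expression then becomes
\begin{equation*}
    \int B^{(\Lambda)}_\mu(\d\eta,\d x)\,(1-u)+\int D^{(\Lambda)}_\mu(\d\eta,\d x)\,(1-u^{-1}).
\end{equation*}
Here the total mass $B^{(\Lambda)}_\mu(\Omega_\Lambda\times\Lambda)\le\norm{b}_\infty\abs{\Lambda}$ is finite, and $D^{(\Lambda)}_\mu$ has total mass $\mu[\eta(\Lambda)]$. If that mass is infinite, we restrict to functions with $u=1$ outside a region of finite $D$-mass, or truncate, and pass to a limit.

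It remains to take the supremum over $g$. For a function $u$ on $\Omega_\Lambda\times\Lambda$ that is not of gradient form, the pointwise optimum is $u=\sqrt{\d D/\d B}$, with densities taken relative to $B+D$. This gives exactly $\int(\sqrt{\d B}-\sqrt{\d D})^2=2\mathfrak{H}^2(B^{(\Lambda)}_\mu,D^{(\Lambda)}_\mu)$. The main obstacle is that $u$ must be a ratio $g(\eta+\delta_x)/g(\eta)$.

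To handle this, we first try a coarser route. The entropy $\Ent(P_{[0,T]}\vert\mu_\Lambda Q^{\Lambda,\beta})$ dominates the entropy of the joint law of the first-jump data. Equivalently, we use the general Girsanov/Jacod representation of relative entropy for marked point processes: $P$ has predictable birth and death compensators, and the entropy equals the expected integrated $\ell$-costs of these compensators relative to $b$ and $1$. By convexity of $\ell$ and stationarity of the marginals, with Jensen applied to conditional expectations given the current state, we reduce to Markov rates $q(x,\eta)$ and $d(x,\eta+\delta_x)$ satisfying the balance identity $q\,\d B/b = d\,\d D$ in the measure sense. The heuristic pointwise minimisation following \eqref{equation:contraction_time_marginal_entropy_for_B_D_process} then yields the bound $2T\mathfrak{H}^2$.

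The balance identity needs justification. For finite volume and stationary marginals, the net flux identity $\mu_\Lambda[\mathscr{L}^Pf]=0$ for all $f$ forces equality of births into and deaths out of each configuration. This is global balance, not detailed balance, and this step is where we expect the real difficulty. Our fallback is a time-reversal argument: the stationary $P$ and its time reversal have equal entropy, by symmetrising the cost and using convexity. The symmetrised process is reversible and satisfies detailed balance, after which the pointwise minimisation applies.
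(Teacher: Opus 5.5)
Your first part is sound and matches the paper's entropy bound in generator form. This covers the positive exponential (super)martingale $\e^{\Phi_g}$, the cancellation of the boundary terms by stationarity of the marginals, and the rewriting of $-\mu_\Lambda[\mathscr{L}_\Lambda g/g]$ as $\int(1-u)\,\d B^{(\Lambda)}_\mu+\int(1-u^{-1})\,\d D^{(\Lambda)}_\mu$ with $u(\eta,x)=g(\eta+\delta_x)/g(\eta)$.

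The gap is the step you flag yourself and then leave open. You never show that the supremum of this expression over multiplicative gradients $u$ reaches $2\mathfrak{H}^2(B^{(\Lambda)}_\mu,D^{(\Lambda)}_\mu)$, and that is the real content of the lemma. The missing idea is that the unconstrained pointwise optimiser $\sqrt{\d D^{(\Lambda)}_\mu/\d B^{(\Lambda)}_\mu}$ is itself of gradient form. Suppose $\mu_\Lambda=f\gamma_\Lambda$, where $\gamma_\Lambda$ is the finite-volume Gibbs measure with empty boundary condition, so its Papangelou intensity is $b(x,\eta_\Lambda)$. Then the GNZ equation for $\gamma_\Lambda$ gives $B^{(\Lambda)}_\mu(\d\eta,\d x)=f(\eta)\,b(x,\eta)\,\gamma_\Lambda(\d\eta)\,\d x$ and $D^{(\Lambda)}_\mu(\d\eta,\d x)=f(\eta+\delta_x)\,b(x,\eta)\,\gamma_\Lambda(\d\eta)\,\d x$. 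Hence $\d D^{(\Lambda)}_\mu/\d B^{(\Lambda)}_\mu=f(\eta+\delta_x)/f(\eta)$, and $g=\sqrt f$ attains the bound.

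For general, possibly singular, $\mu_\Lambda$, which must be covered since $\mu$ is an arbitrary element of $\PP(\Om)$, an approximation is needed. The paper proceeds as follows:
\begin{itemize}
\item It resamples the particle positions uniformly within nested dyadic cells and uses the density $f_n$ of the resampled law with respect to $\gamma_\Lambda$.
\item The ratios $\sqrt{f_n(\eta+\delta_x)/f_n(\eta)}$ are, up to the factor $b(x,\eta)^{-1/2}$, square roots of ratios of the masses that $D^{(\Lambda)}_\mu$ and $\mu_\Lambda(\d\eta)\,\d x$ give to dyadic cells of $\Omega_\Lambda\times\Lambda$. Martingale convergence and Scheff\'e's lemma therefore yield convergence to the geometric mean.
\item It truncates $\sqrt{f_n}$ from above, from below and in the particle number to obtain admissible test functions.
\end{itemize}

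Neither of your alternative routes gets around this; both stall at the same point. The Jacod representation plus Jensen does reduce the entropy to the costs of the mean birth and death measures $\bar{\mathcal B},\bar{\mathcal D}$ relative to $B^{(\Lambda)}_\mu,D^{(\Lambda)}_\mu$. But, as you observe, stationarity of the marginals only gives $\int(\bar{\mathcal B}-\bar{\mathcal D})(\d\eta,\d x)\,[h(\eta+\delta_x)-h(\eta)]=0$ for bounded $h$. This does not force $\bar{\mathcal B}=\bar{\mathcal D}$, because the configuration graph contains cycles such as $\emptyset\to\{x\}\to\{x,y\}\to\{y\}\to\emptyset$, which can carry non-reversible stationary fluxes.

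The time-reversal fallback rests on an unjustified claim. The reference law $\mu_\Lambda Q^{\Lambda,\beta}$ is reversible only if $\mu_\Lambda=\gamma_\Lambda$, so no symmetry forces $P_{[0,T]}$ and its reversal to have the same entropy. At the level of the mean measures, reversal swaps $\bar{\mathcal B}$ and $\bar{\mathcal D}$, and the difference of the two costs is $\int(\bar{\mathcal B}-\bar{\mathcal D})\log(\d D^{(\Lambda)}_\mu/\d B^{(\Lambda)}_\mu)$. This vanishes under global balance precisely because $\log(\d D^{(\Lambda)}_\mu/\d B^{(\Lambda)}_\mu)=\log f(\eta+\delta_x)-\log f(\eta)$ is a gradient. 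That is the same structural fact as above, and for singular $\mu_\Lambda$ it again holds only after approximation.

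Once you use this fact, your Donsker--Varadhan bound closes directly with $g=\sqrt f$ or its truncated approximations. The case $\mu[\eta(\Lambda)]=\infty$ is handled separately by $g=\e^{\eta(\Lambda)\wedge m}$, $m\to\infty$.
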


The upper bound is achieved by a rigorous version of the optimal reversible dynamics~\eqref{equation:contraction_time_marginal_entropy_for_B_D_process_optimal_rates}, constructed first in finite volume and then extended to infinite volume by a stationary block construction, as for $\bar P$ in \zcref{subsection:strategy-lower-bound}, together with a compactness argument.

\begin{lemma}[Infinite-volume attainment]\label{lemma:contraction_time_marginal_minimum_entropy_problem_hellinger_existence_of_minimizer_in_infinite_volume}
    For every \(\mu \in \PP(\Om)\) with $\rho(\mu)<\infty$, there is a \(P \in \PP(E)\) with \(P_0 = \mu\) such that
    \begin{equation}\label{equation:contraction_time_marginal_minimum_entropy_problem_hellinger_existence_of_minimizer_in_infinite_volume}
        H^\beta(P)
        = 2\mathfrak{H}^2(B_\mu, D_\mu).
    \end{equation}
\end{lemma}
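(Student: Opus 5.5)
The plan is to build the minimiser in three stages: finite volume, then infinite volume by a stationary block construction, then a compactness limit that identifies the cost. If $2\mathfrak{H}^2(B_\mu,D_\mu)=\infty$, the statement follows from \zcref{lemma:contraction_time_marginal_minimum_entropy_problem_hellinger_finite_volume_lower_bound} combined with \zcref{lemma:contraction_time_marginal_minimum_entropy_problem_hellinger_thermodynamic_limit}. Indeed, every $P$ with $P_0=\mu$ then has $H^\beta(P)=\infty$, by the conditional-entropy representation \eqref{equation:interacting_case_rate_function_conditional_entropy_representation}, and we may take any stationary $P$ with $P_0=\mu$, for instance the law of the constant path. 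Hence we may assume $2\mathfrak{H}^2(B_\mu,D_\mu)<\infty$. In that case, by \zcref{lemma:contraction_time_marginal_minimum_entropy_problem_hellinger_finite_volume_lower_bound}, it suffices to construct $P$ with $P_0=\mu$ and $H^\beta(P)\le 2\mathfrak{H}^2(B_\mu,D_\mu)$.

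\emph{Finite volume.} Fix a cube $A=(0,a)^d$. I would define a reversible Markov jump process on $\Om_A$ whose jump kernel is the geometric mean of the birth and death measures. Let $G^{(A)}:=\sqrt{B^{(A)}_\mu D^{(A)}_\mu}$, taken with respect to $B^{(A)}_\mu+D^{(A)}_\mu$. The process jumps from $\eta$ to $\eta+\delta_x$ with flux $G^{(A)}(\d\eta,\d x)$, and from $\eta+\delta_x$ back to $\eta$ with the same flux. This construction needs no Papangelou intensity. Detailed balance holds by construction, so $\mu_A$ is reversible. The total flux is finite because $G^{(A)}\le (B^{(A)}_\mu+D^{(A)}_\mu)/2$ and $\rho(\mu)<\infty$, so the process is non-explosive in law. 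The reference dynamics $\mu_AQ^{A,\beta}$ has birth flux $B^{(A)}_\mu$ and death flux $D^{(A)}_\mu$. A Girsanov computation for Poisson jump measures, as in \zcref{proposition:change_of_measure_a_la_girsanov} but with densities of fluxes, then gives $\Ent(P^{A}_{[0,T]}\mid\mu_AQ^{A,\beta})=T\,[\mathcal{K}(G^{(A)}\mid B^{(A)}_\mu)+\mathcal{K}(G^{(A)}\mid D^{(A)}_\mu)]$, where $\mathcal{K}(G\mid B)=\int (g\log(g/b)-g+b)$. A pointwise computation shows this equals $2T\,\mathfrak{H}^2(B^{(A)}_\mu,D^{(A)}_\mu)$, because the $g\log$ terms cancel at $g=\sqrt{bd}$.

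\emph{Infinite volume.} I would place independent stationary copies of this finite-volume process in the tiles $A+a\Z^d$, each started from $\mu_A$ and observed for all times; this gives a time-stationary law. Averaging over spatial shifts in $A$, as in \eqref{eq:lb-stationary-cycle-law}, yields $P^{(a)}\in\PP(E)$. Its time marginal is $\bar\mu^{(a)}$, the spatial average of $\bigotimes\mu_A$, and $\bar\mu^{(a)}\to\mu$ weakly as $a\to\infty$. By superadditivity of the local entropies (\zcref{lem:entropy-superadditivity}) and the finite-range energetic term, $H^\beta(P^{(a)})\le |A|^{-1}\,2\mathfrak{H}^2(B^{(A)}_\mu,D^{(A)}_\mu)+o(1)$. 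By \zcref{lemma:contraction_time_marginal_minimum_entropy_problem_hellinger_thermodynamic_limit}, the right-hand side converges to $2\mathfrak{H}^2(B_\mu,D_\mu)$. The boundary interaction only costs $O(a^{d-1}/a^d)$, using \eqref{equation:interacting_case_lower_bound_via_free_entropy} and the bounds on $b$.

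\emph{Limit.} The values $H^\beta(P^{(a)})$ are bounded, so by \eqref{equation:interacting_case_lower_bound_via_free_entropy} the values $H(P^{(a)})$ are bounded as well. Goodness of $H^\beta$ (\zcref{theorem:full-ldp}) then gives a weakly convergent subsequence $P^{(a)}\to P$. By continuity of $P\mapsto P_0$ we get $P_0=\lim\bar\mu^{(a)}=\mu$. Lower semicontinuity of $H^\beta$ then yields $H^\beta(P)\le\liminf H^\beta(P^{(a)})=2\mathfrak{H}^2(B_\mu,D_\mu)$. Lower semicontinuity holds because $H^\beta$ is a good rate function, hence has closed sublevel sets.

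The main obstacle is the finite-volume Girsanov identity when the fluxes are not uniformly elliptic, and may be singular or vanish on parts of the space. I would first treat truncated kernels: restrict $\eta(A)\le K$ and replace $G$ by $G\wedge$ constants on the set where $\d G/\d B$ and $\d G/\d D$ lie in $[1/k,k]$, adjusting so that detailed balance is preserved. I would then pass to the limit using monotone convergence of the Hellinger integrands. A secondary issue is the cost contributed by the spatial tile boundaries, which should be handled as in \zcref{lemma:block-law}.
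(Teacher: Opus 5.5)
Your proposal is correct and follows the paper's route: a reversible finite-volume process whose birth and death fluxes both equal $\sqrt{B^{(\Lambda)}_\mu D^{(\Lambda)}_\mu}$, independent copies in the tiles with a uniform spatial shift, and a limit via compact sublevel sets and lower semicontinuity. The reverse inequality comes, as in the paper, from the finite-volume lower bound together with the thermodynamic limit.

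Three small corrections:
\begin{enumerate}[(i)]
\item Your first case is vacuous. Since $\rho(\mu)<\infty$, you have $\mathfrak{H}^2(B_\mu,D_\mu)\le(\Vert b\Vert_\infty+\rho(\mu))/2<\infty$.
\item Superadditivity alone cannot give the upper bound on $H(P^{(a)})$; it only reduces the supremum to large cubes. The bound itself comes from joint convexity of the relative entropy over the random shift, data processing to enlarge a cube to the union of tiles it meets, and tensorization over the independent tiles.
\item The boundary energetic error is not controlled by \eqref{equation:interacting_case_lower_bound_via_free_entropy} or by bounds on $b$ alone, since the optimal birth rates are unbounded. It is controlled by the birth flux in the strip, which satisfies $\sqrt{B^{(A)}_\mu D^{(A)}_\mu}(\Om_A\times\partial^\range A)\le(\Vert b\Vert_\infty+\rho(\mu))\abs{\partial^\range A}/2$.
\end{enumerate}
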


Let us summarise these observations.
\begin{proposition}[Minimum entropy problem]\label{proposition:contraction_time_marginal_minimum_entropy_problem}
    Let \(\mu \in \PP(\Om)\) with \(\rho(\mu) < \infty\). Then the identity~\eqref{equation:contraction_time_marginal_minimum_entropy_problem} holds, and the minimum is attained. If \(\rho(\mu) = \infty\), then both sides of~\eqref{equation:contraction_time_marginal_minimum_entropy_problem} are infinite.
\end{proposition}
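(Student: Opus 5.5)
The proof combines the three lemmas just stated with \zcref{lemma:contraction_time_marginal_hellinger_distance_identities}. Throughout, $\mu\in\PP(\Om)$ and $P\in\PP(E)$ with $P_0=\mu$.

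\emph{Lower bound for finite intensity.} Assume $\rho(\mu)<\infty$. If $H^\beta(P)=\infty$ there is nothing to show, so let $H^\beta(P)<\infty$. By space-time stationarity of $P$, the restriction $P_{\Lambda_n,[0,n]}$ is a law on $D([0,n];\Om_{\Lambda_n})$ all of whose time marginals equal $\mu_{\Lambda_n}$. \zcref{lemma:contraction_time_marginal_minimum_entropy_problem_hellinger_finite_volume_lower_bound} with $\Lambda=\Lambda_n$ and $T=n$ then gives
\[
\Ent\big(P_{\Lambda_n,[0,n]}\lvert \mu_{\Lambda_n}Q^{\Lambda_n,\beta}\big)\ge 2n\,\mathfrak{H}^2\big(B^{(\Lambda_n)}_\mu,D^{(\Lambda_n)}_\mu\big).
\]
Divide by $n\abs{\Lambda_n}$ and let $n\to\infty$. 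The left side converges to $H^\beta(P)$ by the conditional-entropy representation~\eqref{equation:interacting_case_rate_function_conditional_entropy_representation}. The right side converges to $2\mathfrak{H}^2(B_\mu,D_\mu)$ by~\eqref{equation:contraction_time_marginal_minimum_entropy_problem_hellinger_thermodynamic_limit}. Identity~\eqref{equation:contraction_time_marginal_hellinger_distance_identities} rewrites the limit as $2\mathfrak{H}^2(b(0,\cdot)\mu,\mu_0^!)$, which proves $\ge$ in~\eqref{equation:contraction_time_marginal_minimum_entropy_problem}.

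\emph{Attainment for finite intensity.} \zcref{lemma:contraction_time_marginal_minimum_entropy_problem_hellinger_existence_of_minimizer_in_infinite_volume} supplies some $P$ with $P_0=\mu$ and $H^\beta(P)=2\mathfrak{H}^2(B_\mu,D_\mu)$. Hence the infimum is attained, so it is a minimum, and it equals the Hellinger expression.

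\emph{Infinite intensity.} Let $\rho(\mu)=\infty$. I must show that every $P$ with $P_0=\mu$ has $H^\beta(P)=\infty$, and that $\mathfrak{H}^2(b(0,\cdot)\mu,\mu_0^!)=\infty$.
\begin{itemize}
\item For the first claim, argue by contradiction: suppose $H^\beta(P)<\infty$. Then $H(P)<\infty$ by~\eqref{equation:interacting_case_lower_bound_via_free_entropy}. I expect \zcref{lem:finite-entropy-intensities}, used earlier to make $F$ integrable, to give $\rho(P)<\infty$ whenever $H(P)<\infty$. The reason is that a stationary law with finite free entropy has a finite birth intensity, and by stationarity the expected number of alive particles equals the birth intensity times the mean lifetime. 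This yields $\rho(\mu)=\rho(P)<\infty$, a contradiction.
\item For the second claim, note that $b(0,\cdot)\mu$ has total mass at most $\norm{b}_\infty$, while $\mu_0^!$ has total mass $\rho(\mu)=\infty$. Write $\mathfrak{H}^2(\mathfrak M,\mathfrak N)\ge\tfrac12(\sqrt{\mathfrak N(\mathcal X)}-\sqrt{\mathfrak M(\mathcal X)})^2$ in the infinite-mass form: on the set where $\d\mathfrak N/\d(\mathfrak M+\mathfrak N)\ge 3/4$, the integrand is bounded below by a positive constant times $\mathfrak N$. This set has infinite $\mathfrak N$-mass, because its complement has $\mathfrak N$-mass at most $3\mathfrak M(\mathcal X)<\infty$. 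Hence the Hellinger distance is infinite.
\end{itemize}

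The main obstacle is the infinite-intensity case, specifically confirming that finite $H$ forces finite intensity. The passage to the limit in the lower bound is routine, since all ingredients are stated as exact limits.
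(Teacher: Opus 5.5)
Your proposal is correct and follows the paper's proof essentially step for step. The steps are the finite-volume lower bound with $\Lambda=\Lambda_n$, $T=n$, lifted by the conditional-entropy representation and the thermodynamic limit; attainment from the infinite-volume lemma; and, for $\rho(\mu)=\infty$, finite intensity under finite $H$ together with the mass mismatch $\Vert b\Vert_\infty$ versus $\rho(\mu)=\infty$. The paper states the mismatch for $B_\mu,D_\mu$ and transfers it by the Hellinger identity, whereas you verify it directly for $b(0,\cdot)\mu$ and $\mu_0^!$, which is equally fine.

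Two small corrections. First, that $P_{\Lambda_n,[0,n]}$ is a law on $D([0,n];\Om_{\Lambda_n})$ follows from $H(P)<\infty$ via the legal-path part of \zcref{lem:finite-entropy-intensities}, not from stationarity. Second, that lemma states $\rho(P)<\infty$ outright, proved via $H(P)\ge(\sqrt{\rho(P)}-1)^2$ from an exponential martingale for the particle count. So no ``birth intensity times mean lifetime'' argument is needed, and that argument would fail anyway, since lifetimes under $P$ are not controlled a priori.
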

\begin{proof}
    If \(\rho(\mu) = \infty\), then every space-time stationary \(P\) with \(P_0 = \mu\) satisfies \(H(P) = \infty\), since \(H(P)<\infty\) would imply \(\rho(P)<\infty\) by \zcref{lem:finite-entropy-intensities}, and hence \(H^\beta(P) = \infty\). On the other hand, \(B_\mu[1] \leq \Vert b\Vert_\infty\), whereas \(D_\mu[1]=\rho(\mu) = \infty\), so that the Hellinger distance \(\mathfrak{H}^2(B_\mu, D_\mu)\) is infinite as well; by~\eqref{equation:contraction_time_marginal_hellinger_distance_identities}, the right-hand side of~\eqref{equation:contraction_time_marginal_minimum_entropy_problem} is infinite.

    Now let \(\rho(\mu) < \infty\), and let $P\in\PP(E)$ with $P_0=\mu$. If $H^\beta(P)=\infty$, there is nothing to prove. Otherwise $H(P)<\infty$, so that $P$-almost every path is a legal birth-and-death path by \zcref{lem:finite-entropy-intensities}, and $P_{\Lambda_n, [0,n]}$ is a law on $D([0,n];\Om_{\Lambda_n})$. By stationarity, all its time marginals are equal to $\mu_{\Lambda_n}$, so that \zcref{lemma:contraction_time_marginal_minimum_entropy_problem_hellinger_finite_volume_lower_bound} applies with $\Lambda=\Lambda_n$ and $T=n$. Dividing by $n\abs{\Lambda_n}$ and using the conditional-entropy representation~\eqref{equation:interacting_case_rate_function_conditional_entropy_representation} together with \zcref{lemma:contraction_time_marginal_minimum_entropy_problem_hellinger_thermodynamic_limit}, we obtain
    \begin{equation}\label{equation:contraction_time_marginal_minimum_entropy_problem_hellinger_infinite_volume_lower_bound}
        H^\beta(P) \geq 2\mathfrak{H}^2(B_\mu, D_\mu).
    \end{equation}
    On the other hand, \zcref{lemma:contraction_time_marginal_minimum_entropy_problem_hellinger_existence_of_minimizer_in_infinite_volume} shows that there exists a space-time stationary \(P\) with time marginal \(P_0 = \mu\) and
    \(H^\beta(P) = 2\mathfrak{H}^2(B_\mu, D_\mu)\).
    Together with~\eqref{equation:contraction_time_marginal_hellinger_distance_identities}, this proves~\eqref{equation:contraction_time_marginal_minimum_entropy_problem}.
\end{proof}

It remains to identify the zeros. The Hellinger distance in~\eqref{equation:contraction_time_marginal_minimum_entropy_problem} vanishes if and only if \(b(0, \cdot) \mu = \mu_0^!\). In view of the characterisation of the reduced Palm measure and translation invariance, this is the GNZ equation for $\mu$ with Papangelou intensity $b$, which characterises the Gibbs measures. Equivalently, $B_\mu=D_\mu$: under $\mu$, the dynamics creates a particle at $x$ in a configuration $\eta$ at the same rate as it removes this particle from $\eta+\delta_x$.

\begin{proof}[Proof of \zcref{thm:marg-ldp}]
    The map \(P \mapsto P_0\) is continuous from $\PP(E)$ to $\PP(\Om)$, both equipped with the weak topology, as explained at the beginning of this subsection, so \zcref{theorem:full-ldp} and the contraction principle give the large deviation principle for $((R_n)_0)_{n\in\N}$ with good rate function $\mu\mapsto\inf\{H^\beta(P)\colon P_0 = \mu\}$. By the goodness of $H^\beta$, the infimum is attained whenever it is finite, and \zcref{proposition:contraction_time_marginal_minimum_entropy_problem} identifies it with $2 \mathfrak{H}^2(b(0, \cdot) \mu, \mu_0^!)$ if $\rho(\mu)<\infty$, and with $\infty$ otherwise.

    The rate function $\mathfrak{I}^\beta$ is non-negative, and $\mathfrak{I}^\beta(\mu)=0$ holds if and only if \(\rho(\mu)<\infty\) and
\begin{equation}\label{equation:contraction_time_marginal_hellinger_distance_gibbs}
        b(0, \cdot) \mu
        = \mu_0^!.
    \end{equation}
    By~\cite[Lemma~3.4]{JKSZ25}, \eqref{equation:contraction_time_marginal_hellinger_distance_gibbs} holds for \(\mu \in \PP(\Om)\) if and only if \(\mu \in \GG^\beta_\theta\). Every $\mu\in\GG^\beta_\theta$ has finite intensity, since the GNZ equation with $g(x,\eta)=\1_{[0,1)^d}(x)$ gives $\rho(\mu)=\mu\big[\int_{[0,1)^d}\d x\, b(x,\cdot)\big]\le\Vert b\Vert_\infty$. Hence $\argmin \mathfrak{I}^\beta = \GG^\beta_\theta$. Since $\GG^\beta_\theta\neq\emptyset$, the minimum of $\mathfrak{I}^\beta$ is zero.
\end{proof}

The proofs of \zcref{lemma:contraction_time_marginal_hellinger_distance_identities,lemma:contraction_time_marginal_minimum_entropy_problem_hellinger_thermodynamic_limit,lemma:contraction_time_marginal_minimum_entropy_problem_hellinger_finite_volume_lower_bound,lemma:contraction_time_marginal_minimum_entropy_problem_hellinger_existence_of_minimizer_in_infinite_volume} are given in \zcref{section:contraction}.

\section{Properties of the dynamical specific entropy}\label{section:rate-function}
In this section we collect two properties of the dynamical specific entropy $H$ that are used throughout the proofs. First, the local entropies $h_{A,T}$ are superadditive in space and in time, which identifies $H$ as a limit along growing space-time boxes and shows that integer spatial and temporal scales suffice in its definition. Second, a law with finite dynamical specific entropy is concentrated on legal birth-and-death paths and has finite occupation, birth and death intensities.

Throughout the proofs, we use the variational representation of the relative entropy: for probability measures $\mathbb{P},\mathbb{Q}$ on a measurable space $(X, \mathcal{X})$,
\begin{equation}\label{def:entropy-variational-formulation}
    \Ent(\mathbb{P}\lvert \mathbb{Q})
    =
    \sup_{f \in B(\mathcal{X}),\, f \geq 0}\Big(\mathbb{P}[f]-\log\mathbb{Q}\big[\e^f\big]\Big),
\end{equation}
where $B(\mathcal{X})$ denotes the set of all bounded $\mathcal{X}$-measurable functions $f\colon X \to \R$. When $X$ is a metric space and $\mathcal{X}$ its Borel $\sigma$-algebra, the supremum can be restricted to bounded continuous functions.

\begin{lemma}[Superadditivity and the entropy limit]
\label{lem:entropy-superadditivity}
Let $P\in\PP(E)$.
For every open cube $A\subset\mathbb R^d$, every finite family of pairwise disjoint open cubes $A_1,\ldots,A_m\subset A$, and every $T>0$,
\begin{equation}\label{eq:entropy-spatial-superadditivity}
    h_{A,T}(P)\geq\sum_{i=1}^m h_{A_i,T}(P).
\end{equation}
Moreover, for every open cube $A$ and every $S,T>0$,
\begin{equation}\label{eq:entropy-temporal-superadditivity}
    h_{A,S+T}(P)\geq h_{A,S}(P)+h_{A,T}(P).
\end{equation}
Consequently, for every $\tau>0$,
\begin{equation}\label{eq:specific-entropy-limit}
    H(P)
    =\sup_{a,T\in\mathbb N_{\geq1}}
       \frac{h_{(0,a)^d,T}(P)}{a^dT} = \lim_{n\to\infty}
       \frac{h_{\Lambda_n^\circ,\tau n}(P)}{\tau n|\Lambda_n|},
\end{equation}
where $\Lambda_n^\circ=(-n/2,n/2)^d$.
Finally, if $H(P)<\infty$, then for all $a,T>0$,
\begin{equation*}
    h_{(0,a)^d,T}(P) = h_{[0,a)^d,T}(P) = h_{[0,a]^d,T}(P).
\end{equation*}
\end{lemma}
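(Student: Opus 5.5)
For the spatial superadditivity \eqref{eq:entropy-spatial-superadditivity}, I will use the chain rule of relative entropy together with the product structure of the reference measure. Conditionally on the initial configuration $\eta$, the free evolution $Q^A_\eta$ restricted to $\bigcup_i A_i\times[0,T]$ is the product $\bigotimes_i Q^{A_i}_{\eta_{A_i}}$. The reason is that births are a space-time Poisson process and deaths are independent, so disjoint cubes evolve independently.

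First, restricting to the sub-$\sigma$-algebra generated by the paths in $\bigcup_i A_i$ can only decrease relative entropy. This gives $h_{A,T}(P)\ge \Ent(P_{\cup A_i,T}\lvert P_{\cup A_i,0}\bigotimes_i Q^{A_i})$. Next, I disintegrate with respect to the initial configuration, writing $P_{\cup A_i,T}=\int P_{\cup A_i,0}(\d\eta)P^\eta$ with the same first marginal as the reference measure. Then I use the standard inequality $\Ent(\mu\lvert\bigotimes_i\nu_i)\ge\sum_i\Ent(\mu_i\lvert\nu_i)$ for a probability $\mu$ with marginals $\mu_i$, applied fibrewise. This gives
\[
\int P_{\cup A_i,0}(\d\eta)\sum_i\Ent(P^\eta_i\lvert Q^{A_i}_{\eta_{A_i}}).
\]
Finally, I integrate the $\eta_{A_j}$, $j\ne i$, out of the $i$-th term. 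By convexity of relative entropy, this reduces each term to $h_{A_i,T}(P)$, which is exactly the chain-rule form $\Ent(P_{A_i,T}\lvert P_{A_i,0}Q^{A_i})$.

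For the temporal superadditivity \eqref{eq:entropy-temporal-superadditivity}, I apply the chain rule at time $S$. The Markov property of $Q^A$ says that, given the path on $[0,S]$, the reference law on $[S,S+T]$ is $Q^A_{\omega_S}$. Hence
\[
h_{A,S+T}(P)=h_{A,S}(P)+P\big[\Ent(P(\cdot\lvert\mathcal F_{A\times[0,S]})_{[S,S+T]}\lvert Q^A_{\omega_S})\big].
\]
By convexity I can coarsen the conditioning from $\mathcal F_{A\times[0,S]}$ to $\sigma(\omega_S|_A)$, which decreases the second term. Time stationarity of $P$ then identifies the coarsened term with $h_{A,T}(P)$. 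One subtlety is that $P$ could charge a jump at time $S$, but stationarity excludes jumps at fixed times almost surely.

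With both superadditivities in hand, $(a,T)\mapsto h_{(0,a)^d,T}(P)$ is superadditive, and by translation invariance it is also monotone in the cube. The standard multiparameter subadditivity argument, in the form of Fekete's lemma, then shows that the normalised quantity converges along growing boxes to its supremum. The supremum over integer parameters agrees with the supremum over real ones because of monotonicity: enlarging $a,T$ to the next integers changes the normalisation by a factor tending to $1$, once the supremum is approached along large boxes. Applying the same argument with $\Lambda_n^\circ$ and the time horizon $\tau n$ gives \eqref{eq:specific-entropy-limit}.

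The final claim is the step I expect to be the main obstacle. From $(0,a)^d\subset[0,a)^d\subset[0,a]^d\subset(-\epsilon,a+\epsilon)^d$, monotonicity gives $h_{(0,a)^d,T}\le h_{[0,a)^d,T}\le h_{[0,a]^d,T}\le h_{(-\epsilon,a+\epsilon)^d,T}$. The outer two quantities must be shown to coincide. I would use spatial superadditivity on a decomposition of $(-\epsilon,a+\epsilon)^d$ into $(0,a)^d$ and thin slabs, so that the difference is bounded by the entropy carried by the boundary region. Since $H(P)<\infty$, that boundary entropy should be of order the slab volume times $T$, which tends to $0$ as $\epsilon\to0$. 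The difficulty is that superadditivity bounds the wrong side, so I actually need an upper bound on the extra entropy in the slabs. For this I would attempt a continuity argument: show that $h_{(-\epsilon,a+\epsilon)^d,T}$ decreases to $h_{[0,a]^d,T}$ as $\epsilon\to0$ by martingale convergence of $\sigma$-algebras. Then I would show that faces of measure zero carry no entropy: under finite entropy, $P$ has finite occupation intensity by stationarity, so almost surely no particle sits on the faces. The $\sigma$-algebras for the open and the closed cube then agree modulo $P$-null sets, and also modulo null sets of the reference measure.
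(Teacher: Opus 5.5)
Your proposal is correct and follows the paper's proof essentially step by step. Both arguments use restriction, fibrewise superadditivity against the product reference and convexity for the spatial bound, and the chain rule at time $S$ with the Markov property, convexity and stationarity for the temporal bound. Both use the tiling argument for the limit, and identify the open and closed cubes because the Lebesgue-null faces carry no particles.

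In the last step, the $\epsilon$-enlargement and the reverse-martingale limit are superfluous. Finite occupation intensity alone only excludes particles on $\partial A$ at fixed times. To exclude them at all times in $[0,T]$, under both $P$ and the reference law, you also need two facts the paper uses: that $P$ is concentrated on legal birth-and-death paths (particles do not move), and that $P[\mathsf B(\partial A,(0,T])]=0$. Both follow from $H(P)<\infty$ via local absolute continuity with respect to the free evolution.
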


\begin{proof}
For the spatial inequality, write
\begin{equation*}
    X=(\omega_t|_A)_{0\leq t\leq T},
    \qquad X_i=(\omega_t|_{A_i})_{0\leq t\leq T},
    \qquad \eta=\omega_0|_A,
    \qquad \eta_i=\eta|_{A_i}.
\end{equation*}
Conditionally on the complete initial configuration $\eta$, the restrictions of the free evolution to the disjoint cubes $A_i$ are independent, with laws $Q^{A_i}_{\eta_i}$ on $[0,T]$. Disintegrating over $\eta$, restricting to the coordinates $(X_1,\dots,X_m)$, and using the superadditivity of the relative entropy with respect to a product reference measure, we obtain
\begin{equation}\label{eq:entropy-spatial-conditioning}
\begin{split}
    h_{A,T}(P)
    &= P\left[
       \Ent\bigl(P(X\in\cdot\lvert \eta)\lvert  Q^A_\eta\bigr)
       \right]
    \geq\sum_{i=1}^m P\left[
       \Ent\bigl(P(X_i\in\cdot\lvert \eta)
                  \lvert  Q^{A_i}_{\eta_i}\bigr)\right]\\
    &\geq\sum_{i=1}^m P \left[
       \Ent\bigl(P(X_i\in\cdot\lvert \eta_i)
                  \lvert  Q^{A_i}_{\eta_i}\bigr)\right]
     =\sum_{i=1}^m h_{A_i,T}(P).
\end{split}
\end{equation}
The superadditivity with respect to a product reference measure follows from the variational formula~\eqref{def:entropy-variational-formulation} by testing sums of functions of the separate coordinates. The second inequality follows from the convexity of the relative entropy: we average the conditional law given $\eta$ over $\eta$, conditionally on $\eta_i$, while the reference measure $Q^{A_i}_{\eta_i}$ remains fixed. In particular, neither independence nor finite entropy of the initial marginal is required.

For the temporal inequality, put
\begin{equation*}
    X=(\omega_t|_A)_{0\leq t\leq S},
    \qquad Y=(\omega_{S+t}|_A)_{0\leq t\leq T},
    \qquad \eta=\omega_S|_A.
\end{equation*}
The Markov property of the reference measure and the chain rule for the relative entropy yield
\begin{equation}\label{eq:entropy-temporal-conditioning}
\begin{split}
    h_{A,S+T}(P)
    &=h_{A,S}(P)
      + P \left[
       \Ent\bigl(P(Y\in\cdot\lvert  X)\lvert  Q^A_\eta\bigr)
       \right]\\
    &\geq h_{A,S}(P)
      + P \left[
       \Ent\bigl(P(Y\in\cdot\lvert \eta)\lvert  Q^A_\eta\bigr)
       \right]
    =h_{A,S}(P)+h_{A,T}(P),
\end{split}
\end{equation}
where $Q^A_\eta$ is observed during $[0,T]$. Here, convexity removes the conditioning on the first segment beyond its terminal configuration, and time stationarity gives the last equality. All terms are non-negative, so these arguments also apply when an entropy is infinite.

Fix $a,T,\tau>0$. The cube $\Lambda_n^\circ$ contains $\lfloor n/a\rfloor^d$ disjoint translates of $(0,a)^d$, and $[0,\tau n]$ contains $\lfloor\tau n/T\rfloor$ consecutive intervals of length $T$. Applying the two superadditivity inequalities, discarding the remaining space and time, and using spatial stationarity gives, for all sufficiently large $n$,
\begin{equation}\label{eq:entropy-tiling-bound}
    h_{\Lambda_n^\circ,\tau n}(P)
    \geq
    \left\lfloor\frac na\right\rfloor^d
    \left\lfloor\frac{\tau n}{T}\right\rfloor
    h_{(0,a)^d,T}(P).
\end{equation}
Thus,
\begin{equation*}
    \liminf_{n\to\infty}
    \frac{h_{\Lambda_n^\circ,\tau n}(P)}{\tau n|\Lambda_n|}
    \geq\frac{h_{(0,a)^d,T}(P)}{a^dT},
\end{equation*}
and taking the supremum over $a,T>0$ gives the lower bound $H(P)$. Conversely, spatial stationarity and the definition of $H$ give
\begin{equation*}
    \frac{h_{\Lambda_n^\circ,\tau n}(P)}{\tau n|\Lambda_n|}
    =\frac{h_{(0,n)^d,\tau n}(P)}{n^d(\tau n)}
    \leq H(P).
\end{equation*}
This proves the limit, including the case $H(P)=\infty$. Taking $\tau=1$ shows that integer sides and durations suffice: the supremum over integer parameters contains the diagonal sequence and is bounded from above by $H(P)$.

Finally, assume $H(P)<\infty$ and fix $a,T>0$, and let $A$ be one of the cubes $(0,a)^d$, $[0,a)^d$ and $[0,a]^d$. By \zcref{lem:finite-entropy-intensities} below,
\begin{equation*}
    P[\omega_0(\partial A)]=0,
    \qquad
    P[\mathsf B(\partial A,(0,T])]=0,
\end{equation*}
since $|\partial A|=0$. Since $P$-almost every path is a legal birth-and-death path, whose particles do not move, almost surely the boundary $\partial A$ carries no particles at any time in $[0,T]$. Likewise, under the reference measure $P_{A,0}Q^A$, the boundary carries no particles initially and no births occur there. Restriction to $(0,a)^d$ and extension by zero on the boundary therefore identify the laws $P_{A,T}$ for the three cubes, as well as the corresponding reference laws. The relative entropy is unchanged under this identification, which proves the final assertion.
\end{proof}

Next we show that a finite dynamical specific entropy controls the mean number of particles, births and deaths.
\begin{lemma}[Finite intensities]
\label{lem:finite-entropy-intensities}
Let $P\in\PP(E)$ satisfy $H(P)<\infty$.
Then $P$-almost every path is a legal birth-and-death path, the intensity $\rho(P)$ is finite, and there is a finite constant $\chi(P)\geq0$, the \emph{birth intensity} of $P$, such that, for every bounded Borel set $B\subset\mathbb R^d$, every $t\in\mathbb R$, and every bounded interval $I=(s,t]\subset\mathbb R$,
\begin{equation}\label{eq:finite-entropy-intensities}
    P[\omega_t(B)]=\rho(P)|B|,
    \qquad
    P[\mathsf B(B,I)]=P[\mathsf D(B,I)]=\chi(P)|B||I|.
\end{equation}
Moreover,
\begin{equation}\label{eq:entropy-intensity-bounds}
    H(P)\geq\bigl(\sqrt{\rho(P)}-1\bigr)^2,
    \qquad\chi(P)\leq H(P)+\e-1.
\end{equation}
In particular, only finitely many individual particles meet any bounded space-time box, almost surely.
\end{lemma}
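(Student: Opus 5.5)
The plan is to apply the variational formula~\eqref{def:entropy-variational-formulation} to $h_{A,T}(P)$, with test functions built from particle counts. Under the reference $P_{A,0}Q^A$, the exponential moments of these counts are explicit Poisson or binomial expressions. Throughout, fix $A=(0,1)^d$ and $T=1$, so that $h_{A,1}(P)\le H(P)<\infty$. By superadditivity (\zcref{lem:entropy-superadditivity}) and stationarity, the same bound scales to larger boxes with cost $H(P)\abs{A}T$.

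\emph{Legal paths.} Under $P_{A,0}Q^A$, almost every path in $A\times[0,T]$ is legal: finitely many changes, each a single birth or a single death of a present particle, with particles immobile. Finite entropy gives $P_{A,T}\ll P_{A,0}Q^A$, so $P$-a.e.\ restriction to $A\times[0,T]$ is legal. By stationarity and countably many translates covering $\R^d\times\R$, $P$-a.e.\ path is legal on every bounded box. Since $\Omega$ allows multiplicities, we also need that births never occur at an existing point and that legality is a closed condition under gluing boxes. Both hold under $Q^A$ and transfer by absolute continuity.

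\emph{Intensity bound.} Test with $f=\lambda\,\omega_T(A)$, truncated at level $N$ and later with $N\uparrow\infty$ by monotone convergence. Under $Q^A_\eta$, $\omega_T(A)$ is the sum of a Binomial$(\eta(A),\e^{-T})$ and an independent Poisson$((1-\e^{-T})\abs{A})$. Hence
\[\log Q^A_\eta[\e^{\lambda\omega_T(A)}]=\eta(A)\log(1+\e^{-T}(\e^\lambda-1))+(1-\e^{-T})\abs{A}(\e^\lambda-1).\]
Subtracting $\lambda\eta(A)$ gives a $\lambda$-dependent term that is harmless, but a cleaner route is available: choose $f=\lambda\omega_T(A)-g(\omega_0)$ with a normalising $g$ as in the $\psi_f$ construction. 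Time-stationarity of $P$ gives $P[\omega_T(A)]=P[\omega_0(A)]=\rho\abs{A}$. We then optimise over $\lambda$ and over $T$ via $H\ge h_{A,T}/(\abs{A}T)$, letting $T\to\infty$ with large boxes so the initial contribution becomes negligible. The choice $\e^\lambda=\sqrt{\rho}$ yields a Poisson-type bound of the form $\rho\lambda-(\e^\lambda-1)$ per unit space-time, i.e.\ $\rho\log\rho-\rho+1\ge(\sqrt\rho-1)^2$. The constant in front must be tracked carefully, but any such bound gives $\rho<\infty$.

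\emph{Birth intensity.} Test with $f=\lambda\,\mathsf B(A,(0,T])$. Under $Q^A_\eta$ this count is Poisson$(\abs{A}T)$, independent of $\eta$, so $\log Q[\e^f]=\abs{A}T(\e^\lambda-1)$. With $\lambda=1$ this gives $P[\mathsf B]\le h_{A,T}(P)+(\e-1)\abs{A}T$, and after normalising, $\chi\le H+\e-1$. Additivity of $B\mapsto P[\mathsf B(B,I)]$ over disjoint sets and intervals, together with space-time stationarity and local finiteness, makes $P[\mathsf B(B,I)]$ a translation-invariant finite measure on $\R^d\times\R$. It is therefore $\chi\,\abs{B}\abs{I}$, and $P[\omega_t(B)]=\rho\abs{B}$ follows in the same way.

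\emph{Deaths equal births.} On legal paths, $\omega_t(B)-\omega_s(B)=\mathsf B(B,(s,t])-\mathsf D(B,(s,t])$. Taking expectations, which is justified since both $\rho$ and $\chi$ are finite and $P[\mathsf D]\le P[\omega_s(B)]+P[\mathsf B]$, and using time-stationarity of the marginals gives $P[\mathsf D]=P[\mathsf B]$. Finally, the number of individual particles meeting a bounded box is at most the initial count plus the births, which has finite expectation and is therefore a.s.\ finite.

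I expect the main obstacle to be getting the exact constant $(\sqrt{\rho}-1)^2$. Choosing the right test function so that the initial-configuration term cancels or vanishes is the delicate part. If I cannot obtain the sharp form directly, I would try the occupation-time functional $\lambda\int_0^T\omega_t(A)\,\d t$: its log-moment under $Q^A_\eta$ is explicit, it is linear in $\eta(A)$ with a coefficient that stays bounded as $T\to\infty$, and it would give the bound per unit time.
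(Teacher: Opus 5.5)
Your treatment of legality, of the birth bound (Poisson with $\lambda=1$), of the mean measures $\rho\abs{B}$ and $\chi\abs{B}\abs{I}$, and of deaths via $\mathsf D\le\omega_s(B)+\mathsf B$ follows the paper. The gap is in the sharp bound $H(P)\ge(\sqrt{\rho}-1)^2$.

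Your main route, the normalised terminal count $\lambda\omega_T(A)$ with $T\to\infty$, cannot give anything per unit time. Writing $\rho=\rho(P)$, it yields
\[
h_{A,T}(P)\ \ge\ \abs{A}\Bigl(\rho\bigl[\lambda-\log\bigl(1+\e^{-T}(\e^{\lambda}-1)\bigr)\bigr]-(1-\e^{-T})(\e^{\lambda}-1)\Bigr).
\]
The right-hand side stays bounded as $T\to\infty$: an endpoint functional gains $O(1)$, not $O(T)$. After dividing by $\abs{A}T$, the resulting bound on $H(P)$ therefore tends to $0$. The per-unit-time bound you were aiming for, $H(P)\ge\rho\log\rho-\rho+1$, is in fact false. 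Take the stationary process with birth rate $1$ and death rate $\tfrac12$, which has $\rho=2$. The Girsanov formula for the death intensities gives $H(P)=2\bigl(\tfrac12\log\tfrac12+\tfrac12\bigr)=1-\log 2\approx0.31$, whereas $2\log2-1\approx0.39$. So this step fails as written.

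\textbf{Finiteness of $\rho$.} The display does give $\rho<\infty$ at fixed $T$, because $\lambda>\log(1+\e^{-T}(\e^\lambda-1))$ for $\lambda>0$. This needs care, since $\rho$ enters on both sides: work with the bounded function $\lambda(\omega_T(A)\wedge N)$ and its exact log-moment, and compare the two endpoints by stationarity.

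\textbf{Sharp constant.} Send $T\downarrow0$ instead of $T\to\infty$. Use $H(P)\ge h_{A,T}(P)/(\abs{A}T)$, valid for every $T>0$, and expand to first order in $T$. This gives $H(P)\ge\rho(1-\e^{-\lambda})-(\e^\lambda-1)$, and your choice $\e^\lambda=\sqrt\rho$ now yields exactly $(\sqrt\rho-1)^2$. This is the paper's inequality $H(P)\ge1+\rho-\e^\gamma-\rho\e^{-\gamma}$ with $\gamma=\lambda$.

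\textbf{Your fallback also works, once $\rho<\infty$ is known.} Put $m_\lambda(t)=\mathbb{E}[\e^{\lambda\min(X,t)}]$ for $X\sim\mathrm{Exp}(1)$ and $\lambda<1$. Finiteness of $\rho$ is needed so that the initial term $\rho\log m_\lambda(T)/T$ vanishes as $T\to\infty$. The occupation functional then gives $H(P)\ge\lambda\rho-\lambda/(1-\lambda)$. This is maximised at $\lambda=1-\rho^{-1/2}$ with value $(\sqrt\rho-1)^2$, and one lets $\lambda\to-\infty$ if $\rho=0$.

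\textbf{How the paper does it.} It merges your two steps into one. Its exponential martingale $Z_{m,T}$ for the immigration-death chain $\omega_t(A)$ is your occupation functional with $\lambda=1-\e^{-\gamma}$, up to the mean-zero boundary term $\gamma(\omega_T(A)-\omega_0(A))$. That boundary term makes the normalisation exact for every $T$ and every initial configuration. The bounded truncation $\phi_m(k)=\gamma(k\wedge m)$ then delivers finiteness of $\rho$ and the sharp bound simultaneously.
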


\begin{proof}
Fix $A=(0,a)^d$ and $T>0$. Since
\begin{equation*}
    h_{A,T}(P)=\Ent(P_{A,T}\lvert  P_{A,0}Q^A)\leq |A|TH(P)<\infty,
\end{equation*}
we have $P_{A,T}\ll P_{A,0}Q^A$. Under $P_{A,0}Q^A$, the configuration in $A$ is constant between finitely many jumps, each consisting of the addition or removal of one particle, and the jump times are almost surely distinct. Absolute continuity, stationarity and a countable exhaustion by translated space-time boxes therefore show that $P$-almost every path is a legal birth-and-death path.

For $\gamma\in\mathbb R$ and $m\in\mathbb N$, set $\phi_m(k)=\gamma(k\wedge m)$. Under $P_{A,0}Q^A$, the particle-count process $(\omega_t(A))_{t\ge0}$ is a Markov chain with generator
\begin{equation*}
    (\mathcal L_Au)(k) = |A|\bigl(u(k+1)-u(k)\bigr) + k\bigl(u(k-1)-u(k)\bigr), \qquad k\in\N_0,
\end{equation*}
where the second term vanishes for $k=0$. Thus
\begin{equation*}
    G_m(k)
    :=\e^{-\phi_m(k)} \mathcal L_A(\e^{\phi_m})(k)=|A|(\e^\gamma-1)\1_{\{k<m\}} + k(\e^{-\gamma}-1)\1_{\{1\leq k\leq m\}}.
\end{equation*}
The positive local martingale
\begin{equation*}
    Z_{m,t} =\exp\left\{ \phi_m(\omega_t(A))-\phi_m(\omega_0(A)) - \int_0^t\d s\, G_m(\omega_s(A)) \right\}
\end{equation*}
is bounded on $[0,T]$, since $\phi_m$ and $G_m$ are bounded for fixed $m$ and $\gamma$. Hence $P_{A,0}Q^A[Z_{m,T}]=1$.
The variational formula~\eqref{def:entropy-variational-formulation} applied to the bounded function $\log Z_{m,T}$, together with stationarity, yields
\begin{equation}\label{eq:truncated-population-entropy}
    \frac{h_{A,T}(P)}{T}
    \geq \frac1T P[\log Z_{m,T}]
    =(1-\e^{-\gamma}) P\big[\omega_0(A)\1_{\{\omega_0(A)\leq m\}}\big] - |A|(\e^\gamma-1)P(\omega_0(A)<m),
\end{equation}
where the endpoint terms cancel because $\phi_m$ is bounded and $\omega_0(A)$ and $\omega_T(A)$ have the same law.

First choose $\gamma=\log2$ in~\eqref{eq:truncated-population-entropy}. Since $h_{A,T}(P)\leq |A|TH(P)$, we obtain
\begin{equation*}
    P\big[\omega_0(A)\1_{\{\omega_0(A)\leq m\}}\big]\leq2|A|(H(P)+1).
\end{equation*}
Letting $m\to\infty$ shows $P[\omega_0(A)]<\infty$, so that the intensity $\rho(P)=P[\omega_0(A)]/|A|$ is finite, and $P[\omega_t(B)]=\rho(P)|B|$ for every bounded Borel set $B$ and $t\in\R$ by stationarity.
Now let $m\to\infty$ in~\eqref{eq:truncated-population-entropy} for an arbitrary fixed $\gamma\in\mathbb R$. Dividing by $|A|$, we obtain
\begin{equation*}
    H(P)\geq 1+\rho(P)-\e^\gamma-\rho(P)\e^{-\gamma}.
\end{equation*}
For $\rho(P)>0$, the choice $\gamma=\frac12\log\rho(P)$ yields $H(P)\geq(\sqrt{\rho(P)}-1)^2$.
For $\rho(P)=0$, the same bound follows by sending $\gamma\to-\infty$.

Finally, put $B_{A,T}=\mathsf B_\omega(A,(0,T])$ and $D_{A,T}=\mathsf D_\omega(A,(0,T])$. Under $P_{A,0}Q^A$, the birth count $B_{A,T}$ is Poisson-distributed with mean $|A|T$, independently of the initial configuration. Applying the entropy inequality $P[f]\le\Ent(P_{A,T}\lvert P_{A,0}Q^A)+\log P_{A,0}Q^A[\e^{f}]$ to $f=B_{A,T}\wedge m$ and then using monotone convergence shows
\begin{equation*}
    P[B_{A,T}]\leq h_{A,T}(P)+|A|T(\e-1)\leq |A|T(H(P)+\e-1)<\infty.
\end{equation*}
From the pathwise identity $\omega_T(A)-\omega_0(A)=B_{A,T}-D_{A,T}$, we have $D_{A,T}\leq \omega_0(A)+B_{A,T}$, so that the death count is integrable, and stationarity gives $P[D_{A,T}]=P[B_{A,T}]$. The mean birth and death measures are therefore locally finite and invariant under space-time translations, and hence equal to a common constant multiple $\chi(P)$ of the space-time Lebesgue measure, which satisfies the asserted bound.

The number of individual particles present in $B$ at some time during $[s,t]$ is bounded by $\omega_s(B)+\mathsf B_\omega(B,(s,t])$, whose expectation is $|B|\bigl(\rho(P)+(t-s)\chi(P)\bigr)<\infty$. A countable exhaustion proves the final assertion simultaneously for all bounded space-time boxes.
\end{proof}

\section{Proof of the lower bound in the non-interacting case}\label{section:proof-lower-bound}
In this section we prove \zcref{lemma:block-law,lemma:auxiliary-entropy,lemma:G-typical-and-parameters}, which complete the proof of the lower bound in \zcref{thm:ldp-lower-bound-free}. Throughout, $P\in\PP(E)$ satisfies $H(P)<\infty$. By \zcref{lem:finite-entropy-intensities}, $P$-almost every path is a legal birth-and-death path, and $P$ has finite intensity $\rho(P)$ and finite birth intensity $\chi(P)$, so that, for every bounded Borel set $B\subset\mathbb R^d$ and every $u<v$,
\begin{equation*}
    P[\omega_u(B)]=\rho(P)|B|, \qquad P[\mathsf B(B,(u,v])]=\chi(P)|B|(v-u).
\end{equation*}
In particular, the number of individual particles present in $B$ at some time during $[u,v]$ is integrable, with expectation $|B|\bigl(\rho(P)+(v-u)\chi(P)\bigr)$.

\subsection{Truncation and smoothing of the initial configuration}\label{subsec:lb-truncation}
Fix the open cube $A=(0,a)^d$, let $P_{A,T}(\cdot\lvert \eta)$ be the conditional law of the path in $A\times[0,T]$ under $P$, given the initial configuration $\eta$ in $A$, and let $K\in\N$ be such that $P(\omega_0(A)\leq K) > 0$. We denote the conditioned initial and path laws by
\begin{equation*}
 	P_{A,0}^{(K)} = P_{A,0}(\,\cdot\lvert  \eta(A)\leq K),
 	\qquad
	P_{A,T}^{(K)} = P_{A,T}(\,\cdot\lvert  \omega_0(A)\leq K).
\end{equation*}
Conditioning on a small initial population costs at most a factor $1/P(\omega_0(A)\le K)$ in entropy, and it does not increase the mean population at later times by more than this factor.
\begin{lemma}[Truncation]
\label{lem:lb-truncation}
The conditioned laws satisfy
\begin{equation}\label{eq:lb-initial-entropy}
 	\Ent\bigl(P_{A,T}^{(K)}\lvert  P_{A,0}^{(K)}Q^A\bigr)
 	= \frac1{P(\omega_0(A)\leq K)}\int_{\{\eta(A)\leq K\}}\hspace{0cm}P_{A,0}(\d\eta) \Ent\bigl(P_{A,T}(\cdot\lvert \eta)\lvert  Q^A_\eta\bigr)\leq \frac{h_{A,T}(P)}{P(\omega_0(A)\leq K)}.
\end{equation}
Moreover,
\begin{equation}\label{eq:lb-terminal-mean}
	P[\omega_T(A)\lvert  \omega_0(A)\leq K] \leq \frac{\rho(P)|A|}{P(\omega_0(A)\leq K)}, \qquad T\geq 0.
\end{equation}
\end{lemma}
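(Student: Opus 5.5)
The plan is to disintegrate both laws over the initial configuration and then use that the reference kernel does not change under conditioning. Write $p_K:=P(\omega_0(A)\le K)$. Since $P^{(K)}_{A,T}$ is $P_{A,T}$ conditioned on an event that depends only on the initial configuration, its initial marginal is $P^{(K)}_{A,0}$. Its conditional law given $\omega_0|_A=\eta$ is still $P_{A,T}(\cdot\lvert\eta)$ for $P^{(K)}_{A,0}$-almost every $\eta$, because conditioning on $\{\eta(A)\le K\}$ only reweights the initial marginal. The reference measure $P^{(K)}_{A,0}Q^A$ has the same initial marginal, and its conditional law given $\eta$ is $Q^A_\eta$. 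I will then apply the chain rule for relative entropy, decomposing over the time-zero coordinate. The initial marginals coincide, so the marginal term vanishes and
\begin{equation*}
\Ent\bigl(P^{(K)}_{A,T}\lvert P^{(K)}_{A,0}Q^A\bigr)=\int P^{(K)}_{A,0}(\d\eta)\,\Ent\bigl(P_{A,T}(\cdot\lvert\eta)\lvert Q^A_\eta\bigr).
\end{equation*}
Writing $P^{(K)}_{A,0}(\d\eta)=p_K^{-1}\1_{\{\eta(A)\le K\}}P_{A,0}(\d\eta)$ gives the equality in~\eqref{eq:lb-initial-entropy}.

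For the inequality in~\eqref{eq:lb-initial-entropy}, the same chain rule applied to $h_{A,T}(P)$ yields $h_{A,T}(P)=\int P_{A,0}(\d\eta)\Ent(P_{A,T}(\cdot\lvert\eta)\lvert Q^A_\eta)$; this identity was already used in~\eqref{eq:entropy-spatial-conditioning}. The integrand is non-negative, so dropping the region $\{\eta(A)>K\}$ gives the bound. The one point needing care is measurability of $\eta\mapsto\Ent(P_{A,T}(\cdot\lvert\eta)\lvert Q^A_\eta)$ and the validity of the chain rule on the Polish space $D([0,T];\Om_A)$. I regard this as standard: regular conditional probabilities exist, $\eta\mapsto Q^A_\eta$ is a measurable kernel, and the chain rule holds with values in $[0,\infty]$. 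I expect this to be the only mildly technical step, and it is routine.

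For~\eqref{eq:lb-terminal-mean}, I will bound the conditional expectation by dropping the indicator and using stationarity. Since $\omega_T(A)\ge0$,
\begin{equation*}
P[\omega_T(A)\lvert\omega_0(A)\le K]=\frac{P[\omega_T(A)\1_{\{\omega_0(A)\le K\}}]}{p_K}\le\frac{P[\omega_T(A)]}{p_K}.
\end{equation*}
Because $H(P)<\infty$, \zcref{lem:finite-entropy-intensities} gives $P[\omega_T(A)]=\rho(P)|A|$ for every $T$, and this yields the claim.
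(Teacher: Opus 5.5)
Your proposal is correct and follows the paper's proof essentially verbatim. Both arguments apply the chain rule over the time-zero coordinate, using that the conditioning event is $\sigma(\omega_0|_A)$-measurable so the conditional path law is unchanged. Both extend the integral to all $\eta$ to get the inequality in~\eqref{eq:lb-initial-entropy}, and both drop the indicator and use stationarity for~\eqref{eq:lb-terminal-mean}. Your appeal to \zcref{lem:finite-entropy-intensities} there is harmless but not needed, since $P[\omega_T(A)]=\rho(P)|A|$ follows from stationarity alone.
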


\begin{proof}
The chain rule for the relative entropy, applied to the disintegrations of $P^{(K)}_{A,T}$ and $P^{(K)}_{A,0}Q^A$ over the initial configuration, gives~\eqref{eq:lb-initial-entropy}, where the inequality follows by extending the integral to all $\eta$. The bound~\eqref{eq:lb-terminal-mean} follows from $P[\omega_T(A)\1_{\{\omega_0(A)\le K\}}]\le P[\omega_T(A)]=\rho(P)|A|$ by stationarity.
\end{proof}

Next, we smooth the particle positions on a fixed spatial scale. Choose $\epsilon\in(0,1]$ such that $a/\epsilon\in\N$, and partition the cube $A$ into the disjoint cells
\begin{equation*}
	C_j = A\cap\prod_{\ell=1}^d (j_\ell\epsilon,(j_\ell+1)\epsilon], \qquad j=(j_1,\ldots,j_d)\in\{0,\ldots,a/\epsilon-1\}^d,
\end{equation*}
of volume $\epsilon^d$. We now choose, independently for each initial or newly born individual particle, a uniform position within its original cell, so that each particle is displaced by at most $\epsilon$ in the sup-norm. For locations with multiplicities, we label the copies exchangeably and assign each death uniformly among the remaining labels at the original death location. The new position is retained throughout the lifetime of the particle, and all birth and death times remain unchanged.
We denote by $J_\epsilon$ the resulting kernel on paths, and by $J_{\epsilon,0}$ the corresponding kernel on initial configurations. Write
\begin{equation*}
 	P_{A,0}^{(K,\epsilon)} := P_{A,0}^{(K)}J_{\epsilon,0},
 	\qquad
 	P_{A,T}^{(K,\epsilon)} := P_{A,T}^{(K)}J_\epsilon,
\end{equation*}
and, for $\upsilon>0$, let $\Pi^\upsilon_A$ be the law of the Poisson point process on $A$ with intensity $\upsilon$. Smoothing maps the free evolution to itself, so it does not increase the entropy, and it makes the truncated initial law absolutely continuous with respect to a Poisson point process, with a bounded density.

\begin{lemma}[Smoothing]
\label{lem:lb-smoothing}
The smoothing kernel satisfies
\begin{equation}\label{eq:lb-reference-smoothing}
 	(P_{A,0}^{(K)}Q^A)J_\epsilon = (P_{A,0}^{(K)}J_{\epsilon,0})Q^A.
\end{equation}
Moreover,
\begin{equation}\label{eq:lb-smoothed-entropy}
 	\Ent \bigl(P_{A,T}^{(K,\epsilon)} \lvert  P_{A,0}^{(K,\epsilon)} Q^A\bigr) \leq \frac{h_{A,T}(P)}{P(\omega_0(A)\leq K)}.
\end{equation}
The smoothed paths have simple configurations at every time almost surely, and smoothing conserves $\omega_t(A)$ at every time $t$.
Finally, for every $\upsilon>0$,
\begin{equation*}
	\Big\| \frac{\d P_{A,0}^{(K,\epsilon)}}{\d\Pi^\upsilon_A} \Big\|_\infty \leq \e^{\upsilon|A|}K! \max\{1,(\upsilon \epsilon^d)^{-K}\},
\end{equation*}
and, in particular,
\begin{equation}\label{eq:lb-smoothed-initial-entropy}
 \Ent\bigl(P_{A,0}^{(K,\epsilon)} \lvert  \Pi^\upsilon_A\bigr) \leq \upsilon|A|+\log K! +K\log^+\!\frac1{\upsilon \epsilon^d} <\infty.
\end{equation}
\end{lemma}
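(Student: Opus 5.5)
The plan is to treat the four assertions of \zcref{lem:lb-smoothing} in turn, using the structure of the free evolution together with the data-processing inequality for relative entropy.

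\emph{Step 1: the intertwining \eqref{eq:lb-reference-smoothing}.} Under $P^{(K)}_{A,0}Q^A$, the path consists of three independent pieces: the initial particles with their i.i.d.\ unit-exponential lifetimes, and the births, which form a Poisson point process on $A\times[0,T]$ with intensity $\d x\,\d t$, each born particle carrying an independent unit-exponential lifetime. The kernel $J_\epsilon$ changes only positions, never times. For births, resampling each birth location uniformly within its cell $C_j$ leaves Lebesgue measure on $A$ invariant, since the cells partition $A$ and have equal volume. Hence, by the marking theorem, the relocated births again form a Poisson point process with intensity $\d x\,\d t$, independent of the lifetimes. For initial particles, $J_\epsilon$ acts through $J_{\epsilon,0}$, and the lifetimes are unaffected. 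For particles at a common location, the exchangeable labelling with uniform assignment of deaths reproduces i.i.d.\ lifetimes, because the memoryless property makes the death order a uniform random order. Assembling these pieces gives the law $(P^{(K)}_{A,0}J_{\epsilon,0})Q^A$.

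\emph{Step 2: the entropy bound \eqref{eq:lb-smoothed-entropy}.} By data processing for the Markov kernel $J_\epsilon$, we have
\[
\Ent(P^{(K)}_{A,T}J_\epsilon\lvert (P^{(K)}_{A,0}Q^A)J_\epsilon)\le \Ent(P^{(K)}_{A,T}\lvert P^{(K)}_{A,0}Q^A).
\]
By Step 1, the reference measure on the left is $P^{(K,\epsilon)}_{A,0}Q^A$. The right-hand side is bounded by \zcref{lem:lb-truncation}. Two further facts are immediate. Since every particle receives an independent uniform position, and there are almost surely only finitely many particles by \zcref{lem:finite-entropy-intensities}, the smoothed configurations are simple almost surely. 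Since times are unchanged and every particle stays in its cell $C_j\subset A$, smoothing conserves $\omega_t(A)$.

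\emph{Step 3: the density bound.} Fix $k\le K$. Conditioned on having $k$ points, the law $P^{(K,\epsilon)}_{A,0}$ is a mixture over cell-occupation vectors of products of uniform densities $\epsilon^{-d}\1_{C_{j_i}}$. Its symmetrised Janossy density on $A^k$ is therefore bounded by
\[
p_k\, k!\,\epsilon^{-dk}/|A|^k\cdot |A|^k ,
\]
with respect to the probability measure $\d x_1\cdots\d x_k/|A|^k$ of ordered points. More usefully, with respect to $\d x_1\cdots\d x_k$ it is bounded by $p_k\epsilon^{-dk}$ on ordered tuples. The corresponding Poisson density on the $k$-point stratum is $\e^{-\upsilon|A|}\upsilon^k/k!$. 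Taking the ratio gives the bound
\[
\e^{\upsilon|A|}\,k!\,(\upsilon\epsilon^d)^{-k}\le \e^{\upsilon|A|}K!\max\{1,(\upsilon\epsilon^d)^{-K}\}.
\]
The entropy bound \eqref{eq:lb-smoothed-initial-entropy} then follows from $\Ent\le\log\|\d P/\d\Pi\|_\infty$.

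The main obstacle is the precise accounting of multiplicities in the Janossy density, that is, the factor $k!$ between ordered and unordered points, together with the rigorous treatment of coincident locations in Step 1. I would handle both by working with labelled particles throughout and symmetrising only at the end.
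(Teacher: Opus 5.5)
Your proposal is correct and follows essentially the paper's route: the Lebesgue-preserving displacement keeps the marked Poisson birth process, which gives \eqref{eq:lb-reference-smoothing}; data processing combined with \eqref{eq:lb-initial-entropy} gives \eqref{eq:lb-smoothed-entropy}; independent uniform positions give simplicity; and the entropy is bounded by the logarithm of the supremum of the density. The only difference is bookkeeping in the density step, where the paper conditions on the cell-count vector $\mathbf m$ to get the exact derivative $p_{\mathbf m}\e^{\upsilon|A|}\prod_j m_j!/(\upsilon\epsilon^d)^{\sum_j m_j}$ and your cruder bound $p_k\epsilon^{-dk}$ on the symmetrised Lebesgue density of ordered $k$-tuples reaches the same final estimate; drop your first Janossy display relative to $\d x_1\cdots\d x_k/|A|^k$, which is wrong as written because the factor $|A|^k$ is misplaced.
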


\begin{proof}
The displacement kernel preserves the Lebesgue measure, and hence the marked birth process of the free evolution: births with their lifetimes still form a marked Poisson point process with intensity measure $\d x\,\d t\,\d r\, \e^{-r}$. Since the displacements are independent of the lifetimes, we obtain~\eqref{eq:lb-reference-smoothing}. The data-processing inequality applied to~\eqref{eq:lb-initial-entropy}, together with~\eqref{eq:lb-reference-smoothing}, yields~\eqref{eq:lb-smoothed-entropy}. The displaced positions are independent and uniformly distributed within their cells, so that the smoothed configurations are simple at every time almost surely, and smoothing does not change the number of particles in $A$.

Conditionally on their cell-count vectors, both $P_{A,0}^{(K,\epsilon)}$ and $\Pi^\upsilon_A$ have independent uniform positions within the cells. Let $p_{\mathbf m}$ be the probability under $P_{A,0}^{(K,\epsilon)}$ of a cell-count vector $\mathbf m=(m_j)_j$. For configurations with cell-count vector $\mathbf m$, the Radon--Nikodym derivative of $P_{A,0}^{(K,\epsilon)}$ with respect to $\Pi^\upsilon_A$ is
\begin{equation*}
	p_{\mathbf m}\,\e^{\upsilon|A|} \frac{\prod_jm_j!}{(\upsilon \epsilon^d)^{\sum_jm_j}}.
\end{equation*}
Since $p_{\mathbf m}\le1$ and $\sum_jm_j\leq K$, this yields the stated density bound, and~\eqref{eq:lb-smoothed-initial-entropy} follows, since the relative entropy is bounded by the logarithm of the supremum of the density.
\end{proof}

\subsection{Block paths with empty endpoints}\label{subsec:lb-cycles}
Recall that $S=T+2$. We construct paths in $D([0,S];\Om_A)$ that start and end with the empty configuration, using a preparation interval of length one, a bulk interval of length $T$, and a clearance interval of length one, see \zcref{fig:blockpath}.

Let $\upsilon=1-\e^{-1}$ and $c=-\log\upsilon$. Under $Q^A_\emptyset$, a particle born at time $s\in[0,1]$ is still alive at time $1$ with probability $\e^{-(1-s)}$, so that the configuration at time $1$ is a Poisson point process with intensity $\int_0^1\d s\, \e^{-(1-s)}=\upsilon$, that is, it has law $\Pi^\upsilon_A$. The preparation cost
\begin{equation}\label{eq:lb-preparation-entropy}
 	C_{\mathrm{prep}} := \Ent \bigl(P_{A,0}^{(K,\epsilon)} \lvert  \Pi^\upsilon_A\bigr)
\end{equation}
is finite by~\eqref{eq:lb-smoothed-initial-entropy}, and bounded as stated in \zcref{lemma:block-law}.

\begin{proof}[Proof of \zcref{lemma:block-law}]
The block law is the concatenation of three phases.

\emph{Preparation.} We start the free evolution in $A$ from the empty configuration and tilt its time-one marginal from $\Pi^\upsilon_A$ to $P_{A,0}^{(K,\epsilon)}$, leaving its conditional path law given that endpoint unchanged. That is, we define the preparation law $\mathcal C_{\mathrm{prep}}$ on $D([0,1];\Om_A)$ by
\begin{equation*}
 \frac{\d\mathcal C_{\mathrm{prep}}} {\d Q^A_{\emptyset}}(\omega) = \frac{\d P_{A,0}^{(K,\epsilon)}}{\d\Pi^\upsilon_A}(\omega_1),
\end{equation*}
where $Q^A_\emptyset$ is observed during $[0,1]$. By the chain rule for the relative entropy, the cost of this operation is $\Ent(\mathcal C_{\mathrm{prep}}\lvert Q^A_\emptyset)=C_{\mathrm{prep}}$, and at time $1$ there are at most $K$ particles.

\emph{Bulk.} Given the configuration at the end of the preparation phase, we sample the bulk path from the conditional law of $P_{A,T}^{(K,\epsilon)}$ given its initial configuration. By~\eqref{eq:lb-smoothed-entropy}, its mean entropic cost relative to the free evolution in $A$ started from the same configuration is at most $h_{A,T}(P)/P(\omega_0(A)\leq K)$.

\emph{Clearance.} Finally, to clear a configuration $\eta$, we condition the free evolution in $A$ started from $\eta$ during a unit time interval on having no births and on all particles of $\eta$ dying within that interval. This event has probability $\e^{-|A|}\upsilon^{\eta(A)}$, so that its conditional entropy is $|A|+c\,\eta(A)$. The configuration at the beginning of the clearance phase has the law of $\omega_T$ under $P^{(K,\epsilon)}_{A,T}$. Since smoothing conserves the number of particles in $A$, \eqref{eq:lb-terminal-mean} shows that the mean entropic cost of the clearance phase is at most $|A|+c\rho(P)|A|/P(\omega_0(A)\leq K)$, uniformly in $T$.

We concatenate these three paths over the intervals $[0,1]$, $[1,1+T]$ and $[1+T,S]$, respectively, and denote the law of the resulting path by $\mathcal C_{\mathrm{block}}$. The adjacent endpoints agree by construction, and the path starts and ends with the empty configuration, which proves~(i). The chain rule for the relative entropy and the Markov property of the free evolution give~\eqref{eq:lb-cycle-entropy}, and the bound on $C_{\rm prep}$ in~(iii) is~\eqref{eq:lb-smoothed-initial-entropy} with $\upsilon=1-\e^{-1}$. Matching empty endpoints make every finite concatenation of independent block paths c\`adl\`ag. Each preparation, bulk and clearance kernel is absolutely continuous with respect to the corresponding kernel of the free evolution, and the Markov property of the free evolution therefore gives absolute continuity for every finite concatenation, which proves~(ii).
\end{proof}

\subsection{The auxiliary laws in the observation window}\label{subsec:lb-finite-cylinder}
We now bound the entropic cost of the auxiliary laws $\widehat Q_n$ constructed in \zcref{subsection:strategy-lower-bound}, see \zcref{fig:window}.

\begin{proof}[Proof of \zcref{lemma:auxiliary-entropy}]
During the time interval $[0,1]$, the evolution in $\Lambda_n$ is conditioned on having no births and on all initial particles of $\xi$ in $\Lambda_n$ dying before time $1$. This event has probability $\e^{-|\Lambda_n|}\upsilon^{\xi(\Lambda_n)}$, so that the entropic cost of the conditioning is $|\Lambda_n|+c\, \xi(\Lambda_n)$. The tile faces remain empty after time one: they have zero Lebesgue measure, and the particle positions in the block paths have distributions that are absolutely continuous with respect to the Lebesgue measure. Each of the $\lfloor n/a\rfloor^d\ell_n$ block paths has entropic cost $\Ent(\mathcal C_{\mathrm{block}}\lvert Q^A_\emptyset)$ relative to the free evolution. Keeping the corridor $\Lambda_n\setminus V_n$, which is empty at time $1$, free of births during $[1,n]$ costs $(n-1)|\Lambda_n\setminus V_n|$, and suppressing births in $V_n$, which is empty at time $1+\ell_nS$, during the remaining time interval of length $r_n=n-1-\ell_nS\in[0,S)$ costs $r_n|V_n|$. Outside $\Lambda_n$ and after time $n$, the law $\widehat Q_n$ agrees with the free evolution. Each of these modifications is absolutely continuous with respect to the free evolution, so that $\widehat Q_n\ll Q_\xi$, and by the chain rule for the relative entropy and the Markov property,
\begin{equation*}
 	\Ent(\widehat Q_n\lvert Q_\xi)\le|\Lambda_n|+c\, \xi(\Lambda_n) + \lfloor n/a\rfloor^d\ell_n \Ent(\mathcal C_{\mathrm{block}}\lvert Q^A_\emptyset) + (n-1)|\Lambda_n\setminus V_n|+r_n|V_n|.
\end{equation*}
For fixed $a,K,\epsilon,T$, we have
\begin{equation*}
	\frac{\lfloor n/a\rfloor^d\ell_n}{n|\Lambda_n|}\longrightarrow\frac1{|A|S}, \qquad \frac{|\Lambda_n\setminus V_n|}{|\Lambda_n|}\longrightarrow0, \qquad \frac{\xi(\Lambda_n)}{n|\Lambda_n|}\longrightarrow0,
\end{equation*}
where the last limit holds since $\xi\in\Omst$, and $(|\Lambda_n|+r_n|V_n|)/(n|\Lambda_n|)\le(1+S)/n\to0$. Estimate ~\eqref{eq:lb-global-entropy} now follows from~\eqref{eq:lb-cycle-entropy} and $h_{A,T}(P)\leq |A|T H(P)$, which holds by the definition of $H$.
\end{proof}

\subsection{Approximation of \texorpdfstring{$P$}{P} by the stationary block law}\label{subsec:lb-approximation}
Recall the stationary block law $\bar P$ from~\eqref{eq:lb-stationary-cycle-law}. In order to show that $\bar P$ approximates $P$ in the weak topology, we control the conditioning and smoothing errors uniformly in the bulk duration $T$, so that we can choose $K$ and $\epsilon$ before $T$.

\begin{lemma}[Uniform conditioning estimate]
\label{lem:lb-conditioning}
For every bounded measurable function $f$ on $E$,
\begin{equation}\label{eq:lb-bounded-conditioning}
 	\sup_{z\in\R^d\times\R}\left| P[f\circ\theta_z\lvert  \omega_0(A)\leq K]-P[f] \right| \leq 2\|f\|_\infty P(\omega_0(A)>K).
\end{equation}
\end{lemma}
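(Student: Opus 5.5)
The plan is to reduce the claim to an elementary inequality for conditional expectations under the event $G:=\{\omega_0(A)\le K\}$, and then use stationarity of $P$ to remove the shift. Write $p:=P(G)>0$. First use space-time invariance of $P$: since $P\circ\theta_z^{-1}=P$, we have $P[f\circ\theta_z]=P[f]$ for every $z$. Thus the quantity in~\eqref{eq:lb-bounded-conditioning} equals $|P[g\lvert G]-P[g]|$ with $g:=f\circ\theta_z$, and $\|g\|_\infty=\|f\|_\infty$. The estimate is then to be proved for an arbitrary bounded $g$ and the fixed event $G$, uniformly in $g$ with $\|g\|_\infty\le\|f\|_\infty$. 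This gives the supremum over $z$ for free. Note that the event $G$ is not shifted, which is exactly why stationarity is needed only for the unconditional term.

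For the core inequality, decompose
\begin{equation*}
    P[g\lvert G]-P[g] = \frac{P[g\1_G]}{p} - P[g\1_G] - P[g\1_{G^c}] = \frac{1-p}{p}P[g\1_G] - P[g\1_{G^c}].
\end{equation*}
Then bound $|P[g\1_G]|\le\|g\|_\infty p$ and $|P[g\1_{G^c}]|\le\|g\|_\infty(1-p)$. Together these give $|P[g\lvert G]-P[g]|\le 2\|g\|_\infty(1-p)=2\|f\|_\infty P(\omega_0(A)>K)$, which is the claim.

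There is no genuine obstacle. The only point to check is measurability, namely that $f\circ\theta_z$ is a bounded measurable function on $E$. This holds because $\theta_z$ is continuous, hence Borel, on $E$ with the local $J_1$ topology, by the facts in \zcref{section:appendix-topology}. Equivalently, one could phrase the bound as a total variation estimate, $\|P(\cdot\lvert G)-P\|_{\rm TV}\le 2P(G^c)$, and then apply it to $g$. The lemma is useful precisely because the right-hand side does not depend on $z$, and therefore not on the bulk duration $T$. This is what permits choosing $K$ before $T$ in \zcref{lemma:G-typical-and-parameters}.
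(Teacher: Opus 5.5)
Your proof is correct and is essentially the paper's argument. The paper likewise removes the shift by stationarity, $P[f\circ\theta_z]=P[f]$, and then uses that the total variation distance $\sup_B|P(B\lvert\omega_0(A)\le K)-P(B)|$ equals $P(\omega_0(A)>K)$; your decomposition $\frac{1-p}{p}P[g\1_G]-P[g\1_{G^c}]$ is simply the direct verification of that total variation bound.
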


\begin{proof}
The total variation distance $\sup_B|P(B\lvert \omega_0(A)\leq K)-P(B)|$ between $P(\cdot\lvert  \omega_0(A)\leq K)$ and $P$ is $1-P(\omega_0(A)\leq K)$. Together with the stationarity of $P$, which gives $P[f\circ\theta_z]=P[f]$, this yields~\eqref{eq:lb-bounded-conditioning}.
\end{proof}

To compare paths, we use the metric $d_E$ from \zcref{section:appendix-topology}, and in particular the estimates in \zcref{lem:lb-local-metric}. Let $R,J,r$ be as there, that is, $R>0$ and $J\in\N$ are fixed, $r>1$ is such that the supports of the test functions $g_1,\ldots,g_J$ defining the metric $d_\Omega$ lie in $(-r+1,r-1)^d$, and $B_r=(-r,r)^d$ is the open box of side length $2r$ (not to be confused with the ball $B_r(x)$). Assume $a>2r$, $T>2R$ and $0<\epsilon\le1$, and abbreviate
\begin{equation*}
    C_{r,R}:=|B_r|(\rho(P)+2R\chi(P)).
\end{equation*} A space-time point $(x,s)\in A\times[0,S]$ is called a \emph{block-interior point} if
\begin{equation*}
 	x\in(r,a-r)^d, \qquad s\in[1+R,1+T-R].
\end{equation*}
Seen from a block-interior point, the observation window $B_r\times[-R,R]$ lies inside the bulk phase of a single block. The proportion of space-time points in $A\times[0,S]$ that are not block-interior is
\begin{equation}\label{eq:lb-boundary-phases}
 	b_{a,T} = 1 - \left(1-\frac{2r}{a}\right)^d \frac{T-2R}{T+2} \leq \frac{2dr}{a}+\frac{2(R+1)}{T+2}.
\end{equation}

\begin{lemma}[Approximation by the stationary block law]
\label{lem:lb-bl-approximation}
Under these assumptions, the stationary block law $\bar P$ satisfies
\begin{equation}\label{eq:lb-bl-approximation}
    \sup_{f\in\mathrm{BL}_1(E)} |\bar P[f]-P[f]| \leq \e^{-R}+2^{-J} + \frac{\epsilon |B_r|(\rho(P)+2R\chi(P))}{P(\omega_0(A)\leq K)} + 2P(\omega_0(A)>K)+2b_{a,T}.
\end{equation}
\end{lemma}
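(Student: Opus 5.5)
The plan is to write $\bar P[f]$ as an average over base points $(x,s)\in A\times[0,S]$ of $\mathbf P[f\circ\theta_{(x,s)}]$, and to split this average into block-interior and non-interior points. For non-interior points I only use $|\bar P[f]|,|P[f]|\le 1$. These points have proportion $b_{a,T}$, so they contribute at most $2b_{a,T}$. For a block-interior point $(x,s)$, I will compare $\mathbf P[f\circ\theta_{(x,s)}]$ with $P[f]$ in three steps: localisation, then smoothing, then conditioning.

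\emph{Localisation.} Use the estimates of \zcref{lem:lb-local-metric}. For $f\in\mathrm{BL}_1(E)$, $f(\omega)$ differs by at most $\e^{-R}+2^{-J}$ from a function depending only on the restriction of $\omega$ to $B_r\times[-R,R]$ (where the test functions $g_1,\dots,g_J$ live). I take this to mean that two paths agreeing on that window have $d_E\le \e^{-R}+2^{-J}$. Seen from a block-interior point, this window lies inside the bulk phase of a single block. There the path of $\mathbf P$ has the law of $P^{(K,\epsilon)}_{A,T}$, shifted by the internal time $s-1$ and position $x$. So up to the error $\e^{-R}+2^{-J}$, $\mathbf P[f\circ\theta_{(x,s)}]$ equals the expectation of (a localised version of) $f$ under the smoothed conditional law, seen from $(x,s-1)$.

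\emph{Smoothing.} Couple the path $\omega$ under $P^{(K)}_{A,T}$ with its smoothed version $J_\epsilon\omega$: same jump times, each particle moved by at most $\epsilon$. If no particle meets the window $(x,s-1)+B_r\times[-R,R]$, the two restrictions coincide. Otherwise, I want \zcref{lem:lb-local-metric} to give that the local distance is at most $\epsilon$ times the number $N$ of individual particles meeting the window, using that the $g_i$ are Lipschitz with suitable normalisation and that no time change is needed. By the count in \zcref{lem:finite-entropy-intensities}, $P[N]\le C_{r,R}$. Conditioning on $\omega_0(A)\le K$ multiplies this by at most $1/P(\omega_0(A)\le K)$. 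This gives the third term of the bound.

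\emph{Conditioning.} What remains is $P[f\circ\theta_{(x,s-1)}\mid\omega_0(A)\le K]-P[f]$. By \zcref{lem:lb-conditioning} this is at most $2P(\omega_0(A)>K)$ in absolute value. Averaging over base points then gives the asserted estimate.

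The main obstacle is the smoothing step. I need a metric estimate that bounds the $d_E$-discrepancy under spatial displacement linearly in $\epsilon N$. The catch is that multiplicities and particles crossing the window boundary under displacement could break the naive bound. I would first try to absorb boundary crossings by the margin built into the choice $\supp g_i\subset(-r+1,r-1)^d$ with $\epsilon\le1$: a displaced particle inside $B_r$ stays within distance $\epsilon\le 1$, so the test functions do not see crossings. The linear bound then follows from the Lipschitz property of the $g_i$.
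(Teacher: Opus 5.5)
Your proposal follows the paper's proof: the same split into block-interior and remaining points, the same coupling of the block's bulk path with $J_\epsilon$ applied to a path $\omega^*$ drawn from $P(\cdot\lvert\omega_0(A)\le K)$, the margin $\supp g_j\subset(-r+1,r-1)^d$ together with $\epsilon\le1$ to handle displaced particles (which is exactly \zcref{lem:lb-local-metric}(ii)), the particle count with conditional mean at most $C_{r,R}/P(\omega_0(A)\le K)$, and \zcref{lem:lb-conditioning}. One bookkeeping point: to pay $\e^{-R}+2^{-J}$ only once, do not first replace $f$ by a function of the window and then undo that replacement after smoothing, which costs this error more than once; instead bound $d_E(\theta_{(x,s)}\widetilde\omega,\theta_{(x,s-1)}\omega^*)\le\e^{-R}+2^{-J}+\epsilon L$ in a single step via \eqref{eq:lb-local-metric} and \eqref{eq:lb-lifeline-metric}, as the paper does.
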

The first two terms come from the size of the observation window, the third from the smoothing, the fourth from the truncation, and the last from the space-time points that are not block-interior. Only the last term depends on $T$; by~\eqref{eq:lb-boundary-phases}, it is at most $4dr/a+4(R+1)/(T+2)$, which is small once $a$ and then $T$ are large.

\begin{proof}
Fix $f\in\mathrm{BL}_1(E)$. From the definition of $\bar P$,
\begin{equation*}
    \bar P[f]-P[f] = \frac1{|A|S}\int_A\d x\int_0^S \d s\, \left(\mathbf P[f(\theta_{(x,s)}\widetilde\omega)]-P[f]\right).
\end{equation*}
The points that are not block-interior occupy the proportion $b_{a,T}$ of $A\times[0,S]$, and the integrand at such points is bounded in absolute value by $2$, since $\|f\|_\infty\leq 1$.

Let $(x,s)$ be a block-interior point. We couple the infinite path $\widetilde\omega$ under $\mathbf P$ with a path $\omega^*$ sampled from $P(\cdot\lvert  \omega_0(A)\leq K)$, so that the bulk path $(\widetilde\omega_{1+t}|_A)_{0\le t\le T}$ of the block $A\times[0,S]$ is obtained from $(\omega^*_t|_A)_{0\le t\le T}$ using $J_\epsilon$. Since the preparation phase occupies the first unit time interval, time $s$ in the block path corresponds to time $s-1$ in the path $\omega^*$.

We estimate the $d_E$-distance between the shifted paths $\theta_{(x,s)}\widetilde\omega$ and $\theta_{(x,s-1)}\omega^*$ by comparing their restrictions to the space-time box $B_r\times[-R,R]$. By the interior conditions, $x+B_r\subset A$ and $[s-1-R,s-1+R]\subset[0,T]$. After these shifts, corresponding birth and death events in the observed bulk interval occur at the same times, while the corresponding spatial positions differ by at most $\epsilon$. The number of individual particles of $\omega^*$ that are present in $x+B_r$ at some time during $[s-1-R,s-1+R]$ is
\begin{equation*}
    L_{x,s}(\omega^*) = \omega^*_{s-1-R}(x+B_r) + \mathsf B_{\omega^*} \bigl(x+B_r,(s-1-R,s-1+R]\bigr).
\end{equation*}
Since the occupation and birth intensities $\rho(P)$ and $\chi(P)$ are finite, space-time stationarity gives $P[L_{x,s}]=C_{r,R}<\infty$, and, since $L_{x,s}\geq0$,
\begin{equation}\label{eq:lb-local-lifeline-mean}
    P[L_{x,s}\lvert  \omega_0(A)\leq K] = \frac{P[L_{x,s}\1_{\{\omega_0(A)\leq K\}}]}{P(\omega_0(A)\leq K)} \leq\frac{C_{r,R}}{P(\omega_0(A)\leq K)}.
\end{equation}
For $P(\omega_0(A)\leq K)\geq1/2$, this is bounded by $2C_{r,R}$, independently of $T$, $a$ and the block-interior point $(x,s)$.

By construction, the coupled shifted paths satisfy the matching condition of \zcref{lem:lb-local-metric}, with $\theta_{(x,s-1)}\omega^*$ as the original path and $L=L_{x,s}(\omega^*)$. Applying~\eqref{eq:lb-local-metric} and~\eqref{eq:lb-lifeline-metric}, we obtain, almost surely,
\begin{equation*}
    d_E\bigl(\theta_{(x,s)}\widetilde\omega, \theta_{(x,s-1)}\omega^*\bigr) \leq \e^{-R}+2^{-J}+\epsilon L_{x,s}(\omega^*).
\end{equation*}
Taking expectations under the coupling and using the Lipschitz property of $f$ and~\eqref{eq:lb-local-lifeline-mean} therefore gives
\begin{equation*}
    \big| \mathbf P[f(\theta_{(x,s)}\widetilde\omega)] - P[f\circ\theta_{(x,s-1)}\lvert  \omega_0(A)\leq K] \big| \leq \e^{-R}+2^{-J}+\frac{\epsilon C_{r,R}}{P(\omega_0(A)\leq K)}.
\end{equation*}
By the uniform conditioning estimate~\eqref{eq:lb-bounded-conditioning},
\begin{equation*}
 \big| P[f\circ\theta_{(x,s-1)}\lvert  \omega_0(A)\leq K]-P[f]\big| \leq 2P(\omega_0(A)>K).
\end{equation*}
Thus, at every block-interior point $(x,s)$,
\begin{equation*}
 \big|\mathbf P[f(\theta_{(x,s)}\widetilde\omega)]-P[f]\big| \leq \e^{-R}+2^{-J} + \frac{\epsilon C_{r,R}}{P(\omega_0(A)\leq K)}+2P(\omega_0(A)>K).
\end{equation*}
Together with the bound $2$ at the remaining points, this yields~\eqref{eq:lb-bl-approximation}, since the right-hand side does not depend on $f\in\mathrm{BL}_1(E)$.
\end{proof}

We can now choose the parameters. Since the conditioning and smoothing errors are controlled uniformly in $T$, we can choose $K$ and $\epsilon$ before taking $T$ sufficiently large.

\begin{proof}[Proof of \zcref{lemma:G-typical-and-parameters}(b)]
Let $\alpha>0$ and let $G$ be an open neighbourhood of $P$. By \zcref{lemma:open-neighbourhoods}, there are $f_1,\ldots,f_p\in\mathrm{BL}_1(E)$ and $\delta>0$ such that $U=U(P;f_1,\dots,f_p;\delta)\subset G$; we may assume $\delta\le1$. We show that the parameters can be chosen such that
\begin{equation}\label{eq:lb-strict-margins}
 	\big|\bar P[f_i]-P[f_i]\big|<\delta/2, \qquad 1\leq i\leq p,
\end{equation}
and such that the right-hand side of~\eqref{eq:lb-global-entropy} is at most $H(P)+\alpha$, which gives the entropy bound in~(b) by \zcref{lemma:auxiliary-entropy}; in particular, $\bar P\in U$.

Choose $R$ and $J$ such that $\e^{-R}\le\delta/16$ and $2^{-J}\le\delta/16$, then $r$ as in \zcref{lem:lb-local-metric}, and $a>2r$ such that $4dr/a\le\delta/16$. For the fixed cube $A=(0,a)^d$, we have $P(\omega_0(A)\leq K)\uparrow1$ as $K\to\infty$. We can therefore choose $K$ such that
\begin{equation*}
    P(\omega_0(A)\leq K)\geq\frac12,\qquad
    2P(\omega_0(A)> K)\leq\frac{\delta}{16},\qquad
    H(P)\Big(\frac1{P(\omega_0(A)\leq K)}-1\Big) \leq \frac{\alpha}{2}.
\end{equation*}
Next, choose $0<\epsilon\leq 1$ with $a/\epsilon\in\N$ and $\epsilon C_{r,R}/P(\omega_0(A)\leq K)\le\delta/16$.
This fixes a finite preparation cost $C_{\mathrm{prep}}$, which does not depend on $T$. Finally, choose $T>2R$ such that
\begin{equation*}
    \frac{4(R+1)}{T+2}\le\frac{\delta}{16},\qquad
    \frac{1}{T+2}\Big(\frac{C_{\mathrm{prep}}}{|A|}+1+\frac{c\rho(P)}{P(\omega_0(A)\leq K)}\Big)
    \leq\frac{\alpha}{2}.
\end{equation*}
By~\eqref{eq:lb-boundary-phases}, these choices give $2b_{a,T}\le 2\delta/16$, and therefore, by~\eqref{eq:lb-bl-approximation},
\begin{equation*}
    \sup_{f\in\mathrm{BL}_1(E)}
    |\bar P[f]-P[f]|
    \le\frac{6\delta}{16}<\frac{\delta}{2},
\end{equation*}
which implies~\eqref{eq:lb-strict-margins}. Moreover, the right-hand side of~\eqref{eq:lb-global-entropy} is at most $H(P)+\alpha/2+\alpha/2$.
\end{proof}

\subsection{A law of large numbers under the auxiliary laws}\label{subsec:lb-concentration}
For fixed parameters $a,K,\epsilon,T$, we now show that $R_n$ converges in $\widehat Q_n$-probability to $\bar P$ when tested against bounded Lipschitz functions. We approximate the block integrals of such functions by functions of finitely many neighbouring block paths, and we apply a law of large numbers for random fields with finite-range dependence.

Let
\begin{equation*}
	I_n=\{0,\ldots,\lfloor n/a\rfloor-1\}^d \times\{0,\ldots,\ell_n-1\},
\end{equation*}
and note that $|I_n||A|S/(n|\Lambda_n|)\to1$. Let $\widetilde\omega$ be the infinite path assembled from the independent block paths $(\zeta^{(i)})_{i\in\mathbb Z^{d+1}}$ under $\mathbf P$, as in the definition~\eqref{eq:lb-stationary-cycle-law} of $\bar P$. For $f\in\mathrm{BL}_1(E)$, define the block integrals
\begin{equation*}
	Y_i(f)=\int_A\d x\int_0^S\d s\, f(\theta_{(ak+x,\ell S+s)}\widetilde\omega), \qquad i=(k,\ell)\in\Z^{d+1}.
\end{equation*}

\begin{lemma}[Law of large numbers for block averages]
\label{lem:lb-cycle-field-lln}
For fixed block parameters $a,K,\epsilon,T$, and every $f\in\mathrm{BL}_1(E)$,
\begin{equation}\label{eq:lb-cycle-field-lln}
 	\frac1{|I_n||A|S}\sum_{i\in I_n}Y_i(f) \longrightarrow \bar P[f] \qquad\text{in }L^1(\mathbf P).
\end{equation}
\end{lemma}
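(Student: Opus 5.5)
The plan is to approximate each block integral $Y_i(f)$ by a function of finitely many neighbouring block paths. The resulting random field $(Y_i^{(m)})_{i}$ is stationary under the shifts of $\Z^{d+1}$ and has finite-range dependence. For such a field an $L^2$ law of large numbers holds by a covariance count.

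\emph{Localisation.} For $f\in\mathrm{BL}_1(E)$ and $m\in\N$, let $f_m$ be a bounded function that depends only on the restriction of the path to a bounded space-time box $B_{r_m}\times[-m,m]$. It should satisfy $\|f-f_m\|_\infty\le\varepsilon_m$ with $\varepsilon_m\to0$. I would take this from the structure of the metric $d_E$ in \zcref{section:appendix-topology}, namely the uniform approximation of bounded Lipschitz functions by bounded local functions in~\eqref{eq:lb-cylinder-agreement}; the terms $\e^{-R}+2^{-J}$ in \zcref{lem:lb-local-metric} express exactly this. Then define $Y_i^{(m)}$ as $Y_i(f)$ with $f$ replaced by $f_m$. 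This gives $|Y_i-Y_i^{(m)}|\le |A|S\varepsilon_m$ deterministically, uniformly in $i$. Hence it suffices to prove~\eqref{eq:lb-cycle-field-lln} for $f_m$, with limit $\bar P[f_m]$, and then let $m\to\infty$, using $|\bar P[f]-\bar P[f_m]|\le\varepsilon_m$.

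\emph{Finite-range dependence and stationarity.} For fixed $m$, the integrand $f_m(\theta_{(ak+x,\ell S+s)}\widetilde\omega)$ with $(x,s)\in A\times[0,S]$ depends only on $\widetilde\omega$ in a bounded box. This box meets only the blocks with indices $j$ satisfying $|j-i|_\infty\le N_m:=\lceil r_m/a\rceil+\lceil m/S\rceil+1$. So $Y_i^{(m)}$ is a measurable function of $(\zeta^{(j)})_{|j-i|_\infty\le N_m}$. Because the $\zeta^{(j)}$ are i.i.d.\ and the assembly of $\widetilde\omega$ commutes with lattice shifts, the field $(Y^{(m)}_i)_{i\in\Z^{d+1}}$ is stationary. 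It is bounded by $|A|S$, and $Y_i^{(m)}$ and $Y_{i'}^{(m)}$ are independent whenever $|i-i'|_\infty>2N_m$. Moreover, $\mathbf P[Y^{(m)}_i]=\mathbf P[Y^{(m)}_0]=|A|S\,\bar P[f_m]$ by the definition~\eqref{eq:lb-stationary-cycle-law} of $\bar P$.

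\emph{Variance bound.} Expanding the variance,
\begin{equation*}
\mathrm{Var}\Big(\frac1{|I_n|}\sum_{i\in I_n}Y^{(m)}_i\Big)\le\frac{(|A|S)^2(4N_m+1)^{d+1}}{|I_n|}\longrightarrow0,
\end{equation*}
since only pairs of indices at distance at most $2N_m$ contribute. This gives $L^2$, and hence $L^1$, convergence to $\bar P[f_m]$. Combining this with the uniform localisation error and letting $m\to\infty$ proves the lemma.

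The only step that needs care is the localisation: that $f\in\mathrm{BL}_1(E)$ is uniformly approximable by bounded functions of a bounded space-time window. Care is needed because $J_1$ time changes and the vague topology on $\Om$ are not literally local. I would rely on the form of $d_E$, a weighted sum over time horizons and test functions $g_j$ with compact support. Concretely, I would set $f_m(\omega):=f(\pi_m\omega)$, where $\pi_m\omega$ is the path restricted to $B_{r_m}\times[-m,m]$ and frozen or emptied outside, and bound $d_E(\omega,\pi_m\omega)\le\e^{-m}+2^{-J_m}$ via \zcref{lem:lb-local-metric}. If $\pi_m$ introduces a jump issue at the window's time endpoints, I would use a smooth time cutoff instead. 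The Lipschitz bound then transfers directly to $f_m$.
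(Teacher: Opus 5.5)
Your proposal is correct and follows the paper's proof: approximate $Y_i(f)$ by a bounded field with finite-range dependence, using \eqref{eq:lb-cylinder-agreement} for a deterministic error of order $|A|S$ times the tolerance, and conclude with the covariance count. The only difference is where the localisation happens. The paper localises the path rather than the test function: it replaces every block path $\zeta^{(j)}$ with $\|j-i\|_\infty>M$ by the empty path, which automatically gives a path in $E$ because blocks start and end empty. Your $f_m=f\circ\pi_m$ instead needs $\pi_m\omega\in E$. This holds for the legal birth-and-death paths charged by $\mathbf P$, since particles do not move and restriction to a bounded open box gives a piecewise constant path. It can fail for arbitrary $\omega\in E$, but the legal paths are all you need.
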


\begin{proof}
The law $\mathbf P$ is invariant under index translations, and $\mathbf P[Y_i(f)]=|A|S\bar P[f]$ by~\eqref{eq:lb-stationary-cycle-law}. For $i\in\Z^{d+1}$ and $M\in\N$, we construct the path $\widetilde\omega^{i,M}$ by retaining the block paths $\zeta^{(j)}$ with $\|j-i\|_\infty\leq M$ and replacing every other block path by the empty path. Since every retained block path starts and ends with the empty configuration, this again gives a path in $E$. Let
\begin{equation*}
	Y_i^{[M]}(f) :=\int_A\d x\int_0^S\d s\, f(\theta_{(ak+x,\ell S+s)}\widetilde\omega^{i,M}),  \qquad i=(k,\ell)\in\Z^{d+1}.
\end{equation*}
Fix $\tau>0$. By~\eqref{eq:lb-cylinder-agreement}, there exists a space-time box
\begin{equation*}
	\mathcal W_\tau = B_{r_\tau}\times[-R_\tau,R_\tau]
\end{equation*}
such that any two paths that agree on this box are at $d_E$-distance at most $\tau$. These auxiliary radii do not change the block parameters. Choose $M=M(\tau)$ so large that every translate of $\mathcal W_\tau$ centred in the $i$-th block is contained in the blocks retained in $\widetilde\omega^{i,M}$, for example $Ma>r_\tau$ and $MS>R_\tau$. Then, for every space-time translation in the $i$-th block, the paths $\widetilde\omega$ and $\widetilde\omega^{i,M}$ agree on the corresponding translate of $\mathcal W_\tau$, and the Lipschitz property of $f$ yields
\begin{equation}\label{eq:lb-localized-block-error}
 	|Y_i(f)-Y_i^{[M]}(f)|\leq |A|S\tau
\end{equation}
for every realisation of $(\zeta^{(i)})_{i\in\Z^{d+1}}$.

Since $Y_i^{[M]}(f)$ depends only on the block paths with indices $j$ satisfying $\|j-i\|_\infty\leq M$, the random variables $Y_i^{[M]}(f)$ and $Y_j^{[M]}(f)$ are independent whenever $\|i-j\|_\infty>2M$. Furthermore, $|Y_i^{[M]}(f)|\leq |A|S$, and these random variables have the same mean. Thus, for every non-empty finite index set $I$,
\begin{equation}\label{eq:lb-finite-dependence-variance}
	\operatorname{Var}_{\mathbf P}\left(\frac1{|I||A|S}\sum_{i\in I}Y_i^{[M]}(f)\right) \leq \frac{(4M+1)^{d+1}}{|I|},
\end{equation}
since each index has at most $(4M+1)^{d+1}$ indices with which its covariance can be non-zero, and each covariance has absolute value at most $(|A|S)^2$. By~\eqref{eq:lb-localized-block-error},
\begin{equation*}
	\left|(|A|S)^{-1}\mathbf P[Y_0^{[M]}(f)] - \bar P[f]\right| \leq \tau,
\end{equation*}
which, together with~\eqref{eq:lb-localized-block-error} and~\eqref{eq:lb-finite-dependence-variance}, yields
\begin{equation*}
 \mathbf P\Big[\Big|\frac1{|I||A|S}\sum_{i\in I}Y_i(f)-\bar P[f] \Big|\Big] \leq 2\tau+\sqrt{\frac{(4M+1)^{d+1}}{|I|}}.
\end{equation*}
We apply this estimate with $I=I_n$. Letting first $n\to\infty$ and then $\tau\to 0$ yields~\eqref{eq:lb-cycle-field-lln}.
\end{proof}

It remains to compare the infinite block path used to define $\bar P$ with the periodised path that defines the empirical process under $\widehat Q_n$.

\begin{proof}[Proof of \zcref{lemma:G-typical-and-parameters}(a)]
Fix $f\in\mathrm{BL}_1(E)$ and $\tau>0$, and choose $\mathcal W_\tau$ as in the proof of \zcref{lem:lb-cycle-field-lln}. We couple the two constructions by using the same block paths: for each $i=(k,\ell)\in I_n$, the path $\zeta^{(i)}$ is placed in $(A+ak)\times[\ell S,(\ell+1)S]$ in the infinite construction, and in $(-\overline{n/2}+A+ak)\times[1+\ell S,1+(\ell+1)S]$ in the construction of $\widehat Q_n$. We complete the latter construction by sampling its remaining parts according to the conditional laws prescribed in the definition of $\widehat Q_n$, and denote the resulting path by $\widehat\omega$. Since $(\zeta^{(i)})_{i\in I_n}$ has the prescribed product law, $\widehat\omega$ has law $\widehat Q_n$, while the law of the infinite block family is unchanged. Let $\widehat\omega^{(n)}$ be the $n$-periodic version of $\widehat\omega$, so that $R_n=R_{n,\widehat\omega}$.

Under this coupling, $\widehat\omega^{(n)}$ agrees with $\theta_{(\overline{n/2},-1)}\widetilde\omega$ on
\begin{equation*}
    \Gamma_n=V_n\times[1,1+\ell_nS).
\end{equation*}
Let $\Gamma_{n,\tau}$ consist of the space-time points $(x,s)\in\Gamma_n$ whose translated observation window $(x,s)+\mathcal W_\tau$ lies strictly inside $\Gamma_n$. For fixed block parameters and fixed $\mathcal W_\tau$,
\begin{equation*}
    |\Delta_n\setminus\Gamma_n|=O(n^d), \qquad |\Gamma_n\setminus\Gamma_{n,\tau}|=O_\tau(n^d).
\end{equation*}
By the definition of $Y_i(f)$ and a change of variables, $\sum_{i\in I_n}Y_i(f) = \int_{\Gamma_n}\d x\, \d s\,f(\theta_{(x+\overline{n/2},s-1)}\widetilde\omega)$.
For every $(x,s)\in\Gamma_{n,\tau}$, the shifted paths $\theta_{(x,s)}\widehat\omega^{(n)}$ and $\theta_{(x+\overline{n/2},s-1)}\widetilde\omega$ agree on $\mathcal W_\tau$, so that their $d_E$-distance is at most $\tau$, and their $f$-values differ by at most $\tau$. On $\Gamma_n\setminus\Gamma_{n,\tau}$, the two integrands differ by at most $2$, whereas on $\Delta_n\setminus\Gamma_n$ only the integrand of the empirical process remains, with absolute value at most $1$. Splitting the integrals accordingly yields, almost surely,
\begin{equation}\label{eq:lb-finite-cylinder-comparison}
    \left| R_{n}[f]-\frac1{n|\Lambda_n|}\sum_{i\in I_n}Y_i(f) \right|
    \leq \tau+ \frac{ 2|\Gamma_n\setminus\Gamma_{n,\tau}| +|\Delta_n\setminus\Gamma_n| }{n|\Lambda_n|}=\tau+O_\tau(n^{-1}).
\end{equation}
Since $|I_n||A|S/(n|\Lambda_n|)\to1$ and $|Y_i(f)|\le|A|S$, the convergence~\eqref{eq:lb-cycle-field-lln} together with~\eqref{eq:lb-finite-cylinder-comparison} shows, by first letting $n\to\infty$ and then $\tau\to 0$, that $R_n[f]\to\bar P[f]$ in $\widehat Q_n$-probability.
\end{proof}

\section{Proof of the upper bound in the non-interacting case}\label{section:proof-upper-bound}
In this section we prove \zcref{prop:compact-upper-bound,prop:exponential-tightness}, which complete the proof of the upper bound in \zcref{thm:ldp-upper-bound}.

\subsection{The upper bound for compact sets}\label{subsection:compact-upper-bound}
We follow the general strategy of~\cite{DaiPra1993,varadhanLectureNotes}, but we have to be careful with some of the technical details. Fix $A=(0,a)^d$, $T>0$, and a bounded continuous function $f$ on $D([0,T];\Om_A)$. Define
\begin{equation}\label{eq:test}
g_f(\eta):=\log Q^A_\eta[\e^f],\qquad
\psi_f(\omega):=f(\omega|_{A\times[0,T]})-g_f(\omega_0|_A).
\end{equation}
Subtracting $g_f$ implies the following normalisation: for every initial configuration $\eta$,
\begin{equation}\label{eq:normalization}
Q^A_\eta\bigl[\e^{f-g_f(\eta)}\bigr]=1.
\end{equation}
We first show some technical properties of $g_f$ and $\psi_f$.

\begin{lemma}[Continuity properties]\label{lemma:continuity-properties}
The function $\eta \mapsto g_f(\eta)$ is bounded and continuous on $\Om_A$, and the map $P\mapsto P[\psi_f]$ is continuous on $\PP(E)$.
\end{lemma}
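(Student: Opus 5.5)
For $g_f$, boundedness is immediate: $\abs{f}\le\|f\|_\infty$ gives $\abs{g_f}\le\|f\|_\infty$. For continuity, I will use the graphical construction of the free evolution in $A$. Fix $\eta\in\Om_A$ and a sequence $\eta_k\to\eta$ vaguely in $\Om_A$. Since $A$ is open and bounded, the configurations are finite, but mass can escape to $\partial A$ (which is not in $A$). I will couple $Q^A_{\eta_k}$ and $Q^A_\eta$ on one probability space using the same birth Poisson process in $A\times[0,T]$ with its exponential lifetimes. The initial particles are paired as follows: for a large compact $K'\subset A$ with $\eta(\partial K')=0$, vague convergence gives $\eta_k(K')=\eta(K')$ eventually. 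The points of $\eta_k$ in $K'$ can then be matched to those of $\eta$ with positions converging. These matched points receive the same lifetimes. The unmatched points of $\eta_k$ lie in $A\setminus K'$, near the boundary, and only affect test functions supported near $\partial A$. Under this coupling, the paths have identical jump times, and positions converge. By the structure of the $J_1$ metric on $D([0,T];\Om_A)$ (\zcref{section:appendix-topology}), the paths then converge almost surely, up to the contribution of particles in $A\setminus K'$. That contribution can be made small in the metric, uniformly, by taking $K'$ large, since the metric $d_\Om$ only tests against finitely many compactly supported functions up to an error $2^{-J}$. Dominated convergence with $\e^{f}\le\e^{\|f\|_\infty}$ then gives $Q^A_{\eta_k}[\e^f]\to Q^A_\eta[\e^f]$, so $g_f$ is continuous.

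For the continuity of $P\mapsto P[\psi_f]$, I note that $\psi_f$ is bounded by $2\|f\|_\infty$. It therefore suffices to show that $\psi_f$ is continuous $P$-almost everywhere for every $P\in\PP(E)$. After that, the continuous mapping theorem, in its version for bounded functions whose discontinuity set is null for the limit law, yields $P_k[\psi_f]\to P[\psi_f]$ whenever $P_k\to P$ weakly. The function $\omega\mapsto(\omega_t|_A)_{0\le t\le T}$ into $D([0,T];\Om_A)$ is continuous at every $\omega$ satisfying two conditions: $\omega$ has no jump at the times $0$ and $T$, and $\omega_t(\partial A)=0$ for all $t$ in a neighbourhood of $[0,T]$. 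Indeed, $J_1$ restriction is continuous at paths continuous at the endpoints, and the spatial restriction $\Om\to\Om_A$ is vaguely continuous at configurations without mass on $\partial A$. Similarly, $\omega\mapsto\omega_0|_A$ is continuous at such paths. Composing with the continuous $f$ and $g_f$ gives continuity of $\psi_f$ there.

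It remains to show that the exceptional set is $P$-null for every stationary $P$. A jump at a fixed time has $P$-probability zero: the set of times $t$ at which $P(\omega\text{ jumps at }t)>0$ is countable for càdlàg processes, and by time-stationarity this probability is constant in $t$, hence zero. The boundary condition is the main obstacle, since $P$ is an arbitrary stationary law, possibly with infinite intensity and not concentrated on legal paths, and a particle could sit on $\partial A$ for positive time. I will use spatial stationarity. Consider the faces $\{x_1=c\}$, $c\in\R$, which are disjoint for distinct $c$. For a fixed time window, the event ``some particle lies on $\{x_1=c\}\cap$ (bounded region) at some time in $[-1,T+1]$'' can hold for only countably many $c$ per path. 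This is because a path over a compact time interval visits, in a bounded region, only countably many configurations, each with countably many points. Hence, by Fubini, the probability is positive for at most countably many $c$, and by stationarity it is the same for all $c$, so it is zero. Applying this to each face of $\partial A$ (using translations in each coordinate direction) shows that $\psi_f$ is $P$-almost surely continuous, which completes the proof.
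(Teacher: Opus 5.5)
\textbf{The gap.} The problem is the boundary condition in the second part. You require $\omega_t(\partial A)=0$ for $t$ near $[0,T]$. You justify that this holds $P$-almost surely by saying that a path visits only countably many configurations on a compact time interval. That claim is false for general elements of $E$: c\`adl\`ag paths in the vague topology need not be piecewise constant, since particles may move continuously.

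The conclusion itself also fails. Let $\eta$ be a Poisson point process and $e_1$ the first unit vector. The law of $(\theta_{-te_1}\eta)_{t\in\R}$ belongs to $\PP(E)$, because $\theta_{(x,s)}$ maps this path to $(\theta_{-te_1}\theta_{x-se_1}\eta)_t$ and $\theta_{x-se_1}\eta$ is again Poisson. Under this law, every particle moves with velocity $e_1$. So any particle lying in $(-T,0)\times(0,a)^{d-1}$ at time $0$ hits the face $\{0\}\times(0,a)^{d-1}\subset\partial A$ during $[0,T]$, which happens with positive probability. Your exceptional set is therefore not null in general, and the continuous-mapping step does not go through as written.

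\textbf{The fix.} The boundary condition is not needed at all. The space $\Om_A$ carries the vague topology of the \emph{open} set $A$, generated by the maps $\eta\mapsto\langle\eta,h\rangle$ with $h\in C_c(A)$. This is exactly the viewpoint of your own first part, where mass may escape to $\partial A$. Every such $h$ extends by zero to a function in $C_c(\R^d)$, so $\eta\mapsto\eta_A$ is continuous on all of $\Om$. Consequently, $\omega\mapsto(\omega_t|_A)_{0\le t\le T}$ and $\omega\mapsto\omega_0|_A$ are continuous at every path without jumps at the times $0$ and $T$. Your (correct) argument that a stationary law does not charge jumps at a fixed time then already shows that $\psi_f$ is continuous $P$-almost surely. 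This is the paper's Step~2.

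\textbf{The first part.} Your treatment of $g_f$ matches the paper's Step~1: common births, paired initial atoms in a compact $K'\subset A$ with $\eta(\partial K')=0$, and shared lifetimes. One small imprecision: your coupling depends on $K'$, so there is no single almost surely convergent sequence to which dominated convergence applies. You can either fix one coupling in which all (finitely many) atoms of $\eta$ are matched, after which the unmatched atoms of $\eta_k$ eventually leave every compact subset of $A$. Or you can conclude via the Prokhorov distance, as the paper does.
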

\begin{proof}
\emph{Step 1: Continuity of the free evolution in its initial configuration.}
Suppose that $\eta_i\to\eta$ vaguely in $\Om_A$. We couple the free evolutions in $A$ started from $\eta_i$ and from $\eta$. Choose a vague metric on $\Om_A$, which we also denote by $d_\Omega$, of the form
\begin{equation*}
    d_\Omega(\eta, \eta') = \sum_{k=1}^\infty 2^{-k}\left(1 \wedge \abs{\langle \eta, h_k\rangle - \langle \eta', h_k\rangle}\right),
\end{equation*}
analogous to the metric $d_\Omega$ on $\Om$ from \zcref{section:appendix-topology}, where $(h_k)_{k \in \N}$ is a countable family in $C_c(A)$ that determines vague convergence in $\Om_A$, and $\langle\eta,h\rangle=\int\eta(\d x)\,h(x)$.
Given $\varepsilon>0$, choose $N$ so large that $\sum_{k>N} 2^{-k}\leq \varepsilon$, and choose a compact set $K\subset A$ that contains the supports of $h_1, \dots, h_N$ in its interior and whose boundary carries no atom of $\eta$. Then, for $i$ sufficiently large, $\eta$ and $\eta_i$ put the same mass on $K$, and the finitely many atoms $x_1,x_2,\ldots$ of $\eta$ in $K$ can be paired, with multiplicity, with the atoms $x^{(i)}_1,x^{(i)}_2,\ldots$ of $\eta_i$ in $K$, in such a way that $x^{(i)}_\ell\to x_\ell$ as $i\to\infty$. To couple the two evolutions, we give the two particles of each pair the same exponential lifetime, and we use the same births with the same lifetimes in both processes. Under this coupling, the two paths differ only in the positions of the paired initial particles, which die at the same times, and in the initial particles outside $K$, which do not contribute to $h_1,\ldots,h_N$. Hence, for $k\le N$ and every time $t$,
\begin{equation*}
\big|\langle \omega^{(i)}_t, h_k\rangle - \langle \omega_t, h_k\rangle\big|\leq \sum_{\ell}\big|h_k(x^{(i)}_\ell)-h_k(x_\ell)\big|\longrightarrow0,
\end{equation*}
where $\omega^{(i)}$ and $\omega$ denote the coupled paths. For each $\varepsilon>0$, this gives a coupling whose path distance, with the identity time change, is eventually at most $2\varepsilon$. The Prokhorov distance between $Q^A_{\eta_i}$ and $Q^A_\eta$ therefore tends to zero, which implies the weak convergence $Q^A_{\eta_i}\Rightarrow Q^A_\eta$. Thus $g_f$ is continuous, and $\norm{g_f}_\infty\le\norm f_\infty$.

\emph{Step 2: Continuity on stationary laws.}
Restricting configurations to the open set $A$ is a vaguely continuous map, because a function in $C_c(A)$ can be extended to a function in $C_c(\mathbb{R}^d)$ by setting it equal to zero outside of $A$. The induced restriction of paths is continuous, and the further restriction to the time interval $[0,T]$ is continuous at every path that has no jump at the times $0$ and $T$. Every stationary law on $E$ assigns zero probability to a jump at a fixed deterministic time. Indeed, each c\`adl\`ag path has at most countably many discontinuities, so that, by Fubini's theorem, the probability of a jump at time $t$ vanishes for Lebesgue-almost every $t$, and by stationarity this probability does not depend on $t$. Therefore $\psi_f$, which is bounded by $2\norm f_\infty$, is continuous outside a null set for every stationary limiting law, and the version of the Portmanteau theorem for bounded, almost everywhere continuous functions gives the continuity of $P\mapsto P[\psi_f]$.
\end{proof}

\begin{lemma}[Variational representation]\label{lemma:nice-variational-representation}
    For every $P\in\PP(E)$,
        \begin{equation}\label{eq:variational}
        h_{A,T}(P)=\sup_{f\in C_b(D([0,T];\Om_A))}P[\psi_f].
    \end{equation}
    Consequently, $H$ is a supremum of weakly continuous affine functionals on $\PP(E)$, and in particular it is weakly lower semicontinuous.
\end{lemma}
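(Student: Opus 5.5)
The plan is to apply the Donsker--Varadhan variational formula \eqref{def:entropy-variational-formulation} to the disintegration of $h_{A,T}(P)$ over the initial configuration. Write $X=(\omega_t|_A)_{0\le t\le T}$ and $\eta=\omega_0|_A$, so that $P_{A,0}Q^A$ is the law with first marginal $P_{A,0}$ and conditional path law $Q^A_\eta$.

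\emph{Inequality $\ge$.} Fix $f\in C_b(D([0,T];\Om_A))$. By \zcref{lemma:continuity-properties}, $g_f$ is bounded and continuous, so $\psi_f(X,\eta)=f(X)-g_f(\eta)$ is a bounded measurable function of the path in $A\times[0,T]$. The normalisation \eqref{eq:normalization} gives
\begin{equation*}
    P_{A,0}Q^A[\e^{\psi_f}]=\int P_{A,0}(\d\eta)\,Q^A_\eta\bigl[\e^{f-g_f(\eta)}\bigr]=1 .
\end{equation*}
The entropy inequality $\mathbb P[\phi]\le\Ent(\mathbb P\lvert\mathbb Q)+\log\mathbb Q[\e^\phi]$ then yields $P[\psi_f]\le h_{A,T}(P)$. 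This holds for every $f$, so the supremum is at most $h_{A,T}(P)$.

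\emph{Inequality $\le$.} By \eqref{def:entropy-variational-formulation}, and since $D([0,T];\Om_A)$ is Polish, we have
\begin{equation*}
    h_{A,T}(P)=\sup_{\phi\in C_b}\bigl(P[\phi]-\log P_{A,0}Q^A[\e^\phi]\bigr),
\end{equation*}
where $\phi$ ranges over bounded continuous functions of the pair $(X,\eta)$. Since $\eta=X_0$ is a function of the path, $\phi$ can be regarded as a function of $X$ alone. It is continuous at every path without a jump at time $0$, which is enough $P$-a.s. and $P_{A,0}Q^A$-a.s. by the argument in Step 2 of \zcref{lemma:continuity-properties}. For such $\phi$, take $f=\phi$. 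Then $\psi_f=\phi-g_\phi(\eta)$, and by Jensen's inequality
\begin{equation*}
    P[g_\phi(\eta)]\ge\log P_{A,0}[\e^{g_\phi}]=\log P_{A,0}Q^A[\e^\phi],
\end{equation*}
because $\e^{g_\phi(\eta)}=Q^A_\eta[\e^\phi]$. Hence $P[\psi_\phi]\le P[\phi]-\log P_{A,0}Q^A[\e^\phi]$, which goes in the wrong direction.

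To fix this, I would use the conditional Donsker--Varadhan formula instead:
\begin{equation*}
    h_{A,T}(P)=\sup_{\phi}P\bigl[\phi(X,\eta)-\log Q^A_\eta[\e^{\phi(\cdot,\eta)}]\bigr].
\end{equation*}
This follows from the chain rule, since $h_{A,T}(P)=P[\Ent(P(X\in\cdot\lvert\eta)\lvert Q^A_\eta)]$, together with the variational formula applied fibrewise. For $\phi$ depending only on $X$, the integrand is exactly $\psi_\phi$. Allowing functions $\phi(X,\eta)$ of both arguments adds nothing, because $\eta=X_0$ is itself determined by $X$. Thus the restriction to functions of the path in \eqref{eq:variational} is natural, and the remaining task is to show that the supremum over bounded measurable $\phi$ can be reduced to $C_b$.

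\emph{Main obstacle: reduction to continuous $f$.} I would first approximate a bounded measurable $\phi$ by bounded continuous $f_k$ converging to $\phi$ in $L^1(P_{A,T}+P_{A,0}Q^A)$ with $\|f_k\|_\infty\le\|\phi\|_\infty$, using Lusin's theorem on the Polish space. Then $P[f_k]\to P[\phi]$. Moreover, along a subsequence, $Q^A_\eta[\e^{f_k}]\to Q^A_\eta[\e^\phi]$ in $P_{A,0}$-measure: the $L^1(P_{A,0}Q^A)$ convergence, read fibrewise via Fubini, gives this. Dominated convergence then gives $P[g_{f_k}]\to P[\log Q^A_\eta\e^{\phi}]$. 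Taking the supremum proves \eqref{eq:variational}.

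\emph{Consequences for $H$.} By \zcref{lemma:continuity-properties}, each map $P\mapsto P[\psi_f]$ is weakly continuous, and it is affine. Dividing by $a^dT$ and taking the supremum over $a$, $T$ and $f$ exhibits $H$ as a supremum of continuous affine functionals, hence $H$ is weakly lower semicontinuous.
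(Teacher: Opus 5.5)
Your final argument is correct, but you only needed the detour because of a sign error. Since $\log$ is concave, Jensen's inequality gives $P_{A,0}[g_\phi]\le\log P_{A,0}[\e^{g_\phi}]=\log(P_{A,0}Q^A)[\e^{\phi}]$, not $\ge$. Hence $P[\psi_\phi]\ge P_{A,T}[\phi]-\log(P_{A,0}Q^A)[\e^\phi]$ for every $\phi\in C_b(D([0,T];\Om_A))$. Taking the supremum over $\phi$ and using the variational formula \eqref{def:entropy-variational-formulation}, restricted to bounded continuous functions on the metric space $D([0,T];\Om_A)$, yields $\sup_f P[\psi_f]\ge h_{A,T}(P)$ immediately. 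This is exactly the paper's proof, so your first attempt already closed the argument.

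The route you switch to is also valid. It consists of the conditional Donsker--Varadhan formula over bounded measurable $\phi$, then an $L^1(P_{A,T}+P_{A,0}Q^A)$ approximation by continuous $f_k$ with $\|f_k\|_\infty\le\|\phi\|_\infty$, and finally the Lipschitz continuity of $\log$ on $[\e^{-\|\phi\|_\infty},\e^{\|\phi\|_\infty}]$ to get $P_{A,0}[g_{f_k}]\to P_{A,0}[g_\phi]$. However, it re-proves, in the conditional setting, the reduction from measurable to continuous test functions that the unconditional formula already provides. Moreover, the quickest justification of the conditional formula you invoke is itself Jensen's inequality in the correct direction. The alternative, exchanging $P_{A,0}[\sup_\phi\cdot]$ with $\sup_\phi P_{A,0}[\cdot]$ after applying the formula fibrewise, needs a jointly measurable choice of $\phi$, e.g.\ a truncated log-density.

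Two minor remarks. First, you do not need pairs $(X,\eta)$, since $P_{A,T}$ and $P_{A,0}Q^A$ are both laws on $D([0,T];\Om_A)$. Second, $X\mapsto X_0$ is continuous on that space outright, because $J_1$ time changes of $[0,T]$ fix $0$, so no almost-sure continuity argument at time $0$ is required.
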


\begin{proof}
Disintegration over the initial configuration gives
\begin{equation*}
h_{A,T}(P)
=\int P_{A,0}(\mathrm d\eta)\,\Ent\bigl(P_{A,T}(\cdot\lvert \eta)\lvert  Q^A_\eta\bigr).
\end{equation*}
The variational formula~\eqref{def:entropy-variational-formulation} applied for each $\eta$, together with~\eqref{eq:normalization}, shows $h_{A,T}(P)\ge P[\psi_f]$ for every $f$. Conversely, Jensen's inequality gives
\begin{equation*}
P_{A,0}[g_f]\le\log (P_{A,0}Q^A)[\e^f],
\end{equation*}
so that $P[\psi_f]=P_{A,T}[f]-P_{A,0}[g_f]\ge P_{A,T}[f]-\log (P_{A,0}Q^A)[\e^f]$. Taking the supremum over bounded continuous $f$, the right-hand side gives the variational formula for $\Ent(P_{A,T}\lvert  P_{A,0}Q^A)$ on the metric space $D([0,T];\Om_A)$. This proves~\eqref{eq:variational}, and the last assertion follows from \zcref{lemma:continuity-properties} and the definition~\eqref{eq:rate} of $H$.
\end{proof}

The decisive estimate is the following bound on exponential moments, which follows along the lines of~\cite[Lemmas~5.2 and~5.3]{DaiPra1993}, adapted to the continuum; for the comparison of the empirical field with spatial averages, which corresponds to Step~3 of the proof, see also~\cite[Remark~2.4]{GeorgiiZessin1993}.

\begin{lemma}[Exponential moments]\label{lemma:upper-bound-for-such-laplace-transforms}
    For every $\xi\in\Om$ and every function $\psi_f$ as in~\eqref{eq:test},
    \begin{equation}\label{eq:normalizedbound}
    \limsup_{n\to\infty}\frac{1}{n|\Lambda_n|}
    \log Q_{\xi}\left[\exp\left\{\frac{n|\Lambda_n|}{|A|T}R_n[\psi_f]\right\}\right]\le0.
\end{equation}
\end{lemma}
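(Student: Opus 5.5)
The plan is to write $n|\Lambda_n|R_n[\psi_f]$ as an average, over offsets $z\in A\times[0,T)$, of sums of $\psi_f$ evaluated along disjoint lattices of blocks. Each such lattice sum will have exponential moment at most one by the Markov property and spatial independence. Jensen's inequality (convexity of the exponential) then passes from the averaged sum to the individual lattice sums.

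\emph{Step 1: the exponential martingale identity.} Fix an offset $z=(y,u)\in A\times[0,T)$ and consider the blocks $(y+ak+A)\times[u+\ell T,u+(\ell+1)T]$, $k\in\Z^d$, $\ell\in\N_0$, that are fully contained in $\Delta_n$. Denote this finite index set by $I_n(z)$. Set $S_z:=\sum_{(k,\ell)\in I_n(z)}\psi_f(\theta_{(y+ak,u+\ell T)}\omega)$, computed on the genuine (non-periodised) path $\omega$ under $Q_\xi$. Conditionally on $\omega_{u+\ell T}$, the free evolutions in the disjoint cubes $y+ak+A$ are independent, with laws $Q^{A}_{\eta}$ for the shifted initial configurations. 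By \eqref{eq:normalization}, the conditional expectation of $\exp\{\sum_k\psi_f(\cdot)\}$ over one time layer equals $1$. Iterating over $\ell$ from the last layer backwards via the Markov property gives $Q_\xi[\e^{S_z}]=1$ for every $z$.

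\emph{Step 2: comparison with the empirical process.} Since $\psi_f$ is $\mathcal F_{A\times[0,T]}$-measurable, $\theta_x\psi_f$ applied to $\omega^{(n)}$ agrees with the same expression on $\omega$ whenever the block lies inside $\Delta_n$. The remaining space-time points, namely blocks crossing the spatial boundary of $\Lambda_n$ or the temporal reset at time $n$, occupy volume $O(n^{d}(a+T))$. On these points $|\psi_f|\le2\|f\|_\infty$. Therefore
\begin{equation*}
\frac{n|\Lambda_n|}{|A|T}R_n[\psi_f]\le \frac1{|A|T}\int_{A\times[0,T)}\d z\,S_z + C_{f,a,T}\,n^{d-1}\cdot n^{0}\cdot\frac{n^{d}}{n^{d}}\;,
\end{equation*}
where the error is $O(n^{d})$, i.e.\ $o(n|\Lambda_n|)$. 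Here one uses that $\int_{A\times[0,T)}\d z\sum_{I_n(z)}$ covers each interior space-time point exactly once. The initial configuration $\xi$ plays no role, since blocks are only evaluated through the normalised $\psi_f$.

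\emph{Step 3: Jensen.} Applying Jensen's inequality to the probability measure $\d z/(|A|T)$ on offsets, and then Fubini, gives
\begin{equation*}
Q_\xi\Big[\exp\Big\{\tfrac{1}{|A|T}\int\d z\,S_z\Big\}\Big]\le \tfrac1{|A|T}\int\d z\,Q_\xi[\e^{S_z}]=1.
\end{equation*}
This requires joint measurability of $(z,\omega)\mapsto S_z$, which holds since $\psi_f$ is measurable and translations act measurably. Combining this with the deterministic $O(n^d)$ error, taking logarithms and dividing by $n|\Lambda_n|$ yields \eqref{eq:normalizedbound}.

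The main obstacle is Step 2, the boundary bookkeeping. One has to make sure that periodisation and the temporal reset only affect blocks meeting $\partial\Delta_n$. This needs care when $\omega^{(n)}$ jumps at time $n$, but $\psi_f$ restricted to $[0,T]$ of a block contained in $[0,n]$ reads only times $\le n$, where $\omega^{(n)}=\omega$ up to the right-limit at the endpoint. A block ending exactly at $n$ can be counted among the error blocks.
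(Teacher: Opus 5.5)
Your proposal is correct and follows the paper's proof essentially step for step. The ingredients match: an exact identity $Q_\xi[\e^{S_z}]=1$ for each fixed lattice offset, obtained from spatial independence, the normalisation \eqref{eq:normalization} and backward conditioning over consecutive time layers; Jensen's inequality over uniformly distributed offsets; and a deterministic periodisation error on the space-time points whose observation box does not lie in $\Lambda_n\times[0,n)$. The paper likewise discards blocks ending at time $n$, by requiring $s+T<n$. The only blemish is the garbled error term in your Step~2 display, which should be $\frac{2\|f\|_\infty}{|A|T}(da+T)n^d$, consistent with your stated $O(n^d)$, rather than something of order $n^{d-1}$.
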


\begin{proof}
Recall that $A=(0,a)^d$, so that $|A|=a^d$, and fix $n>\max\{a,T\}$.

\emph{Step 1: An exact identity for each fixed lattice offset.}
Fix an offset $(y,w)\in[0,a)^d\times[0,T)$. The spatial cubes and time intervals of the corresponding block lattice have base points
\begin{equation*}
x_k :=y+ak,
\qquad
s_\ell :=w+\ell T,
\qquad
k\in\mathbb Z^d,\quad \ell\in\mathbb Z_{\ge0}.
\end{equation*}
We only keep the indices whose corresponding boxes $(A+x_k)\times[s_\ell,s_\ell+T]$ lie in $\Delta_n$, that is,
\begin{equation*}
I_n(y)
:=
\big\{
k\in\mathbb Z^d\colon
y+ak\in[-n/2,n/2-a)^d
\big\},
\qquad
J_n(w)
:=
\left\{
\ell\in\mathbb Z_{\ge0}\colon
w+\ell T<n-T
\right\}.
\end{equation*}
Every retained box lies inside $\Delta_n$, and in particular its terminal time is strictly less than $n$. Define the random sum
\begin{equation*}
\Phi_n(y,w)
:=
\sum_{\ell\in J_n(w)}
\sum_{k\in I_n(y)}
\psi_f(\theta_{(x_k,s_\ell)}\omega),
\end{equation*}
where an empty sum is understood to be zero. We claim that
\begin{equation}
\label{eq:upper-fixed-offset}
Q_{\xi}
\left[
\exp\{\Phi_n(y,w)\}
\right]
=1.
\end{equation}
To prove this, we group the factors according to their time indices,
\begin{equation*}
Z_\ell
:=
\exp\Big\{
\sum_{k\in I_n(y)}
\psi_f(\theta_{(x_k,s_\ell)}\omega)
\Big\},
\end{equation*}
and let $\mathcal F_t:=\sigma(\omega_r\colon 0\le r\le t)$ be the $\sigma$-algebra of the full history up to time $t$.
For each retained spatial cube, let $\eta_{k,\ell}\in\Om_A$ be its configuration at time $s_\ell$, translated back to $A$, that is,
\begin{equation*}
\eta_{k,\ell}(C)
:=
\omega_{s_\ell}(x_k+C),
\qquad C\subset A\ \text{measurable}.
\end{equation*}
Conditionally on $\mathcal F_{s_\ell}$, the evolutions in the disjoint cubes $A+x_k$ are independent, and their translated laws on the next time interval of length $T$ are $Q^A_{\eta_{k,\ell}}$. Indeed, births in disjoint cubes are independent, particles do not move, and the memoryless property of exponential lifetimes gives independent residual lifetimes to the particles present at time $s_\ell$. The normalisation~\eqref{eq:normalization} therefore yields
\begin{equation*}
Q_{\xi}
\left[
Z_\ell\lvert \mathcal F_{s_\ell}
\right]
=
\prod_{k\in I_n(y)}
Q^A_{\eta_{k,\ell}}
\left[
\e^{f-g_f(\eta_{k,\ell})}
\right]
=1.
\end{equation*}
For every earlier time index $r<\ell$, the terminal time satisfies $s_r+T\le s_\ell$, so that all earlier factors $Z_r$ are $\mathcal F_{s_\ell}$-measurable. Conditioning at the beginning of the last retained time interval thus removes its factor from the expectation, and repeating this backwards over the finitely many time intervals proves~\eqref{eq:upper-fixed-offset}. In particular, no independence between successive time intervals is required.

\emph{Step 2: Averaging over the lattice offset.}
Let $\pi$ be the uniform probability measure on $[0,a)^d \times [0,T)$, that is,
\begin{equation*}
\pi(\d y,\d w)
=
\frac{1}{|A|T}\,\dd y\dd w,
\qquad
(y,w)\in[0,a)^d\times[0,T).
\end{equation*}
Jensen's inequality, followed by~\eqref{eq:upper-fixed-offset}, gives
\begin{equation}
\label{eq:upper-offset-average}
Q_{\xi}
\Big[
\exp\Big\{
\int\pi(\d y,\d w)\Phi_n(y,w)
\Big\}
\Big]
\le
\int\pi(\d y,\d w)
Q_{\xi}
\Big[\e^{\Phi_n(y,w)}\Big]
=1.
\end{equation}
To identify the averaged sum, we introduce the region of retained base points
\begin{equation*}
\hat{\Delta}_n
:=
[-n/2,n/2-a)^d\times[0,n-T) \subset \Delta_n.
\end{equation*}
The translates $[0,a)^d+ak$, $k\in\mathbb Z^d$, partition $\mathbb R^d$, while the intervals $[0,T)+\ell T$, $\ell\in\mathbb Z_{\ge0}$, partition $[0,\infty)$. Consequently, the change of variables $x=y+ak$ and $s=w+\ell T$ gives
\begin{equation*}
\begin{split}
\int\pi(\d y,\d w)\Phi_n(y,w)
&=
\frac{1}{|A|T}
\int_{[0,a)^d}\d y\int_0^T\d w
\sum_{\ell\in J_n(w)}
\sum_{k\in I_n(y)}
\psi_f(\theta_{(y+ak,w+\ell T)}\omega)
\\
&=
\frac{1}{|A|T}
\int_{\hat{\Delta}_n}\d x\, \d s\,
\psi_f(\theta_{(x,s)}\omega).
\end{split}
\end{equation*}

\emph{Step 3: Comparison with the empirical process.}
By the definition of $R_n$,
\begin{equation*}
\frac{n|\Lambda_n|}{|A|T}R_n[\psi_f]
=
\frac{1}{|A|T}
\int_{\Delta_n}\d x\, \d s\,
\psi_f(\theta_{(x,s)}\omega^{(n)}).
\end{equation*}
Thus, to use the bound obtained in Steps~1 and~2, it suffices to bound the error induced by the periodisation. For $(x,s)\in\hat{\Delta}_n$, the whole observation box $(A+x)\times[s,s+T]$ lies within $\Delta_n$, and $s+T<n$. Therefore, its restriction is unaffected by the periodisation, and $\psi_f(\theta_{(x,s)}\omega^{(n)})=\psi_f(\theta_{(x,s)}\omega)$. On $\Delta_n \setminus \hat{\Delta}_n$, we use the bound $\|\psi_f\|_\infty\le2\|f\|_\infty$, so that
\begin{equation*}
\left|
\frac{n|\Lambda_n|}{|A|T}R_n[\psi_f]
-
\int\pi(\d y,\d w)\Phi_n(y,w)
\right|
\le
\frac{2\|f\|_\infty}{|A|T}
\,|\Delta_n\setminus\hat{\Delta}_n|\le
\frac{2(da+T)\|f\|_\infty}{|A|T}\,n^d.
\end{equation*}
Combining this deterministic error bound with~\eqref{eq:upper-offset-average}, we obtain
\begin{equation*}
\frac{1}{n|\Lambda_n|}
\log
Q_{\xi}\left[
\exp\left\{
\frac{n|\Lambda_n|}{|A|T}R_n[\psi_f]
\right\}\right]
\le
\frac{2(da+T)\|f\|_\infty}{|A|T\,n}.
\end{equation*}
Letting $n\to\infty$ proves the claim.
\end{proof}

We can now put these steps together.
\begin{proof}[Proof of~\zcref{prop:compact-upper-bound}]
    Let $F\subset \PP(E)$ be compact and fix an arbitrary $\alpha <\inf_F H$. It suffices to show that the $\limsup$ is at most $-\alpha$. For each $P \in F$ we have $H(P) > \alpha$, so by the definition~\eqref{eq:rate} of $H$ and the variational formula of \zcref{lemma:nice-variational-representation}, there are a cube $A=(0,a)^d$, a duration $T>0$ and a bounded continuous function $f$ on $D([0,T];\Om_A)$ with $P[\psi_f] / (\abs{A} T) > \alpha$. By \zcref{lemma:continuity-properties}, the set
    \begin{equation*}
        O_P := \big\{P' \in \PP(E)\colon P'[\psi_f] / (\abs{A} T) > \alpha\big\}
    \end{equation*}
    is an open neighbourhood of $P$, and by the exponential Markov inequality,
    \begin{equation*}
        Q_{\xi}(R_n \in O_P)
        \leq
        Q_{\xi}\Big(\frac{n|\Lambda_n|}{\abs{A}T}R_n[\psi_f] > \alpha n|\Lambda_n|\Big)
        \leq
        \e^{-\alpha n|\Lambda_n|}\, Q_{\xi}\left[\exp\left\{\frac{n|\Lambda_n|}{\abs{A}T}R_n[\psi_f]\right\}\right].
    \end{equation*}
    Combining this with \zcref{lemma:upper-bound-for-such-laplace-transforms} gives
    \begin{equation*}
        \limsup_{n \to \infty}\frac{1}{n|\Lambda_n|}\log Q_{ \xi}(R_n \in O_P) \leq -\alpha.
    \end{equation*}
    The sets $(O_P)_{P \in F}$ form an open cover of the compact set $F$, so we can extract finitely many $P_1, \dots, P_k \in F$ such that $O_{P_1},\dots, O_{P_k}$ already cover $F$, and a union bound gives
    \begin{equation*}
        \limsup_{n \to \infty}\frac{1}{n|\Lambda_n|}\log Q_{ \xi}(R_n \in F)
        \leq
        \max_{j=1,\dots,k} \limsup_{n \to \infty}\frac{1}{n|\Lambda_n|}\log Q_{ \xi}(R_n \in O_{P_j}) \leq -\alpha.
    \end{equation*}
    Since this holds for every $\alpha <\inf_F H$, the claim follows.
\end{proof}

\subsection{Compact sets of paths}\label{subsection:compact-paths}
To show exponential tightness, we construct compact subsets of $\PP(E)$ in which $R_n$ takes its values with high probability. By Prokhorov's theorem, this amounts to constructing compact subsets of $E$ on which $R_n$ puts most of its mass, so we first have to understand which subsets of $E$ are compact in the local $J_1$ topology. A bound on the number of particles alone does not give compactness. We also have to make sure that no two jumps occur arbitrarily close together in time, and, for technical reasons, that no jumps occur too close to the endpoints of an observation interval. As it turns out, these requirements suffice.

For a bounded open set $B\Subset \R^d$, a time interval $[u,v]$, $M\in\mathbb N$, and $\delta>0$, let $\Xi_{B,[u,v]}(M,\delta)$ be the set of all piecewise constant paths $\gamma \in D([u,v];\Om_B)$ with at most $M$ atoms in every state, that is, $\gamma_t(B) \leq M$ for all $t\in [u,v]$, with at most $2M$ jumps, with consecutive jump times separated by at least $\delta$, and with all jump times in $[u+\delta,v-\delta]$.

\begin{lemma}[Compact sets of local paths]\label{lemma:compact-sets-building-blocks}
    For every bounded open set $B\Subset \R^d$, time interval $[u,v]$, $M\in\mathbb N$, and $\delta>0$, the set $\Xi_{B,[u,v]}(M,\delta)$ is compact in the $J_1$ topology on $D([u,v];\Om_B)$.
\end{lemma}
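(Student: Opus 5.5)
The plan is to prove sequential compactness, since $D([u,v];\Om_B)$ with the $J_1$ topology is metrizable. Take a sequence $(\gamma^{(i)})_i$ in $\Xi_{B,[u,v]}(M,\delta)$. Each path is described by finitely many data: the number $N_i\le 2M$ of jumps, the jump times $u+\delta\le\tau^{(i)}_1<\dots<\tau^{(i)}_{N_i}\le v-\delta$ with gaps at least $\delta$, and the $N_i+1$ constant states $\eta^{(i)}_0,\dots,\eta^{(i)}_{N_i}\in\Om_B$, each with at most $M$ atoms. We pass to a subsequence along which $N_i\equiv N$ is constant and the jump times converge, $\tau^{(i)}_j\to\tau_j$. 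The closed constraints are preserved in the limit, so $u+\delta\le\tau_1$, $\tau_N\le v-\delta$ and $\tau_{j+1}-\tau_j\ge\delta$. In particular the limiting jump times remain distinct and stay away from the endpoints.

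Next we need compactness of the states. The set of configurations in $\Om_B$ with at most $M$ atoms is vaguely relatively compact, because mass is bounded on compacts. Its vague closure in $\Om_B$ consists of configurations with at most $M$ atoms, since atoms may escape to $\partial B$ or merge, but the total mass cannot increase. Hence we extract a further subsequence with $\eta^{(i)}_j\to\eta_j$ vaguely for every $j$, and each $\eta_j(B)\le M$. This is where allowing multiple points is essential: merging atoms stay in $\Om_B$.

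We then define the limit path $\gamma$ to equal $\eta_j$ on $[\tau_j,\tau_{j+1})$, with $\tau_0=u$ and $\tau_{N+1}=v$. There are two issues to settle.

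\emph{Step 1: $\gamma\in\Xi$.} If two consecutive limit states coincide, $\eta_j=\eta_{j+1}$, then $\gamma$ has fewer jumps. This does no harm, because all the constraints ("at most $2M$ jumps", the gap condition, the endpoint condition) are monotone under deleting jumps. So $\gamma\in\Xi_{B,[u,v]}(M,\delta)$.

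\emph{Step 2: $J_1$ convergence.} Take the piecewise linear time change $\lambda_i$ of $[u,v]$ that fixes $u$ and $v$ and maps $\tau_j$ to $\tau^{(i)}_j$. It is a valid increasing homeomorphism for large $i$, because the limit times are separated and lie in $(u,v)$. This is where the endpoint condition is used: a jump converging to $u$ or $v$ would force $\lambda_i$ to be non-injective near the fixed endpoints. We have $\|\lambda_i-\mathrm{id}\|_\infty\to0$, and $\gamma^{(i)}\circ\lambda_i$ equals $\eta^{(i)}_j$ on $[\tau_j,\tau_{j+1})$. Hence
\[
\sup_t d_\Omega\bigl(\gamma^{(i)}(\lambda_i(t)),\gamma(t)\bigr)\le\max_j d_\Omega(\eta^{(i)}_j,\eta_j)\to0,
\]
which gives $J_1$ convergence.

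The main point needing care is the vague closure step. We must check that the limit of configurations with at most $M$ atoms in the open set $B$ is again a counting measure on $B$ with at most $M$ atoms. The vague topology on $\Om_B$ only tests against functions in $C_c(B)$, so mass escaping to $\partial B$ simply disappears. The limit is therefore integer-valued and locally finite, with total mass at most $M$ by lower semicontinuity of mass on open sets.
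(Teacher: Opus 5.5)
Your argument is correct and is essentially the paper's proof written out sequentially. The paper writes $\Xi_{B,[u,v]}(M,\delta)$ as the union over $q\le 2M$ of the images of compact parameter sets $\Theta_q\times\Sigma_M(B)^{q+1}$ (jump times in $[u+\delta,v-\delta]$ with $\delta$-gaps, and states with at most $M$ atoms) under reconstruction maps; your subsequence extraction is the compactness of these parameter sets, and your piecewise linear time change supplies the continuity of the reconstruction map, which the paper only asserts. The one step you state without proof, that a vague limit in $\Om_B$ of configurations with at most $M$ atoms is again a counting measure, is obtained in the paper by viewing $\Sigma_M(B)$ as the restriction to $B$ of the compact set of counting measures on $\overline B$ with at most $M$ atoms; equivalently, let the at most $M$ atom positions converge in the compact set $\overline B$.
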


Here it is important that we take multiplicities into account and do not work only with simple point configurations, for which the corresponding set would in general fail to be closed.

\begin{proof}
We show that $\Xi_{B,[u,v]}(M,\delta)$ is the image of a compact set under a continuous map. First, the set
\begin{equation*}
\Sigma_M(B)=\{\eta\in\Om_B\colon \eta(B)\le M\}
\end{equation*}
is the image, under the restriction to $B$, of the compact set of counting measures on $\overline B$ with at most $M$ atoms. Allowing multiplicities makes the latter set closed, and since $B$ is open, the restriction map is continuous; hence $\Sigma_M(B)$ is compact.

For a fixed number $0\leq q \leq2M$ of jumps, the set of admissible ordered jump times,
\begin{equation*}
    \Theta_q := \{(s_1, \dots, s_q) \in [u+\delta, v-\delta]^q \colon s_{i+1}\geq s_i + \delta,\ 1\leq i < q\},
\end{equation*}
is a compact subset of $\R^q$, where $\Theta_0$ consists of the empty tuple. Hence $\Theta_q \times \Sigma_M(B)^{q+1}$ is compact in the product topology. On this parameter space, define the reconstruction map $((s_i)_{i=1}^q, (\eta^{(i)})_{i=0}^{q}) \mapsto (\gamma_t)_{t\in [u,v]}$ by
\begin{equation*}
    \gamma_t := \eta^{(i)} \quad \text{for } t \in [s_i, s_{i+1}),\ 0 \leq i \le q, \qquad \gamma_v:=\eta^{(q)},
\end{equation*}
with the conventions $s_0=u$ and $s_{q+1}=v$. The reconstruction maps are continuous, and $\Xi_{B,[u,v]}(M,\delta)$ is the union of the images of the sets $\Theta_q \times \Sigma_M(B)^{q+1}$, $0\leq q \leq 2M$. This yields the claim.
\end{proof}

From these local sets we construct compact subsets of the full path space $E$. Recall the box $B_r=(-r,r)^d$ for $r>0$. For $j\ge1$, put
\begin{equation*}
\Xi_j(M,\delta):=\Xi_{B_{j+1},[-j-1,j+1]}(M,\delta),
\end{equation*}
and let $G_j(M,\delta)$ be the set of paths in $E$ whose restriction to the space-time box $B_{j+1}\times[-j-1,j+1]$ belongs to $\Xi_j(M,\delta)$. These sets are Borel measurable, since the restriction of paths is a Borel measurable map. We use their closures $\overline{G}_j(M,\delta):=\overline{G_j(M,\delta)}^{\,E}$, because closed constraints will be needed for the compactness of sets of laws.

\begin{lemma}[Two compactness facts]\label{lem:compactness}
The following assertions hold.
\begin{enumerate}[(i)]
\item If $\omega\in \overline{G}_j(M,\delta)$, then there is $\gamma\in\Xi_j(M,\delta)$ with
\begin{equation*}
\omega_t|_{B_{j+1}}=\gamma_t\qquad\text{for } -j-1<t<j+1.
\end{equation*}
\item Fix $M_j\in\N$ and $\delta_j>0$ for every $j$. If a sequence $(\omega_i)_{i\in\N}$ belongs to $\overline G_j(M_j,\delta_j)$ for all sufficiently large $i$, separately for each $j$, then it has a subsequence converging in $E$.
\end{enumerate}
\end{lemma}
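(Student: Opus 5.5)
For~(i), I will approximate $\omega\in\overline G_j(M,\delta)$ by a sequence $\omega^{(i)}\in G_j(M,\delta)$ with $d_E(\omega^{(i)},\omega)\to0$. I will denote by $\gamma^{(i)}\in\Xi_j(M,\delta)$ the restriction of $\omega^{(i)}$ to $B_{j+1}\times[-j-1,j+1]$. By \zcref{lemma:compact-sets-building-blocks}, the set $\Xi_j(M,\delta)$ is compact in the $J_1$ topology on $D([-j-1,j+1];\Om_{B_{j+1}})$. Passing to a subsequence, I may therefore assume $\gamma^{(i)}\to\gamma\in\Xi_j(M,\delta)$ in $J_1$.

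From local $J_1$ convergence in $E$ (see \zcref{section:appendix-topology}), I will extract time changes $\lambda_i$ close to the identity on $[-j-1,j+1]$ such that $\omega^{(i)}_{\lambda_i(t)}\to\omega_t$ uniformly on compact subintervals. Composing with the spatial restriction to the open box $B_{j+1}$, which is vaguely continuous, I get that the restricted paths $\omega^{(i)}|_{B_{j+1}}$ converge to $\omega|_{B_{j+1}}$ in the local $J_1$ sense on the open interval $(-j-1,j+1)$. The same restricted paths also converge to $\gamma$. Uniqueness of $J_1$ limits on every compact $[u,v]\subset(-j-1,j+1)$ whose endpoints are continuity points of both limits will give $\omega_t|_{B_{j+1}}=\gamma_t$ at all common continuity points. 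Right-continuity then extends this to every $t$ in the open interval. The endpoints are excluded precisely because the local metric on $E$ does not control the behaviour at $\pm(j+1)$.

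The main obstacle here is the mismatch between the $J_1$ topology on $D([u,v];\cdot)$, with its time changes fixing the endpoints, and the local topology on $E$. I will handle it by working only on compact subintervals strictly inside, which is harmless since the statement only concerns $-j-1<t<j+1$.

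For~(ii), I will use a diagonal argument. For each $j$ and all large $i$, part~(i) gives $\gamma^{(i)}_j\in\Xi_j(M_j,\delta_j)$ agreeing with $\omega_i$ on $B_{j+1}\times(-j-1,j+1)$. By compactness (\zcref{lemma:compact-sets-building-blocks}) and a diagonal extraction, I obtain a subsequence along which $\gamma^{(i)}_j\to\gamma_j$ in $J_1$ for every $j$.

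The limits are consistent: for $j<j'$, restricting $\gamma_{j'}$ to $B_{j+1}\times(-j-1,j+1)$ gives $\gamma_j$ there. This holds since restriction to a smaller open box is continuous and the inner time interval is handled as in~(i). The consistent family defines a path $\omega\in E$, namely the configuration whose restriction to each $B_{j+1}$ at time $t\in(-j-1,j+1)$ is $\gamma_j(t)$. It is locally finite and càdlàg.

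Finally, convergence $\omega_i\to\omega$ in $E$ follows from the definition of the metric $d_E$ in~\eqref{eqn:definition-j1-metric}, as a sum over windows of local $J_1$ distances. On each window, $\omega_i$ coincides with $\gamma^{(i)}_j$, which converges to $\gamma_j=\omega$ there. I expect checking that the local $J_1$ distances are controlled by $J_1$ convergence on slightly larger closed windows to be routine via \zcref{section:appendix-topology}.
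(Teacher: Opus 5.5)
Your proposal is correct and follows essentially the same route as the paper: for~(i), approximation from $G_j(M,\delta)$, compactness of $\Xi_j(M,\delta)$, identification at common continuity times and then right continuity; for~(ii), diagonal extraction, compatibility of the limits, patching, and convergence in $d_E$. The step you defer as routine is where the paper does some work: for each truncation level $K$ of the series defining $d_\Omega$, it picks $j$ with the supports of $g_1,\dots,g_K$ inside $B_{j+1}$, transfers the $J_1$ convergence of $\gamma^{(i)}_j$ to uniform convergence of the pairings $\langle\cdot,g_\ell\rangle$ via uniform continuity on the compact set $\Sigma_{M_j}(B_{j+1})$, and bounds the remaining terms by $2^{-K}$.
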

Note that the identity in~(i) is asserted on the open time interval, not at its endpoints.

\begin{proof}
For~(i), choose $\omega_i\in G_j(M,\delta)$ converging to $\omega$. The compactness of $\Xi_j(M,\delta)$ gives a subsequential limit $\gamma$ of their local restrictions. At every common continuity time of $\omega$ and $\gamma$ in the open interval, convergence in the two path spaces identifies their values after restriction to $B_{j+1}$. These common continuity times are dense, and right continuity identifies the values at all times in the open interval.

For~(ii), the main idea is to choose representatives as provided by~(i), and to extract a diagonal subsequence that converges in $\Xi_j(M_j,\delta_j)$ for every $j$. We then check that these limits are consistent on their overlapping domains, so that they define a unique c\`adl\`ag path $\omega_\infty$, and finally that the subsequence converges to $\omega_\infty$ in $E$.

By~(i), for every $j$ and all sufficiently large $i$, there is $\gamma_i^{(j)}\in\Xi_j(M_j,\delta_j)$ such that
\begin{equation*}
\omega_i(t)|_{B_{j+1}}
=
\gamma_i^{(j)}(t),
\qquad -j-1<t<j+1.
\end{equation*}
The compactness of each $\Xi_j(M_j,\delta_j)$ and a diagonal extraction give a subsequence, still indexed by $i$, for which $\gamma_i^{(j)}\to\gamma^{(j)}$ in the $J_1$ topology on $D([-j-1,j+1];\Om_{B_{j+1}})$, for every $j$, with $\gamma^{(j)}\in\Xi_j(M_j,\delta_j)$.

\emph{Step 1: Compatibility and construction of the limit.}
Fix $k<j$. At every common continuity time $t\in(-k-1,k+1)$ of $\gamma^{(k)}$ and $\gamma^{(j)}$, the convergence of the evaluations gives
\begin{equation*}
\gamma^{(k)}(t)
=
\gamma^{(j)}(t)|_{B_{k+1}}.
\end{equation*}
Indeed, the corresponding identity holds for the approximating representatives, since both are restrictions of $\omega_i(t)$, and restriction to the open set $B_{k+1}$ is continuous for the vague topology, so that this identity passes to the limit. The common continuity times are dense, and right continuity extends the identity to every $t\in(-k-1,k+1)$.

Having this consistency, we can patch the local paths together. For each $t\in\mathbb R$, the compatible paths $(\gamma^{(j)})_{j \in \N}$ define a unique configuration $\omega_\infty(t)\in\Om$ with
\begin{equation*}
\omega_\infty(t)|_{B_{j+1}}
=
\gamma^{(j)}(t),
\qquad |t|<j+1.
\end{equation*}
The resulting configuration is locally finite, because every compact set $D \subset \R^d$ lies in $B_{j+1}$ for sufficiently large $j$.
It remains to check that $\omega_\infty$ is a c\`adl\`ag path in the vague topology. We use the test functions $(g_\ell)_{\ell\ge1}$ and the metric $d_\Omega$ from \zcref{section:appendix-topology}. For any finite collection $g_1,\ldots,g_K$, choose $j$ so large that their supports lie in $B_{j+1}$ and the time under consideration lies in $(-j-1,j+1)$. The right continuity of $\gamma^{(j)}$ gives the right continuity of all these pairings. The remaining terms in the metric contribute at most $2^{-K}$, so sending $K \to\infty$ proves the right continuity of $\omega_\infty$ with respect to $d_\Omega$. Likewise, the local left limits $\gamma^{(j)}(t-)$ are compatible under restriction and define a locally finite configuration on $\mathbb R^d$, and the same argument shows that $\omega_\infty(s)$ converges to this configuration as $s\uparrow t$. Thus $\omega_\infty\in E$.

\emph{Step 2: Convergence on finite time intervals.}
Fix $u>0$ such that $\omega_\infty$ is continuous at both $-u$ and $u$. For a given $K\in\mathbb N$, choose $j$ so large that
\begin{equation*}
[-u,u]\subset(-j-1,j+1),
\qquad
\bigcup_{\ell=1}^K\operatorname{supp}(g_\ell)
\subset B_{j+1}.
\end{equation*}
Since $\gamma^{(j)}$ agrees near these endpoints with the spatial restriction of $\omega_\infty$, it is continuous at $-u$ and $u$. Restricting the $J_1$ convergence to $[-u,u]$ therefore gives
\begin{equation*}
\gamma_i^{(j)}|_{[-u,u]}
\longrightarrow
\gamma^{(j)}|_{[-u,u]}
\quad\text{in }
D([-u,u];\Om_{B_{j+1}}).
\end{equation*}
Choose the corresponding Skorokhod time changes $\Psi_i\colon[-u,u]\to[-u,u]$, which are increasing homeomorphisms fixing the endpoints, whose distance to the identity tends to zero. All states of the paths under consideration belong to the compact set $\Sigma_{M_j}(B_{j+1})$ from the proof of \zcref{lemma:compact-sets-building-blocks}. On this set, each continuous map $\eta\mapsto\langle\eta,g_\ell\rangle$, $1\le\ell\le K$, is uniformly continuous. The uniform convergence of the time-changed paths therefore implies
\begin{equation*}
\max_{1\le\ell\le K}
\sup_{|t|\le u}
\left|
\left\langle
\omega_i(\Psi_i(t))-\omega_\infty(t),g_\ell
\right\rangle
\right|
\longrightarrow0.
\end{equation*}
Here the same time change controls all $K$ retained test functions, because their supports lie in the same spatial window. Using these time changes in the full configuration space, the definition of $d_\Omega$ gives
\begin{equation*}
\sup_{|t|\le u}
d_\Omega\bigl(
\omega_i(\Psi_i(t)),\omega_\infty(t)
\bigr)\le
\sum_{\ell=1}^K
2^{-\ell}
\Big(
1\wedge
\sup_{|t|\le u}
\left|
\left\langle
\omega_i(\Psi_i(t))-\omega_\infty(t),g_\ell
\right\rangle
\right|
\Big)
+
2^{-K}.
\end{equation*}
The finite sum tends to zero, while $2^{-K}$ bounds the remaining tail of the series defining $d_\Omega$, irrespective of the number of particles outside the chosen window. Consequently,
\begin{equation*}
\limsup_{i\to\infty}
d_{J_1,u}\bigl(
\omega_i|_{[-u,u]},
\omega_\infty|_{[-u,u]}
\bigr)
\le 2^{-K},
\end{equation*}
and since $K$ was arbitrary, the $\limsup$ vanishes.

\emph{Step 3: Convergence in $E$.}
A c\`adl\`ag path has at most countably many discontinuities, so that the preceding convergence holds for almost every $u>0$. The definition~\eqref{eqn:definition-j1-metric} of the metric $d_E$ and dominated convergence therefore give
\begin{equation*}
d_E(\omega_i,\omega_\infty)
=
\int_0^\infty\d u\,
\e^{-u}
\left(
1\wedge
d_{J_1,u}\bigl(
\omega_i|_{[-u,u]},
\omega_\infty|_{[-u,u]}
\bigr)
\right)
\to 0.
\end{equation*}
This proves~(ii).
\end{proof}

\subsection{Probabilistic control of local defects}\label{subsection:probabilistic-control}
To show exponential tightness, we have to show that only a small fraction of the space-time points in $\Delta_n$ exhibit one of the following three local defects in their observation window:
\begin{enumerate}[(1)]
    \item too many particles,
    \item two jumps that are too close in time,
    \item a jump that is too close in time to one of the endpoints of the observation interval.
\end{enumerate}
The required probabilistic estimates are the goal of this subsection. Throughout, $\xi\in\Omst$.

We start with two elementary bounds that serve as the backbone of the subsequent analysis. Define
\begin{equation*}
A_n:=\int_0^n\d t\, \omega_t(\Lambda_n),
\qquad B_n:=\mathsf{B}(\Lambda_n,(0,n]).
\end{equation*}
The first quantity is the total occupation time in the space-time box $\Delta_n$, and the second the total number of births in $\Delta_n$; they are not to be confused with the cube $A$ and the boxes $B_r$. We begin by estimating their exponential moments, which will later control the fraction of space-time points in $\Delta_n$ whose observation window has too many particles or jumps close to the endpoints.

\begin{lemma}[Occupation time and births]\label{lem:scalar}
For $0<\lambda<1$ and $\psi\ge0$,
\begin{equation}\label{eq:mgf}
    \limsup_{n\to\infty}\frac1{n|\Lambda_n|}
    \log Q_{\xi} \left[\e^{\lambda A_n+\psi B_n}\right]
    \le\frac{\e^\psi}{1-\lambda}-1.
\end{equation}
Consequently, for $R\ge0$ and $\sigma>0$,
\begin{equation}\label{eq:occupationtail}
\limsup_{n\to\infty}\frac1{n|\Lambda_n|}
\log Q_{\xi}(A_n+2RB_n>\sigma n|\Lambda_n|)
\le-\frac{\sigma}{2}+2\e^R-1.
\end{equation}
Moreover, for $\sigma\ge1$,
\begin{equation}\label{eq:birthtail}
\limsup_{n\to\infty}\frac1{n|\Lambda_n|}
\log Q_{\xi}(B_n>\sigma n|\Lambda_n|)
\le-\sigma\log\sigma+\sigma-1.
\end{equation}
\end{lemma}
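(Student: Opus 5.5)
The plan is to compute the joint exponential moment explicitly, using the independent structure of the free evolution, and then to obtain both tail bounds by exponential Chebyshev with optimised parameters.

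Under $Q_\xi$, the particles contributing to $A_n$ and $B_n$ fall into two independent groups: the initial particles of $\xi$ in $\Lambda_n$, and the particles born in $\Delta_n$.

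\emph{Initial particles.} Each initial particle in $\Lambda_n$ has an independent unit-exponential lifetime $L$. It contributes $L\wedge n$ to $A_n$ and nothing to $B_n$. For $\lambda<1$ we have $\mathbb E[\e^{\lambda(L\wedge n)}]\le \mathbb E[\e^{\lambda L}]=1/(1-\lambda)$. Hence this group contributes at most $\xi(\Lambda_n)\log\frac1{1-\lambda}$ to the log-moment. Since $\xi\in\Omst$, this is $o(n|\Lambda_n|)$.

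\emph{Newly born particles.} The births in $\Delta_n$, marked with their lifetimes, form a Poisson point process with intensity $\d x\,\d t\,\e^{-r}\d r$. A particle born at time $t$ contributes $1$ to $B_n$ and $(r\wedge(n-t))\le r$ to $A_n$. By the Campbell formula for Poisson exponential functionals,
\begin{equation*}
\log Q\big[\e^{\lambda A^{\rm new}_n+\psi B_n}\big]\le n|\Lambda_n|\Big(\e^{\psi}\int_0^\infty \e^{\lambda r}\e^{-r}\d r-1\Big)=n|\Lambda_n|\Big(\frac{\e^\psi}{1-\lambda}-1\Big).
\end{equation*}
Multiplying the two independent contributions and taking the $\limsup$ gives~\eqref{eq:mgf}.

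\emph{Tail bounds.} For~\eqref{eq:occupationtail}, I apply Chebyshev with $\lambda=1/2$ and $\psi=2R\lambda=R$. This gives
\begin{equation*}
Q_\xi(A_n+2RB_n>\sigma n|\Lambda_n|)\le \e^{-\sigma n|\Lambda_n|/2}\,Q_\xi\big[\e^{\frac12 A_n+RB_n}\big].
\end{equation*}
By~\eqref{eq:mgf} the exponent is $-\sigma/2+2\e^R-1$. For~\eqref{eq:birthtail}, I let $\lambda\downarrow0$, or simply use that $B_n$ is Poisson with mean $n|\Lambda_n|$. Chebyshev with $\psi=\log\sigma\ge0$ then gives $-\sigma\log\sigma+\sigma-1$.

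The main obstacle is merely bookkeeping: checking that the two groups of particles are independent, and that truncating lifetimes at $n$ only decreases the moments. The only place where the hypothesis $\xi\in\Omst$ is needed is the initial-particle term.
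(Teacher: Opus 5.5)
Your proposal is correct and follows essentially the same route as the paper. Both compute the joint exponential moment from the independent initial lifetimes and the exponential formula for the marked Poisson birth process, bound truncated lifetimes by full ones, and use $\xi\in\Omst$ to make the initial-particle term negligible. Both then apply exponential Chebyshev with $\lambda=1/2$, $\psi=R$, and the Poisson tail of $B_n$ with $\psi=\log\sigma$.
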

\begin{proof}
\emph{Step 1: The joint exponential moment.}
Particles do not move, so that only the initial particles in $\Lambda_n$ and the births in $\Delta_n$ contribute to $A_n$. Each of the $\xi(\Lambda_n)$ initial particles has an independent exponential lifetime with mean one, births form a Poisson point process with intensity one on $\Delta_n$, and each birth has its own independent lifetime. For an $\operatorname{Exp}(1)$-distributed random variable $X$, put
\begin{equation*}
m_\lambda(t)=\mathbb{E}\Big[\e^{\lambda\min(X,t)}\Big].
\end{equation*}
An initial particle contributes $\min(X,n)$ to $A_n$, and a particle born at time $u$ contributes $\min(X,n-u)$ to $A_n$ and one to $B_n$. Thus the independence of the initial lifetimes and the exponential formula for marked Poisson point processes give
\begin{equation}\label{eq:exactmgf}
    Q_{\xi} \Big[\e^{\lambda A_n+\psi B_n}\Big]
    =m_\lambda(n)^{\xi(\Lambda_n)}
    \exp\Big\{n^d\int_0^n\d u\,
    \bigl(\e^\psi m_\lambda(n-u)-1\bigr)\Big\}.
\end{equation}
Since $0<\lambda<1$,
\begin{equation*}
m_\lambda(t)\le\mathbb{E}\big[\e^{\lambda X}\big]
=\int_0^\infty\d v\, \e^{-(1-\lambda)v}
=\frac1{1-\lambda}.
\end{equation*}
Consequently, for every $n$,
\begin{equation*}
\log Q_{\xi}\big[\e^{\lambda A_n+\psi B_n}\big]
\le  \xi(\Lambda_n)\log\frac1{1-\lambda}
+n|\Lambda_n|\left(\frac{\e^\psi}{1-\lambda}-1\right).
\end{equation*}
Dividing by $n|\Lambda_n|$ and using that $\xi(\Lambda_n)/(n|\Lambda_n|)\to0$, since $\xi\in\Omst$, we obtain~\eqref{eq:mgf}.

\emph{Step 2: Exponential Markov inequality for occupation and births.}
For $R\ge0$, the choice $\lambda=1/2$ and $\psi = R$ in the preceding moment estimate gives
\begin{equation*}
\begin{split}
Q_{\xi}(A_n+2RB_n>\sigma n|\Lambda_n|)
&\le \e^{-\sigma n|\Lambda_n|/2}
   Q_{\xi}\big[\e^{A_n/2+RB_n}\big]
\\
&\le
\exp\left\{\xi(\Lambda_n)\log2+
n|\Lambda_n|\left(-\frac{\sigma}{2}+2\e^R-1\right)\right\}.
\end{split}
\end{equation*}
Taking the logarithm and dividing by $n|\Lambda_n|$ proves~\eqref{eq:occupationtail}.

\emph{Step 3: The Poisson upper tail.}
The birth count is Poisson-distributed with mean $n|\Lambda_n|$, so that $Q_{\xi}[\e^{\psi B_n}]=\exp\{n|\Lambda_n|(\e^\psi-1)\}$. For any $\psi > 0$, the exponential Markov inequality therefore gives
\begin{equation*}
Q_{\xi}(B_n>\sigma n|\Lambda_n|)
\le\exp\{n|\Lambda_n|(\e^\psi-1-\psi\sigma)\}.
\end{equation*}
For $\sigma>1$, the function $\psi \mapsto \e^\psi-1-\psi\sigma$ is minimised at $\psi=\log\sigma > 0$, and substituting this value gives
\begin{equation*}
Q_{\xi}(B_n>\sigma n|\Lambda_n|)
\le\exp\{-n|\Lambda_n|(\sigma\log\sigma-\sigma+1)\}.
\end{equation*}
For $\sigma=1$, the claimed bound is the trivial bound by one. Taking logarithms and dividing by $n|\Lambda_n|$ proves~\eqref{eq:birthtail} in both cases.
\end{proof}

The second elementary estimate controls the number of points of a Poisson process that are very close to another point.

\begin{lemma}[Close points of a Poisson clock]\label{lem:poisson}
Let $\Pi^\lambda$ be a Poisson point process with intensity $\lambda>0$ on $[0,n]$, and let $Z_\delta$ be the number of its points that have another point within distance $\delta>0$.
Then, for every $\psi>0$, there is a finite constant $c= c_{\psi,\lambda}$ such that
\begin{equation}\label{eq:poissonpair}
\log \Pi^\lambda[\e^{\psi Z_\delta}]
\le c(n\delta+\delta^2),
\qquad 0<\delta\le(3\lambda)^{-1}.
\end{equation}
\end{lemma}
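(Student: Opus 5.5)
We cut $[0,n]$ into consecutive intervals $I_1,\dots,I_m$ of length $\delta$, with $m=\lceil n/\delta\rceil$; the last one may be shorter. Let $N_k=\Pi^\lambda(I_k)$. These counts are independent Poisson variables with mean at most $\lambda\delta$. If a point has another point within distance $\delta$, then both points lie in the same interval or in two adjacent intervals. Hence, with $\widetilde N_k := N_k\1_{\{N_{k-1}+N_k+N_{k+1}\ge2\}}$, we have the deterministic bound
\begin{equation*}
    Z_\delta\le\sum_{k}\widetilde N_k\le\sum_k N_k\1_{\{N_k\ge2\}}+\sum_k \big(N_k\1_{\{N_{k-1}\ge1\}}+N_k\1_{\{N_{k+1}\ge1\}}\big)\1_{\{N_k\ge1\}}.
\end{equation*}
This is slightly wasteful but sufficient. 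To decouple the dependence between neighbouring intervals, we split the sum over $k$ according to the residue of $k$ modulo $3$. Hölder's inequality with three factors then reduces the claim to bounding $\Pi^\lambda[\e^{3\psi\sum_{k\equiv r}W_k}]$. Here $W_k$ is a function of $(N_{k-1},N_k,N_{k+1})$, and the triples for $k\equiv r \pmod 3$ are disjoint and therefore independent. So the exponential moment factorises into at most $m/3+1$ factors of the form $\mathbb{E}[\e^{3\psi W}]$.

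The main step is a single-block estimate: $\log\mathbb{E}[\e^{3\psi W}]\le c'\delta^2$ for $\delta\le(3\lambda)^{-1}$. To see it, write $\e^{3\psi W}-1$ and note that $W=0$ unless at least two points fall in three consecutive intervals, that is, in a window of length at most $3\delta$. Let $N\sim\mathrm{Poisson}(\mu)$ with $\mu=3\lambda\delta\le1$ denote the total count in the triple, and use $W\le N$. Then
\begin{equation*}
    \mathbb{E}[\e^{3\psi W}]-1\le\mathbb{E}\big[\e^{3\psi N}\1_{\{N\ge2\}}\big]=\sum_{j\ge2}\e^{-\mu}\frac{\mu^j}{j!}\e^{3\psi j}\le \mu^2\e^{6\psi}\sum_{i\ge0}\frac{(\mu\e^{3\psi})^i}{i!}\le \mu^2\e^{6\psi+\e^{3\psi}},
\end{equation*}
where we used $\mu\le1$. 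With $\log(1+x)\le x$, this gives $\log\mathbb{E}[\e^{3\psi W}]\le 9\lambda^2\e^{6\psi+\e^{3\psi}}\delta^2$. The condition $\delta\le(3\lambda)^{-1}$ is used exactly here, to keep $\mu\le 1$ and hence make the constant uniform in $\delta$.

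Multiplying over the factors, the Hölder exponent $1/3$ compensates for the three residue classes. We obtain
\begin{equation*}
    \log\Pi^\lambda[\e^{\psi Z_\delta}]\le (m/3+1)\,C_{\psi,\lambda}\delta^2\le C_{\psi,\lambda}\Big(\frac{n}{\delta}+2\Big)\delta^2,
\end{equation*}
and this is at most $c(n\delta+\delta^2)$ with $c=2C_{\psi,\lambda}$. The bookkeeping at the boundary intervals $k=1$ and $k=m$, where one neighbour is missing, only decreases $W$. The shorter last interval is likewise harmless, since its mean is smaller. This proves~\eqref{eq:poissonpair}.
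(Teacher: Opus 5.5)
Your proof is correct and follows essentially the paper's argument: a partition into intervals of length $\delta$, three-interval neighbourhoods, H\"older's inequality over the residue classes modulo $3$ together with independence of disjoint triples, and a single-block Poisson bound of order $\mu^2$ with $\mu = 3\lambda\delta \le 1$. One small fix: drop the ``slightly wasteful'' intermediate bound, since the resulting $W_k$ can reach $3N_k$, so the later claim $W\le N$ fails for it (only $W\le 3N$ holds); take $W_k=\widetilde N_k$ directly, which satisfies $W_k\le N_k\le N$ and vanishes unless $N\ge2$.
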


\begin{proof}
\emph{Step 1: Two or more points in a short interval.}
If $X$ is a Poisson random variable with mean $m\le1$, then
\begin{equation*}
\mathbb{E}\Big[\e^{\psi X\1_{\{X\ge2\}}}\Big]
=\e^{-m}\Big(1+m+\sum_{j\ge2}\frac{(\e^\psi m)^j}{j!}\Big).
\end{equation*}
The elementary inequalities $\e^{-m}(1+m)\le1$ and $\sum_{j\ge2}x^j/j!\le x^2\e^x/2$ give
\begin{equation*}
\mathbb{E}\Big[\e^{\psi X\1_{\{X\ge2\}}}\Big]
\le1+c_\psi m^2,
\qquad c_\psi = \tfrac12\e^{2\psi+\e^\psi},
\end{equation*}
and hence $\log\mathbb{E}[\e^{\psi X\1_{\{X\ge2\}}}]\le c_\psi m^2$.

\emph{Step 2: Covering all close pairs of points.}
Partition the interval $[0,n]$ into $\lceil n/\delta\rceil$ successive subintervals $I_\ell$ of length $\delta$, except possibly the last. Let $\widetilde I_\ell$ be the union of $I_\ell$ and its nearest neighbours $I_{\ell-1}$ and $I_{\ell+1}$, where available, and define
\begin{equation*}
Y_\ell:=\Pi^\lambda(\widetilde I_\ell)
\1_{\{\Pi^\lambda(\widetilde I_\ell)\ge2\}}.
\end{equation*}
A point in $I_\ell$ with a partner within distance $\delta$ forces at least two points in $\widetilde I_\ell$. Thus $Z_\delta\le\sum_\ell Y_\ell$ almost surely.

\emph{Step 3: Separating the dependent intervals.}
If $\ell_1 \neq \ell_2$ but $\ell_1 \equiv \ell_2 \bmod 3$, then the intervals $\widetilde{I}_{\ell_1}$ and $\widetilde{I}_{\ell_2}$ are disjoint up to endpoints, so that their Poisson counts are independent. H\"older's inequality across these three residue classes, followed by independence, gives
\begin{equation*}
\log {\Pi^\lambda}\left[\e^{\psi Z_\delta}\right]
\le\frac13\sum_\ell\log {\Pi^\lambda}\left[\e^{3\psi Y_\ell}\right].
\end{equation*}
Each count has mean at most $3\lambda\delta\le1$, so that Step~1 implies
\begin{equation*}
\log {\Pi^\lambda}\left[\e^{\psi Z_\delta}\right]
\le\frac13\left(\frac n\delta+1\right)
c_{3\psi}(3\lambda\delta)^2.
\end{equation*}
This proves~\eqref{eq:poissonpair}, for example with $c_{\psi,\lambda}=3c_{3\psi}\lambda^2$.
\end{proof}

For controlling the mass that the empirical process $R_n$ puts on the complement of the sets $\overline{G}_j(M,\delta)$, the key estimate will be
\begin{equation*}
R_n(G_j(M,\delta)^c)
\le\frac{2(j+1)}n+\Upsilon_n^{\mathrm{cnt}}(M)
+\Upsilon_n^{\mathrm{gap}}(M,\delta)
+\Upsilon_n^{\mathrm{end}}(\delta),
\end{equation*}
see the proof of \zcref{lem:single} below. Here, the first term is the normalised space-time volume of the seams, that is, of the space-time points near the temporal boundary of $\Delta_n$, which we discard. The quantity $\Upsilon_n^{\mathrm{cnt}}$ is the fraction of space-time points in $\Delta_n$ whose observation window contains too many particles, $\Upsilon_n^{\mathrm{gap}}$ the fraction whose window contains two jumps that are too close in time, and $\Upsilon_n^{\mathrm{end}}$ the fraction whose window contains a jump too close to one of its endpoints. These quantities are defined precisely below. We bound them separately, but we have to do this in the right order, since the choices of parameters depend on each other.

Fix $r,R>0$, recall the box $B_r=(-r,r)^d$, and consider $n>2R$, sufficiently large so that no translate of $B_r$ overlaps itself on the spatial torus $\mathbb R^d/n\mathbb Z^d$. The \emph{non-seam} space-time points are the points $(x,s)\in\Lambda_n\times(R,n-R)$. The observation windows $(x+B_r)\times[s-R,s+R]$ around such points do not touch the temporal boundary of $\Delta_n$. The remaining \emph{seam} points have normalised volume $n^{-(d+1)} \cdot 2R n^d = 2R/n$ and are treated separately. The space-time points $(x,s)$ with $s\in \{R,n-R\}$ have zero Lebesgue measure, so the space-time averages below may equivalently be written over the closed interval $[R,n-R]$. All spatial translates below are interpreted modulo $n$, that is, we consider periodic boundary conditions in space.

If the window around a non-seam point $(x,s)$ contains $L$ individual particles at some time, counting the particles present initially and those born later, then at every fixed time within the window the population does not exceed $L$, and the window contains at most $2L$ jumps after its initial time. Indeed, every particle present during the window is either present initially or born later, and each such particle can die at most once.

We now partition the spatial torus $\Lambda_n$ into cells of unit volume,
\begin{equation*}
u_{n,k}:=-\overline{n/2}+k+[0,1)^d,
\qquad k\in\{0,\ldots,n-1\}^d.
\end{equation*}
Let $U_{n,k}$ be the union of the cells that meet $B_r + u_{n,k}$, with indices read modulo $n$, see \zcref{fig:unk_boxes}. For fixed $r$ and all sufficiently large $n$, each $U_{n,k}$ consists of the same number $q_r$ of cells, and every cell belongs to exactly $q_r$ of these unions. In particular, $B_r+x\subset U_{n,k}$ whenever $x\in u_{n,k}$. For $R\le s\le n-R$, define
\begin{equation*}
L_{k,s}
:=\omega_{s-R}(U_{n,k})
+\mathsf{B}(U_{n,k},(s-R,s+R]).
\end{equation*}
Thus $L_{k,s}$ counts all particles that are present in the enlarged window $U_{n,k}$, either already at the initial time or by being born during the observation interval. In particular, a bound on $L_{k,s}$ controls the smaller observation window $B_r + x$ for every spatial point $x$ in the cell $u_{n,k}$.

\begin{figure}
    \centering
    \includegraphics[width=0.5\linewidth]{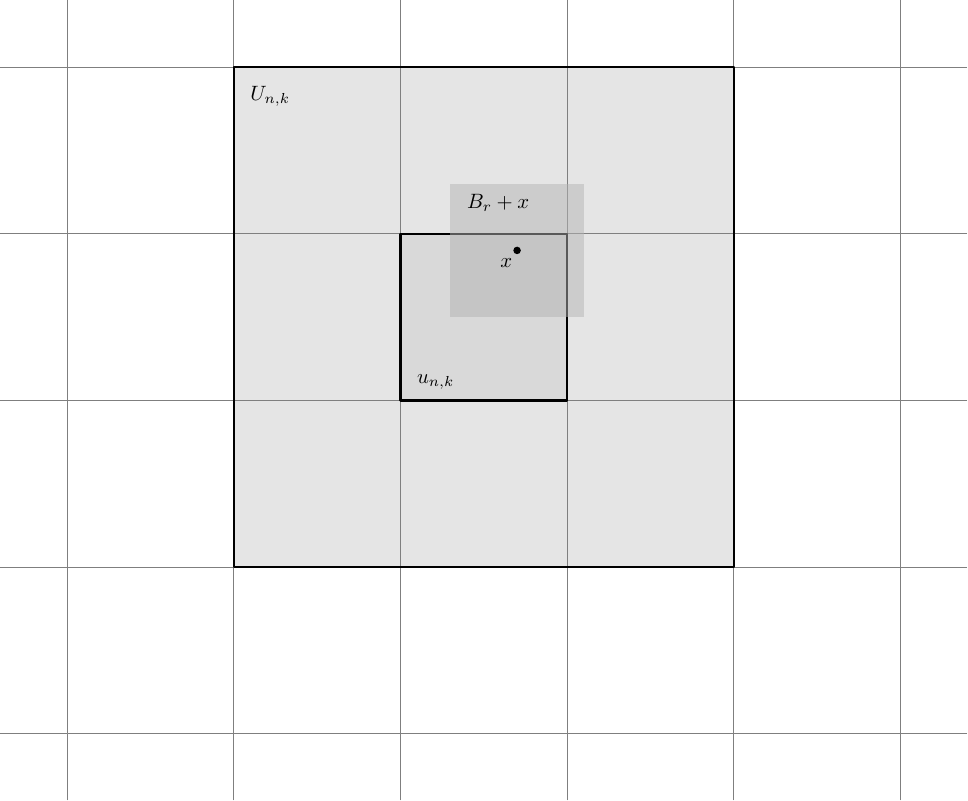}
    \caption{Two-dimensional illustration of the enlargement $U_{n,k}$ of the cell $u_{n,k}$, the union of the cells meeting $B_r+u_{n,k}$. For every $x\in u_{n,k}$, the observation window $B_r+x$ lies in $U_{n,k}$.}
    \label{fig:unk_boxes}
\end{figure}

As a first step, we bound the space-time averages of $L_{k,s}$ in terms of the quantities $A_n$ and $B_n$, which we already know how to control.
\begin{lemma}[Space-time count]\label{lem:space-time-count}
For fixed $r,R>0$ and all sufficiently large $n$,
\begin{equation}\label{eq:space-time-count}
\sum_k\int_R^{n-R}\d s\, L_{k,s}
\le q_r(A_n+2RB_n),
\end{equation}
where the sum is over $k\in\{0,\ldots,n-1\}^d$.
\end{lemma}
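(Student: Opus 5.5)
We split $L_{k,s}$ into its two summands and bound each separately, using that every unit cell of $\Lambda_n$ lies in exactly $q_r$ of the unions $U_{n,k}$. The path $\omega$ here is the original path on $\Delta_n$, read with periodic spatial indices. For $s\in[R,n-R]$, all times $s-R$ and $s+R$ lie in $[0,n]$, so no temporal periodisation enters.

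\emph{Occupation term.} Since $U_{n,k}$ is a union of $q_r$ unit cells and each cell is covered exactly $q_r$ times, we have $\sum_k \omega_t(U_{n,k})=q_r\,\omega_t(\Lambda_n)$ for every $t\in[0,n]$. Hence, substituting $t=s-R$,
\begin{equation*}
\sum_k\int_R^{n-R}\d s\,\omega_{s-R}(U_{n,k})=q_r\int_0^{n-2R}\d t\,\omega_t(\Lambda_n)\le q_r A_n,
\end{equation*}
using that $\omega_t(\Lambda_n)\ge0$.

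\emph{Birth term.} By the same covering identity, $\sum_k\mathsf B(U_{n,k},(s-R,s+R])=q_r\,\mathsf B(\Lambda_n,(s-R,s+R])$. Write this birth count as a sum over the births $(u,y)$ in $\Delta_n$ of the indicators $\1_{\{s-R<u\le s+R\}}$, and apply Fubini. Each birth at time $u\in(0,n]$ then contributes the Lebesgue measure of $\{s\in[R,n-R]\colon u-R\le s<u+R\}$, which is at most $2R$. Summing over births gives at most $2R\,q_r B_n$.

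Adding the two bounds yields~\eqref{eq:space-time-count}. The only point requiring care is the exact covering multiplicity $q_r$. This needs $n$ large, so that the enlarged windows do not wrap around the torus and overlap themselves, and this is guaranteed by the standing assumption on $n$. The remaining steps are routine.
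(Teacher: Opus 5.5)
Your proof is correct and follows essentially the same argument as the paper: you split $L_{k,s}$ into its occupation and birth terms, use the exact $q_r$-fold covering of unit cells by the unions $U_{n,k}$, and apply a Fubini count in which each birth receives central-time measure at most $2R$.
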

\begin{proof}
We treat the two terms in the definition of $L_{k,s}$ separately. At every fixed time $t$, each particle belongs to exactly $q_r$ of the unions $U_{n,k}$. Hence
\begin{equation*}
\sum_k\int_R^{n-R}\d s\, \omega_{s-R}(U_{n,k})
=q_r\int_0^{n-2R}\d t\, \omega_t(\Lambda_n)\le q_r A_n.
\end{equation*}
For the birth term, fix a birth at spatial position $y$ and time $u\in(0,n]$. Its cell belongs to exactly $q_r$ of the unions $U_{n,k}$, and for each such union, the birth is counted in $L_{k,s}$ only if $u\in(s-R,s+R]$, a condition that restricts $s$ to a set of Lebesgue measure at most $2R$. Summing this contribution over all births yields
\begin{equation*}
\sum_k\int_R^{n-R}\d s\,
\mathsf{B}(U_{n,k},(s-R,s+R])
\le 2Rq_rB_n.
\end{equation*}
Adding the two bounds proves~\eqref{eq:space-time-count}.
\end{proof}

We now define the first fraction of bad space-time points, namely of those where $L_{k,s}$ exceeds a threshold $M>0$,
\begin{equation}\label{eq:countbad}
\Upsilon_n^{\mathrm{cnt}}(M):
=\frac1{n|\Lambda_n|}\sum_k
 \int_R^{n-R}\d s\, \1_{\{L_{k,s}>M\}},
\end{equation}
where the sum is over $k\in\{0,\dots, n-1\}^d$. In words, $\Upsilon_n^{\mathrm{cnt}}(M)$ is the normalised volume of the non-seam space-time points whose \emph{enlarged} window has more than $M$ particles. We use the enlarged count here, because the same cutoff is required for the control of jumps that are too close to each other, see below. The dependence on the fixed radii $r,R>0$ is suppressed in the notation.

\begin{lemma}[Exponential estimate for $\Upsilon^{\mathrm{cnt}}$]\label{lem:counttail}
Fix $r,R>0$. For every $\varepsilon>0$ and every $M>0$,
\begin{equation}\label{eq:counttail}
\limsup_{n\to\infty}\frac1{n|\Lambda_n|}
\log Q_{\xi}\bigl(\Upsilon_n^{\mathrm{cnt}}(M)>\varepsilon\bigr)
\le-\frac{\varepsilon M}{2q_r}+2\e^R-1
\xrightarrow[M\to\infty]{}-\infty.
\end{equation}
In particular, given $\alpha>0$, one can choose $M\in\mathbb N$ such that this $\limsup$ is strictly smaller than $-\alpha$.
\end{lemma}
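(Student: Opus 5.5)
The plan is a Markov (counting) step followed by \zcref{lem:space-time-count} and the occupation tail~\eqref{eq:occupationtail}. On the event $\{\Upsilon_n^{\mathrm{cnt}}(M)>\varepsilon\}$, the normalised space-time volume of non-seam pairs $(k,s)$ with $L_{k,s}>M$ exceeds $\varepsilon$. Since $L_{k,s}\ge0$, the pointwise bound $\1_{\{L_{k,s}>M\}}\le L_{k,s}/M$ gives
\begin{equation*}
    \varepsilon\, n|\Lambda_n| < \sum_k\int_R^{n-R}\d s\,\1_{\{L_{k,s}>M\}} \le \frac1M\sum_k\int_R^{n-R}\d s\, L_{k,s}.
\end{equation*}

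By \zcref{lem:space-time-count}, for all sufficiently large $n$ the right-hand side is at most $q_r(A_n+2RB_n)/M$. Hence, for large $n$,
\begin{equation*}
    \{\Upsilon_n^{\mathrm{cnt}}(M)>\varepsilon\}\subset\Big\{A_n+2RB_n>\frac{\varepsilon M}{q_r}\, n|\Lambda_n|\Big\}.
\end{equation*}
Applying~\eqref{eq:occupationtail} from \zcref{lem:scalar} with $\sigma=\varepsilon M/q_r$ yields the bound $-\varepsilon M/(2q_r)+2\e^R-1$ on the $\limsup$. Discarding finitely many $n$ does not affect the $\limsup$. The assumption $\xi\in\Omst$ enters only through \zcref{lem:scalar}.

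The limit $M\to\infty$ is immediate, because $r$, $R$ and $\varepsilon$ are fixed, so that $q_r$ and $2\e^R-1$ are constants. For the final assertion, choose $M\in\N$ with $\varepsilon M/(2q_r)>\alpha+2\e^R-1$.

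There is no real obstacle. The only points to check are that \zcref{lem:space-time-count} applies for all large $n$, which requires $n$ large enough for the torus cell structure to stabilise, and that the sum over $k$ together with the integral over $s$ matches the normalisation $n|\Lambda_n|$ in~\eqref{eq:countbad}. Both are already arranged in the setup preceding the lemma.
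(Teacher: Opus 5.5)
Your proposal is correct and is essentially the paper's proof: the pointwise bound $\1_{\{L_{k,s}>M\}}\le L_{k,s}/M$ combined with \zcref{lem:space-time-count} gives $\{\Upsilon_n^{\mathrm{cnt}}(M)>\varepsilon\}\subset\{A_n+2RB_n>\varepsilon M n|\Lambda_n|/q_r\}$ for large $n$. From there, \eqref{eq:occupationtail} with $\sigma=\varepsilon M/q_r$ and the explicit choice $M>2q_r(\alpha+2\e^R-1)/\varepsilon$ finish the argument exactly as in the paper.
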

\begin{proof}
Since $\1_{\{L_{k,s}>M\}}\le L_{k,s}/M$, \zcref{lem:space-time-count} gives the pathwise estimate
\begin{equation*}
\Upsilon_n^{\mathrm{cnt}}(M)
\le\frac{q_r}{Mn|\Lambda_n|}(A_n+2RB_n).
\end{equation*}
Therefore
\begin{equation*}
\{\Upsilon_n^{\mathrm{cnt}}(M)>\varepsilon\}
\subset
\Big\{A_n+2RB_n>
\frac{\varepsilon M}{q_r}n|\Lambda_n|\Big\},
\end{equation*}
and~\eqref{eq:occupationtail} with $\sigma=\varepsilon M/q_r$ gives~\eqref{eq:counttail}. Finally, any integer $M>2q_r(\alpha+2\e^R-1)/\varepsilon$ makes the limiting upper bound strictly smaller than $-\alpha$.
\end{proof}

From now on, we fix $M\in\mathbb N$ and only consider windows with $L_{k,s}\le M$. Their populations stay below $M$ throughout the observed time interval. This permits a comparison with Poisson clocks whose rates do not depend on the random population, which allows us to use \zcref{lem:poisson}.

For a non-seam space-time point $(x,s)\in \Delta_n$, let
\begin{equation*}
\mathcal J_n(x,s)
=\{t\in(s-R,s+R]\colon
\text{a genuine birth or death occurs in }B_r+x
\text{ at time }t\}.
\end{equation*}
This is a finite set, and it contains only genuine jumps, not the resets that occur at the end of the time interval $[0,n]$. We define
\begin{equation*}
\Upsilon_n^{\mathrm{gap}}(M,\delta)
:=\frac1{n|\Lambda_n|}\sum_k
\int_R^{n-R}\d s\int_{u_{n,k}}\d x\,
\1_{\{L_{k,s}\le M\}}
\1_{\{\exists\,t,t'\in\mathcal J_n(x,s)\colon
0<|t-t'|\le\delta\}}.
\end{equation*}
In words, $\Upsilon_n^{\mathrm{gap}}(M,\delta)$ is the fraction of non-seam space-time points for which the enlarged count is at most $M$, but the smaller observation window contains two distinct jumps separated by at most $\delta$. The space-time points with $L_{k,s}>M$ are already accounted for by $\Upsilon_n^{\mathrm{cnt}}(M)$.

To control $\Upsilon_n^{\mathrm{gap}}$, we compare the jumps in windows of bounded population with Poisson clocks. For each unit cell $u=u_{n,k}$, take an independent Poisson random measure on $(0,\infty)\times(0,\infty)\times[0,1]$ with intensity $\mathrm{d}t\,\mathrm{d}y\,\mathrm{d}m$. At a point $(t,y,m)$, we accept a death if $y\le \omega_{t-}(u)$, and we use $m$ to choose one of the present particles uniformly. The total death rate is $\omega_{t-}(u)$, and every present particle has death rate one, so that this realises the deaths of the free evolution. Restricting this random measure to $y\le M$ and projecting its points onto the time axis gives a Poisson clock with rate $M$. We superpose it with the independent birth clock of the cell, which has rate one. The resulting clock has rate $1+M$, and it contains every genuine jump in the cell whenever the population of the cell immediately before that jump is at most $M$. This construction is made without conditioning on an event of low population: the points with $y>M$ supply any further deaths when the population is larger. For $U_{n,k}$, we superpose the clocks of its $q_r$ cells and call the resulting clock $\Pi_{n,k}$; it has rate $q_r(1+M)$. In a window with $L_{k,s}\le M$, all genuine jumps in $U_{n,k}$ after the initial time of the window are points of $\Pi_{n,k}$. Finally, let $Z_{k,\delta}$ be the number of points of $\Pi_{n,k}$ in $[0,n]$ that have another point of $\Pi_{n,k}$ within distance $\delta$, as in \zcref{lem:poisson}. The clocks are realised on an enlarged probability space, and we keep writing $Q_\xi$ for the joint law of the path and the clocks; this does not change the probabilities of events that only involve the path.

\begin{lemma}[Exponential estimate for $\Upsilon^{\mathrm{gap}}$]\label{lem:gaptail}
Fix $r,R>0$ and $M\in\mathbb N$. For every $\delta>0$ and all sufficiently large $n$,
\begin{equation}\label{eq:gapfraction}
\Upsilon_n^{\mathrm{gap}}(M,\delta)
\le\frac{2R}{n|\Lambda_n|}\sum_k Z_{k,\delta}.
\end{equation}
Moreover, for every $\varepsilon>0$,
\begin{equation}\label{eq:gaptail}
\lim_{\delta\downarrow0}\limsup_{n\to\infty}\frac1{n|\Lambda_n|}
\log Q_{\xi}
\bigl(\Upsilon_n^{\mathrm{gap}}(M,\delta)>\varepsilon\bigr)
=-\infty.
\end{equation}
\end{lemma}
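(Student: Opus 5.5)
The proof has two parts: a pathwise bound~\eqref{eq:gapfraction}, and then an exponential Chebyshev argument based on \zcref{lem:poisson}.

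\emph{Pathwise bound.} Fix a non-seam point $(x,s)$ with $x\in u_{n,k}$ and $L_{k,s}\le M$, and suppose two distinct genuine jumps at times $t,t'\in\mathcal J_n(x,s)$ satisfy $0<|t-t'|\le\delta$. Since $B_r+x\subset U_{n,k}$, both jumps occur in $U_{n,k}$ during $(s-R,s+R]$. By the clock construction, every genuine jump in $U_{n,k}$ after time $s-R$ is a point of $\Pi_{n,k}$ when $L_{k,s}\le M$. Here we use that the population of each cell of $U_{n,k}$ stays at most $L_{k,s}\le M$ throughout the window. Hence $t$ is a point of $\Pi_{n,k}$ in $(s-R,s+R]$ with another point within distance $\delta$, i.e.\ one of the points counted by $Z_{k,\delta}$.

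Consequently, the indicator in the definition of $\Upsilon^{\mathrm{gap}}_n$ is bounded by $\sum_{t\in\Pi_{n,k}\text{ counted in }Z_{k,\delta}}\1_{\{t\in(s-R,s+R]\}}$. Integrating over $x\in u_{n,k}$ contributes a factor $|u_{n,k}|=1$. Integrating over $s$ contributes at most $2R$ per counted point, because each point $t$ lies in $(s-R,s+R]$ only for $s$ in a set of measure $2R$. Summing over $k$ gives~\eqref{eq:gapfraction}. The window lies in $(0,n)$, so only points of $\Pi_{n,k}$ in $[0,n]$ matter; on the spatial torus, resets do not occur inside non-seam windows.

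\emph{Exponential bound.} By~\eqref{eq:gapfraction} and the exponential Markov inequality, for any $\psi>0$,
\begin{equation*}
Q_\xi\bigl(\Upsilon^{\mathrm{gap}}_n(M,\delta)>\varepsilon\bigr)\le \e^{-\psi\varepsilon n|\Lambda_n|/(2R)}\,Q_\xi\Big[\exp\Big\{\psi\sum_k Z_{k,\delta}\Big\}\Big].
\end{equation*}
The clocks $\Pi_{n,k}$ are not independent in $k$, since neighbouring unions share cells. Each cell clock, however, belongs to exactly $q_r$ unions. I would therefore bound $Z_{k,\delta}$ by the sum over the $q_r$ cells $u\subset U_{n,k}$ of the number $Z'_{u,\delta}$ of points of the cell clock of $u$ that have a point of the union clock within distance $\delta$. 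This is cleaner via Hölder's inequality: partition the index set $k$ into $(2\lceil r\rceil+3)^d$ sublattices so that within each sublattice the unions $U_{n,k}$ are disjoint and hence the $Z_{k,\delta}$ are independent. Hölder's inequality across the sublattices with exponent $N=(2\lceil r\rceil+3)^d$ then gives
\begin{equation*}
\log Q_\xi\Big[\e^{\psi\sum_k Z_{k,\delta}}\Big]\le \frac1N\sum_k\log \Pi^{q_r(1+M)}\bigl[\e^{N\psi Z_\delta}\bigr]\le n^d\,c_{N\psi,q_r(1+M)}(n\delta+\delta^2)
\end{equation*}
by \zcref{lem:poisson}, valid for $\delta\le(3q_r(1+M))^{-1}$. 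This uses that $\Pi_{n,k}$ is a Poisson clock of rate $q_r(1+M)$ on $[0,n]$. Dividing by $n|\Lambda_n|=n^{d+1}$, the $\limsup$ in~\eqref{eq:gaptail} is at most $-\psi\varepsilon/(2R)+C_{\psi}\delta$, with $C_\psi$ depending also on $r,M$. Letting $\delta\downarrow0$ and then $\psi\to\infty$ yields $-\infty$.

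The main obstacle is the dependence between overlapping unions, handled by the sublattice Hölder step. A secondary point is justifying that the clock construction captures all jumps under $L_{k,s}\le M$, including deaths of initial particles; this follows from the acceptance rule $y\le\omega_{t-}(u)\le M$.
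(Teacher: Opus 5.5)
Your proposal is correct and follows the paper's proof essentially step by step: you charge each bad space-time point to a clock point counted by $Z_{k,\delta}$, with a factor $2R$ from the admissible central times and a factor one from $x\in u_{n,k}$. You then apply H\"older's inequality over finitely many classes of pairwise disjoint unions, \zcref{lem:poisson} at rate $q_r(1+M)$, and an exponential Chebyshev bound with a large exponent and small $\delta$. The only difference is that you use residue classes modulo $2\lceil r\rceil+3$ where the paper uses a proper colouring of the bounded-degree overlap graph; on the torus $\{0,\dots,n-1\}^d$ your classes are not disjoint across the wrap-around unless the period divides $n$. You should therefore either use the colouring or add a bounded number of extra classes for the last incomplete period in each coordinate, which leaves the rest of the argument unchanged.
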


\begin{proof}
\emph{Step 1: The total number of close pairs of clock points.}
Clocks associated with disjoint unions of cells are independent, but a single cell belongs to several unions, see \zcref{fig:unk_boxes}, so we do not have full independence. Consider the graph whose vertices are the unions of cells, with an edge between $U_{n,k}$ and $U_{n,k'}$ whenever they share a cell. For fixed $r$, this graph has uniformly bounded degree, so that we can choose a proper colouring of it, in which no two neighbours share a colour, with at most $p_r$ colours, independently of $n$. Within each colour class $\mathcal I_1,\ldots,\mathcal I_{p_r}$, the Poisson clocks are independent.

For $b>0$ and $0<\delta\le[3q_r(1+M)]^{-1}$, H\"older's inequality across the $p_r$ colour classes, followed by the independence within each class, gives
\begin{equation*}
\log Q_{\xi}\Big[
 \exp\Big\{b\sum_k Z_{k,\delta}\Big\}\Big]
\le\frac1{p_r}\sum_{a=1}^{p_r}
\log Q_{\xi}\Big[
\exp\Big\{bp_r\sum_{k\in\mathcal I_a}Z_{k,\delta}\Big\}\Big]=\frac1{p_r}\sum_k
\log Q_{\xi}\left[\e^{bp_r Z_{k,\delta}}\right].
\end{equation*}
Applying \zcref{lem:poisson} to each of the $n^d$ terms yields
\begin{equation}\label{eq:colored}
\log Q_{\xi}\Big[\exp\Big\{b\sum_k Z_{k,\delta}\Big\}\Big]
\le c_{b,M,r}(n|\Lambda_n|\delta+n^d\delta^2),
\end{equation}
where the finite constant $c_{b,M,r}$ does not depend on $n$ and $\delta$.

\emph{Step 2: Charging each bad space-time point to a close clock point.}
Fix $k$ and a space-time point $(x,s)$ counted in $\Upsilon_n^{\mathrm{gap}}(M,\delta)$ with $x\in u_{n,k}$. Since $L_{k,s}\le M$, both genuine jumps in its window are points of $\Pi_{n,k}$. Each of them therefore has another clock point within distance $\delta$, and is counted by $Z_{k,\delta}$. A fixed clock point at time $t$ can belong to the window around $(x,s)$ only if $s\in[t-R,t+R]$, a set of central times of length at most $2R$, and all $x\in u_{n,k}$ together contribute spatial volume one. A union bound over the $Z_{k,\delta}$ counted points therefore bounds the bad space-time volume associated with the cell $u_{n,k}$ by $2R Z_{k,\delta}$. Summing over $k$ and normalising by $n|\Lambda_n|$ gives~\eqref{eq:gapfraction}.

\emph{Step 3: Exponential Markov inequality.}
For any $b>0$ and $0<\delta\le[3q_r(1+M)]^{-1}$, \eqref{eq:gapfraction} and~\eqref{eq:colored} give
\begin{equation*}
Q_{\xi}\bigl(\Upsilon_n^{\mathrm{gap}}(M,\delta)>\varepsilon\bigr)
\le Q_{\xi}\Big(\sum_k Z_{k,\delta}>
\frac{\varepsilon n|\Lambda_n|}{2R}\Big)\le\exp\Big\{-\frac{b\varepsilon n|\Lambda_n|}{2R}
+c_{b,M,r}(n|\Lambda_n|\delta+n^d\delta^2)\Big\}.
\end{equation*}
Hence,
\begin{equation}\label{ineq:proof-upsilon-gap}
\limsup_{n\to\infty}\frac1{n|\Lambda_n|}
\log Q_{\xi}
\bigl(\Upsilon_n^{\mathrm{gap}}(M,\delta)>\varepsilon\bigr)
\le-\frac{b\varepsilon}{2R}+c_{b,M,r}\delta.
\end{equation}

\emph{Step 4: Choice of the parameters.}
Let $\alpha>0$ be the desired decay exponent. First choose $b=2R(\alpha+1)/\varepsilon$, which fixes the constant $c_{b,M,r}$ from Step~1. Next choose $\delta>0$ so small that $\delta\le[3q_r(1+M)]^{-1}$ and $c_{b,M,r}\delta<1/2$. With this choice of parameters, the bound in~\eqref{ineq:proof-upsilon-gap} is smaller than $-\alpha-1/2$, and the same holds for every smaller $\delta>0$. Since $\alpha$ was arbitrary, this proves~\eqref{eq:gaptail}.
\end{proof}

It remains to make sure that no jumps occur near the endpoints of the observation interval. We prove a slightly more general estimate for the absence of jumps near a finite family of prescribed times. Fix $p\ge1$ and times $t_1,\ldots,t_p\in[-R,R]$, and define
\begin{equation*}
\Upsilon_n^{\mathrm{end}}(\delta)
:=\frac1{n|\Lambda_n|}\sum_k
\int_R^{n-R}\d s\int_{u_{n,k}}\d x\,
\1_{\{\exists\,t\in\mathcal J_n(x,s)\colon
\min_{1\le i\le p}|t-s-t_i|\le\delta\}}.
\end{equation*}
This is the fraction of non-seam space-time points whose observation window contains a genuine jump within distance $\delta$ of at least one of the translated times $s+t_i$. In contrast to $\Upsilon^{\text{gap}}$, we do not need to impose a population cutoff in this definition. The dependence of $\Upsilon^{\text{end}}$ on $r,R$ and on the times $t_1,\dots, t_p$ is suppressed. Later we take $p=2$ and $t_1=-R$, $t_2=R$, in order to protect the endpoints of the observed time interval.

\begin{lemma}[Exponential estimate for $\Upsilon^{\mathrm{end}}$]\label{lem:endtail}
Fix $r,R>0$ and $t_1,\ldots,t_p\in[-R,R]$. For every $\delta>0$ and all sufficiently large $n$,
\begin{equation}\label{eq:endfraction}
\Upsilon_n^{\mathrm{end}}(\delta)
\le\frac{2p\delta|B_r|}{n|\Lambda_n|}\bigl(2B_n+ \xi(\Lambda_n)\bigr).
\end{equation}
Consequently, for every $\varepsilon>0$,
\begin{equation}\label{eq:endtail}
\lim_{\delta\downarrow0}\limsup_{n\to\infty}\frac1{n|\Lambda_n|}
\log Q_{\xi}\bigl(\Upsilon_n^{\mathrm{end}}(\delta)>\varepsilon\bigr)
=-\infty.
\end{equation}
\end{lemma}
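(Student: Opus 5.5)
The plan is the same charging argument as in Step~2 of the proof of \zcref{lem:gaptail}, except that no Poisson clock is needed: every genuine jump in the path is either a birth in $\Delta_n$ or a death, and every death belongs to a particle that was either present initially in $\Lambda_n$ or born in $\Delta_n$. Hence the total number of genuine jumps in $\Lambda_n\times(0,n]$ is at most $2B_n+\xi(\Lambda_n)$. Since the windows are read on the spatial torus, we count jumps of $\omega$ in $\Lambda_n$, which is what the periodised path reproduces away from the resets; these resets are excluded from $\mathcal J_n(x,s)$.

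To obtain \eqref{eq:endfraction}, fix one genuine jump at position $y$ and time $t$. It contributes to the indicator at $(x,s)$ only if $y\in B_r+x$, that is, $x\in y-B_r$ (mod $n$), a set of volume $|B_r|$. It also needs $|t-s-t_i|\le\delta$ for some $i$, which confines $s$ to the union of $p$ intervals of length $2\delta$. The bad space-time volume charged to this jump is therefore at most $2p\delta|B_r|$. A union bound over all jumps bounds the integral of the indicator by $2p\delta|B_r|(2B_n+\xi(\Lambda_n))$. Dividing by $n|\Lambda_n|$ gives \eqref{eq:endfraction}, valid once $n$ is large enough that translates of $B_r$ do not wrap around the torus.

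For \eqref{eq:endtail}, set $\sigma:=\varepsilon/(4p\delta|B_r|)$. Since $\xi\in\Omst$, we have $\xi(\Lambda_n)\le \tfrac{\varepsilon}{2}\cdot\frac{n|\Lambda_n|}{2p\delta|B_r|}$ for large $n$. Then \eqref{eq:endfraction} gives $\{\Upsilon_n^{\mathrm{end}}(\delta)>\varepsilon\}\subset\{B_n>\sigma n|\Lambda_n|\}$ for all large $n$. For $\delta$ small enough that $\sigma\ge1$, \eqref{eq:birthtail} yields the exponential rate bound $-\sigma\log\sigma+\sigma-1$. This tends to $-\infty$ as $\delta\downarrow0$, because $\sigma\to\infty$.

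I expect no real obstacle. The only point needing care is the bookkeeping that the reset jumps are excluded and that the deaths of initial particles are covered by the term $\xi(\Lambda_n)$, which is negligible precisely because $\xi\in\Omst$.
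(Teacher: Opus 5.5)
Your argument follows the paper's proof essentially step by step. You charge each of the at most $2B_n+\xi(\Lambda_n)$ genuine jumps with bad space-time volume $2p\delta|B_r|$, absorb $\xi(\Lambda_n)$ using $\xi\in\Omst$ for fixed $\delta$, and conclude with \eqref{eq:birthtail}. The one slip is arithmetic: from $2B_n+\xi(\Lambda_n)>2\sigma n|\Lambda_n|$ and $\xi(\Lambda_n)\le\sigma n|\Lambda_n|$ you only get $B_n>\tfrac{\sigma}{2}n|\Lambda_n|$. So the threshold should be $\varepsilon/(8p\delta|B_r|)$, as in the paper, with $\delta\le\varepsilon/(8p|B_r|)$ to make it at least one. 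This is harmless, since the threshold still tends to infinity as $\delta\downarrow0$.
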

\begin{proof}
\emph{Step 1: The space-time volume affected by one jump.}
Every genuine jump in $\Delta_n$ is either a birth or the death of a particle that was present initially or born during $[0,n]$, so that their total number is at most $2B_n+\xi(\Lambda_n)$. Fix such a jump at spatial position $y$ and time $u$. The spatial points $x$ with $y\in B_r+x$ form a set of torus volume $|B_r|$. For a fixed $i$, the temporal condition $|u-s-t_i|\le\delta$ restricts $s$ to an interval of length $2\delta$, and the union over $i=1,\ldots,p$ has length at most $2p\delta$. The additional requirements $s\in[R,n-R]$ and $u\in(s-R,s+R]$ can only reduce this measure. Thus each genuine jump affects space-time volume at most $2p\delta|B_r|$. Summing over all jumps and dividing by $n|\Lambda_n|$ gives~\eqref{eq:endfraction}.

\emph{Step 2: Reduction to a large birth count.}
Fix $\varepsilon,\delta>0$. Since $\xi(\Lambda_n)/(n|\Lambda_n|)\to0$, we have, for all sufficiently large $n$,
\begin{equation*}
\frac{\xi(\Lambda_n)}{n|\Lambda_n|}
\le\frac{\varepsilon}{4p|B_r|\delta}.
\end{equation*}
If $\Upsilon_n^{\mathrm{end}}(\delta)>\varepsilon$, then~\eqref{eq:endfraction} implies
\begin{equation*}
2\frac{B_n}{n|\Lambda_n|}+\frac{\xi(\Lambda_n)}{n|\Lambda_n|}
>\frac{\varepsilon}{2p|B_r|\delta}.
\end{equation*}
Subtracting the preceding bound on $\xi(\Lambda_n)/(n|\Lambda_n|)$ and dividing by two shows that
\begin{equation*}
\{\Upsilon_n^{\mathrm{end}}(\delta)>\varepsilon\}
\subset
\{B_n>\sigma_\delta n|\Lambda_n|\},
\qquad
\sigma_\delta:=\frac{\varepsilon}{8p|B_r|\delta},
\end{equation*}
for all sufficiently large $n$, with $\delta$ fixed.

\emph{Step 3: Sending the time tolerance to zero.}
For $\delta\le\varepsilon/(8p|B_r|)$, we have $\sigma_\delta\ge1$, so that~\eqref{eq:birthtail} gives
\begin{equation*}
\limsup_{n\to\infty}\frac1{n|\Lambda_n|}
\log Q_{\xi}\bigl(\Upsilon_n^{\mathrm{end}}(\delta)>\varepsilon\bigr)
\le-\sigma_\delta\log\sigma_\delta+\sigma_\delta-1.
\end{equation*}
As $\delta\downarrow0$, the threshold $\sigma_\delta$ tends to infinity, and the right-hand side tends to $-\infty$. This proves~\eqref{eq:endtail}. The order of the limits is important here: the tolerance $\delta$ is fixed while $n\to\infty$, so that the contribution of the initial particles can be removed first.
\end{proof}

Together, the three lemmas give the order of the parameters needed below: first we choose $M$ to control $\Upsilon_n^{\mathrm{cnt}}(M)$, which also fixes the clock rate for $\Upsilon_n^{\mathrm{gap}}$; then we choose a gap size $\delta>0$ that controls both $\Upsilon_n^{\mathrm{gap}}(M,\delta)$ and $\Upsilon_n^{\mathrm{end}}(\delta)$.

\subsection{Exponential tightness}\label{subsection:exponential-tightness}
\zcref[S]{prop:exponential-tightness} follows from the following estimate for a single scale $j$.

\begin{lemma}[Single scale]\label{lem:single}
For every $j\ge1$ and all $\alpha,\varepsilon>0$, there are $M\in\mathbb N$ and $\delta>0$ such that
\begin{equation}\label{eq:single}
\limsup_{n\to\infty}\frac1{n|\Lambda_n|}\log Q_{\xi}\bigl(R_n(\overline{G}_j(M,\delta)^c)>\varepsilon\bigr)<-\alpha.
\end{equation}
\end{lemma}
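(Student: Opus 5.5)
Since $G_j(M,\delta)\subset\overline G_j(M,\delta)$, we have $R_n(\overline G_j(M,\delta)^c)\le R_n(G_j(M,\delta)^c)$. It therefore suffices to prove the pathwise decomposition announced before \zcref{lem:space-time-count},
\begin{equation*}
R_n(G_j(M,\delta)^c)\le\frac{2(j+1)}n+\Upsilon_n^{\mathrm{cnt}}(M)+\Upsilon_n^{\mathrm{gap}}(M,\delta)+\Upsilon_n^{\mathrm{end}}(\delta),
\end{equation*}
and then combine \zcref{lem:counttail,lem:gaptail,lem:endtail} with a union bound. We work with the parameters $r=j+1$ and $R=j+1$, so that the observation window $(x+B_r)\times[s-R,s+R]$ is exactly the box $B_{j+1}\times[-j-1,j+1]$ seen from $(x,s)$. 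For $\Upsilon^{\mathrm{end}}$ we take $p=2$, $t_1=-R$ and $t_2=R$.

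For the pathwise step, fix a non-seam point $(x,s)\in\Lambda_n\times(R,n-R)$ with $x\in u_{n,k}$. For such a point the window does not meet the temporal resets at times in $n\Z$. By spatial periodicity and the choice of $n$ large, the restriction of $\theta_{(x,s)}\omega^{(n)}$ to the box is the restriction of the legal birth-and-death path $\omega$ to a translate of $B_r$ on the torus. Suppose that $L_{k,s}\le M$, that no two genuine jumps in $\mathcal J_n(x,s)$ are within distance $\delta$, and that no jump lies within $\delta$ of $s\pm R$. Since $B_r+x\subset U_{n,k}$, the window then contains at most $M$ individual particles, so the population is at most $M$ at every time and there are at most $2M$ jumps. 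The jumps are $\delta$-separated and lie in $[s-R+\delta,s+R-\delta]$. Hence the restricted path lies in $\Xi_j(M,\delta)$, that is, $\theta_{(x,s)}\omega^{(n)}\in G_j(M,\delta)$. A point violating one of these conditions is counted either in $\Upsilon^{\mathrm{cnt}}$, in $\Upsilon^{\mathrm{gap}}$ (on $\{L_{k,s}\le M\}$), or in $\Upsilon^{\mathrm{end}}$. The seam points contribute at most $2R/n=2(j+1)/n$. Averaging the indicator of $G_j^c$ over $\Delta_n$ gives the decomposition. There is one subtlety here. The jumps in $\Xi_j$ must be jumps of the restriction to the open box $B_{j+1}$, whereas $L_{k,s}$ counts particles in $U_{n,k}$. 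The inclusion $B_r+x\subset U_{n,k}$ settles this, and deaths of particles present at the initial time $s-R$ are included in the count $2L$.

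For the probabilistic step, take $n$ so large that $2(j+1)/n<\varepsilon/4$. Then $R_n(\overline G_j^c)>\varepsilon$ forces one of the three fractions to exceed $\varepsilon/4$. By \zcref{lem:counttail}, choose $M\in\N$ such that the $\limsup$ of the rate for $\{\Upsilon^{\mathrm{cnt}}_n(M)>\varepsilon/4\}$ is below $-\alpha-1$. With $M$ fixed, \zcref{lem:gaptail,lem:endtail} give $\delta>0$ such that the corresponding rates for $\Upsilon^{\mathrm{gap}}_n(M,\delta)$ and $\Upsilon^{\mathrm{end}}_n(\delta)$ are also below $-\alpha-1$. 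Both lemmas are stated as limits as $\delta\downarrow0$, and they hold for all smaller $\delta$, so a common $\delta$ works. The probability of a union of three events decays at the worst of the three rates, which is below $-\alpha$; this gives \eqref{eq:single}.

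The main obstacle is the pathwise identification of the local restriction with an element of $\Xi_j(M,\delta)$. One has to check that the restriction of $\theta_{(x,s)}\omega^{(n)}$ to the time interval $[-j-1,j+1]$ is piecewise constant, with jump times exactly the shifted elements of $\mathcal J_n(x,s)$. This needs the legality of $\omega$ (\zcref{lem:finite-entropy-intensities} is not needed, since legality holds $Q_\xi$-a.s.). It also needs that at the interval endpoints no jump occurs, which is guaranteed by the $\Upsilon^{\mathrm{end}}$ condition. Everything else is bookkeeping with the lemmas already established.
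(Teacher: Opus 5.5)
Your proposal is correct and follows the paper's proof essentially step by step. You use the same pathwise bound $R_n(G_j(M,\delta)^c)\le 2(j+1)/n+\Upsilon_n^{\mathrm{cnt}}(M)+\Upsilon_n^{\mathrm{gap}}(M,\delta)+\Upsilon_n^{\mathrm{end}}(\delta)$ with $r=R=j+1$ and protected times $\pm R$, then choose $M$ first and a common $\delta$ at threshold $\varepsilon/4$, and finish with a union bound and the passage from $G_j$ to its closure.
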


\begin{proof}
We apply the estimates of \zcref{subsection:probabilistic-control} with $r=R=j+1$, protecting the two times $t_1 = -R$ and $t_2 = R$ in \zcref{lem:endtail}. Consider a non-seam space-time point $(x,s)$ that is not counted in any of $\Upsilon_n^{\mathrm{cnt}}(M)$, $\Upsilon_n^{\mathrm{gap}}(M,\delta)$ and $\Upsilon_n^{\mathrm{end}}(\delta)$. Its enlarged window contains at most $M$ particles in total, so that the smaller window $(x+B_{j+1})\times[s-j-1,s+j+1]$ contains at most $M$ particles at every time and at most $2M$ jumps. These jumps are separated by more than $\delta$ and avoid the two endpoints by more than $\delta$. The restriction of $\theta_{(x,s)}\omega^{(n)}$ to $B_{j+1}\times[-j-1,j+1]$ therefore belongs to $\Xi_j(M,\delta)$, and
\begin{equation*}
R_n(G_j(M,\delta)^c)
\le\frac{2(j+1)}n+\Upsilon_n^{\mathrm{cnt}}(M)
+\Upsilon_n^{\mathrm{gap}}(M,\delta)
+\Upsilon_n^{\mathrm{end}}(\delta).
\end{equation*}
First choose $M$ by~\eqref{eq:counttail} at threshold $\varepsilon/4$, with a decay exponent strictly larger than $\alpha$; this fixes the clock rate. Next choose $\delta\in(0,1/4)$ by~\eqref{eq:gaptail} and~\eqref{eq:endtail} at the same threshold. Finally, for all sufficiently large $n$, the seam term is smaller than $\varepsilon/4$. A union bound gives~\eqref{eq:single} with $G_j(M,\delta)$ in place of $\overline G_j(M,\delta)$, and replacing $G_j(M,\delta)$ by its closure only reduces the complement.
\end{proof}

The passage from a single scale to all scales needs some care, since the threshold in $n$ may depend on $j$. The following construction covers the finitely many exceptional values of $n$ at each scale by ordinary tightness.

\begin{proof}[Proof of~\zcref{prop:exponential-tightness}]
Fix $\alpha,\varepsilon>0$, and choose $\varepsilon_j>0$ with $\sum_j\varepsilon_j\le\varepsilon$. By \zcref{lem:single}, applied with the decay exponent $\alpha+j+1$, we can choose $M_j$, $\delta_j$ and $n_j$ such that, with $\overline{G}_j=\overline G_j(M_j,\delta_j)$,
\begin{equation*}
Q_{\xi}\bigl(R_n(\overline{G}_j^c)>\varepsilon_j\bigr)
\le2^{-j}\e^{-(\alpha+j)n|\Lambda_n|}\qquad\text{for } n\ge n_j.
\end{equation*}
For each of the finitely many $n<n_j$, the mean empirical process $Q_{\xi}[R_n]$ is a probability measure on the Polish space $E$. Since finite families of probability measures are tight, there is a compact set $\mathscr L_j\subset E$ such that
\begin{equation*}
Q_{\xi} \left[R_n(\mathscr L_j^c)\right]
\le\varepsilon_j2^{-j}\e^{-(\alpha+j)n|\Lambda_n|}
\qquad\text{for } n<n_j.
\end{equation*}
Set $\mathscr H_j:=\overline{G}_j\cup\mathscr L_j$ and $\mathscr K:=\bigcap_j\mathscr H_j$. Markov's inequality for the finitely many small $n$ and the preceding exponential estimate for the large $n$ give the common bound
\begin{equation*}
    Q_\xi\bigl(R_n(\mathscr H_j^c)>\varepsilon_j\bigr)\le 2^{-j}\e^{-(\alpha+j)n|\Lambda_n|}, \qquad n\ge1,
\end{equation*}
and summing over $j$ proves~\eqref{eq:pathcompact}, since $R_n(\mathscr K^c)\le\sum_jR_n(\mathscr H_j^c)$.

To check that $\mathscr K$ is compact, consider any sequence in $\mathscr K$. If some $\mathscr L_j$ contains a subsequence, the compactness of $\mathscr L_j$ supplies a convergent further subsequence. Otherwise, for every $j$, the sequence eventually belongs to $\overline{G}_j$, and \zcref{lem:compactness}(ii) applies. In either case the limit belongs to $\mathscr K$, which is closed. This proves the first claim.

For the second claim, choose a sequence $(\varepsilon_k)_{k \in \N}$ with $\varepsilon_k\downarrow0$, and apply~\eqref{eq:pathcompact} with the threshold $\varepsilon_k$ and the exponent $\alpha+k$, which gives compact sets $\mathscr K_k \subset E$. Define
\begin{equation*}
K_\alpha
:=\{P\in\mathcal{P}_{\theta}(E)\colon P(\mathscr K_k^c)\le\varepsilon_k\text{ for every }k\}.
\end{equation*}
Each constraint is weakly closed by the Portmanteau theorem. Moreover, the constraints imply uniform tightness, so that Prokhorov's theorem yields relative compactness, which, together with the closedness of $\PP(E)$ in the space of probability measures on $E$, shows that $K_\alpha$ is compact. Finally,
\begin{equation*}
Q_{\xi}(R_n\notin K_\alpha)
\le\sum_{k\ge1}\e^{-(\alpha+k)n|\Lambda_n|}
\le \e^{-\alpha n|\Lambda_n|},
\end{equation*}
where the last inequality uses $n|\Lambda_n|\ge1$ and $\sum_{k\ge1}\e^{-k}<1$.
\end{proof}

\section{Proofs for the interacting dynamics}\label{section:interacting-case}
In this section we prove the results stated in \zcref{subsection:strategy-interacting}, as well as the properties of the rate function in \zcref{proposition:rate-function-properties}. Recall that $b(x,\eta)=\e^{-\beta\mathfrak{h}(x,\eta)}$ denotes the birth rate at the fixed inverse temperature $\beta\in\R$, that $\range$ denotes its range, that we suppress the dependence of all laws on the initial condition $\xi\in\Omst$, and that $Q^\beta_n$ and $Q^0_n$ denote the dynamics in $\Lambda_n$ with periodic boundary conditions and the free evolution in $\Lambda_n$, respectively.

\subsection{Entropy bounds}\label{section:interacting_case_entropy_bounds}
Recall from \zcref{lem:finite-entropy-intensities} that every \(P\) with \(H(P) < \infty\) is concentrated on legal birth-and-death paths and has finite intensity \(\rho(P)\) and finite birth intensity \(\chi(P)\), which equals its death intensity. The proof of \zcref{lem:finite-entropy-intensities}, with the entropy inequality applied to $\lambda\mathsf B(A,(0,T])\wedge m$ in place of $\mathsf B(A,(0,T])\wedge m$, shows more generally that
\begin{equation}\label{equation:interacting_case_bound_free_entropy_beta}
    \lambda \chi(P)
    \leq H(P) + \e^\lambda - 1, \qquad \lambda > 0.
\end{equation}
We record the following upgrade of \eqref{equation:interacting_case_bound_free_entropy_beta}, and how \(H(P)\) also bounds the interaction term \(P[F]\) of the rate function.

\begin{lemma}[Entropy bounds]\label{lemma:interacting_case_nice_things_from_finite_free_entropy}
    Let $P\in\PP(E)$ with $H(P)<\infty$. The bound \eqref{equation:interacting_case_bound_free_entropy_beta} also holds for \(\lambda \leq 0\), and in particular, by setting \(\lambda = \log\chi(P)\) if \(\chi(P) \neq 0\), and by letting \(\lambda \downarrow - \infty\) otherwise,
    \begin{equation*}
        H(P) \geq \chi(P) \log \chi(P) - \chi(P) + 1.
    \end{equation*}
    Furthermore, for every \(\delta > 0\) there exists \(C_\delta < \infty\), not depending on $P$, such that
    \begin{equation}\label{equation:interacting_case_entropy_bound_of_interacting_part}
        \abs{P[F]}
        \leq \Vert 1-b\Vert_\infty + \Vert \log b \Vert_\infty \chi(P)
        \leq \delta H(P) + C_\delta.
    \end{equation}
\end{lemma}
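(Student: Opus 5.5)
The plan is to revisit the entropy-inequality argument from the proof of \zcref{lem:finite-entropy-intensities}, now allowing negative tilts. Fix $A=(0,a)^d$ and $T>0$, write $B_{A,T}=\mathsf B_\omega(A,(0,T])$, and recall that under $P_{A,0}Q^A$ the count $B_{A,T}$ is Poisson with mean $|A|T$, independently of the initial configuration. For $\lambda\le0$ the function $f=\lambda B_{A,T}$ is bounded above by $0$ but unbounded below. I will therefore apply the entropy inequality $P[f]\le h_{A,T}(P)+\log P_{A,0}Q^A[\e^f]$ to the bounded truncation $f_m=\lambda(B_{A,T}\wedge m)$, which gives
\begin{equation*}
    \lambda P[B_{A,T}\wedge m]\le h_{A,T}(P)+\log Q[\e^{\lambda(B_{A,T}\wedge m)}].
\end{equation*}
Letting $m\to\infty$, the left side converges to $\lambda P[B_{A,T}]=\lambda\chi(P)|A|T$, since $P[B_{A,T}]<\infty$ by \zcref{lem:finite-entropy-intensities}. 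The right side converges by dominated convergence, because the integrand is bounded by $1$ for $\lambda\le0$, and the limit is $|A|T(\e^\lambda-1)$. Using $h_{A,T}(P)\le|A|TH(P)$ and dividing by $|A|T$ yields \eqref{equation:interacting_case_bound_free_entropy_beta} for all $\lambda\le0$.

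Combining this with the case $\lambda>0$ gives the bound for all real $\lambda$. If $\chi(P)>0$, the choice $\lambda=\log\chi(P)$ gives $\chi\log\chi\le H(P)+\chi-1$, which is the claimed inequality. If $\chi(P)=0$, the bound reads $H(P)\ge 1-\e^\lambda$, and letting $\lambda\downarrow-\infty$ gives $H(P)\ge1$, which agrees with the claim under the convention $0\log0=0$.

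For \eqref{equation:interacting_case_entropy_bound_of_interacting_part}, I start from the definition \eqref{eq:observable-F} of $F$. Pathwise, $|F(\omega)|\le\|1-b\|_\infty+\|\log b\|_\infty\,\mathsf B_\omega([0,1)^d,(0,1])$. Taking expectations under $P$ and using $P[\mathsf B([0,1)^d,(0,1])]=\chi(P)$ from \zcref{lem:finite-entropy-intensities} gives the first inequality, with $F$ being $P$-integrable. For the second inequality, set $c_b=\|\log b\|_\infty$, which is finite since $b$ is bounded above and away from zero. Given $\delta>0$, apply \eqref{equation:interacting_case_bound_free_entropy_beta} with $\lambda=c_b/\delta$ (if $c_b=0$ there is nothing to prove). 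This gives $c_b\chi(P)\le\delta H(P)+\delta(\e^{c_b/\delta}-1)$, so we may take
\begin{equation*}
    C_\delta=\|1-b\|_\infty+\delta(\e^{c_b/\delta}-1),
\end{equation*}
which does not depend on $P$.

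I expect the main obstacle to be justifying the limit $m\to\infty$ for negative $\lambda$. The left side uses monotone convergence together with the integrability of $B_{A,T}$ under $P$. The right side uses the bounded integrand. Everything else is direct.
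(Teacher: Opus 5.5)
Your proposal is correct and follows the paper's proof. The paper also obtains the case $\lambda\le 0$ from the same entropy inequality once $\chi(P)<\infty$ is known. It bounds $\abs{P[F]}$ by $\Vert 1-b\Vert_\infty$ plus $\Vert\log b\Vert_\infty$ times the mean birth count in the unit cell, and takes $\lambda=\Vert\log b\Vert_\infty/\delta$, giving exactly your $C_\delta$; your truncation $\lambda(B_{A,T}\wedge m)$ with monotone and dominated convergence just spells out the step the paper leaves implicit.
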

\begin{proof}
    The extension of \eqref{equation:interacting_case_bound_free_entropy_beta} to \(\lambda \leq 0\) follows from the same entropy inequality, once the integrability \(\chi(P) < \infty\) has been established with \(\lambda = 1\). The triangle inequality and the stationarity of \(P\) give \(\abs{P[F]} \leq \Vert 1-b\Vert_\infty + \Vert \log b \Vert_\infty \chi(P)\). Given \(\delta > 0\), the choice \(\lambda =  \Vert \log b \Vert_\infty/ \delta\) in \eqref{equation:interacting_case_bound_free_entropy_beta} yields
    \begin{equation*}
        \Vert 1-b\Vert_\infty + \Vert \log b \Vert_\infty \chi(P)
        \leq \delta H(P) + C_\delta
    \end{equation*}
    with \(C_\delta = \Vert 1-b\Vert_\infty + \delta (\e^{\Vert \log b\Vert_\infty/\delta} -1)\).
\end{proof}

\subsection{Change of measure and boundary conditions}\label{section:interacting_case_girsanov_and_irrelevance_of_boundary_configurations}

\begin{proof}[Proof of \zcref{proposition:change_of_measure_a_la_girsanov}]
    Consider the random birth-counting measure $\mathsf B_\omega(\d x, \d s)$, that is,
    \begin{equation*}
        \mathsf B_\omega(B,(s,t])
        = \#\{\text{births in } B \text{ during } (s, t] \}, \qquad 0 \leq s \leq t, \quad B \subset \Lambda.
    \end{equation*}
    Under \(Q^v\), its predictable compensator is \(v_t(x) \, \d t \, \d x\), so that
    \(\mathsf B_\omega(B,(0,t]) - \int_0^t\d s \int_B \d x\, v_s(x)\)
    is a martingale under \(Q^v\).
    Now write \(r_t(x) := u_t(x)/v_t(x)\) and set
    \begin{align*}
        Z_t
        &:= \exp\bigg(\int_0^t \d s\int_D\d x\,  [v_s(x) - u_s(x) ]  + \sum_{\substack{(s, x) \text{ birth}\colon (s,x) \in (0, t] \times D}} \log \frac{u_s(x)}{v_s(x)} \bigg) \\
        &= \exp\bigg(\int_0^t \int_D \mathsf B_\omega(\d x, \d s)\, \log r_s(x) - \int_0^t \d s\int_D\d x\,  [r_s(x) - 1] v_s(x)\bigg).
    \end{align*}
    This process solves
    \begin{equation*}
        Z_0 = 1,
        \qquad
        \d Z_t = Z_{t-} \, \int_{D} \big(\mathsf B_\omega(\d x, \d t) - \d x\, \d t\, v_t(x)\big)[r_t(x) - 1] ,
    \end{equation*}
    and is therefore a positive local martingale. We now check that it is in fact a true martingale by establishing the desired moment bounds.
    For some fixed \(p > 1\), let
    \begin{equation*}
        L_t
        = \exp\bigg(\int_0^t  \int_D  \mathsf B_\omega(\d x, \d s) \,\log r^p_s(x)- \int_0^t\d s \int_D\d x\,  [r^p_s(x) - 1] v_s(x) \bigg),
    \end{equation*}
    which is \(Z_t\) with \(r_t\) replaced by \(r_t^p\), so that \(L_t\) is a positive local martingale as well, and hence a supermartingale. In particular, \(Q^v[L_\sigma] \leq Q^v[L_0] = 1\) for every stopping time \(\sigma \leq T\).
    Comparing \(\log Z_t^p\) with \(\log L_t\), we get
    \begin{equation*}
    \begin{split}
        \log Z_t^p - \log L_t
        &= \int_0^t \d s\int_D \d x\, [r_s^p(x) - 1 - p(r_s(x) - 1)]  v_s(x)\\
        &= \int_0^t \d s\int_D \d x\, [u_s(x)^p v_s(x)^{1-p} - p u_s(x) + (p-1) v_s(x)],
    \end{split}
    \end{equation*}
    where the integrand is non-negative by the convexity of \(r \mapsto r^p\), and bounded from above by \(C_p\).
    It follows that, for every stopping time \(\sigma \leq T\),
    \begin{equation*}
        Q^v[Z^p_\sigma] = Q^v\big[\e^{\log Z_\sigma^p - \log L_\sigma } L_\sigma\big]
        \leq \e^{C_p T \abs{D}} Q^v[L_\sigma]
        \leq \e^{C_p T \abs{D}}.
    \end{equation*}
    This gives the moment bound \eqref{equation:interacting_case_girsanov_moment_bound}, and also shows that \(Z\) is a true martingale with \(Q^v[Z_t] = 1\) for \(0 \leq t \leq T\).
    Finally, \(\d Q^u/\d Q^v = Z_T\): indeed, \(Z_T \, \d Q^v\) changes the birth compensator in \(D\) from \(v_t(x) \, \d t \, \d x\) to \(r_t(x) v_t(x) \, \d t \, \d x = u_t(x) \, \d t \, \d x\), see~\cite[Theorem~4.5 and the remark after it]{Jacod1975}, and outside \(D\) the birth rates agree by assumption, so that \(Z_T\,  \d Q^v = \d Q^u\).

    For the last assertion, we apply the first part with $\Lambda=D=\Lambda_n$, $T=n$, $u_t(x)=b(x,\omega^{(n)}_{t-})$ and $v_t(x)=1$. Then $\log Z_n=\mathfrak{L}_n$, and the constant $C_p$ with $m=1/\Vert 1/b\Vert_\infty$ and $M=\Vert b\Vert_\infty$, restricted to $t=1$, is $c_p$.
\end{proof}

One direct consequence of \zcref{proposition:change_of_measure_a_la_girsanov} is that boundary conditions play no role for the large deviations.
\begin{proof}[Proof sketch of \zcref{lemma:interacting_case_boundary_conditions_play_no_role}]
    \emph{Finite-volume dynamics with boundary conditions.} For two finite-volume dynamics with different boundary conditions, the equalities follow from the finite range of \(b\), the moment bound \eqref{equation:interacting_case_girsanov_moment_bound} and H\"older's inequality, since the corresponding birth rates only differ on a set of volume of order \(\abs{\partial^\range \Lambda_n}\), where \(\partial^\range \Lambda_n := \{x \in  \Lambda_n \colon \mathrm{dist}(x, \Lambda_n^c) \leq \range\}\).

    \emph{Infinite-volume dynamics.} To compare the infinite-volume dynamics with the periodic dynamics, we introduce the modified birth rate
    \begin{equation*}
        \widetilde{b}_n(x, \eta) = \begin{cases}
            b(x, \eta^{(n)}), &x \in \Lambda_n, \\
            b(x, \eta), &x \not\in \Lambda_n,
        \end{cases}
    \end{equation*}
    which agrees with the birth rate $b(x, \eta^{(n)})$ with periodic boundary conditions inside \(\Lambda_n\), and with the usual infinite-volume birth rate \(b(x, \eta)\) outside \(\Lambda_n\).
    Write \(Q^{\mathrm{inf}}_n\) for the infinite-volume dynamics and \(Q^{\mathrm{mod}}_n\) for the dynamics with the modified birth rate, both considered as laws on \(D([0, n]; \Om)\), whereas \(Q^\beta_n\) is a law on \(D([0, n]; \Om_{\Lambda_n})\).
    Then \(Q^\beta_n(A_n) = Q^{\mathrm{mod}}_n(A_n)\), since under the modified dynamics the configuration in $\Lambda_n$ evolves autonomously with periodic birth rates. It therefore suffices to compare the infinite-volume and the modified dynamics, whose birth rates differ, by the finite range of \(b\), only in the strip \(\partial^\range \Lambda_n\).
    A proof similar to that of \zcref{proposition:change_of_measure_a_la_girsanov} shows that
    \begin{equation*}
    \begin{split}
        \frac{\d Q_n^{\mathrm{inf}}}{\d Q_n^{\mathrm{mod}}}(\omega)
        = \exp\bigg(&\int_0^n \d t\int_{\partial^\range \Lambda_n}\hspace{-0.5cm}\d x\,  [\widetilde{b}_n(x, \omega_{t-}) - b(x, \omega_{t-})]+ \sum_{\substack{(t, x) \text{ birth}\colon (t, x) \in (0, n] \times \partial^\range \Lambda_n}} \hspace{-0.5cm}\log\frac{b(x, \omega_{t-})}{\widetilde{b}_n(x, \omega_{t-})} \bigg),
    \end{split}
    \end{equation*}
    and we can conclude with moment bounds as before.
\end{proof}

\subsection{Approximation of the rate function}\label{section:interacting_case_approximation_of_rate_function}
We first show that the approximants from \zcref{definition:interacting_case_approximation_of_rate_function} are legitimate test functions for the weak topology.

\begin{lemma}[Continuity of the approximants]\label{lemma:interacting_case_approximants_of_rate_function_are_continuous}
    For all \(\epsilon, \delta, K > 0\), the maps \(U_{\epsilon, \delta, K}\) and \(\omega \mapsto \int_0^1 [1 - b(0, \omega_t)] \, \d t\) are bounded and continuous with respect to the local Skorokhod topology.
    Moreover, for every \(P \in \PP(E)\) that is concentrated on legal birth-and-death paths and has finite birth intensity $\chi(P)$,
    \begin{equation}\label{equation:interacting_case_bounds_approximants_of_rate_function}
        \abs{P[U_{\epsilon, \delta, K}]} \leq \Vert \log b\Vert_\infty \chi(P),
        \qquad \abs{P[F_{\epsilon, \delta, K}]} \leq \Vert 1-b\Vert_\infty + \Vert \log b\Vert_\infty \chi(P).
    \end{equation}
\end{lemma}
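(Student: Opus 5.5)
We treat the two maps separately and then read off the expectation bounds from stationarity. For the first map, $\omega\mapsto\int_0^1[1-b(0,\omega_t)]\,\d t$, boundedness is immediate from $0<b\le\Vert b\Vert_\infty$. For continuity, let $\omega^{(i)}\to\omega$ in $E$. The $J_1$ convergence implies $\omega^{(i)}_t\to\omega_t$ vaguely at every continuity time $t$ of $\omega$, hence for Lebesgue-almost every $t\in[0,1]$. We then use that $\eta\mapsto b(0,\eta)$ is continuous on $\Om$; this is one of the standing properties of the area interaction recorded in \zcref{section:interaction-assumptions}, and it holds because $\mathfrak h(0,\eta)$ depends continuously on the finitely many points of $\eta$ in $B_\range(0)$. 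Combined with dominated convergence, this gives continuity of the first map.

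For $U_{\epsilon,\delta,K}$, boundedness follows from $|\log b|\le\Vert\log b\Vert_\infty$ and the clipping, which give $|U_{\epsilon,\delta,K}|\le\Vert\log b\Vert_\infty K$. For continuity, we note that $\eta\mapsto\sum_{x\in\eta}\varphi_\delta(x)=\langle\eta,\varphi_\delta\rangle$ is vaguely continuous because $\varphi_\delta\in C_c(\R^d)$. Thus for almost every $t\in[0,1]$, namely whenever both $t$ and $t+\epsilon$ are continuity times of $\omega$, we have $D_{\epsilon,\delta}(\omega^{(i)},t)\to D_{\epsilon,\delta}(\omega,t)$, and also $\log b(0,\omega^{(i)}_t)\to\log b(0,\omega_t)$. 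The integrand $\log b(0,\omega_t)\,[D_{\epsilon,\delta}\wedge K]$ is uniformly bounded by $\Vert\log b\Vert_\infty K$, so dominated convergence yields continuity. The only subtle point is that the set of jump times of the limit path $\omega$ is countable, hence Lebesgue-null; this handles both $t$ and the shifted time $t+\epsilon$.

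For the bounds \eqref{equation:interacting_case_bounds_approximants_of_rate_function}, we first drop the clipping, $[D\wedge K]\le D$, and use Fubini:
\begin{equation*}
    \abs{P[U_{\epsilon,\delta,K}]}\le\Vert\log b\Vert_\infty\int_0^1\d t\,P[D_{\epsilon,\delta}(\omega,t)].
\end{equation*}
On a legal path, the positive part of the increment $\sum_{x\in\omega_{t+\epsilon}}\varphi_\delta(x)-\sum_{x\in\omega_t}\varphi_\delta(x)$ is at most $\sum\varphi_\delta(y)$ over the births $(s,y)$ with $s\in(t,t+\epsilon]$, because particles do not move and deaths only decrease the sum. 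By \zcref{lem:finite-entropy-intensities}, the mean birth measure of $P$ is $\chi(P)\,\d y\,\d s$, so this expectation is at most $\epsilon^{-1}\chi(P)\,\epsilon\int\varphi_\delta=\chi(P)$; formally, one applies the intensity identity to simple functions approximating $\varphi_\delta$. Integrating over $t\in[0,1]$ gives the first bound. The second bound then follows by adding $\Vert1-b\Vert_\infty$ from the first term of $F_{\epsilon,\delta,K}$.

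The main care point is the pathwise domination of $D_{\epsilon,\delta}$ by mollified births on legal paths. It needs multiplicities to be handled correctly; a death removes one copy, which only lowers the sum.
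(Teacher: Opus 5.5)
Your proposal is correct and follows the paper's proof essentially step by step. For continuity, both arguments use convergence of the evaluations at $t$ and $t+\epsilon$ for almost every $t$, together with continuity of $\eta\mapsto\log b(0,\eta)$ and of $\eta\mapsto\langle\eta,\varphi_\delta\rangle$, and then dominated convergence. For the bounds, both dominate the positive increment pathwise by the $\varphi_\delta$-weighted births and then integrate against the mean birth measure.

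One small precision: \zcref{lem:finite-entropy-intensities} is stated under $H(P)<\infty$, which is not assumed here. The identity that the mean birth measure equals $\chi(P)\,\d y\,\d s$ follows directly from space-time stationarity and local finiteness, and this is what the paper's ``by stationarity'' refers to.
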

\begin{proof}
    \emph{Continuity.} Convergence in the Skorokhod topology implies the convergence of the evaluation maps at every continuity time of the limiting path. Since a c\`adl\`ag path has at most countably many discontinuities, this applies to \(\d t\)-almost every \(t\) and \(t + \epsilon\) simultaneously. Moreover, the maps
    \begin{equation*}
        \eta \mapsto \log b(0, \eta), \qquad (\eta, \widetilde{\eta}) \mapsto \Big(\sum_{x \in \widetilde{\eta}} \varphi_\delta(x) - \sum_{x \in \eta} \varphi_\delta(x) \Big)_+
    \end{equation*}
    are continuous. Since the integrand of \(U_{\epsilon, \delta, K}\) is bounded by $K\Vert\log b\Vert_\infty$, dominated convergence shows that \(U_{\epsilon, \delta, K}\) is continuous.
    The continuity of \(\omega \mapsto \int_{0}^{1} \d t\, [1 - b(0, \omega_t)]\) is proved analogously.

    \emph{Bounds.} For a legal birth-and-death path, the increase of $\sum_{x\in\omega_t}\varphi_\delta(x)$ during $(t,t+\epsilon]$ is at most the $\varphi_\delta$-weighted number of births during $(t,t+\epsilon]$. Hence, by stationarity,
    \begin{equation*}
    \begin{split}
        \abs{P[U_{\epsilon, \delta, K}]}
        &\leq \frac{\Vert \log b\Vert_\infty}{\epsilon} \int_0^1 \d t\,  P\Big[\sum_{\substack{(s, y) \text{ birth}\colon t < s \leq t+\epsilon}} \varphi_\delta(y) \Big] \\
        &= \frac{\Vert \log b\Vert_\infty}{\epsilon} \Big(\epsilon \int_{\R^d}\d x\,  \varphi_\delta(x)\Big) \chi(P)
        = \Vert \log b\Vert_\infty\chi(P).
    \end{split}
    \end{equation*}
    The other bound follows from the triangle inequality.
\end{proof}

We now decompose the error that we make by replacing $\mathfrak{L}_n/(n\abs{\Lambda_n})$ with \(R_n[F_{\epsilon, \delta}]\) into manageable pieces. Fix \(\epsilon, \delta \in (0,1)\). A birth of the periodised path is called \emph{genuine} if it is neither the reappearance of an initial particle nor otherwise caused by the temporal reset introduced by the periodisation. A genuine birth is called \((n, \epsilon)\)-\emph{bad} if its own lifetime is smaller than \(\epsilon\), or if, within time distance \(\epsilon\) and spatial distance \(\range + 2\) on the periodised box \(\Lambda_n\), there is another birth, the death of a particle born before it, or the death of an initial particle. We let \(B_{n, \epsilon}^{\mathrm{bad}}\) be the number of \((n, \epsilon)\)-bad genuine births, \(B_{n, \epsilon}^{\mathrm{seam}}\) the number of genuine births in the time intervals \([0, 2 \epsilon]\) and \([n - 2\epsilon, n]\), and \(B_n\) the total number of genuine births during $[0,n]$. Moreover, let
\begin{equation*}
    w_b(\delta) := \sup_{\eta\in\Om} \sup_{\abs{x - y}\leq \delta} \abs{\log b(x, \eta) - \log b(y, \eta)},
\end{equation*}
which tends to zero as $\delta\downarrow0$, since $\abs{\mathfrak{h}(x,\eta)-\mathfrak{h}(y,\eta)}\le\abs{B_{\range/2}(x)\triangle B_{\range/2}(y)}$.

\begin{lemma}[Deterministic error from smoothing]\label{lemma:interacting_case_approximation_of_rate_function_deterministic_error}
    For every legal birth-and-death path, with the counts defined above for its periodised version,
    \begin{equation*}
        \abs{\mathfrak{L}_n - n \abs{\Lambda_n} R_n[F_{\epsilon, \delta}]}
        \leq w_b(\delta) B_n + 2 \Vert\log b\Vert_{\infty}  \big(B^{\mathrm{bad}}_{n, \epsilon} + B_{n, \epsilon}^{\mathrm{seam}} + \xi(\Lambda_n)\big).
    \end{equation*}
\end{lemma}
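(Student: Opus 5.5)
We compare the two sides term by term after writing $n\abs{\Lambda_n}R_n[F_{\epsilon,\delta}]$ as a space-time integral over $\Delta_n$ of the shifted path $\theta_{(x,s)}\omega^{(n)}$. There are two pieces to handle: the drift term $\int_0^1\d t\,[1-b(0,\omega_t)]$ and the birth term $U_{\epsilon,\delta}$.

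The drift term is the easier one. Integrating $\int_0^1\d t\,[1-b(0,\theta_x\omega^{(n)}_{s+t})]$ over $(x,s)\in\Delta_n$ and using the $n$-periodicity in time gives, by Fubini, exactly $\int_0^n\d t\int_{\Lambda_n}\d x\,[1-b(x,\omega^{(n)}_t)]$. This coincides with the first term of $\mathfrak{L}_n$, since $\omega^{(n)}_{t-}=\omega^{(n)}_t$ for Lebesgue-almost every $t$. The drift term therefore produces no error at all.

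For the birth term, we first apply Fubini in the same way:
\begin{equation*}
\int_{\Delta_n}\d x\,\d s\, U_{\epsilon,\delta}(\theta_{(x,s)}\omega^{(n)})=\int_0^n\d t\int_{\Lambda_n}\d x\,\log b(x,\omega^{(n)}_t)\,\frac1\epsilon\Big(\sum_{y\in\omega^{(n)}_{t+\epsilon}}\varphi_\delta(y-x)-\sum_{y\in\omega^{(n)}_t}\varphi_\delta(y-x)\Big)_+.
\end{equation*}
Now fix $(t,x)$. We call a birth \emph{good} if it is genuine and neither bad nor seam, and we examine the contribution of each good birth at $(u,y)$. By the definition of badness, within spatial distance $\range+2\ge\delta$ of $y$ and time distance $\epsilon$ of $u$ nothing else happens, and the particle itself survives for time at least $\epsilon$. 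Hence, for every $x\in B_\delta(y)$ and every $t\in(u-\epsilon,u)$, the increment in the bracket is exactly $\varphi_\delta(y-x)$. Moreover, $\omega^{(n)}_t$ agrees with $\omega^{(n)}_{u-}$ on $B_\range(x)$, because $\abs{x-y}\le\delta$ and there are no events within distance $\range+2$. This gives $\log b(x,\omega^{(n)}_t)=\log b(x,\omega^{(n)}_{u-})$. Integrating over $t$ and $x$ then yields $\int\d x\,\varphi_\delta(y-x)\log b(x,\omega^{(n)}_{u-})$. This differs from $\log b(y,\omega^{(n)}_{u-})$ by at most $w_b(\delta)$, because $\int\varphi_\delta=1$. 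Summing over good births gives the term $w_b(\delta)B_n$.

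It remains to handle the other births and the other time windows, and I expect this bookkeeping to be the main obstacle. The difficulty is that $(\cdot)_+$ is not additive, so events cannot simply be charged separately. The plan is as follows. For $t$ outside every window $(u-\epsilon,u)$ associated with a good birth, the bracket is bounded above by the positive part of the increments. Those increments come only from births that are bad, seam, or reappearances at the temporal reset. The reset reappearances number at most $\xi(\Lambda_n)$, and their windows lie near $t\in[n-\epsilon,n]$.

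When the window is that of a good birth, isolation guarantees that no other event affects the bracket at that $x$. Each remaining birth contributes at most $\Vert\log b\Vert_\infty$ on the $U$ side, after integrating $\frac1\epsilon\varphi_\delta$ over a window of length $\epsilon$. It also contributes at most $\Vert\log b\Vert_\infty$ on the $\mathfrak{L}_n$ side, and together these give the factor $2$.

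Deaths of pre-existing particles only decrease the bracket. Since bad births are defined to include births near such deaths, the good windows are unaffected by deaths. Collecting the terms gives the stated bound, with $B^{\mathrm{seam}}_{n,\epsilon}$ absorbing the births near times $0$ and $n$. Those are the births whose $\epsilon$-window crosses the reset, where periodisation reuses $\omega_0$.
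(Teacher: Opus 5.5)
Your proposal is correct and follows essentially the same route as the paper's proof, which the paper omits and only sketches. The drift terms coincide exactly by periodicity and Fubini. Each isolated non-seam genuine birth contributes a $\varphi_\delta$-average of $\log b$, hence an error of at most $w_b(\delta)$. Every remaining birth or reset reappearance is charged at most $\Vert\log b\Vert_\infty$ on each side, because the positive part of the increment is bounded by the $\varphi_\delta$-weighted count of upward jumps.

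One point deserves a more careful statement. The exact evaluation should be carried out on the space-time sets $(u-\epsilon,u)\times B_\delta(y)$, not on the time windows $(u-\epsilon,u)$ alone. These sets are pairwise disjoint for distinct good births by the isolation condition, whereas a time window alone can also contain events far from $y$.
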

We omit the proof, which consists only of elementary but tedious calculations: the contributions of the term $\int_0^1\d t\, [1-b(0,\omega_t)]$ and of $\int_0^n\d t\int_{\Lambda_n}\d x\, [1-b(x,\omega^{(n)}_{t-})]$ agree by periodicity, a genuine birth that is neither bad nor in the seams contributes to $n\abs{\Lambda_n}R_n[U_{\epsilon,\delta}]$ a $\varphi_\delta$-average of $\log b$ over a $\delta$-neighbourhood of its position, and every other birth contributes at most $\Vert\log b\Vert_\infty$ to each of the two expressions.

Next, we bound the number of bad births on the exponential scale.
\begin{lemma}[Nearby events]\label{lemma:interacting_case_approximation_of_rate_function_nearby_events}
    For every \(a > 0\),
    \begin{equation*}
        \lim_{\epsilon \downarrow 0} \limsup_{n \to \infty} \frac{1}{n\abs{\Lambda_n}} \log Q_n^0\big[\e^{a B_{n, \epsilon}^{\mathrm{bad}}}\big]
        = 0.
    \end{equation*}
\end{lemma}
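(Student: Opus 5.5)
Under $Q^0_n$, the free evolution in $\Lambda_n$ from $\xi_{\Lambda_n}$, births form a Poisson point process of intensity $\d x\,\d t$ on $\Lambda_n\times[0,n]$ with independent $\mathrm{Exp}(1)$ lifetimes. Initial particles have independent $\mathrm{Exp}(1)$ lifetimes. The lower bound $\ge 0$ is trivial, since $B^{\mathrm{bad}}_{n,\epsilon}\ge0$, so only the upper bound needs an argument. We dominate $B^{\mathrm{bad}}_{n,\epsilon}$ by a sum of three counts, each handled by the exponential formula for marked Poisson processes together with a colouring/H\"older decoupling as in \zcref{lem:poisson,lem:gaptail}. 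By H\"older's inequality with three equal exponents it suffices to treat each count separately with $3a$ in place of $a$.

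The three counts are as follows. First, $B^{(1)}$ is the number of births whose own lifetime is $<\epsilon$. This is a Poisson variable with mean $n\abs{\Lambda_n}(1-\e^{-\epsilon})$, so $\log Q^0_n[\e^{3aB^{(1)}}]=n\abs{\Lambda_n}(1-\e^{-\epsilon})(\e^{3a}-1)$, which is $o(n|\Lambda_n|)$ as $\epsilon\downarrow0$. Second, $B^{(2)}$ is the number of births having another birth, or the death of an earlier-born particle, within time distance $\epsilon$ and spatial distance $\range+2$. Third, $B^{(3)}$ is the number of births having the death of an initial particle within that distance. For $B^{(3)}$ we simply bound crudely: each death of an initial particle can be near a birth only if a birth lies in a cylinder of volume $c\epsilon$ around it. Thus $B^{(3)}\le\sum_{y\in\xi_{\Lambda_n}} N_y$, where $N_y$ counts births in that cylinder. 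This gives $\log Q^0_n[\e^{3aB^{(3)}}]\le \xi(\Lambda_n)\, c\epsilon(\e^{3a}-1)\cdot(\text{overlap const})$. Here one can bound it even more crudely by $C\,\xi(\Lambda_n)+o$ using Poisson moments, and this is $o(n\abs{\Lambda_n})$ because $\xi\in\Omst$. Overlaps between different cylinders are harmless after a colouring of spatial cells, as below.

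For $B^{(2)}$, which I expect to be the main step, I discretise space-time into cells $u\times[m\epsilon,(m+1)\epsilon)$, with $u$ unit spatial cells of the torus. A bad birth of the second type forces one of two things in the enlarged cell $\widetilde U$ (the neighbouring cells within distance $\range+3$ and neighbouring time slots). Either there are at least two births in $\widetilde U$, or a birth in $\widetilde U$ is accompanied by the death of an earlier-born particle in $\widetilde U$.

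The second alternative is harder because deaths of old particles are not Poisson in a bounded way. I would handle it by marking each birth with its lifetime. The death of a particle born before the bad birth at time $t$ occurs in $[t-\epsilon,t+\epsilon]$. Attach this event to the particle that dies: the particle born at $s$ with death time in a slot of length $2\epsilon$ near another birth. Using the Poisson clock comparison from \zcref{subsection:probabilistic-control} with rate-one deaths of individually marked particles, such a pair is a pair (birth $x$, death of $y$) within distance $\epsilon$. I count it via $Y_{\widetilde U}:=\Pi(\widetilde U)\,\1\{\Pi(\widetilde U)\ge2\}$, applied to the superposed marked point process of births and death-times of births. The death-times form, by the mapping theorem, a Poisson process with intensity at most $\d x\,\d t$ as well (the displacement theorem applied to $(s,\text{lifetime})\mapsto s+\text{lifetime}$). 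Hence in each enlarged cell the superposition of birth and death times is dominated by a sum of two Poisson processes of intensity $C\epsilon$ per cell. These are not independent of each other, but H\"older splits them.

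Then $B^{(2)}\le\sum_{\widetilde U}Y_{\widetilde U}$. Proper colouring with a bounded number of colours and H\"older give independence across cells of one colour. Step 1 of \zcref{lem:poisson} gives $\log\mathbb E[\e^{bY}]\le c_b(C\epsilon)^2$ per cell, for $\epsilon$ small. Summing over the $n\abs{\Lambda_n}/\epsilon$ cells yields $\log Q^0_n[\e^{3aB^{(2)}}]\le c\,n\abs{\Lambda_n}\epsilon$, which vanishes as $\epsilon\downarrow0$.

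Combining the three bounds proves the claim.
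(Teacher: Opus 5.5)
Your bounds for births with a short lifetime and for births close to other births are fine. $B^{(3)}$ also works once you bound it by the number of births in the \emph{union} of the cylinders around the initial deaths. Conditionally on the initial lifetimes, this is a Poisson variable with mean at most $c\epsilon\,\xi(\Lambda_n)$. With $\sum_y N_y$ instead, the overlap is not bounded by a constant, since $\xi$ may have many points at or near one location.

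The genuine gap is in the part of $B^{(2)}$ coming from deaths of earlier-born particles, which is the real content of the lemma. The mapping theorem identifies the law of the death times on their own, but not their joint law with the births, and it is the joint law that matters.

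Your variables $Y_{\widetilde U}$ are built from the superposition of births and deaths. For two cells with the same spatial base and time slots $m<m'$, they are dependent no matter how large $m'-m$ is, because a particle born in slot $m$ can die in slot $m'$ (lifetimes are unbounded). So no colouring with a bounded number of colours makes one colour class independent, and the per-cell bound cannot be multiplied over the $n\abs{\Lambda_n}/\epsilon$ cells.

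Separating births from deaths by H\"older does not help either. The relevant configuration is one birth and one death in the same cell, and its probability is of order $\epsilon^2$ only because of their local near-independence. If you dominate $Y_{\widetilde U}$ by a function of the births plus a function of the deaths, one of the two must be positive whenever its process has a single point in the cell. That costs $1+O(\epsilon)$ per cell and $O(n\abs{\Lambda_n})$ in total, which does not vanish after normalisation as $\epsilon\downarrow0$.

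What is missing is a decoupling of births from deaths of older particles over the whole time horizon at once. The paper first does the spatial colouring and H\"older step, which is legitimate because particles do not move, so well-separated enlarged cubes are independent. It then uses the colour-switching \zcref{lemma:interacting_case_color_switching_domination}:
\begin{enumerate}[(1)]
\item Realise the births in an enlarged cube of volume $\kappa$ as the colour-$1$ points of a rate-$2\kappa$ marked Poisson process with independent fair colours. The count of interest is then $M_{1,1}$.
\item The bijection of that lemma gives $M_{1,1}\preceq M_{1,2}+M_{2,1}$, and $M_{1,2}$, $M_{2,1}$ have the same law by symmetry.
\item The two colour classes are independent Poisson processes. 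Conditioning on the colour-$2$ points therefore bounds $M_{1,2}$ by a Poisson count in a set of time-measure at most $2\epsilon N_2$, where $N_2\sim\mathrm{Poisson}(\kappa n)$ is the number of colour-$2$ points.
\end{enumerate}
This gives $\log\mathbb{E}[\e^{aM_{1,2}}]\le\kappa n\{\exp(2\kappa\epsilon(\e^a-1))-1\}$, which after division by $n$ tends to zero as $\epsilon\downarrow0$.
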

\begin{proof}[Proof sketch]
    Nearby births can be controlled via \zcref{lem:poisson}, and short lifetimes by Poisson thinning.
    We sketch how to control nearby deaths, as the other events are handled similarly or more easily. Nearby deaths of initial particles can be handled by first conditioning on their lifetimes and then observing that the births close to these deaths lie in space-time boxes of total volume at most \(2 \epsilon \abs{B_{\range+2}(0)} \xi(\Lambda_n)\). It remains to bound the nearby deaths of particles born during the observation window. The spatial colouring and H\"older argument is similar to Step~1 of the proof of \zcref{lem:gaptail}.

    First, we partition \(\Lambda_n\) into unit cubes and enlarge them by the dependence radius \(\range + 2\). Each enlarged cube intersects only a uniformly bounded number of other enlarged cubes, so that independence and H\"older's inequality allow us to work in a fixed enlarged cube of volume \(\kappa\), which only depends on \(\range\) and \(d\). The births in this cube form a Poisson point process with rate \(\kappa\), marked with independent unit-exponential lifetimes.
    To account for the deaths of older particles, we order the particles by their birth times \(s_i\), let \(r_i\) be their lifetimes, and put
    \begin{equation*}
        D_i :=  \{j < i \colon \abs{s_j + r_j - s_i} \leq \epsilon \},
    \end{equation*}
    so that \(D_i\) contains the indices of all older particles that die within time distance $\epsilon$ of the birth at time \(s_i\).
    Now consider a marked Poisson point process with rate \(2 \kappa\), and flip independent fair coins to assign colour \(1\) or colour \(2\) to each of its points. We realise the marked Poisson point process with rate \(\kappa\) as the points of colour \(1\). The number of births with nearby deaths that we are interested in is then given by \(M_{1,1}\) in \zcref{lemma:interacting_case_color_switching_domination} below. The stochastic domination of \(M_{1,1}\) by \(M_{1,2} + M_{2, 1}\) established there, H\"older's inequality, and the symmetry of the two colours, which shows that $M_{1,2}$ and $M_{2,1}$ have the same law, allow us to only bound \(\mathbb{E}[\e^{a M_{1,2}}]\) for \(a > 0\). Conditioning on the points of colour $2$, we get
    \begin{equation*}
        \log \mathbb{E}[\e^{a M_{1, 2}}]
        \leq \kappa n \{\exp(2\kappa \epsilon (\e^a - 1) ) - 1\},
    \end{equation*}
    and the right-hand side is of the right order: after division by \(n\), it converges to zero as \(\epsilon \downarrow 0\), for fixed \(a > 0\).
\end{proof}

In the proof sketch above, we used the following stochastic domination.
\begin{lemma}[Colour switching]\label{lemma:interacting_case_color_switching_domination}
    Let \(k \in \N\) and let \(D_i \subset \{1, \dots, i-1\}\) for \(1 \leq i \leq k\).
    For an assignment of colours \(a \in \{1, 2\}^k\) and \(c_1,c_2\in\{1,2\}\), set
    \begin{equation*}
        M_{c_1, c_2}(a) = \#\{i \colon a_i = c_1 \text{ and } D_i \text{ contains some } j \text{ with } a_j = c_2 \}.
    \end{equation*}
    Then there is a bijection \(S \colon \{1,2\}^k \to \{1,2\}^k\) such that
    \begin{equation}\label{equation:interacting_case_stochastic_domation_pointwise_color_switching_bound}
        M_{1,1}(a)
        \leq M_{1,2}(Sa) + M_{2, 1}(Sa), \qquad a\in\{1,2\}^k.
    \end{equation}
    In particular, \(M_{1,1}\) is stochastically dominated by \(M_{1,2} + M_{2,1}\) if \(a\) is chosen uniformly from \(\{1, 2\}^k\).
\end{lemma}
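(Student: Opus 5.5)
The plan is to build $S$ as a ``triangular'' map: the colour of index $i$ is flipped or not according to a rule that depends only on the coordinates $a_1,\dots,a_{i-1}$. Such a map is automatically a bijection, and the rule can be chosen so that every index counted in $M_{1,1}(a)$ receives in $Sa$ a colour different from that of some element of $D_i$.

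\emph{Construction.} Identify colours with elements of $\Z/2\Z$ via $1\mapsto 0$, $2\mapsto 1$, and write $b=Sa$. Define $b$ recursively in $i=1,\dots,k$ as $b_i := a_i + c_i$, where the flip $c_i\in\{0,1\}$ is determined as follows.
\begin{itemize}
\item Let $j^*(i)$ be the smallest $j\in D_i$ with $a_j=1$, if one exists.
\item If $j^*(i)$ exists, choose $c_i$ so that the value $a_i=1$ would be mapped to the colour opposite to $b_{j^*(i)}$; that is, $1+c_i \ne b_{j^*(i)}$.
\item Otherwise set $c_i=0$.
\end{itemize}
Since $D_i\subset\{1,\dots,i-1\}$, the flip $c_i$ depends only on $a_1,\dots,a_{i-1}$ and on $b_1,\dots,b_{i-1}$. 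The latter are themselves functions of $a_1,\dots,a_{i-1}$, so $c_i$ is a function of $(a_1,\dots,a_{i-1})$ alone.

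\emph{Bijectivity.} Given $b$, we recover $a$ recursively. The first coordinate satisfies $a_1=b_1$, because $D_1=\emptyset$ forces $c_1=0$. Once $a_1,\dots,a_{i-1}$ are known, $c_i$ is determined, and then $a_i=b_i-c_i$. Hence $S$ is injective, and therefore bijective on the finite set $\{1,2\}^k$.

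\emph{Pointwise bound.} Let $i$ be counted in $M_{1,1}(a)$, so $a_i=1$ and $j^*(i)$ exists. By construction $b_i\ne b_{j^*(i)}$, and $j^*(i)\in D_i$. There are two cases.
\begin{itemize}
\item If $b_i=1$, then $b_{j^*(i)}=2$, so $i$ is counted in $M_{1,2}(b)$.
\item If $b_i=2$, then $b_{j^*(i)}=1$, so $i$ is counted in $M_{2,1}(b)$.
\end{itemize}
Distinct indices $i$ contribute distinct counts, so summing over $i$ gives \eqref{equation:interacting_case_stochastic_domation_pointwise_color_switching_bound}.

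\emph{Stochastic domination.} A bijection of $\{1,2\}^k$ preserves the uniform measure. Hence if $a$ is uniform, then $Sa$ is uniform as well. The pointwise inequality then says that $M_{1,1}(a)$ is bounded above by $(M_{1,2}+M_{2,1})(Sa)$, which has the law of $M_{1,2}+M_{2,1}$ under the uniform measure. This gives the claimed stochastic domination.

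The only delicate point is that the flip must not depend on $a_i$ itself, since this is what makes $S$ invertible. This is guaranteed because the choice of $j^*(i)$ and the target colour use only the strictly earlier coordinates.
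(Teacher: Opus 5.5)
Your proof is correct and follows the paper's argument. The paper also builds $S$ triangularly, giving index $i$ the colour opposite to (if $a_i=1$) or equal to (if $a_i=2$) the new colour of the first colour-$1$ predecessor in $D_i$, which is exactly your flip rule whenever $j^*(i)$ exists. The only difference is the default when no such predecessor exists: the paper uses index $1$ as a dummy predecessor, while you leave the colour unchanged. This is immaterial, since such indices are never counted in $M_{1,1}(a)$.
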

\begin{proof}
    We construct \(S(a)\) iteratively. Keep the first colour, \((S(a))_1 := a_1\). For \(i \geq 2\), we fix the first predecessor of colour $1$, or the index $1$ if there is none, that is,
    \begin{equation*}
        p_i = \begin{cases}
            \min\{j \in D_i  \colon  a_j = 1\}, & \text{ if } \{j \in D_i  \colon  a_j = 1\} \neq \emptyset, \\
            1, & \text{otherwise},
        \end{cases}
    \end{equation*}
    and set
    \begin{equation}\label{equation:interacting_case_definition_color_switching_map}
        (S(a))_i := \begin{cases}
            3 - (S(a))_{p_i}, &\text{if } a_i = 1, \\
            (S(a))_{p_i}, & \text{if } a_i = 2.
        \end{cases}
    \end{equation}
    That is, if \(a_i=2\), the new colour of \(i\) is the new colour of its chosen predecessor, and if \(a_i=1\), it is the opposite colour.
    The map \(S\) is invertible, because for every \(a' \in \{1,2\}^k\) the iterative construction determines a unique preimage \(a \in \{1,2\}^k\).
    Regarding \eqref{equation:interacting_case_stochastic_domation_pointwise_color_switching_bound}, suppose that \(i\) is counted by \(M_{1,1}(a)\). Then \(a_i = 1\), and there is a predecessor \(j \in D_i\) with \(a_j = 1\), so that $p_i\in D_i$ is a predecessor of colour $1$. By~\eqref{equation:interacting_case_definition_color_switching_map}, \((S(a))_i \neq (S(a))_{p_i}\), so that \(i\) is counted by \(M_{1,2}(S(a))\) or by \(M_{2, 1}(S(a))\).
\end{proof}

Next, we control the error introduced by clipping the smoothed birth counts at the level $K$.
\begin{lemma}[Exponential clipping]\label{lemma:interacting_case_approximation_of_rate_function_exponential_clipping}
    For fixed \(\epsilon, \delta > 0\) and \(a > 0\),
    \begin{equation}\label{lemma:interacting_case_exponential_clipping_removal}
        \lim_{K \to \infty} \limsup_{n \to \infty} \frac{1}{n \abs{\Lambda_n}} \log Q^0_n\big[\e^{a n \abs{\Lambda_n} R_n[\abs{F_{\epsilon, \delta} - F_{\epsilon, \delta, K}}] }\big]
        = 0.
    \end{equation}
\end{lemma}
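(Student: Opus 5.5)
We reduce the statement to a pathwise bound of the clipping error by a superlinear function of local birth counts, and then control that function by the Poisson structure of the free evolution. Since the first summands of $F_{\epsilon,\delta}$ and $F_{\epsilon,\delta,K}$ coincide,
\begin{equation*}
    \abs{F_{\epsilon,\delta}-F_{\epsilon,\delta,K}}(\omega)\le \Vert\log b\Vert_\infty\int_0^1\d t\,\big(D_{\epsilon,\delta}(\omega,t)-K\big)_+ .
\end{equation*}
For a legal path, $D_{\epsilon,\delta}(\omega,t)\le \Vert\varphi_\delta\Vert_\infty\epsilon^{-1} N(\omega,t)$, where $N(\omega,t)$ counts the births in $B_\delta(0)\times(t,t+\epsilon]$. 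On the periodised path there are in addition reset jumps near times $t\in n\Z$ and spurious contributions from the initial particles. Those points $(x,s)\in\Delta_n$ whose window $(x,s)+B_\delta(0)\times[0,1+\epsilon]$ meets a reset form a set of volume $O(n^d)$. On this set we cannot use the birth bound, so we bound the integrand by $K$ plus the number of particles present at the reset in a $\delta$-neighbourhood. Integrated over space, this gives at most $C(\xi(\Lambda_n)+\omega_n(\Lambda_n)+n^d)$. These terms are harmless on the exponential scale: $\xi\in\Omst$ handles the first, and the third is of order $n^d$. For $\omega_n(\Lambda_n)$, the exponential moment bound~\eqref{eq:mgf} of \zcref{lem:scalar} controls $A_n$, and a similar Poisson computation controls the population at time $n$, giving a contribution $O(n^d)$ at the exponential scale.

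The main term is then bounded pathwise. Up to these boundary terms,
\begin{equation*}
    n\abs{\Lambda_n}R_n\big[\abs{F_{\epsilon,\delta}-F_{\epsilon,\delta,K}}\big]\le \Vert\log b\Vert_\infty\int_{\Lambda_n}\d x\int_0^{n+1}\d t\,\big(c_{\epsilon,\delta}\,N_x(t)-K\big)_+ ,
\end{equation*}
where $c_{\epsilon,\delta}=\Vert\varphi_\delta\Vert_\infty/\epsilon$ and $N_x(t)$ is the number of births of the free evolution in $B_\delta(x)\times(t,t+\epsilon]$. We discretise space and time into cells $C$ of side $\delta$ and duration $\epsilon$. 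Each window $B_\delta(x)\times(t,t+\epsilon]$ is covered by a bounded number $m=m(d)$ of such cells. Writing $Y_C$ for the number of births in $C$, we get $(c\,N_x(t)-K)_+\le \sum_{C}(mc\,Y_C-K/m)_+$, summed over the neighbouring cells. Consequently the exponent is dominated by $a'\sum_C(mc\,Y_C-K/m)_+$ with $a'=a\,C_{\epsilon,\delta}$, the sum running over the order $n\abs{\Lambda_n}/(\delta^d\epsilon)$ cells in $\Delta_n$.

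Under $Q^0_n$ the variables $Y_C$ are independent Poisson with mean $\abs{B_\delta}\epsilon$ or smaller, by the Poisson birth structure of the free evolution. Hence
\begin{equation*}
    \log Q^0_n\Big[\e^{a'\sum_C (mcY_C-K/m)_+}\Big]=\sum_C\log\mathbb E\big[\e^{a'(mcY-K/m)_+}\big].
\end{equation*}
Each term is at most $\log(1+\mathbb E[\e^{a'mcY}\1_{\{Y> K/(m^2c)\}}])$, which tends to $0$ as $K\to\infty$ by dominated convergence, since Poisson variables have all exponential moments. Dividing by $n\abs{\Lambda_n}$ gives a bound $(\delta^d\epsilon)^{-1}o_K(1)$, which vanishes as $K\to\infty$ because $\epsilon,\delta$ are fixed. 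Together with the negligible boundary terms (after Hölder to separate them), this yields the upper bound $0$. The lower bound $0$ is trivial, since the integrand is non-negative.

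The main obstacle is the bookkeeping at the temporal resets and at the seams near time $n$. There the path is not a legal birth-and-death path and the increment bound by births fails, so these contributions have to be isolated carefully and shown to be $O(n^d+\xi(\Lambda_n))$ plus a population term with exponential moments of order $n^d$. Once this is done, the bulk estimate above is routine.
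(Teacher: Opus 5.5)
Your proposal is correct and follows essentially the paper's route: a deterministic bound of $n\abs{\Lambda_n}R_n[\abs{F_{\epsilon,\delta}-F_{\epsilon,\delta,K}}]$ by $\Vert\log b\Vert_\infty\,\xi(\Lambda_n)$ plus a thresholded sum of local genuine birth counts, followed by independence of the Poisson counts and a tail estimate as $K\to\infty$. The only difference is that your disjoint $\delta\times\epsilon$ cells, combined with $(\sum_{j\le m}a_j-K)_+\le\sum_{j\le m}(ma_j-K/m)_+$, give independence directly, whereas the paper uses overlapping enlarged unit boxes and a colouring/H\"older step.

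One correction concerns the reset. Removed particles only decrease $D_{\epsilon,\delta}$, and only the reappearing initial particles increase it, so the right pointwise bound there is $(D_{\epsilon,\delta}-K)_+\le(c_{\epsilon,\delta}N_x(t)-K)_++c_{\epsilon,\delta}\,\xi^{(n)}(B_\delta(x))$. Your bound by $K$ plus the particles present at the reset is not valid on its own, since births in the window also contribute, but those births are already in your main term. The corrected bound yields exactly the paper's $\xi(\Lambda_n)$ term and makes the $\omega_n(\Lambda_n)$ and $Kn^d$ contributions unnecessary.
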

\begin{proof}[Proof sketch]
    We only provide a rough sketch, in order not to repeat the same arguments.
    Partition the space-time torus corresponding to \(\Lambda_n \times [0, n]\) into unit space-time boxes \(C_i\), and enlarge them to
    \begin{equation*}
        D_i
        := C_i + ([-\delta, \delta]^d \times [0, \epsilon]),
    \end{equation*}
    of space-time volume \(\kappa\), depending only on $\epsilon,\delta$ and $d$, interpreted periodically at the boundaries of the space-time torus. One can show that, deterministically,
    \begin{equation}\label{equation:interacting_case_exponential_clipping_removal_deterministic_bound}
    \begin{split}
        n \abs{\Lambda_n} R_n[\abs{F_{\epsilon, \delta} - F_{\epsilon, \delta, K}}]
        &= n \abs{\Lambda_n} R_n[\abs{U_{\epsilon, \delta} - U_{\epsilon, \delta, K}}] \\
        &\leq \Vert \log b\Vert_\infty \xi(\Lambda_n) + \frac{\Vert \log b\Vert_\infty \Vert \varphi_\delta\Vert_\infty}{\epsilon} \sum_{i} N_i \1_{\{N_i > \epsilon K / \Vert \varphi_\delta\Vert_\infty\}},
    \end{split}
    \end{equation}
    where \(N_i\) is the number of genuine births in \(D_i\).
    For the first summand on the right-hand side we use the assumption $\xi\in\Omst$, and for the second summand we use a colouring or bounded-overlap argument and H\"older's inequality, which reduces \eqref{lemma:interacting_case_exponential_clipping_removal} to controlling \(N \1_{\{N > \epsilon K / \Vert \varphi_\delta\Vert_\infty\}}\) for of order \(n \abs{\Lambda_n}\) independent Poisson random variables \(N\) with parameter of order \(\kappa\); compare with Step~1 of the proof of \zcref{lem:gaptail}.
\end{proof}

We are now in a position to prove the superexponential approximation.
\begin{proof}[Proof of \zcref{proposition:interacting_case_approximation_of_rate_function_approximately_functional_of_R_n}]
    We establish
    \begin{equation}\label{equation:interacting_case_approximation_of_rate_function_approximately_functional_of_R_n_MGF_bound}
        \lim_{j \to \infty} \limsup_{n \to \infty} \frac{1}{n \abs{\Lambda_n}} \log Q^0_n\big[\e^{a \abs{\mathfrak{L}_n - n\abs{\Lambda_n} R_n[F_j] }} \big]
        = 0, \qquad a > 0,
    \end{equation}
    from which \eqref{equation:interacting_case_approximation_of_rate_function_approximately_functional_of_R_n} follows by a Chernoff bound.
    By \zcref{lemma:interacting_case_approximation_of_rate_function_deterministic_error} and the triangle inequality,
    \begin{equation*}
    \begin{split}
        \abs{\mathfrak{L}_n - n \abs{\Lambda_n} R_n[F_{\epsilon, \delta, K}]}
        &\leq n \abs{\Lambda_n} \abs{R_n[F_{\epsilon, \delta}] - R_n[F_{\epsilon, \delta, K}]} + w_b(\delta) B_n + 2 \Vert\log b\Vert_{\infty}  \big({B}^{\mathrm{bad}}_{n, \epsilon} + B_{n, \epsilon}^{\mathrm{seam}} + \xi(\Lambda_n)\big).
    \end{split}
    \end{equation*}
    Under \(Q_n^0\), the counts \(B_n\) and \(B_{n, \epsilon}^{\mathrm{seam}}\) are Poisson-distributed with means \(n \abs{\Lambda_n}\) and \(4 \epsilon \abs{\Lambda_n}\), respectively. H\"older's inequality with four factors therefore gives
    \begin{equation*}
    \begin{split}
       & \frac{1}{n \abs{\Lambda_n}} \log Q^0_n\big[\e^{a \abs{\mathfrak{L}_n - n\abs{\Lambda_n} R_n[F_{\epsilon, \delta, K}] }} \big] \leq 2 a \Vert\log b \Vert_{\infty} \frac{\xi(\Lambda_n)}{n \abs{\Lambda_n}} + \frac{1}{4} \big(\e^{4 a w_b(\delta)} - 1\big)\\
        &\quad 
        + \frac{1}{4 n\abs{\Lambda_n}} \log Q_n^0\big[\e^{8 a \Vert\log b \Vert_{\infty} B_{n, \epsilon}^{\mathrm{bad}}}\big]  + \frac{\epsilon}{n}\big(\e^{8a  \Vert\log b \Vert_{\infty}} - 1\big)
        + \frac{1}{4 n\abs{\Lambda_n}} \log Q_n^0\big[\e^{4 a n \abs{\Lambda_n} \abs{R_n[F_{\epsilon, \delta}] - R_n[F_{\epsilon, \delta, K}]} }\big].
    \end{split}
    \end{equation*}
    The first and the fourth summands on the right-hand side vanish as \(n \to \infty\), for fixed \(\epsilon, \delta, K\), the first one since $\xi\in\Omst$. For the other terms, we have to take care of the order of the limits \(K \to \infty\), \(\epsilon \to 0\) and \(\delta \to 0\), and we choose the parameters diagonally.
    We first choose \(\delta_j > 0\) so small that \(\e^{4 j w_b(\delta_j)} - 1 \leq 1/j\).
    Next, we choose \(\epsilon_j > 0\) so small that
    \begin{equation*}
        \limsup_{n \to \infty} \frac{1}{n\abs{\Lambda_n}} \log Q_n^0\big[\e^{8 j \Vert\log b\Vert_{\infty} B_{n, \epsilon_j}^{\mathrm{bad}}}\big]
        \leq \frac{1}{j},
    \end{equation*}
    which is possible by \zcref{lemma:interacting_case_approximation_of_rate_function_nearby_events}.
    Finally, for fixed \(\epsilon_j\) and \(\delta_j\), we choose \(K_j \in \N\) such that \(\epsilon_j K_j \geq j \Vert \varphi_{\delta_j} \Vert_\infty\) and
    \begin{equation*}
        \limsup_{n \to \infty} \frac{1}{n\abs{\Lambda_n}} \log Q_n^0\Big[\e^{4 j n \abs{\Lambda_n} \big|R_n\big[F_{\epsilon_j, \delta_j}\big] - R_n\big[F_{\epsilon_j, \delta_j, K_j}\big]\big|}\Big]
        \leq \frac{1}{j},
    \end{equation*}
    which is possible by \zcref{lemma:interacting_case_approximation_of_rate_function_exponential_clipping}. With this choice of parameters, \eqref{equation:interacting_case_approximation_of_rate_function_approximately_functional_of_R_n_MGF_bound} holds, since for every \(a > 0\) we have \(j \geq a\) for all sufficiently large $j$, and~\eqref{equation:interacting_case_particular_apprximation_parameters} holds by construction.
\end{proof}

We close this subsection with the approximation of the interaction term in expectation.
\begin{proof}[Proof of \zcref{proposition:interacting_case_approximation_of_rate_function}]
    \emph{Pointwise convergence.} If \(\chi(P) = 0\), then $P$-almost surely there are no births, so that $U_{\epsilon,\delta,K}=0$ and, by stationarity, $P[F_j]=P[F]$ for all $j$.
    If \(\chi(P) > 0\), we use as a computational tool the \emph{birth Palm law} \(P^+\) of \(P\). Analogous to the usual Palm measure, \(P^+\) is uniquely defined by
    \begin{equation*}
        \chi(P)P^+[g]
        = P\Big[\sum_{\substack{(t, x) \text{ birth}\colon (t, x) \in (0, 1] \times [0,1)^d}} g(\theta_{(x, t)}\omega) \Big], \qquad g \geq 0 \text{ measurable},
    \end{equation*}
    and is interpreted as the law of the process seen from a typical birth, which occurs at time \(0\) at the origin.
    Campbell's formula gives
    \begin{equation*}
        P[F]
        = P\Big[\int_{0}^{1}\d t\,  [1 - b(0, \omega_t)]\Big]
        + \chi(P) P^+[\log b(0, \omega_{0-})].
    \end{equation*}
    We first compare with the unclipped approximation $F_{\epsilon,\delta}$ and show that
    \begin{equation}\label{equation:interacting_case_approximation_of_rate_function_pointwise_unclipped}
        \abs{P[F_{\epsilon, \delta}] - P[F]}
        = \abs{P[U_{\epsilon, \delta}] - \chi(P) P^+[\log b(0, \omega_{0-})]}
        \leq \chi(P) w_b(\delta) + 2 \Vert \log b\Vert_\infty \chi(P) P^+(A_\epsilon),
    \end{equation}
    with the Palm event
    \begin{equation*}
        A_\epsilon
        = \{\text{a birth or death other than the distinguished birth occurs in } B_{\range+2}(0) \times [-\epsilon, \epsilon]\}.
    \end{equation*}
    Indeed, again by Campbell's formula,
    \begin{equation}\label{equation:interacting_case_campbell_expansion_approximation}
        P[U_{\epsilon, \delta}]
        = \frac{\chi(P)}{\epsilon} \int_{\R^d}\d x \, \varphi_\delta(-x) \, \int_{-\epsilon}^{0}\d t \, P^+\Big[\log(b(0, \theta_x\omega_t)) \, \frac{(W_{x, t}^{\mathrm{birth}} - W_{x, t}^{\mathrm{death}})_+}{W_{x, t}^{\mathrm{birth}}} \Big],
    \end{equation}
    where
    \begin{equation*}
        W_{x, t}^{\mathrm{birth}}
        = \sum_{\substack{(s, y) \text{ birth}\colon t < s \leq t+\epsilon}} \varphi_\delta(y-x),
    \end{equation*}
    and \(W_{x, t}^{\mathrm{death}}\) is defined analogously, with deaths instead of births.
    On \(A_\epsilon^c\) and whenever \(\varphi_\delta(-x) > 0\), the only jump relevant for the integrand in \eqref{equation:interacting_case_campbell_expansion_approximation} is the distinguished birth at the origin at time \(0\), so that there
    \begin{equation*}
        \log(b(0, \theta_x\omega_t)) \, \frac{(W_{x, t}^{\mathrm{birth}} - W_{x, t}^{\mathrm{death}})_+}{W_{x, t}^{\mathrm{birth}}}
        = \log(b(0, \theta_x\omega_{0-})).
    \end{equation*}
    Since the fraction in~\eqref{equation:interacting_case_campbell_expansion_approximation} takes values in $[0,1]$, it follows that
    \begin{equation*}
    \begin{split}
        &\abs{P[U_{\epsilon, \delta}] - \chi(P) P^+[\log b(0, \omega_{0-})]} \\
        &\leq \frac{\chi(P)}{\epsilon} \int_{\R^d}\hspace{-0.15cm} \d x\, \varphi_\delta(-x)  \int_{-\epsilon}^{0} \hspace{-0.15cm}\d t\, \big\{P^+\big[\abs{\log(b(0, \theta_x\omega_{0-})) - \log(b(0, \omega_{0-}))} \1_{A_\epsilon^c}\big]+ 2 \Vert \log b \Vert_\infty P^+(A_\epsilon) \big\}  \\
        &\leq  \chi(P) w_b(\delta) + 2 \Vert \log b \Vert_\infty \chi(P) P^+(A_\epsilon),
    \end{split}
    \end{equation*}
    which is \eqref{equation:interacting_case_approximation_of_rate_function_pointwise_unclipped}.

    To remove the clipping, we first note that, by stationarity,
    \begin{equation*}
        \abs{P[F_{\epsilon, \delta}] - P[F_{\epsilon, \delta, K}]}
        = \abs{P[U_{\epsilon, \delta}] - P[U_{\epsilon, \delta, K}]}
        \leq \Vert \log b \Vert_{\infty} P[(D_{\epsilon, \delta}(\omega, 0) - K)_+].
    \end{equation*}
    Since
    \begin{equation*}
        D_{\epsilon, \delta}(\omega, 0)
        = \frac{1}{\epsilon}\Big(\sum_{x \in \omega_{\epsilon}} \varphi_\delta(x) - \sum_{x \in \omega_{0}} \varphi_\delta(x) \Big)_+
        \leq \frac{1}{\epsilon} \int_{\R^d \times (0, \epsilon]} \mathsf{B}_\omega(\d y, \d s)\, \varphi_\delta(y),
    \end{equation*}
    we have
    \begin{equation*}
        (D_{\epsilon, \delta}(\omega, 0) - K)_+
        \leq \frac{1}{\epsilon} \int_{\R^d \times (0, \epsilon]}\mathsf{B}_\omega(\d y, \d s)\,  \varphi_\delta(y) \1_{\{\mathsf{B}_\omega(B_\delta(0), (0, \epsilon]) > \epsilon K / \Vert\varphi_\delta\Vert_\infty\}}.
    \end{equation*}
    Again by Campbell's formula,
    \begin{equation*}
         P[(D_{\epsilon, \delta}(\omega, 0) - K)_+]
         \leq \chi(P) P^+\big(\mathsf{B}_\omega(B_2(0), (-1, 1]) > \epsilon K / \Vert \varphi_\delta \Vert_\infty\big),
    \end{equation*}
    so that altogether
    \begin{equation*}
    \begin{split}
        \abs{P[F_{\epsilon, \delta, K}] - P[F]}
        &\leq \chi(P) w_b(\delta) + 2 \Vert \log b \Vert_\infty \chi(P) P^+(A_\epsilon) \\
        &\qquad + \Vert\log b \Vert_\infty \chi(P) P^+\big(\mathsf{B}_\omega(B_2(0), (-1, 1])  > \epsilon K / \Vert \varphi_\delta \Vert_\infty\big).
    \end{split}
    \end{equation*}
    As $\epsilon\downarrow0$, $P^+(A_\epsilon)\to0$, since $P^+$-almost every path has only finitely many events in a bounded space-time box, none of which occurs simultaneously with the distinguished birth. Together with~\eqref{equation:interacting_case_particular_apprximation_parameters}, this proves the pointwise convergence~\eqref{equation:interacting_case_pointwise_regarding_measure_convergence_of_rate_function_approximation} for our choice of \((F_j)_{j \in \N}\).

    \emph{Uniform convergence on entropy sublevel sets.}
    Fix \(\alpha \in [0, \infty)\). We show that \((P[F_j])_{j \in \N}\) is uniformly Cauchy on the sublevel set \(\{P \colon H(P) \leq \alpha\}\), which suffices, since we have already identified the limit.
    Let \(a > 0\). We show that there is a \(J \in \N\) such that, for all \(j, k \geq J\),
    \begin{equation}\label{equation:interacting_case_cauchyness_approximation_of_rate_function}
        \sup_{H(P) \leq \alpha} \abs{P[F_j] - P[F_k]}
        \leq 2 a.
    \end{equation}
    By \zcref{proposition:interacting_case_approximation_of_rate_function_approximately_functional_of_R_n}, we can choose \(J\) such that, for all $j\ge J$,
    \begin{equation}\label{equation:interacting_case_exponential_upper_bound_approximation_applied}
        \limsup_{n \to \infty} \frac{1}{n \abs{\Lambda_n}} \log Q^0_n\big(\abs{ \tfrac{1}{n\abs{\Lambda_n}}\mathfrak{L}_n - R_n[F_j] } > a \big) \leq -\alpha - 1.
    \end{equation}
    Suppose that there were some \(P\) with $H(P)\le\alpha$ and \(j, k \geq J\) such that \(\abs{P[F_j] - P[F_k]} > 2a\). Then
    \(G := \{Q \colon  \abs{Q[F_j] - Q[F_k]} > 2a\}\)
    is an open neighbourhood of \(P\), and on the event $\{R_n\in G\}$ at least one of the two approximation errors exceeds $a$. By the lower bound of \zcref{thm:ldp-lower-bound-free} and the continuity of \(F_j, F_k\),
    \begin{equation*}
    \begin{split}
        &\liminf_{n \to \infty} \frac{1}{n \abs{\Lambda_n}} \log\Big[ Q^0_n\big(\abs{ \tfrac{1}{n\abs{\Lambda_n}}\mathfrak{L}_n - R_n[F_j] } > a \big) + Q^0_n\big(\abs{ \tfrac{1}{n\abs{\Lambda_n}}\mathfrak{L}_n - R_n[F_k] } > a \big) \Big] \\
        &\qquad\geq
        \liminf_{n \to \infty} \frac{1}{n\abs{\Lambda_n}} \log Q^0_n\big(R_n \in G\big)
        \geq -H(P)
        \geq -\alpha,
    \end{split}
    \end{equation*}
    which contradicts \eqref{equation:interacting_case_exponential_upper_bound_approximation_applied}. Hence \eqref{equation:interacting_case_cauchyness_approximation_of_rate_function} holds.

    \emph{Upper bound on sets of laws.}
    The inequality \eqref{equation:interacting_case_rate_function_approximation_upper_bound_set_of_measures} is clear if \(\mathcal{K}\) is contained in a sublevel set \(\{P \colon H(P) \leq \alpha\}\) for some \(\alpha \in [0, \infty)\), because we have established the uniform convergence \(P[F_j] \to P[F]\) there. In the general case, we additionally use \eqref{equation:interacting_case_bounds_approximants_of_rate_function} and then the bound \eqref{equation:interacting_case_bound_free_entropy_beta} on \(\chi(P)\), which yield
    \begin{equation*}
        P[F_j] - H(P)
        \leq C_b' - C_b H(P)
    \end{equation*}
    for constants \(C_b', C_b \in (0, \infty)\) that do not depend on $j$ and $P$. Hence, for every \(\alpha > 0\),
    \begin{equation*}
        \sup_{P \in \mathcal{K}} \big(P[F_j] - H(P)\big)
        \leq \Big(\sup_{P \in \mathcal{K},\, H(P) \leq \alpha} \big(P[F_j] - H(P)\big)\Big) \lor (C_b' - C_b \alpha),
    \end{equation*}
    and \eqref{equation:interacting_case_rate_function_approximation_upper_bound_set_of_measures} follows by first letting $j \to \infty$ and then \(\alpha \to \infty\).
\end{proof}

\subsection{Properties of the rate function}\label{section:interacting_case_properties_of_the_rate_function}
We finally prove the properties of the rate function \(H^\beta\) stated in \zcref{proposition:rate-function-properties}. We start with the conditional-entropy representation, the analogue of the limit in \zcref{lem:entropy-superadditivity} for the free evolution.
\begin{lemma}[Conditional-entropy representation of \(H^\beta\)]\label{lemma:interacting_case_rate_function_conditional_entropy_representation}
    For every \(P \in \PP(E)\), the representation \eqref{equation:interacting_case_rate_function_conditional_entropy_representation} holds.
\end{lemma}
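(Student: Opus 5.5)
The plan is to use the chain rule for relative entropy together with the Girsanov formula of \zcref{proposition:change_of_measure_a_la_girsanov}. Both $Q^{\Lambda_n,\beta}_\eta$ and $Q^{\Lambda_n}_\eta$ are defined on $\Lambda_n\times[0,n]$ with the same initial law $P_{\Lambda_n,0}$. The birth rates are $b(x,\eta_{\Lambda_n})$ (empty boundary) and $1$, both uniformly elliptic. Hence
\begin{equation*}
    \log\frac{\d (P_{\Lambda_n,0}Q^{\Lambda_n,\beta})}{\d(P_{\Lambda_n,0}Q^{\Lambda_n})}=:\mathfrak{L}^\emptyset_n,
\end{equation*}
which is given by the formula analogous to~\eqref{eq:girsanov-heuristic}, with $\omega_{t-}|_{\Lambda_n}$ in place of $\omega^{(n)}_{t-}$. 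This density is bounded away from zero on every path with finitely many births.

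\textbf{Case $H(P)<\infty$.} By \zcref{lem:finite-entropy-intensities}, $P$ lives on legal paths with finite $\chi(P)$, so $\mathfrak{L}^\emptyset_n$ is $P$-integrable, being bounded by $\Vert 1-b\Vert_\infty n|\Lambda_n|+\Vert\log b\Vert_\infty\mathsf B(\Lambda_n,(0,n])$. The two reference laws are mutually absolutely continuous, so
\begin{equation*}
    \Ent\big(P_{\Lambda_n,[0,n]}\lvert P_{\Lambda_n,0}Q^{\Lambda_n,\beta}\big)=h_{\Lambda_n,n}(P)-P[\mathfrak{L}^\emptyset_n].
\end{equation*}
By \zcref{lem:entropy-superadditivity} (with $\tau=1$, and open versus half-open cubes irrelevant when $H(P)<\infty$), $h_{\Lambda_n,n}(P)/(n|\Lambda_n|)\to H(P)$. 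It then suffices to show $P[\mathfrak{L}^\emptyset_n]/(n|\Lambda_n|)\to P[F]$. Split $\Lambda_n$ into the bulk $\{x\colon B_\range(x)\subset\Lambda_n\}$ and the strip $\partial^\range\Lambda_n$. In the bulk, $b(x,\omega_{t-}|_{\Lambda_n})=b(x,\omega_{t-})$. Space-time stationarity together with Campbell's formula (or directly translation covariance $b(x,\eta)=b(0,\theta_x\eta)$) then gives a contribution of $|\text{bulk}|\cdot n\cdot P[F]$: the time integral term becomes $1-b(0,\omega_0)$ by stationarity, and the birth sum term matches the birth term of $F$ by tiling with unit cells. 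The strip contributes at most $(\Vert1-b\Vert_\infty+\Vert\log b\Vert_\infty\chi(P))\,n|\partial^\range\Lambda_n|=o(n|\Lambda_n|)$. This gives the limit $H(P)-P[F]=H^\beta(P)$.

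\textbf{Case $H(P)=\infty$.} Here we must show that the normalised conditional entropies diverge. If $P_{\Lambda_n,[0,n]}$ is not absolutely continuous with respect to $P_{\Lambda_n,0}Q^{\Lambda_n,\beta}$, the entropy is infinite. Otherwise, note that $P_{\Lambda_n,0}Q^{\Lambda_n,\beta}$ and $P_{\Lambda_n,0}Q^{\Lambda_n}$ are equivalent, so $h_{\Lambda_n,n}(P)<\infty$ for each $n$, though these values may grow superlinearly. I would use the Donsker--Varadhan inequality in the form
\begin{equation*}
    h_{\Lambda_n,n}(P)\le \tfrac1{1-\delta}\Ent(P\lvert P_{\Lambda_n,0}Q^{\Lambda_n,\beta})+\tfrac{\delta}{1-\delta}\log\big(P_{\Lambda_n,0}Q^{\Lambda_n,\beta}\big)\big[\e^{-\mathfrak{L}^\emptyset_n/\delta}\big],
\end{equation*}
obtained by H\"older from the chain rule. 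The moment bound~\eqref{equation:interacting_case_girsanov_moment_bound}, applied with the roles of $u,v$ swapped (reference $b$, target $1$), shows that the last logarithm is $O(n|\Lambda_n|)$. Dividing by $n|\Lambda_n|$, the left side tends to $H(P)=\infty$, so the right side does too.

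The main obstacle is making the bulk identification in the finite case rigorous: checking that the boundary terms at times $0$ and $n$ disappear, and that births near $\partial\Lambda_n$ use the empty configuration. Both are handled by the $\chi(P)$ bound and $|\partial^\range\Lambda_n|=o(|\Lambda_n|)$. The limit exists in both cases, which gives~\eqref{equation:interacting_case_rate_function_conditional_entropy_representation}.
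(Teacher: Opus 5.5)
Your overall plan is the paper's. For $H(P)<\infty$ you use the chain rule with the Girsanov density relative to the empty-boundary dynamics, a bulk/strip splitting by finite range, stationarity and $\chi(P)<\infty$. For $H(P)=\infty$ you use a H\"older change of reference controlled by the Girsanov moment bound. Your finite case is correct as written.

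\emph{The infinite case rests on an inequality that is false as stated.} Write $\mathbb{Q}^0_n:=P_{\Lambda_n,0}Q^{\Lambda_n}$ and $\mathbb{Q}^\beta_n:=P_{\Lambda_n,0}Q^{\Lambda_n,\beta}$. Then $\mathfrak{L}^\emptyset_n=\log(\d\mathbb{Q}^\beta_n/\d\mathbb{Q}^0_n)$, and your error term is
\[
\mathbb{Q}^\beta_n[\e^{-\mathfrak{L}^\emptyset_n/\delta}]=\mathbb{Q}^\beta_n\big[(\d\mathbb{Q}^0_n/\d\mathbb{Q}^\beta_n)^{1/\delta}\big].
\]
A moment of $\d\mathbb{Q}^0_n/\d\mathbb{Q}^\beta_n$ under $\mathbb{Q}^\beta_n$ bounds $\Ent(\cdot\lvert\mathbb{Q}^\beta_n)$ in terms of $\Ent(\cdot\lvert\mathbb{Q}^0_n)$, which is the opposite of what you need. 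A counterexample: on a two-point space take $\mathbb{B}=(\frac12,\frac12)$, $\mathbb{A}=(\varepsilon,1-\varepsilon)$ and $P=\mathbb{B}$. Your inequality would then say
\[
\Ent(\mathbb{B}\lvert\mathbb{A})\le\tfrac{\delta}{1-\delta}\log\mathbb{B}\big[(\d\mathbb{A}/\d\mathbb{B})^{1/\delta}\big]\le\tfrac{\log 2}{1-\delta},
\]
but the left-hand side is of order $\frac12\log(1/\varepsilon)\to\infty$ as $\varepsilon\downarrow0$. The sign already shows up in your own chain-rule route, $\Ent(P\lvert\mathbb{Q}^0_n)=\Ent(P\lvert\mathbb{Q}^\beta_n)+P[\mathfrak{L}^\emptyset_n]$. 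Bounding $P[\mathfrak{L}^\emptyset_n]$ from above needs the exponential moment of $+\mathfrak{L}^\emptyset_n/\delta$ under $\mathbb{Q}^0_n$.

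\emph{The fix is cheap and needs no role swap.} For bounded $f$, H\"older gives
\[
\mathbb{Q}^\beta_n[\e^{(1-\delta)f}]=\mathbb{Q}^0_n[\e^{\mathfrak{L}^\emptyset_n}\e^{(1-\delta)f}]\le\mathbb{Q}^0_n[\e^{\mathfrak{L}^\emptyset_n/\delta}]^{\delta}\,\mathbb{Q}^0_n[\e^{f}]^{1-\delta}.
\]
Insert this into the variational formula for $\Ent(P_{\Lambda_n,[0,n]}\lvert\mathbb{Q}^\beta_n)$ and take the supremum over $f$ to get
\[
(1-\delta)\Ent(P_{\Lambda_n,[0,n]}\lvert\mathbb{Q}^0_n)\le\Ent(P_{\Lambda_n,[0,n]}\lvert\mathbb{Q}^\beta_n)+\delta\log\mathbb{Q}^0_n[\e^{\mathfrak{L}^\emptyset_n/\delta}].
\]
The last term is at most $\delta C_{1/\delta}\,n\abs{\Lambda_n}$ by \eqref{equation:interacting_case_girsanov_moment_bound} with the original roles $u_t(x)=b(x,(\omega_{t-})_{\Lambda_n})$ and $v\equiv1$, uniformly in the initial law. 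This is exactly the paper's argument with $\delta=1/2$.

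The variational derivation also handles infinite values directly. You therefore need neither $P$-integrability of $\mathfrak{L}^\emptyset_n$, which is unavailable when $H(P)=\infty$, nor your claim that equivalence of the two reference laws makes $h_{\Lambda_n,n}(P)$ finite. That claim is not justified, since absolute continuity does not imply finite entropy.
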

\begin{proof}
    Assume first that \(H(P) < \infty\). Then \zcref{proposition:change_of_measure_a_la_girsanov} and the chain rule for the relative entropy give
    \begin{equation}\label{equation:interacting_case_conditional_entropy_rep_finite_volume}
    \begin{split}
        \Ent&\big(P_{\Lambda_n, [0,n]} \lvert  P_{\Lambda_n, 0} Q^{\Lambda_n,\beta}\big)
        = \Ent\big(P_{\Lambda_n, [0,n]} \lvert P_{\Lambda_n, 0} Q^{\Lambda_n}\big)\\
        &\quad- P\Big[\int_0^n \d t \int_{\Lambda_n}\d x\,  \big(1 - b(x, (\omega_t)_{\Lambda_n})\big)+ \sum_{\substack{(t, x) \text{ birth}\colon (t, x) \in (0, n] \times \Lambda_n }} \log b(x, (\omega_{t-})_{\Lambda_n}) \Big].
    \end{split}
    \end{equation}
    By the finite range of \(b\), the birth rates with and without the restriction to $\Lambda_n$ only differ at points of the strip $\partial^\range\Lambda_n$, so that stationarity gives
    \begin{equation*}
    \begin{split}
        &\Big\vert n \abs{\Lambda_n} P[F] -  P\Big[\int_0^n \d t \int_{\Lambda_n} \d x\, \big(1 - b(x, (\omega_t)_{\Lambda_n})\big) +\hspace{-0.3cm} \sum_{\substack{(t, x) \text{ birth}\colon \\ (t, x) \in (0, n] \times \Lambda_n }} \hspace{-0.3cm}\log b(x, (\omega_{t-})_{\Lambda_n}) \Big] \Big\vert \lesssim (1 + \chi(P)) n\abs{\partial^\range \Lambda_n}.
    \end{split}
    \end{equation*}
    Dividing \eqref{equation:interacting_case_conditional_entropy_rep_finite_volume} by \(n\abs{\Lambda_n}\), letting \(n \to \infty\), and using \zcref{lem:entropy-superadditivity} for the first term on its right-hand side yields \eqref{equation:interacting_case_rate_function_conditional_entropy_representation}.

    Now assume that \(H(P) = \infty\). Uniformly over the initial distribution,
    \begin{equation*}
        \log \big(P_{\Lambda_n, 0} Q^{\Lambda_n}\big)\Big[ \Big(\frac{\d(P_{\Lambda_n, 0} Q^{\Lambda_n,\beta})}{\d(P_{\Lambda_n, 0} Q^{\Lambda_n})}\Big)^2 \Big]
        \lesssim n\abs{\Lambda_n}
    \end{equation*}
    by \eqref{equation:interacting_case_girsanov_moment_bound}. H\"older's inequality and the variational formula for the relative entropy therefore give
    \begin{equation*}
        \Ent\big(P_{\Lambda_n, [0,n]} \lvert P_{\Lambda_n, 0} Q^{\Lambda_n}\big)
        \lesssim 2 \Ent\big(P_{\Lambda_n, [0,n]} \lvert  P_{\Lambda_n, 0} Q^{\Lambda_n,\beta}\big) + n \abs{\Lambda_n},
    \end{equation*}
    so that \(H(P) = \infty\), and hence \(H^\beta(P) = \infty\) by definition, implies
    \begin{equation*}
        \lim_{n \to \infty} \frac{1}{n\abs{\Lambda_n}}  \Ent\big(P_{\Lambda_n, [0,n]} \lvert P_{\Lambda_n, 0} Q^{\Lambda_n,\beta}\big)
        = \infty
    \end{equation*}
    as desired. 
\end{proof}

We also stated the following continuity and boundedness properties of \(H^\beta\).
\begin{lemma}[Further properties of \(H^\beta\)]\label{lemma:interacting_case_rate_function_further_properties}
    The rate function \(H^\beta\) from \eqref{equation:interacting_case_rate_function} has the following properties.
    \begin{enumerate}[(i)]
        \item It is lower semicontinuous on \(\PP(E)\), equipped with the weak topology.
        \item It is a good rate function on \(\PP(E)\), equipped with the weak topology.
        \item It is non-negative and has minimum \(0\).
        \item The bound \eqref{equation:interacting_case_lower_bound_via_free_entropy} holds.
    \end{enumerate}
\end{lemma}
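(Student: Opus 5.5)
I plan to prove the four items in the order (iv), (i), (iii), (ii), since (iv) feeds into the others.

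\emph{Item (iv).} Let $\delta\in(0,1)$ and $P\in\PP(E)$. If $H(P)=\infty$, both sides of \eqref{equation:interacting_case_lower_bound_via_free_entropy} are infinite and there is nothing to show. Otherwise $H^\beta(P)=H(P)-P[F]$. By \eqref{equation:interacting_case_entropy_bound_of_interacting_part} of \zcref{lemma:interacting_case_nice_things_from_finite_free_entropy} we have $P[F]\le\delta H(P)+C_\delta$. Hence $H^\beta(P)\ge(1-\delta)H(P)-C_\delta$.

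\emph{Item (i).} Let $P_k\to P$ weakly and put $c:=\liminf_k H^\beta(P_k)$. We may assume $c<\infty$ and pass to a subsequence realising the liminf. By (iv), $H(P_k)\le 2(H^\beta(P_k)+C_{1/2})$, so along the subsequence $H(P_k)\le\alpha$ for some finite $\alpha$. Since $H$ is weakly lower semicontinuous by \zcref{lemma:nice-variational-representation}, it follows that $H(P)\le\alpha$. The sublevel set $\{H\le\alpha\}$ is closed, and \zcref{proposition:interacting_case_approximation_of_rate_function} shows that $Q\mapsto Q[F]$ is continuous on it. Therefore $P_k[F]\to P[F]$. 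Combining this with $\liminf H(P_k)\ge H(P)$ gives $H^\beta(P)\le c$.

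\emph{Item (iii).} Non-negativity follows from the conditional-entropy representation in \zcref{lemma:interacting_case_rate_function_conditional_entropy_representation}, because each relative entropy there is non-negative. To see that the minimum is $0$, I see two routes.
\begin{enumerate}[(a)]
\item Take $\nu\in\GG^\beta_\theta$, which is non-empty. Let $P$ be the stationary infinite-volume dynamics started from $\nu$. The entropies $\Ent(P_{\Lambda_n,[0,n]}\lvert P_{\Lambda_n,0}Q^{\Lambda_n,\beta})$ should be of order $n\abs{\partial^\range\Lambda_n}$ by the Girsanov bound \eqref{equation:interacting_case_girsanov_moment_bound}. The difficulty is that this route needs existence of the stationary dynamics, which the paper only has via the graphical construction.
\item A softer alternative uses the LDP itself. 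The upper bound with $F=\PP(E)$ and exponential tightness, together with goodness, give $\inf H^\beta\le 0$, since the probabilities are $1$ and the infimum over the whole space is attained. This is a circular risk, because the goodness used there is item (ii). I would argue as follows instead: the lower semicontinuity from (i), combined with the compact-set upper bound, Step 3 of the proof of \zcref{theorem:full-ldp}, and Step 1 there, shows that $0\le-\inf_{K_\alpha}H^\beta$ for $\alpha>0$. This compact-set argument only used \eqref{equation:interacting_case_rate_function_approximation_upper_bound_set_of_measures}, not goodness. The infimum of an lsc function over the compact set $K_\alpha$ is attained, which gives a minimiser with $H^\beta=0$.
\end{enumerate}
I expect to use route (b), since it stays within the paper's results.

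\emph{Item (ii).} For $c\in\R$, the set $\{H^\beta\le c\}$ is closed by (i). By (iv) with $\delta=1/2$, it is contained in $\{H\le 2(c+C_{1/2})\}$, which is compact by the $\beta=0$ case of \zcref{theorem:full-ldp}. A closed subset of a compact set is compact, so $\{H^\beta\le c\}$ is compact.

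The main obstacle is item (iii): attaining the minimum $0$ without circularity. In particular, I need to check that the compact-set upper bound argument does not presuppose goodness of $H^\beta$.
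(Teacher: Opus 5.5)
Your proposal is correct and follows the paper's proof: (iv) from the entropy bound on $P[F]$; (i) by confining to a sublevel set $\{H\le\alpha'\}$, on which $P\mapsto P[F]$ is continuous (your sequential version is equivalent, since the weak topology on $\PP(E)$ is metrisable); (ii) as a closed subset of that compact sublevel set; and (iii) via the large deviation upper bound, whose proof never uses goodness of $H^\beta$. The circularity you worry about does not arise, because (ii) uses only (i), (iv) and the $\beta=0$ compactness, so the paper simply proves (ii) first, applies the closed-set upper bound to $\PP(E)$, and attains the infimum by goodness. Your variant via lower semicontinuity on $K_\alpha$ also works, but only for $\alpha$ large enough that the Step~1 exponent $c_p/p-\alpha/q$ is negative, not for every $\alpha>0$.
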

\begin{proof}
    For~(i), fix $\alpha\in\R$, choose $\delta \in (0,1)$, and let $C_\delta$ be as in \eqref{equation:interacting_case_entropy_bound_of_interacting_part}. Then $H^\beta=H-P[F]\ge(1-\delta)H-C_\delta$ on $\{H<\infty\}$, so that
    \begin{equation*}
        \{H^\beta \leq \alpha\} \subset \{H \leq \alpha'\},
        \qquad \alpha' = (\alpha + C_\delta) / (1-\delta).
    \end{equation*}
    By \zcref{proposition:interacting_case_approximation_of_rate_function}, \(P \mapsto P[F]\) is continuous on \(\{H \leq \alpha'\}\), which is closed by the lower semicontinuity of $H$, see \zcref{lemma:nice-variational-representation}. Hence $H^\beta=H-P[F]$ is lower semicontinuous on $\{H\le\alpha'\}$, and \(\{H^\beta \leq \alpha\} = \{P\in\{H\le\alpha'\}\colon H(P) - P[F] \leq \alpha\}\) is closed.
    For~(ii), \(\{H^\beta \leq \alpha\}\) is a closed subset of the weakly compact set \(\{H \leq \alpha'\}\), see \zcref{theorem:full-ldp} for $\beta=0$.
    For~(iii), the non-negativity follows from \eqref{equation:interacting_case_rate_function_conditional_entropy_representation}, and \(\inf H^\beta \leq 0\) follows by applying the upper bound of \zcref{theorem:full-ldp}, whose proof does not use the goodness of $H^\beta$, to the closed set \(\PP(E)\). The infimum is attained by the goodness of \(H^\beta\).
    Finally, (iv) follows from \eqref{equation:interacting_case_entropy_bound_of_interacting_part}, since $H^\beta=H-P[F]$ if $H<\infty$, and both sides of~\eqref{equation:interacting_case_lower_bound_via_free_entropy} are infinite otherwise.
\end{proof}

\section{Proofs for the minimum entropy problem}\label{section:contraction}
In this section we prove \zcref{lemma:contraction_time_marginal_hellinger_distance_identities,lemma:contraction_time_marginal_minimum_entropy_problem_hellinger_thermodynamic_limit,lemma:contraction_time_marginal_minimum_entropy_problem_hellinger_finite_volume_lower_bound,lemma:contraction_time_marginal_minimum_entropy_problem_hellinger_existence_of_minimizer_in_infinite_volume}, which complete the identification of the rate function in \zcref{thm:marg-ldp}. Throughout, $\beta\in\R$ is fixed and $b$ denotes the corresponding birth rate. For two \(\sigma\)-finite measures \(\mathfrak{M}\), \(\mathfrak{N}\) on a common measurable space \(\mathcal{X}\), we use their \emph{geometric mean} \(\sqrt{\mathfrak{M}\mathfrak{N}}\), defined by
\begin{equation*}
    \d\sqrt{\mathfrak{M}\mathfrak{N}}
    := \sqrt{\frac{\d\mathfrak{M}}{\d (\mathfrak{M} + \mathfrak{N})}\frac{\d\mathfrak{N}}{\d (\mathfrak{M} + \mathfrak{N})}}\, \d (\mathfrak{M} + \mathfrak{N}).
\end{equation*}
If \(\mathfrak{M}\) and \(\mathfrak{N}\) are finite measures, then expanding the square in the definition of the Hellinger distance gives
\begin{equation*}
    \mathfrak{H}^2(\mathfrak{M}, \mathfrak{N})
    = \frac{\mathfrak{M}(\mathcal{X}) + \mathfrak{N}(\mathcal{X})}{2} - \sqrt{\mathfrak{M}\mathfrak{N}}(\mathcal{X}).
\end{equation*}

\subsection{Hellinger distance identities}\label{section:contraction_time_marginal_minimum_entropy_problem_basic_facts}
We first prove that \( \mathfrak{H}^2(b(0, \cdot) \mu, \mu_0^!) = \mathfrak{H}^2(B_\mu, D_\mu)\), and we express this quantity in terms of Papangelou intensities, when they exist.
\begin{proof}[Proof of \zcref{lemma:contraction_time_marginal_hellinger_distance_identities}]
    For measurable \(f \geq 0\), translation invariance gives
    \begin{equation*}
    \begin{split}
        \int f(\eta, x) \, B_\mu(\d\eta, \d x)
        &= \int \mu(\d\eta)  \int_{[0, 1)^d} \d x \, b(x, \eta) \, f(\eta, x) = \int_{[0, 1)^d} \d x \, \int \mu(\d\eta) \, b(0, \theta_x\eta) \, f(\eta, x)  \\
        &= \int_{[0, 1)^d} \d x \, \int \mu(\d\eta) \, b(0, \eta) \, f(\theta_{-x}\eta, x) = \int (b(0, \cdot) \mu)(\d\eta) \int_{[0, 1)^d} \d x\, f(\theta_{-x}\eta, x) ,
    \end{split}
    \end{equation*}
    that is, \(B_\mu = ((b(0, \cdot) \mu) \otimes \1_{[0, 1)^d}(x)\, \d x) \circ i^{-1}\) with \(i \colon (\eta, x) \mapsto (\theta_{-x}\eta, x)\).
    Similarly, the characterisation of the reduced Palm measure gives \(D_\mu = (\mu_0^! \otimes \1_{[0, 1)^d}(x)\, \d x) \circ i^{-1}\).
    Since $i$ is a measurable bijection with measurable inverse, and the Lebesgue measure of $[0,1)^d$ is one, it follows that
    \begin{equation*}
        \mathfrak{H}^2(B_\mu, D_\mu)
        = \mathfrak{H}^2\big((b(0, \cdot) \mu) \otimes \1_{[0, 1)^d}(x)\, \d x, \mu_0^! \otimes \1_{[0, 1)^d}(x)\, \d x \big)
        = \mathfrak{H}^2\big(b(0, \cdot) \mu , \mu_0^!\big).
    \end{equation*}
    Now suppose that \(\mu\) has a Papangelou intensity \(\kappa\). Then, by the definitions of \(B_\mu\) and of the Papangelou intensity,
    \begin{equation*}
        B_\mu(\d\eta, \d x) = \mu(\d\eta)\d x\, b(x, \eta) \1_{[0, 1)^d}(x),
        \qquad
        D_\mu(\d\eta, \d x) = \mu(\d\eta)\d x\, \kappa(x, \eta) \1_{[0, 1)^d}(x),
    \end{equation*}
    and therefore
    \begin{equation*}
    \begin{split}
        2\mathfrak{H}^2(B_\mu, D_\mu)
        &= \int\mu(\d\eta)\d x\,  \Big(\sqrt{b(x,\eta)} - \sqrt{\kappa(x,\eta)}  \Big)^2 \1_{[0, 1)^d}(x) = \mu\Big[\int_{[0, 1)^d}\d x \, \Big(\sqrt{b(x, \eta)} - \sqrt{\kappa(x,\eta)}\Big)^2\Big].
    \end{split}
    \end{equation*}
        as desired.
\end{proof}

Next, we establish the representation of \(\mathfrak{H}^2(B_\mu, D_\mu)\) as a thermodynamic limit. For measurable sets $A, \Lambda\Subset\R^d$, we write
\begin{equation*}
    B_\mu\vert_{A}(\d\eta,\d x)
    := \mu(\d\eta)\d x \, b(x, \eta) \, \1_A(x),
\end{equation*}
and \(B_\mu\vert_{\Lambda, A}\) for the image of \(B_\mu\vert_{A}\) under \(i_\Lambda \colon (\eta, x) \mapsto (\eta_\Lambda, x)\); the measures \(D_\mu\vert_{A}\) and \(D_\mu\vert_{\Lambda, A}\) are defined analogously. Moreover, \(B_\mu^{(\Lambda)}\vert_A\) and \(D_\mu^{(\Lambda)}\vert_A\) denote the restrictions of \(B_\mu^{(\Lambda)}\) and \(D_\mu^{(\Lambda)}\) to $\Om_\Lambda\times A$ for $A\subset\Lambda$, and $\partial^\range\Lambda:=\{y \in \Lambda \colon \mathrm{dist}(y, \Lambda^c) \leq \range\}$.

\begin{proof}[Proof of \zcref{lemma:contraction_time_marginal_minimum_entropy_problem_hellinger_thermodynamic_limit}]
    We can assume \(\rho(\mu) < \infty\). Indeed, if \(\rho(\mu) = \infty\), then the total mass of \(D_\mu\) is infinite, whereas \(B_\mu[1] \leq \Vert b\Vert_\infty < \infty\), so that \(\mathfrak{H}^2(B_\mu, D_\mu) = \infty\), and the same argument applies to the finite-volume versions.

    \emph{Upper bound on the limit superior.}
    For \(A \subset \Lambda_n\), the Hellinger distance is additive over the decomposition of the spatial coordinate $x$, so that
    \begin{equation}\label{equation:contraction_time_marginal_minimum_entropy_problem_hellinger_thermodynamic_limit_lower_bound_decomposition}
        \mathfrak{H}^2(B_\mu^{(\Lambda_n)}, D^{(\Lambda_n)}_\mu)
        = \mathfrak{H}^2(B_\mu^{(\Lambda_n)}\vert_{A}, D^{(\Lambda_n)}_\mu \vert_{A}) + \mathfrak{H}^2(B_\mu^{(\Lambda_n)}\vert_{\Lambda_n \setminus A}, D^{(\Lambda_n)}_\mu \vert_{\Lambda_n \setminus A}).
    \end{equation}
    We apply this with \(A = \partial^\range \Lambda_n\), because \(b(x, \eta_{\Lambda_n}) = b(x, \eta)\) for \(x \in \Lambda_n \setminus \partial^\range \Lambda_n\) and \(\eta \in \Om\).
    The boundary part is of boundary order, as
    \begin{equation}\label{equation:contraction_time_marginal_minimum_entropy_problem_hellinger_thermodynamic_limit_lower_bound_boundary}
    \begin{split}
        \mathfrak{H}^2(B_\mu^{(\Lambda_n)}\vert_{\partial^\range \Lambda_n}, D^{(\Lambda_n)}_\mu \vert_{\partial^\range \Lambda_n})
        &\leq B_\mu^{(\Lambda_n)}(\Omega_{\Lambda_n} \times \partial^\range \Lambda_n) + D_\mu^{(\Lambda_n)}(\Omega_{\Lambda_n} \times \partial^\range \Lambda_n) \\
        &\leq \Vert b\Vert_\infty \abs{\partial^\range \Lambda_n} + \rho(\mu) \abs{\partial^\range \Lambda_n}.
    \end{split}
    \end{equation}
    On the other hand, the finite range of \(b\) gives
    \begin{equation*}
        B_\mu^{(\Lambda_n)}\vert_{\Lambda_n \setminus \partial^\range \Lambda_n}
        = B_\mu\vert_{\Lambda_n \setminus \partial^\range \Lambda_n} \circ i_{\Lambda_n}^{-1},
    \end{equation*}
    and similarly \(D^{(\Lambda)}_\mu\vert_A = D_\mu\vert_{A} \circ i_\Lambda^{-1}\) for all \(A\subset\Lambda\). The Hellinger distance contracts under images of measures, and stationarity gives
\begin{equation}\label{equation:contraction_time_marginal_minimum_entropy_problem_hellinger_thermodynamic_limit_lower_bound_bulk}
\mathfrak{H}^2(B_\mu^{(\Lambda_n)}\vert_{\Lambda_n \setminus \partial^\range \Lambda_n} , D^{(\Lambda_n)}_\mu \vert_{\Lambda_n \setminus \partial^\range \Lambda_n})
        \leq \mathfrak{H}^2( B_\mu\vert_{\Lambda_n \setminus \partial^\range \Lambda_n},  D_\mu\vert_{\Lambda_n \setminus \partial^\range \Lambda_n})
        = \abs{\Lambda_n \setminus \partial^\range \Lambda_n}  \mathfrak{H}^2( B_\mu,  D_\mu).
    \end{equation}
    It follows from \eqref{equation:contraction_time_marginal_minimum_entropy_problem_hellinger_thermodynamic_limit_lower_bound_decomposition}, \eqref{equation:contraction_time_marginal_minimum_entropy_problem_hellinger_thermodynamic_limit_lower_bound_boundary} and \eqref{equation:contraction_time_marginal_minimum_entropy_problem_hellinger_thermodynamic_limit_lower_bound_bulk} that
    \begin{equation*}
        \limsup_{n \to \infty} \frac{\mathfrak{H}^2( B_\mu^{(\Lambda_n)},  D_\mu^{(\Lambda_n)})}{\abs{\Lambda_n}}
        \leq  \mathfrak{H}^2( B_\mu,  D_\mu).
    \end{equation*}

    \emph{Lower bound on the limit inferior.}
    We show that
    \begin{equation}\label{equation:contraction_time_marginal_minimum_entropy_problem_hellinger_thermodynamic_limit_upper_bound_k}
        \liminf_{n \to \infty} \frac{\mathfrak{H}^2( B_\mu^{(\Lambda_n)},  D_\mu^{(\Lambda_n)})}{\abs{\Lambda_n}}
        \geq \mathfrak{H}^2(B_{\mu}\vert_{\Lambda_k, [0, 1)^d}, D_{\mu}\vert_{\Lambda_k, [0,1)^d})
    \end{equation}
    for all \(k \geq \range\). This implies
    \begin{equation*}
        \liminf_{n \to \infty} \frac{\mathfrak{H}^2( B_\mu^{(\Lambda_n)},  D_\mu^{(\Lambda_n)})}{\abs{\Lambda_n}}
        \geq \mathfrak{H}^2(B_{\mu}, D_{\mu}),
    \end{equation*}
    since \(\mathfrak{H}^2(B_{\mu}\vert_{\Lambda_k, [0, 1)^d}, D_{\mu}\vert_{\Lambda_k, [0,1)^d}) \to \mathfrak{H}^2(B_{\mu}, D_{\mu})\) as $k \to \infty$, by the martingale convergence theorem; see~\cite[Theorem~6.6]{nguyen1979ergodic} for the corresponding statement for the relative entropy.
    To prove \eqref{equation:contraction_time_marginal_minimum_entropy_problem_hellinger_thermodynamic_limit_upper_bound_k}, we decompose \(\Lambda_n\), up to boundary order, into shifted copies \(j + [0, 1)^d\) of \([0, 1)^d\) such that all \(j + \Lambda_k\) have distance at least \(\range\) from \(\partial \Lambda_n\), so that \(b(x, \eta_{\Lambda_n}) = b(x, \eta)\) for \(x \in j + \Lambda_k\).
    By \eqref{equation:contraction_time_marginal_minimum_entropy_problem_hellinger_thermodynamic_limit_lower_bound_decomposition}, and since the Hellinger distance contracts under images of measures,
    \begin{equation*}
    \begin{split}
        \mathfrak{H}^2( B_\mu^{(\Lambda_n)},  D_\mu^{(\Lambda_n)})
        &\geq \sum_j \mathfrak{H}^2\big( B_\mu^{(\Lambda_n)}\vert_{j + [0,1)^d},  D_\mu^{(\Lambda_n)}\vert_{j + [0, 1)^d}\big)\geq \sum_j \mathfrak{H}^2\big( B_\mu\vert_{j + \Lambda_k, j + [0, 1)^d},  D_\mu\vert_{j + \Lambda_k, j + [0, 1)^d}\big).
    \end{split}
    \end{equation*}
    Finally, by stationarity, all summands are equal to \(\mathfrak{H}^2(B_{\mu}\vert_{\Lambda_k, [0, 1)^d}, D_{\mu}\vert_{\Lambda_k, [0,1)^d})\), so that
    \begin{equation*}
        \mathfrak{H}^2( B_\mu^{(\Lambda_n)},  D_\mu^{(\Lambda_n)})
        \geq (1 - o(1))\abs{\Lambda_n} \mathfrak{H}^2(B_{\mu}\vert_{\Lambda_k, [0, 1)^d}, D_{\mu}\vert_{\Lambda_k, [0,1)^d}),
    \end{equation*}
    which proves \eqref{equation:contraction_time_marginal_minimum_entropy_problem_hellinger_thermodynamic_limit_upper_bound_k}.

    \emph{Dirichlet form expression with local densities.}
    It remains to prove \eqref{equation:contraction_time_marginal_dirichlet_form_identity_hellinger_distance} for a fixed \(\nu \in \GG^\beta_\theta\).
    By the GNZ equations,
    \begin{equation*}
    \begin{split}
        \mathcal{E}^{\nu}_{\Lambda}(g, g)
        &= \int \nu_\Lambda(\d\eta_\Lambda) \sum_{x \in \eta_\Lambda} [g(\eta_\Lambda - \delta_x) - g(\eta_\Lambda)]^2
        = \int \nu(\d\eta) \int_{\Lambda} \d x \, b(x, \eta) [g(\eta_\Lambda) - g(\eta_\Lambda + \delta_x)]^2 \\
        &= \int \nu_\Lambda(\d\eta_\Lambda) \int_{\Lambda} \d x \, b_\Lambda^\nu(x, \eta) [g(\eta_\Lambda) - g(\eta_\Lambda + \delta_x)]^2,
    \end{split}
    \end{equation*}
    with the averaged birth rate
    \begin{equation*}
        b_\Lambda^\nu(x, \eta)
        := \int\nu_{\Lambda^c \lvert \Lambda}(\d \zeta_{\Lambda^c} \vert \eta)\,  b(x, \eta_{\Lambda} \zeta_{\Lambda^c}),
    \end{equation*}
    where $\nu_{\Lambda^c\lvert\Lambda}$ denotes the conditional law of the configuration outside $\Lambda$ given the configuration inside $\Lambda$ under $\nu$.
    For \(g_\Lambda = \d\mu_{\Lambda}/\d\nu_\Lambda\), the GNZ equations show more generally that
    \begin{equation*}
        D_\mu^{(\Lambda)}(\d\eta, \d x)
        = g_\Lambda(\eta + \delta_x) \,  b_\Lambda^\nu(x, \eta) \, \nu_\Lambda(\d\eta) \, \d x,
    \end{equation*}
    so that, with
    \begin{equation*}
        B_\mu^{(\Lambda, \nu)}(\d \eta, \d x)
        := g_\Lambda(\eta) \,  b_\Lambda^\nu(x, \eta) \, \nu_\Lambda(\d\eta) \, \d x,
    \end{equation*}
    we obtain
    \begin{equation*}
        \mathcal{E}^\nu_{\Lambda}\big(\sqrt{g_\Lambda},  \sqrt{g_\Lambda}\big)
        = \int \nu_\Lambda(\d\eta) \int_{\Lambda} \d x \, b_\Lambda^\nu(x, \eta) \big[\sqrt{g_\Lambda}(\eta) - \sqrt{g_\Lambda}(\eta + \delta_x)\big]^2
        = 2\mathfrak{H}^2(B_\mu^{(\Lambda, \nu)}, D_\mu^{(\Lambda)}).
    \end{equation*}
    Since we have already shown \eqref{equation:contraction_time_marginal_minimum_entropy_problem_hellinger_thermodynamic_limit}, it remains to bound
    \begin{equation*}
        \big\vert \mathfrak{H}^2(B_\mu^{(\Lambda, \nu)}, D_\mu^{(\Lambda)}) - \mathfrak{H}^2(B_\mu^{(\Lambda)}, D_\mu^{(\Lambda)}) \big\vert
        \leq (\Vert b\Vert_\infty + \rho(\mu)) \abs{\partial^\range \Lambda}.
    \end{equation*}
    This follows from computations analogous to \eqref{equation:contraction_time_marginal_minimum_entropy_problem_hellinger_thermodynamic_limit_lower_bound_decomposition} and \eqref{equation:contraction_time_marginal_minimum_entropy_problem_hellinger_thermodynamic_limit_lower_bound_boundary}, because \(b(x, \eta_\Lambda) = b_\Lambda^\nu(x, \eta)\) for \(x \in \Lambda \setminus \partial^\range \Lambda\).
\end{proof}

\subsection{The finite-volume lower bound}\label{section:contraction_time_marginal_minimum_entropy_problem_lower_bound}
In this subsection we prove the finite-volume lower bound of \zcref{lemma:contraction_time_marginal_minimum_entropy_problem_hellinger_finite_volume_lower_bound}. To connect the Hellinger distance \(\mathfrak{H}^2(B_\mu^{(\Lambda)}, D_\mu^{(\Lambda)})\) with the dynamics, we use a variational characterisation involving the formal generator of the dynamics in $\Lambda$ with empty boundary configuration,
\begin{equation*}
    (\mathscr{L}_\Lambda g)(\eta_\Lambda)
    = \int_\Lambda\d x\,  b(x, \eta_\Lambda) [g(\eta_\Lambda + \delta_x) - g(\eta_\Lambda)] + \sum_{x \in \eta_\Lambda} [g(\eta_\Lambda - \delta_x) - g(\eta_\Lambda)].
\end{equation*}
Let \(\gamma_\Lambda = \nu^{(\emptyset)}_\Lambda\) be the finite-volume Gibbs measure in \(\Lambda\) with empty boundary configuration, and consider the formal Dirichlet form
\begin{equation*}
    \mathcal{E}^{\emptyset}_\Lambda(g, g)
    = \gamma_\Lambda[g (-\mathscr{L}_\Lambda g)] \in [0, \infty]
\end{equation*}
for non-negative \(g \in L^2(\gamma_\Lambda)\). If \(\mu_\Lambda \ll \gamma_\Lambda\) and \(f = \d \mu_\Lambda/\d \gamma_\Lambda\) is bounded away from zero and infinity, then the computation in the proof of \eqref{equation:contraction_time_marginal_dirichlet_form_identity_hellinger_distance} shows that
\begin{equation}\label{equation:contraction_time_marginal_minimum_entropy_problem_hellinger_finite_volume_lower_bound_variational_formulation_using_generator_regular_mu}
    2\mathfrak{H}^2(B_\mu^{(\Lambda)}, D_\mu^{(\Lambda)})
    = \mathcal{E}^{\emptyset}_\Lambda(\sqrt{f}, \sqrt{f})
    = \gamma_\Lambda[\sqrt{f} (-\mathscr{L}_\Lambda \sqrt{f})]
    = \mu_\Lambda\Big[\frac{-\mathscr{L}_\Lambda \sqrt{f}}{\sqrt{f}} \Big].
\end{equation}
The following variational characterisation holds even if \(\mu_\Lambda\) is singular with respect to $\gamma_\Lambda$.
\begin{lemma}[Variational characterisation of the Hellinger distance]\label{lemma:contraction_time_marginal_minimum_entropy_problem_hellinger_finite_volume_lower_bound_variational_formulation_using_generator}
    For every \(\mu\in\PP(\Om)\) and \(\Lambda \Subset \R^d\),
    \begin{equation}\label{equation:contraction_time_marginal_minimum_entropy_problem_hellinger_finite_volume_lower_bound_variational_formulation_using_generator}
        2\mathfrak{H}^2(B_\mu^{(\Lambda)}, D_\mu^{(\Lambda)})
        = \sup \Big\{ \mu_\Lambda\Big[\frac{-\mathscr{L}_\Lambda u}{u} \Big]\colon u \text { measurable},\ 0 <\inf u \leq \sup u < \infty, \ \sup \abs{\mathscr{L}_\Lambda u} < \infty\Big\}.
    \end{equation}
\end{lemma}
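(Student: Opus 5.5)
The plan is to prove the two inequalities in \eqref{equation:contraction_time_marginal_minimum_entropy_problem_hellinger_finite_volume_lower_bound_variational_formulation_using_generator} separately, after rewriting the functional $\mu_\Lambda[-\mathscr{L}_\Lambda u/u]$ in terms of the measures $B^{(\Lambda)}_\mu$ and $D^{(\Lambda)}_\mu$.

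\emph{Rewriting.} For an admissible $u$, set $r_u(\eta,x):=u(\eta+\delta_x)/u(\eta)$, which is bounded away from zero and infinity. Expanding the generator gives
\begin{equation*}
    \mu_\Lambda\Big[\frac{-\mathscr{L}_\Lambda u}{u}\Big]
    = \int \mu_\Lambda(\d\eta)\int_\Lambda \d x\, b(x,\eta)\big(1-r_u(\eta,x)\big)
    + \int\mu_\Lambda(\d\eta)\sum_{x\in\eta}\Big(1-\frac{u(\eta-\delta_x)}{u(\eta)}\Big).
\end{equation*}
In the second term we substitute $\eta'=\eta-\delta_x$, exactly as in the definition of $D^{(\Lambda)}_\mu$. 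This yields
\begin{equation*}
    \mu_\Lambda\Big[\frac{-\mathscr{L}_\Lambda u}{u}\Big]
    = B^{(\Lambda)}_\mu[1-r_u] + D^{(\Lambda)}_\mu[1-r_u^{-1}].
\end{equation*}
Both integrands are bounded. Since $\sup\abs{\mathscr{L}_\Lambda u}<\infty$, the expression is finite even when $\rho(\mu)=\infty$.

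\emph{Upper bound.} Assume first $\rho(\mu)<\infty$, so that both measures are finite. Write $p$ and $q$ for the densities of $B^{(\Lambda)}_\mu$ and $D^{(\Lambda)}_\mu$ with respect to their sum. The pointwise inequality $rp+q/r\ge2\sqrt{pq}$ then gives
\begin{equation*}
    B[1-r_u]+D[1-r_u^{-1}]\le B(\mathcal X)+D(\mathcal X)-2\sqrt{BD}(\mathcal X)=2\mathfrak H^2(B,D),
\end{equation*}
where $B=B^{(\Lambda)}_\mu$ and $D=D^{(\Lambda)}_\mu$. If $\rho(\mu)=\infty$, the left-hand side of \eqref{equation:contraction_time_marginal_minimum_entropy_problem_hellinger_finite_volume_lower_bound_variational_formulation_using_generator} is infinite, and there is nothing to prove.

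\emph{Lower bound.} Here the only freedom is that $r_u$ must be a ratio of a single function $u$; it cannot be an arbitrary positive function of $(\eta,x)$. Formally the optimal choice is $r=\sqrt{\d D/\d B}$. I would realise it through Gibbs densities, following \eqref{equation:contraction_time_marginal_minimum_entropy_problem_hellinger_finite_volume_lower_bound_variational_formulation_using_generator_regular_mu}. Take a regularised law $\nu=\nu_k$ on $\Om_\Lambda$ with density $g=\d\nu/\d\gamma_\Lambda$ bounded away from zero and infinity, and set $u=\sqrt g$. The GNZ equation for $\gamma_\Lambda$ gives
\begin{equation*}
    D^{(\Lambda)}_\nu = g(\eta+\delta_x)\,b(x,\eta)\,\gamma_\Lambda(\d\eta)\,\d x,
    \qquad
    B^{(\Lambda)}_\nu = g(\eta)\,b(x,\eta)\,\gamma_\Lambda(\d\eta)\,\d x,
\end{equation*}
so that $r_u=\sqrt{\d D^{(\Lambda)}_\nu/\d B^{(\Lambda)}_\nu}$. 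The aim is then to choose $\nu_k$ so that these ratios converge, in the sense required, to $\sqrt{\d D^{(\Lambda)}_\mu/\d B^{(\Lambda)}_\mu}$. For the construction, I would take $\nu_k$ to be the mixture $(1-1/k)\mu_\Lambda+(1/k)\gamma_\Lambda$, conditioned to a finite $\sigma$-algebra generated by cell counts on a dyadic partition of level $k$ and by particle number at most $N_k$. To make $\sup\abs{\mathscr L_\Lambda u}<\infty$, I would freeze $g$ to a constant for $\eta(\Lambda)\ge N_k$. Then the death sum contributes only at configurations with $\eta(\Lambda)\le N_k$, where it has at most $N_k$ bounded terms.

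\emph{Main obstacle.} The key step is showing
\begin{equation*}
    B^{(\Lambda)}_\mu[1-r_k]+D^{(\Lambda)}_\mu[1-r_k^{-1}]\to2\mathfrak H^2\big(B^{(\Lambda)}_\mu,D^{(\Lambda)}_\mu\big).
\end{equation*}
I would first restrict to the event $\{\eta(\Lambda)<N\}$, whose complement carries little mass of both measures when $\rho(\mu)<\infty$. When $\rho(\mu)=\infty$, I would instead use $u=\e^{-\gamma(\eta(\Lambda)\wedge m)}$, as in \zcref{lem:finite-entropy-intensities}, to show that the supremum is infinite. On the restricted event, the ratios of cell-averaged densities converge by the martingale convergence theorem along the refining partitions. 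The mixture weight $1/k$ keeps $r_k$ bounded, and I would combine this with a truncation of $r_k$ to $[\varepsilon,1/\varepsilon]$ and dominated convergence. Establishing this convergence, including on the sets where one of $p$ and $q$ vanishes, is the step I expect to need the most care.
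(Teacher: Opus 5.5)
Your rewriting of $\mu_\Lambda[-\mathscr{L}_\Lambda u/u]$ as $B^{(\Lambda)}_\mu[1-r_u]+D^{(\Lambda)}_\mu[1-r_u^{-1}]$ matches the paper. So do the upper bound via $rs+r^{-1}t\ge2\sqrt{st}$ and the case $\mu[\eta(\Lambda)]=\infty$ with $u=\e^{c(\eta(\Lambda)\wedge m)}$. The gap is in the lower bound, at the particle-number cutoff.

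You make $\mathscr{L}_\Lambda u$ bounded by freezing $g$ on the atom $\{\eta(\Lambda)>N_k\}$. You then discard $\{\eta(\Lambda)\ge N\}$ because it carries little $B^{(\Lambda)}_\mu$- and $D^{(\Lambda)}_\mu$-mass. Small mass is not enough, because $1-r_u^{-1}$ is unbounded there. At the interface $\eta(\Lambda)=N_k$, with $\eta+\delta_x$ frozen, the ratio $r_u^{-1}=\sqrt{g(\eta)/g(\text{frozen})}$ compares a constant with the value of $g$ on a fine cell, which is of order $\mu_\Lambda(\text{cell})/\gamma_\Lambda(\text{cell})$.

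Here is a concrete failure. Let $d=1$ and $\Lambda=[0,\tfrac12)$. Let $\mu$ be the law of $\sum_{z\in\Z}M\delta_{z+X}$, with $X$ uniform on $[0,1)$ and an independent multiplicity $M$ with $\mathbb P(M=N)=cN^{-3}$, so that $\rho(\mu)<\infty$. Take $\eta=N\delta_X$ with $N=N_k$ and cells of side $2^{-k}$.
\begin{itemize}
\item Up to constants, $g(\eta)/g(\text{frozen})\asymp N^{-2}2^{k(N-1)-N}$.
\item $D^{(\Lambda)}_\mu$ gives mass of order $N^{-2}$ to the pairs $(\eta,x)$ with $x$ in the cell of $X$.
\end{itemize}
Hence $D^{(\Lambda)}_\mu[r_u^{-1}]\gtrsim N^{-3}2^{(k(N-1)-N)/2}$. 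This diverges for every joint choice of $k\to\infty$ and $N_k\ge2$, so $\mu_\Lambda[-\mathscr{L}_\Lambda u/u]\to-\infty$. The mixture with $\gamma_\Lambda$ only bounds $g$ from below and cannot prevent this. Your remark that ``the mixture keeps $r_k$ bounded'' holds only for fixed $k$ and gives no dominating function as $k\to\infty$. Truncating the ratio $r_k$ itself is not admissible, as you note.

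What is missing is the order of the approximations used in the paper.
\begin{enumerate}
\item Clamp the function first, $u_{n,r}=c_r(\sqrt{f_n})$ with $c_r(t)=(t\vee r^{-1})\wedge r$, so that all ratios lie in $[r^{-2},r^2]$.
\item Only then freeze $u$ on $\{\eta(\Lambda)>m\}$ and let $m\to\infty$ by dominated convergence, with bound $(1+r^2)(\norm{b}_\infty\abs{\Lambda}+\eta(\Lambda))$.
\item Remove the clamp using a domination uniform in $r$. This comes from $c_r(s)/c_r(t)\le1+s/t$ and the cell-wise Cauchy--Schwarz bound $\int\sqrt b\,r_n^\pm\le\norm{b}_\infty^{1/2}(\abs{\Lambda}\mu_\Lambda[\eta(\Lambda)])^{1/2}$.
\item Refine the partition last, by martingale convergence and Scheff\'e's lemma.
\end{enumerate}

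A secondary point concerns averaging under $\gamma_\Lambda$. Your ratio then becomes $\sqrt{D(\text{cell})/(\widetilde B(\text{cell})\,\bar b_\gamma(\text{cell}))}$, with a $\gamma_\Lambda$-averaged birth rate $\bar b_\gamma$, so you would also need uniform continuity of $b$ in the positions. The paper avoids this by averaging under $\pi_\Lambda$ and multiplying back the exact Gibbs factor, $f_n=Z_\Lambda(\emptyset)\e^{\beta\mathscr H_\Lambda}g_n$, so that $\sqrt b$ factors out exactly. With the clamp in place, the mixture is unnecessary.
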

\begin{proof}
    We first show that both sides of \eqref{equation:contraction_time_marginal_minimum_entropy_problem_hellinger_finite_volume_lower_bound_variational_formulation_using_generator} are infinite if \(\mu[\eta(\Lambda)] = \infty\). Indeed,
    \begin{equation*}
        \mu[\eta(\Lambda)]
        = D_\mu^{(\Lambda)}[1]
        \leq 4 \mathfrak{H}^2(B^{(\Lambda)}_\mu, D^{(\Lambda)}_\mu) + 2 B_\mu^{(\Lambda)}[1]
        \leq 4 \mathfrak{H}^2(B^{(\Lambda)}_\mu, D^{(\Lambda)}_\mu) + 2 \Vert b\Vert_\infty \abs{\Lambda},
    \end{equation*}
    so that \(\mathfrak{H}^2(B^{(\Lambda)}_\mu, D^{(\Lambda)}_\mu) = \infty\) if \(\mu[\eta(\Lambda)] = \infty\). For \(u_m(\eta) = \e^{\eta(\Lambda) \land m}\), we have
    \begin{equation*}
    \begin{split}
        \mu_\Lambda\Big[\frac{-\mathscr{L}_\Lambda u_m}{u_m} \Big]
        &= \mu_\Lambda\Big[(1-\e)\1_{\{\eta(\Lambda) < m\}} \int_\Lambda b(x, \eta)\, \d x + (1-\e^{-1}) \eta(\Lambda) \1_{\{\eta(\Lambda) \leq m\}} \Big] \\
        &\geq - (\e - 1) \Vert b\Vert_\infty \abs{\Lambda}  + (1-\e^{-1}) \mu\big[\eta(\Lambda) \1_{\{\eta(\Lambda) \leq m\}}\big],
    \end{split}
    \end{equation*}
    which tends to infinity as $m\to\infty$ if \(\mu[\eta(\Lambda)] = \infty\). We can therefore assume \(\mu[\eta(\Lambda)] < \infty\) from now on.

    \emph{Upper bound on the supremum.}
    Let \(u\) be measurable with \(0 < \inf u \leq \sup u < \infty\) and \(\sup \abs{\mathscr{L}_\Lambda u }< \infty\). Then, with \(r(\eta, x) = u(\eta + \delta_x)/u(\eta)\),
    \begin{equation*}
    \begin{split}
        \mu_\Lambda\Big[\frac{-\mathscr{L}_\Lambda u}{u} \Big]
        &= \int  [1-r(\eta, x)] \,B^{(\Lambda)}_\mu(\d\eta, \d x) + \int [1 - r(\eta, x)^{-1}]\,D^{(\Lambda)}_\mu(\d \eta, \d x) \\
        &= B^{(\Lambda)}_\mu[1] + D^{(\Lambda)}_\mu[1] - \int \big\{ r(\eta, x)\, B^{(\Lambda)}_\mu(\d\eta, \d x) + r(\eta, x)^{-1} D^{(\Lambda)}_\mu(\d \eta, \d x) \big\}  \\
        &\leq B^{(\Lambda)}_\mu[1] + D^{(\Lambda)}_\mu[1] - 2\int  \sqrt{B^{(\Lambda)}_\mu(\d\eta, \d x)\,   D^{(\Lambda)}_\mu(\d \eta, \d x)}
        = 2 \mathfrak{H}^2(B^{(\Lambda)}_\mu, D^{(\Lambda)}_\mu),
    \end{split}
    \end{equation*}
    where we used $rs+r^{-1}t\ge2\sqrt{st}$ for the densities $s,t$ of the two measures with respect to their sum.

    \emph{Lower bound on the supremum.}
     We extend \eqref{equation:contraction_time_marginal_minimum_entropy_problem_hellinger_finite_volume_lower_bound_variational_formulation_using_generator_regular_mu} to singular \(\mu_\Lambda\) by approximating with absolutely continuous measures \(\mu_\Lambda^{(n)}\).
    Partition \(\Lambda\) into cells \(C_1^{(n)}, \dots, C_{2^{nd}}^{(n)}\) of positive volume, using dyadic partitions of \(\R^d\) to ensure that the partitions are nested. Define \(\mu_\Lambda^{(n)}\) by first sampling from \(\mu_\Lambda\) and then resampling the particle positions in each cell uniformly and independently, as in the smoothing of \zcref{subsec:lb-truncation}.
    Let
    \begin{equation*}
        \mathfrak{C}_n(\eta)
        := \big\{\widetilde{\eta} \in \Omega_\Lambda \,\colon\,  \widetilde{\eta}(C_j^{(n)}) =  \eta(C_j^{(n)})\ \text{for all } 1 \leq j \leq 2^{nd}\big\}.
    \end{equation*}
    An everywhere-defined version of \(\d \mu_\Lambda^{(n)}/\d \pi_\Lambda\) is given by
    \begin{equation*}
        g_n(\eta)
        := \frac{\mu_\Lambda(\mathfrak{C}_n(\eta))}{\pi_\Lambda(\mathfrak{C}_n(\eta))},
    \end{equation*}
    and an everywhere-defined version of \(\d \mu_\Lambda^{(n)}/\d \gamma_\Lambda\) by
    \begin{equation*}
        f_n(\eta)
        := Z_\Lambda(\emptyset) \e^{\beta\mathscr{H}_\Lambda(\eta)} g_n(\eta).
    \end{equation*}
    We show that
    \begin{equation}\label{equation:contraction_time_marginal_minimum_entropy_problem_hellinger_finite_volume_lower_bound_variational_formulation_using_generator_maximizing_sequence}
        \mu_\Lambda\Big[\frac{-\mathscr{L}_\Lambda \sqrt{f_n}}{\sqrt{f_n}}\Big] \xrightarrow[n \to \infty]{} 2\mathfrak{H}^2(B_\mu^{(\Lambda)}, D_\mu^{(\Lambda)}).
    \end{equation}
    First, observe that
    \begin{equation*}
    \begin{split}
        &\mu_\Lambda\Big[\frac{-\mathscr{L}_\Lambda \sqrt{f_n}}{\sqrt{f_n}}\Big]
        = B_\mu^{(\Lambda)}[1] + D_\mu^{(\Lambda)}[1] - \int \sqrt{\frac{f_n(\eta + \delta_x)}{f_n(\eta)}}  \, B_\mu^{(\Lambda)}(\d\eta, \d x) - \int \sqrt{\frac{f_n(\eta)}{f_n(\eta + \delta_x)}}  \, D_\mu^{(\Lambda)}(\d\eta, \d x) \\
        &\qquad= B_\mu^{(\Lambda)}[1] + D_\mu^{(\Lambda)}[1] - \int \sqrt{b(x, \eta)} \, r_n^+(\eta, x) \, \widetilde{B}_\mu^{(\Lambda)}(\d\eta, \d x) - \int \sqrt{b(x, \eta)} \, r_n^-(\eta, x) \, D_\mu^{(\Lambda)}(\d\eta, \d x),
    \end{split}
    \end{equation*}
    with
    \begin{equation*}
        \widetilde{B}_\mu^{(\Lambda)}(\d\eta, \d x)
        = \mu_\Lambda(\d \eta) \, \1_\Lambda(x) \, \d x
    \end{equation*}
    and
    \begin{equation*}
        r_n^+(\eta, x)
        = \begin{cases}
            \sqrt{\frac{g_n(\eta + \delta_x)}{g_n(\eta)}}, &g_n(\eta) > 0,\\
            0, &g_n(\eta) = 0,
        \end{cases}
        \qquad
        r_n^-(\eta, x)
        = \begin{cases}
            \sqrt{\frac{g_n(\eta)}{g_n(\eta + \delta_x)}}, &g_n(\eta + \delta_x) > 0,\\
            0, &g_n(\eta + \delta_x) = 0.
        \end{cases}
    \end{equation*}
    By the boundedness of \(b\), it suffices for \eqref{equation:contraction_time_marginal_minimum_entropy_problem_hellinger_finite_volume_lower_bound_variational_formulation_using_generator_maximizing_sequence} to show that
    \begin{equation}\label{equation:contraction_time_marginal_minimum_entropy_problem_hellinger_finite_volume_lower_bound_variational_formulation_using_generator_measure_convergence}
    \begin{split}
        r_n^+(\eta, x) \, \widetilde{B}_\mu^{(\Lambda)}(\d\eta, \d x)
        &\xrightarrow[n \to \infty]{} \sqrt{\widetilde{B}_\mu^{(\Lambda)}(\d\eta, \d x)\,  D_\mu^{(\Lambda)}(\d\eta, \d x)}, \\
        r_n^-(\eta, x) \, D_\mu^{(\Lambda)}(\d\eta, \d x)
        &\xrightarrow[n \to \infty]{} \sqrt{\widetilde{B}_\mu^{(\Lambda)}(\d\eta, \d x) \, D_\mu^{(\Lambda)}(\d\eta, \d x)}
    \end{split}
    \end{equation}
    in total variation.
    Writing \(C^{(n)}(x)\) for the unique cell \(C_j^{(n)}\) with \(x \in C_j^{(n)}\), we have
    \begin{equation*}
    \begin{split}
        \widetilde{B}^{(\Lambda)}_\mu(\mathfrak{C}_n(\eta) \times  C^{(n)}(x))
        &= \abs{C^{(n)}(x)}\, \mu_\Lambda(\mathfrak{C}_n(\eta)), \\
        D^{(\Lambda)}_\mu(\mathfrak{C}_n(\eta) \times C^{(n)}(x))
        &= [\eta(C^{(n)}(x)) + 1]\, \mu_\Lambda(\mathfrak{C}_n(\eta + \delta_x)),
    \end{split}
    \end{equation*}
    and therefore, if the denominator is non-zero,
    \begin{equation*}
    \begin{split}
        r^+_n(\eta, x)
        &= \sqrt{\tfrac{\mu_\Lambda(\mathfrak{C}_n(\eta + \delta_x))}{\mu_\Lambda(\mathfrak{C}_n(\eta))}} \sqrt{\tfrac{\pi_\Lambda(\mathfrak{C}_n(\eta))}{\pi_\Lambda(\mathfrak{C}_n(\eta + \delta_x))}}= \sqrt{\tfrac{\mu_\Lambda(\mathfrak{C}_n(\eta + \delta_x))}{\mu_\Lambda(\mathfrak{C}_n(\eta))}} \sqrt{\tfrac{\eta(C^{(n)}(x)) + 1}{\abs{C^{(n)}(x)}}}
        = \sqrt{\tfrac{D^{(\Lambda)}_\mu(\mathfrak{C}_n(\eta)\times  C^{(n)}(x))}{\widetilde{B}^{(\Lambda)}_\mu(\mathfrak{C}_n(\eta)\times  C^{(n)}(x))}},
    \end{split}
    \end{equation*}
    and analogously for \(r_n^-\). Rewriting everything in terms of densities with respect to \(D^{(\Lambda)}_\mu + \widetilde{B}^{(\Lambda)}_\mu\), we see that \eqref{equation:contraction_time_marginal_minimum_entropy_problem_hellinger_finite_volume_lower_bound_variational_formulation_using_generator_measure_convergence} holds by the classical martingale convergence argument for the Radon--Nikodym theorem and Scheff\'e's lemma.
Finally, $\sqrt{f_n}$ is in general neither bounded away from zero nor such that
$\sup|\mathscr{L}_\Lambda\sqrt{f_n}|<\infty$, and we remedy this by truncation. For
$r\ge1$ let $c_r(t):=(t\vee r^{-1})\wedge r$, $t\ge0$, and for $m\in\mathbb{N}$ set
\[
  u_{n,r}:=c_r\bigl(\sqrt{f_n}\bigr),\qquad
  u_{n,m,r}(\eta):=u_{n,r}(\eta)\,\1_{\{\eta(\Lambda)\le m\}}
  +\1_{\{\eta(\Lambda)>m\}}.
\]
For fixed $m,r$, the function $u_{n,m,r}$ takes values in $[r^{-1},r]$, and
$\mathscr{L}_\Lambda u_{n,m,r}$ vanishes on $\{\eta(\Lambda)>m+1\}$ and is bounded
elsewhere, so $u_{n,m,r}$ satisfies the requirements in~\eqref{equation:contraction_time_marginal_minimum_entropy_problem_hellinger_finite_volume_lower_bound_variational_formulation_using_generator}.
We show that
\begin{equation}\label{eq:trunc-limit}
  \mu_\Lambda\Big[\frac{-\mathscr{L}_\Lambda\sqrt{f_n}}{\sqrt{f_n}}\Big]
  =\lim_{r\to\infty}\lim_{m\to\infty}
   \mu_\Lambda\Big[\frac{-\mathscr{L}_\Lambda u_{n,m,r}}{u_{n,m,r}}\Big].
\end{equation}
For any positive function $u$ on $\Omega_\Lambda$,
\begin{equation}\label{eq:gen-ratio}
  \frac{-\mathscr{L}_\Lambda u}{u}(\eta)
  =\int_\Lambda\mathrm{d}x\,b(x,\eta)\Bigl[1-\frac{u(\eta+\delta_x)}{u(\eta)}\Bigr]
  +\sum_{x\in\eta}\Bigl[1-\frac{u(\eta-\delta_x)}{u(\eta)}\Bigr].
\end{equation}
For fixed $r$, all ratios in \eqref{eq:gen-ratio}
with $u=u_{n,m,r}$ lie in $[r^{-2},r^2]$, so the integrand is bounded by
$(1+r^2)(\|b\|_\infty|\Lambda|+\eta(\Lambda))$, which is $\mu_\Lambda$-integrable.
Moreover, $u_{n,m,r}$ and $u_{n,r}$ agree at $\eta$ and at all $\eta\pm\delta_x$ as
soon as $\eta(\Lambda)<m$. By dominated convergence, the inner limit in
\eqref{eq:trunc-limit} equals $\mu_\Lambda[-\mathscr{L}_\Lambda u_{n,r}/u_{n,r}]$.

For the limit $r\to\infty$, the ratios in \eqref{eq:gen-ratio} are of order
$r^2$ in general, and we use instead the elementary inequality
\begin{equation}\label{eq:clamp}
  \frac{c_r(s)}{c_r(t)}\le 1+\frac{s}{t},\qquad s\ge0,\ t>0,\ r\ge1.
\end{equation}
Indeed, if $t\in[r^{-1},r]$, then $c_r(s)\le s+r^{-1}\le s+t$; if $t<r^{-1}$, then
$c_r(s)/c_r(t)=r\,c_r(s)\le\max\{rs,1\}\le s/t+1$; and if $t>r$, the ratio is at most
one. Since $g_n(\eta)=0$ means $\mu_\Lambda(\mathfrak{C}_n(\eta))=0$ and there are only
countably many sets $\mathfrak{C}_n(\cdot)$, we have $f_n>0$ $\mu_\Lambda$-almost
surely. By \eqref{eq:clamp}, every bracket in \eqref{eq:gen-ratio} with $u=u_{n,r}$ is
then bounded in absolute value by $2+\sqrt{f_n(\zeta)/f_n(\eta)}$, $\zeta=\eta\pm\delta_x$,
uniformly in $r$. The resulting dominating function is $\mu_\Lambda$-integrable: by the
computation following \eqref{equation:contraction_time_marginal_minimum_entropy_problem_hellinger_finite_volume_lower_bound_variational_formulation_using_generator_maximizing_sequence}, its integral equals
\[
  2B^{(\Lambda)}_\mu[1]+2D^{(\Lambda)}_\mu[1]
  +\int\sqrt{b}\,r_n^+\,\mathrm{d}\widetilde{B}^{(\Lambda)}_\mu
  +\int\sqrt{b}\,r_n^-\,\mathrm{d}D^{(\Lambda)}_\mu .
\]
The functions $r_n^\pm$ are constant on each of the countably many sets
$\mathfrak{C}_n(\eta)\times C^{(n)}(x)$, with values
$(D^{(\Lambda)}_\mu/\widetilde{B}^{(\Lambda)}_\mu)^{1/2}$ and
$(\widetilde{B}^{(\Lambda)}_\mu/D^{(\Lambda)}_\mu)^{1/2}$ of that set, so the
Cauchy--Schwarz inequality over these sets bounds each of the last two integrals by
\[
  \|b\|_\infty^{1/2}\bigl(\widetilde{B}^{(\Lambda)}_\mu[1]\,D^{(\Lambda)}_\mu[1]\bigr)^{1/2}
  =\|b\|_\infty^{1/2}\bigl(|\Lambda|\,\mu_\Lambda[\eta(\Lambda)]\bigr)^{1/2}<\infty.
\]
Finally, $u_{n,r}(\zeta)\to\sqrt{f_n(\zeta)}$ for every $\zeta$ as $r\to\infty$, and
$u_{n,r}(\eta)=\sqrt{f_n(\eta)}$ for all large $r$ whenever $f_n(\eta)>0$. Hence the
integrand converges $\mu_\Lambda$-almost everywhere to
$-\mathscr{L}_\Lambda\sqrt{f_n}/\sqrt{f_n}$, and dominated convergence proves
\eqref{eq:trunc-limit}. Together with \eqref{equation:contraction_time_marginal_minimum_entropy_problem_hellinger_finite_volume_lower_bound_variational_formulation_using_generator_maximizing_sequence}, this shows that the
supremum in \eqref{equation:contraction_time_marginal_minimum_entropy_problem_hellinger_finite_volume_lower_bound_variational_formulation_using_generator} is at least
$2\mathfrak{H}^2(B^{(\Lambda)}_\mu,D^{(\Lambda)}_\mu)$.
    % \newpage
    % Finally, $\sqrt{f_n}$ is in general neither bounded away from zero nor such that \(\sup \abs{\mathscr{L}_\Lambda \sqrt{f_n}} < \infty\), and we remedy this by another approximation step. By dominated convergence,
    % \begin{equation*}
    %     \mu_\Lambda\Big[\frac{-\mathscr{L}_\Lambda \sqrt{f_n}}{\sqrt{f_n}}\Big]
    %     = \lim_{r \to \infty} \lim_{m \to \infty} \mu_\Lambda\Big[\frac{-\mathscr{L}_\Lambda u_{n, m, r}}{u_{n, m, r}}\Big]
    % \end{equation*}
    % for
    % \begin{equation*}
    %     u_{n, m, r}(\eta)
    %     = \big[(\sqrt{f_n} \lor r^{-1}) \land r\big]\1_{\{\eta(\Lambda) \leq m\}} + \1_{\{\eta(\Lambda) > m\}},
    % \end{equation*}
    % which, for fixed \(m, r \geq 1\), satisfies the requirements in~\eqref{equation:contraction_time_marginal_minimum_entropy_problem_hellinger_finite_volume_lower_bound_variational_formulation_using_generator}.
\end{proof}

We put the variational characterisation to use with the following entropy bound.
\begin{lemma}[Entropy bound in generator form]\label{lemma:contraction_time_marginal_minimum_entropy_problem_hellinger_finite_volume_lower_bound_generator_form}
    Let \(P_{[0,T]}\) be as in \zcref{lemma:contraction_time_marginal_minimum_entropy_problem_hellinger_finite_volume_lower_bound}, and let \(u \colon \Omega_\Lambda \to [0, \infty)\) be measurable with \(0 < \inf u \leq \sup u < \infty\) and \(\sup \abs{\mathscr{L}_\Lambda u }< \infty\).
    Then
    \begin{equation}\label{equation:contraction_time_marginal_minimum_entropy_problem_hellinger_finite_volume_lower_bound_generator_form}
        T\mu_\Lambda\Big[\frac{-\mathscr{L}_\Lambda u}{u}\Big]
        \leq \Ent(P_{[0, T]} \vert \mu_\Lambda Q^{\Lambda,\beta}).
    \end{equation}
\end{lemma}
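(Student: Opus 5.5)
The plan is to use the exponential martingale associated with $u$ under the reference dynamics, and then the variational formula~\eqref{def:entropy-variational-formulation}. Under $\mu_\Lambda Q^{\Lambda,\beta}$, the process
\begin{equation*}
    M_t := \frac{u(\omega_t)}{u(\omega_0)}\exp\Big(-\int_0^t \frac{\mathscr{L}_\Lambda u}{u}(\omega_s)\,\d s\Big),\qquad 0\le t\le T,
\end{equation*}
is a positive local martingale with $M_0=1$. This holds because $\mathscr{L}_\Lambda$ is the generator of a finite-volume birth-and-death process with bounded birth rate. Since $u$ is bounded away from zero and infinity and $\mathscr{L}_\Lambda u$ is bounded, $M_t$ is bounded on $[0,T]$ by $(\sup u/\inf u)\,\e^{T\sup|\mathscr{L}_\Lambda u|/\inf u}$. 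Hence $M$ is a true martingale and $(\mu_\Lambda Q^{\Lambda,\beta})[M_T]=1$.

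To verify the martingale property, I would apply Dynkin's formula to $u$, using the compensator of births, $b(x,\omega_{t-})\,\d x\,\d t$, and of deaths, which occur at rate one per particle. This gives that $u(\omega_t)-u(\omega_0)-\int_0^t\mathscr{L}_\Lambda u(\omega_s)\,\d s$ is a local martingale. Localisation uses the stopping times at which the particle number exceeds $m$, and these times tend to infinity almost surely because the number of births in $[0,T]$ is dominated by a Poisson variable. The exponential martingale then follows from the product rule with the finite-variation factor $\exp(-\int_0^t \mathscr{L}_\Lambda u/u)$. Boundedness of $u$, $1/u$ and $\mathscr{L}_\Lambda u$ is what makes all the integrals finite.

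Next I would apply the variational formula to the bounded measurable function $\log M_T$. This function is bounded because its three ingredients are. The formula gives
\begin{equation*}
    \Ent(P_{[0,T]}\vert\mu_\Lambda Q^{\Lambda,\beta}) \ge P_{[0,T]}[\log M_T] - \log(\mu_\Lambda Q^{\Lambda,\beta})[M_T] = P_{[0,T]}[\log M_T].
\end{equation*}
The formula is stated for $f\ge0$, but the general bounded version follows by adding a constant. Expanding the logarithm,
\begin{equation*}
    P_{[0,T]}[\log M_T] = P_{[0,T]}[\log u(\omega_T)] - P_{[0,T]}[\log u(\omega_0)] + \int_0^T P_{[0,T]}\Big[\frac{-\mathscr{L}_\Lambda u}{u}(\omega_s)\Big]\d s .
\end{equation*}
By hypothesis all time marginals of $P_{[0,T]}$ equal $\mu_\Lambda$. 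The endpoint terms therefore cancel, since $\log u$ is bounded, and Fubini makes each integrand equal to $\mu_\Lambda[-\mathscr{L}_\Lambda u/u]$. This yields~\eqref{equation:contraction_time_marginal_minimum_entropy_problem_hellinger_finite_volume_lower_bound_generator_form}. If the entropy is infinite there is nothing to prove, and if it is finite then $P_{[0,T]}$ is concentrated on legal paths, so $M_T$ is defined $P_{[0,T]}$-almost surely.

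The main obstacle I expect is justifying rigorously that $M$ is a true martingale under the finite-volume dynamics when the state space has unbounded particle number. This requires the localisation argument above together with a careful identification of the compensators, for instance via the graphical construction or the compensator statement already used in \zcref{proposition:change_of_measure_a_la_girsanov}. Once that is in place, the rest is a direct computation.
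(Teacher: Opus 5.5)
Your proposal is correct and follows the same route as the paper: the exponential martingale $Z_t=\frac{u(\omega_t)}{u(\omega_0)}\exp\bigl(-\int_0^t \frac{\mathscr{L}_\Lambda u}{u}(\omega_s)\,\d s\bigr)$ under $\mu_\Lambda Q^{\Lambda,\beta}$, the entropy inequality applied to the bounded function $\log Z_T$, and the cancellation of the endpoint terms because all time marginals equal $\mu_\Lambda$. The only difference is that the paper cites Ethier--Kurtz (Chapter~4, Lemma~3.2) for the martingale property. You instead sketch it yourself via Dynkin's formula, localisation in the particle number, and boundedness of $Z$ on $[0,T]$, and that sketch is sound.
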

\begin{proof}
    We use the entropy inequality \(\mathbb{P}[f] \leq \Ent(\mathbb{P} \vert \mathbb{Q}) + \log \mathbb{Q}[\e^f] \leq \Ent(\mathbb{P} \vert \mathbb{Q})\) for measurable $f$ with \(\mathbb{Q}[\e^f] \leq 1\), with \(\mathbb{P} = P_{[0, T]}\) and \(\mathbb{Q} = \mu_\Lambda Q^{\Lambda,\beta}\).
    Since all time marginals of $P_{[0,T]}$ are equal to \(\mu_\Lambda\), the left-hand side of~\eqref{equation:contraction_time_marginal_minimum_entropy_problem_hellinger_finite_volume_lower_bound_generator_form} equals
    \begin{equation*}
        \int_0^T P\Big[\frac{(-\mathscr{L}_\Lambda u)(\omega_t)}{u(\omega_t)}\Big]\, \d t
        = P\Big[\log\frac{u(\omega_T)}{u(\omega_0)} -\int_0^T \frac{\mathscr{L}_\Lambda u(\omega_t)}{u(\omega_t)}\, \d t\Big]
        = P[\log Z_T],
    \end{equation*}
    where
    \begin{equation*}
        Z_t
        = \frac{u(\omega_t)}{u(\omega_0)} \exp\Big( -\int_0^t \frac{\mathscr{L}_\Lambda u(\omega_s)}{u(\omega_s)}\, \d s\Big).
    \end{equation*}
    It is classical, see e.g.~\cite[Chapter~4, Lemma~3.2]{EK86}, that \((Z_t)_{t \in [0, T]}\) is a martingale under the Markov process \(\mu_\Lambda Q^{\Lambda,\beta}\) with generator \(\mathscr{L}_\Lambda\) on bounded measurable functions. Hence
    \((\mu_\Lambda Q^{\Lambda,\beta})[\e^{\log Z_T}]
        = (\mu_\Lambda Q^{\Lambda,\beta})[Z_0] = 1\),
    and the entropy inequality yields \eqref{equation:contraction_time_marginal_minimum_entropy_problem_hellinger_finite_volume_lower_bound_generator_form}.
\end{proof}

\begin{proof}[Proof of \zcref{lemma:contraction_time_marginal_minimum_entropy_problem_hellinger_finite_volume_lower_bound}]
    Combining \zcref{lemma:contraction_time_marginal_minimum_entropy_problem_hellinger_finite_volume_lower_bound_variational_formulation_using_generator,lemma:contraction_time_marginal_minimum_entropy_problem_hellinger_finite_volume_lower_bound_generator_form}, we obtain
    \begin{equation*}
        2\mathfrak{H}^2(B_\mu^{(\Lambda)}, D_\mu^{(\Lambda)})
        = \sup_u \mu_\Lambda\Big[\frac{-\mathscr{L}_\Lambda u}{u} \Big]
        \leq \frac{1}{T}\Ent(P_{[0, T]} \vert \mu_\Lambda Q^{\Lambda,\beta}),
    \end{equation*}
    where the supremum is over all $u$ as in \eqref{equation:contraction_time_marginal_minimum_entropy_problem_hellinger_finite_volume_lower_bound_variational_formulation_using_generator}.
\end{proof}

\subsection{Attainment of the lower bound}\label{section:lemma:contraction_time_marginal_minimum_entropy_problem_attainment_of_lower_bound}
In this subsection we prove \zcref{lemma:contraction_time_marginal_minimum_entropy_problem_hellinger_existence_of_minimizer_in_infinite_volume}. We first construct a process in finite volume that attains the lower bound, and then build a sequence of infinite-volume processes from these finite-volume processes, from which we extract a minimiser by compactness.
\begin{lemma}[Finite-volume attainment]\label{lemma:contraction_time_marginal_minimum_entropy_problem_hellinger_finite_volume_attainment}
    Let \(\mu \in \PP(\Om)\) with $\rho(\mu)<\infty$, and let \(\Lambda \Subset \R^d\). Then there is a time-stationary probability measure \(W^{(\Lambda)}\) on \(D([0, \infty); \Omega_\Lambda)\) with time marginals \(W^{(\Lambda)}_t = \mu_\Lambda\), \(t \geq 0\), such that
    \begin{equation}\label{equation:contraction_time_marginal_minimum_entropy_problem_hellinger_finite_volume_attainment}
        \Ent\big(W^{(\Lambda)}_{[0, T]} \big\vert \mu_\Lambda Q^{\Lambda,\beta}\big)
        = 2T \mathfrak{H}^2(B_\mu^{(\Lambda)}, D_\mu^{(\Lambda)}), \qquad T > 0.
    \end{equation}
\end{lemma}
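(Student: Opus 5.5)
The plan is to construct $W^{(\Lambda)}$ as a reversible jump process on $\Omega_\Lambda$ whose birth measure is the geometric mean $\sqrt{B_\mu^{(\Lambda)}D_\mu^{(\Lambda)}}$. No Papangelou intensity is needed: the rates are read off directly from the measures $B_\mu^{(\Lambda)}$ and $D_\mu^{(\Lambda)}$. Set $G:=\sqrt{B_\mu^{(\Lambda)}D_\mu^{(\Lambda)}}$ on $\Omega_\Lambda\times\Lambda$, and disintegrate it in two ways. First, with respect to $\mu_\Lambda(\d\eta)$, giving a birth kernel $q(\eta,\d x)$ with $\mu_\Lambda(\d\eta)q(\eta,\d x)=G(\d\eta,\d x)$. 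Second, after the change of variables $(\eta,x)\mapsto(\eta+\delta_x,x)$, with respect to $\mu_\Lambda$ again, giving death rates $d(\zeta,x)$ for $x\in\zeta$. Both disintegrations exist because $G\le B_\mu^{(\Lambda)}$ and $G\le D_\mu^{(\Lambda)}$, which have $\eta$-marginals absolutely continuous with respect to $\mu_\Lambda$, respectively $\mu_\Lambda$ composed with the reduced map.

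The pure-jump generator $\mathcal A g(\eta)=\int q(\eta,\d x)[g(\eta+\delta_x)-g(\eta)]+\sum_{x\in\eta}d(\eta,x)[g(\eta-\delta_x)-g(\eta)]$ then satisfies the detailed balance relation $\mu_\Lambda(\d\eta)q(\eta,\d x)=G=$ the reversed death flux. So $\mu_\Lambda$ is reversible, provided the process is non-explosive. For non-explosion I would use that the total jump rate has finite $\mu_\Lambda$-mean, namely $2G[1]\le\|b\|_\infty|\Lambda|+\rho(\mu)|\Lambda|<\infty$. I would also use the standard fact that a stationary minimal jump chain whose jump rate is integrable under its initial law does not explode: the expected number of jumps in $[0,T]$ is $T\cdot\mathbb E_{\mu}[\text{rate}]<\infty$. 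Start the chain from $\mu_\Lambda$; by reversibility and finite expected jumps, $W^{(\Lambda)}$ is time-stationary with marginal $\mu_\Lambda$.

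For the entropy, I apply the Girsanov formula of \zcref{proposition:change_of_measure_a_la_girsanov}, extended to general jump kernels and death rates; the moment bound is not needed, only the density formula on the event of finitely many jumps. Under the reference measure $\mu_\Lambda Q^{\Lambda,\beta}$ the birth intensity is $b(x,\eta)\,\d x$ and the death rate is $1$. The log-density of $W^{(\Lambda)}$ over $[0,T]$ is
\begin{equation*}
\int_0^T\d t\Big(\int[b(x,\omega_t)\d x-q(\omega_t,\d x)]+\sum_{x\in\omega_t}(1-d(\omega_t,x))\Big)+\sum_{\text{births}}\log\tfrac{\d q}{b\,\d x}+\sum_{\text{deaths}}\log d .
\end{equation*}
Taking expectations with the compensators and using stationarity gives $T$ times $\int\ell(\d G/\d B\,|\,1)\,\d B+\int\ell(\d G/\d D\,|\,1)\,\d D$, where $\ell(s|t)=s\log(s/t)-s+t$. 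Writing $s,t$ for the densities of $B,D$ with respect to $B+D$, we have $\d G/\d B=\sqrt{t/s}$. The two log terms cancel, since $\sqrt{st}\log\sqrt{t/s}+\sqrt{st}\log\sqrt{s/t}=0$. What remains is $\int(s+t-2\sqrt{st})=2\mathfrak H^2$.

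A singular part of $B$ relative to $D$, where $G$ vanishes, is harmless: there $\ell(0|1)=1$, so it still contributes its mass. A singular part of $D$ relative to $B$ needs more care, since there the death rate vanishes. The formula above requires $q\ll b\,\d x$, which holds because $G\ll B$. Integrability of the log-terms follows from $G[1]<\infty$ and $x|\log x|$ being controlled by $\ell$. I expect the main obstacle to be this rigorous justification of the Girsanov density for general kernels: showing that the density martingale is a true martingale and that $W^{(\Lambda)}\ll\mu_\Lambda Q^{\Lambda,\beta}$ despite unbounded, possibly vanishing rates. I would handle it by localising at the stopping times when $\omega_t(\Lambda)$ exceeds $m$. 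On each localised piece the entropy identity holds by the chain rule, and I would then let $m\to\infty$ using lower semicontinuity for one inequality. For the other inequality, I would combine \zcref{lemma:contraction_time_marginal_minimum_entropy_problem_hellinger_finite_volume_lower_bound}, which already gives "$\ge$", with monotone convergence of the explicit cost to obtain "$\le$".
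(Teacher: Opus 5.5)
Your proposal is correct and is essentially the paper's proof. Your kernels $q$ and $d$ are exactly the paper's rates $b\sqrt{(1-p_\Lambda)/p_\Lambda}$ and $\sqrt{p_\Lambda/(1-p_\Lambda)}$ with $p_\Lambda=\d B_\mu^{(\Lambda)}/\d(B_\mu^{(\Lambda)}+D_\mu^{(\Lambda)})$, non-explosion rests on $\mu_\Lambda[k_\Lambda(\eta,\Omega_\Lambda)]=2\sqrt{B_\mu^{(\Lambda)}D_\mu^{(\Lambda)}}[1]<\infty$, and the entropy is the same compensator computation in which the logarithmic terms cancel and $B_\mu^{(\Lambda)}[1]+D_\mu^{(\Lambda)}[1]-2G[1]=2\mathfrak{H}^2(B_\mu^{(\Lambda)},D_\mu^{(\Lambda)})$ remains; invoking the finite-volume lower bound for ``$\ge$'' is a valid shortcut that the paper does not need.

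Three small repairs are needed.
\begin{enumerate}[(i)]
\item $G\le B_\mu^{(\Lambda)}$ is false in general: a Poisson process of intensity $\lambda>1$ at $\beta=0$ gives $G=\sqrt{\lambda}\,B_\mu^{(\Lambda)}$. However, $G\ll B_\mu^{(\Lambda)}$ and $G\ll D_\mu^{(\Lambda)}$ are all you actually use.
\item As phrased, your non-explosion step presupposes the stationarity it is meant to deliver. The non-circular input, as in the paper, is that the sub-Markovian kernels of the killed process are already $\mu_\Lambda$-symmetric, being increasing limits of the kernels allowing at most $N$ jumps. They are therefore $\mu_\Lambda$-subinvariant, which bounds the expected number of jumps in $[0,T]$ before explosion by $T\mu_\Lambda[k_\Lambda(\eta,\Omega_\Lambda)]$.
\item Integrability of the log terms follows directly from $\sup_{p\in(0,1)}\sqrt{p(1-p)}\,\abs{\log((1-p)/p)}\le 2/\e$. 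The density itself comes from the jump-hold construction once $k_\Lambda(\eta,\cdot)$ is absolutely continuous with respect to the reference jump kernel. Hence no localisation is needed; localising by particle number would not bound $q$ and $d$ anyway.
\end{enumerate}
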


The construction of the process in \zcref{lemma:contraction_time_marginal_minimum_entropy_problem_hellinger_finite_volume_attainment} is guided by the optimal birth and death rates~\eqref{equation:contraction_time_marginal_entropy_for_B_D_process_optimal_rates} for regular \(\mu\). In that case, the jump kernel \(k_\Lambda(\eta, \d \zeta)\) would be given by
\begin{equation*}
    \int k_\Lambda(\eta, \d\zeta) f(\zeta)
    = \int_\Lambda \sqrt{b(x, \eta) \kappa_\Lambda(x, \eta)}\, f(\eta + \delta_x)  \, \d x
    + \sum_{x \in \eta_\Lambda}  \sqrt{\frac{b(x, \eta - \delta_x)}{\kappa_\Lambda(x, \eta - \delta_x)}}\, f(\eta - \delta_x),
\end{equation*}
where \(\kappa_\Lambda\) is the Papangelou intensity of \(\mu_\Lambda\).
In terms of
\begin{equation*}
    p_\Lambda
    = \frac{\d B^{(\Lambda)}_\mu}{\d(B^{(\Lambda)}_\mu + D^{(\Lambda)}_\mu)},
\end{equation*}
we have \(\kappa_\Lambda/b_\Lambda = (1 - p_\Lambda)/p_\Lambda\), where $b_\Lambda(x,\eta)=b(x,\eta_\Lambda)$, since \(b_\Lambda/\kappa_\Lambda = \d B^{(\Lambda)}_\mu/\d D^{(\Lambda)}_\mu\). Consequently, the right-hand side of
\begin{equation*}
    \int k_\Lambda(\eta, \d\zeta) f(\zeta)
    = \int_\Lambda b(x, \eta) \sqrt{\frac{1 - p_\Lambda(\eta, x)}{p_\Lambda(\eta, x)}}\, f(\eta + \delta_x)  \, \d x
    + \sum_{x \in \eta_\Lambda}  \sqrt{\frac{p_\Lambda(\eta - \delta_x, x)}{1- p_\Lambda(\eta - \delta_x, x)}}\, f(\eta - \delta_x)
\end{equation*}
makes sense even if \(\mu_\Lambda\) has no Papangelou intensity. The resulting jump kernel satisfies the detailed balance condition \(\mu_\Lambda(\d \eta) k_\Lambda(\eta, \d\zeta) = \mu_\Lambda(\d\zeta) k_\Lambda(\zeta, \d\eta)\). Indeed,
\begin{equation}\label{equation:contraction_time_marginal_minimum_entropy_problem_hellinger_finite_volume_attainment_symmetry_density_calculation}
    \sqrt{\frac{1- p_\Lambda}{p_\Lambda}}\, B^{(\Lambda)}_\mu
    = \sqrt{p_\Lambda(1 - p_\Lambda)}\, (B^{(\Lambda)}_\mu + D^{(\Lambda)}_\mu)
    = \sqrt{B^{(\Lambda)}_\mu  D^{(\Lambda)}_\mu},
    \qquad \sqrt{\frac{p_\Lambda}{1 - p_\Lambda}}\, D^{(\Lambda)}_\mu
    = \sqrt{B^{(\Lambda)}_\mu  D^{(\Lambda)}_\mu},
\end{equation}
and hence
\begin{equation}\label{equation:contraction_time_marginal_minimum_entropy_problem_hellinger_finite_volume_attainment_symmetry_calculation}
\begin{split}
    \int \mu_\Lambda(\d\eta) \int k_\Lambda(\eta, \d\zeta) f(\eta, \zeta)
    &= \int B_\mu^{(\Lambda)}(\d\eta, \d x)\, f(\eta, \eta + \delta_x) \sqrt{\frac{1 - p_\Lambda(\eta, x)}{p_\Lambda(\eta, x)}} \\
    &\qquad + \int D_\mu^{(\Lambda)}(\d\eta, \d x)\, f(\eta + \delta_x, \eta) \sqrt{\frac{p_\Lambda(\eta, x)}{1- p_\Lambda(\eta, x)}}  \\
    &= \int \sqrt{B^{(\Lambda)}_\mu  D^{(\Lambda)}_\mu}(\d\eta, \d x)\, \big[f(\eta, \eta + \delta_x) + f(\eta + \delta_x, \eta)\big],
\end{split}
\end{equation}
which is symmetric under exchanging $\eta$ and $\zeta$ in $f(\eta,\zeta)$.

The following technical lemma shows that this gives a well-defined Markov jump process.
\begin{lemma}[The optimal process in finite volume]\label{lemma:optimal-jump-process}
    Let \(\mu_\Lambda\) satisfy \(\mu_\Lambda[\eta(\Lambda)] < \infty\), and fix an everywhere-defined version of \(p_\Lambda\) with values in \([0, 1]\).
    Then there is a non-explosive birth-and-death Markov jump process \(W^{(\Lambda)}\) with values in \(D([0, \infty); \Om_\Lambda)\), started from \(\mu_\Lambda\), whose jump kernel coincides, for \(\mu_\Lambda\)-almost every \(\eta\), with
    \begin{equation}\label{equation:equation:contraction_time_marginal_minimum_entropy_problem_hellinger_finite_volume_attainment_jump_kernel_definition}
        \int k_\Lambda(\eta, \d\zeta) f(\zeta)
        = \int_\Lambda \d x\, b(x, \eta) \sqrt{\frac{1 - p_\Lambda(\eta, x)}{p_\Lambda(\eta, x)}}\, f(\eta + \delta_x)
        + \sum_{x \in \eta_\Lambda}  \sqrt{\frac{p_\Lambda(\eta - \delta_x, x)}{1- p_\Lambda(\eta - \delta_x, x)}}\, f(\eta - \delta_x).
    \end{equation}
    It is time-stationary and reversible.
\end{lemma}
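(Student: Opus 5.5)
The plan is to build $W^{(\Lambda)}$ as the minimal (Feller) jump process associated with a suitably modified version of $k_\Lambda$. Detailed balance~\eqref{equation:contraction_time_marginal_minimum_entropy_problem_hellinger_finite_volume_attainment_symmetry_calculation} and the finiteness of the total flux will then give stationarity, reversibility and non-explosion.

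\emph{Step 1: the total jump rate is integrable.} Write $q(\eta):=k_\Lambda(\eta,\Om_\Lambda)\in[0,\infty]$. Applying~\eqref{equation:contraction_time_marginal_minimum_entropy_problem_hellinger_finite_volume_attainment_symmetry_calculation} with $f\equiv1$ gives
\begin{equation*}
\mu_\Lambda[q]=2\sqrt{B^{(\Lambda)}_\mu D^{(\Lambda)}_\mu}(\Om_\Lambda\times\Lambda)\le B^{(\Lambda)}_\mu[1]+D^{(\Lambda)}_\mu[1]\le\Vert b\Vert_\infty\abs{\Lambda}+\mu_\Lambda[\eta(\Lambda)]<\infty.
\end{equation*}
The middle inequality is the arithmetic--geometric mean inequality for the densities. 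In particular, the set $N_0:=\{q=\infty\}$ is $\mu_\Lambda$-null. This already handles the points where $p_\Lambda\in\{0,1\}$ and the formal rates blow up.

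\emph{Step 2: a closed set of full measure.} The difficulty is that $k_\Lambda$ is only meaningful $\mu_\Lambda$-almost everywhere, so the process must never visit the bad null sets. Applying the symmetry identity to $f(\eta,\zeta)=\1_N(\zeta)$ for a $\mu_\Lambda$-null set $N$ gives
\begin{equation*}
\int\mu_\Lambda(\d\eta)\,k_\Lambda(\eta,N)=\int_N\mu_\Lambda(\d\zeta)\,q(\zeta)=0.
\end{equation*}
Hence $N_{1}:=N_0\cup\{\eta\colon k_\Lambda(\eta,N_0)>0\}$ is again null. Iterating, $N_{m+1}:=N_m\cup\{\eta\colon k_\Lambda(\eta,N_m)>0\}$, each $N_m$ is null. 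On $\Om^*_\Lambda:=\big(\bigcup_m N_m\big)^c$ we have $q<\infty$ and $k_\Lambda(\eta,(\Om^*_\Lambda)^c)=0$. We replace $k_\Lambda$ by $0$ off $\Om^*_\Lambda$; this changes the kernel only on a $\mu_\Lambda$-null set, as the statement allows.

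\emph{Step 3: construction and non-explosion.} Starting from $\mu_\Lambda$, run the minimal process: exponential holding times with rate $q(\eta)$, and jumps distributed according to $k_\Lambda(\eta,\cdot)/q(\eta)$. Every jump is of the form $\eta\mapsto\eta\pm\delta_x$, so until explosion the paths are càdlàg legal birth-and-death paths in $\Om_\Lambda$. The associated minimal sub-Markovian semigroup $(P_t)$ is $\mu_\Lambda$-symmetric, by detailed balance and the standard Feller construction. Consequently $\mu_\Lambda P_t\le\mu_\Lambda$.

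By the Lévy system formula, the expected number of jumps in $[0,T]$ before explosion is
\begin{equation*}
\int_0^T(\mu_\Lambda P_t)[q]\,\d t\le T\mu_\Lambda[q]<\infty.
\end{equation*}
Explosion before $T$ would require infinitely many jumps in $[0,T]$, so it has probability zero. Thus the process is non-explosive and $P_t1=1$ $\mu_\Lambda$-almost everywhere. Symmetry then upgrades $\mu_\Lambda P_t\le\mu_\Lambda$ to $\mu_\Lambda P_t=\mu_\Lambda$, since the total masses agree. This proves time-stationarity. Reversibility follows because a stationary Markov process with a $\mu_\Lambda$-symmetric semigroup has finite-dimensional distributions invariant under time reversal. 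Finally, non-explosion together with the jumps being single births or deaths shows that the law lives on $D([0,\infty);\Om_\Lambda)$.

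I expect Step~3 to be the main obstacle: the rates are unbounded and only defined $\mu_\Lambda$-almost everywhere. The route I would take is the jump-count bound via $\mu_\Lambda P_t\le\mu_\Lambda$, which relies on the integrability from Step~1. If that comparison turns out to be delicate, the fallback is to truncate the rates to $q\wedge m$ (keeping detailed balance by symmetric truncation of $\sqrt{B^{(\Lambda)}_\mu D^{(\Lambda)}_\mu}$) and pass to the limit $m\to\infty$ using the same uniform jump bound.
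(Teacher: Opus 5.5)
Your proposal is correct and follows essentially the same route as the paper: integrability of the total jump rate via detailed balance, the minimal jump-hold process killed at explosion, $\mu_\Lambda$-symmetry of its sub-Markovian semigroup (as an increasing limit of symmetric kernels with finitely many jumps), and the bound $\int_0^T(\mu_\Lambda P_t)[q]\,\d t\le T\mu_\Lambda[q]<\infty$ on the expected number of jumps, which rules out explosion and then gives stationarity and reversibility. The only difference is your Step~2, which is harmless but unnecessary: the paper simply sets the kernel to zero wherever its total mass is infinite, making those states absorbing, and this already modifies the kernel only on a $\mu_\Lambda$-null set.
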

\begin{proof}
    We first give an everywhere-defined version of the jump kernel \(k_\Lambda\) with finite total jump rate everywhere. In \eqref{equation:equation:contraction_time_marginal_minimum_entropy_problem_hellinger_finite_volume_attainment_jump_kernel_definition}, we interpret the square root in the birth term as \(\infty\) if \(p_\Lambda(\eta, x) = 0\), and the square root in the death term as \(\infty\) if \(p_\Lambda(\eta - \delta_x, x) = 1\). The concrete values assigned here are irrelevant, since \(p_\Lambda > 0\) holds $B^{(\Lambda)}_\mu$-almost everywhere and \(p_\Lambda < 1\) holds $D^{(\Lambda)}_\mu$-almost everywhere; we only need an everywhere-defined measurable version of these terms. Let \(\widetilde{k}_\Lambda\) be defined by the right-hand side of \eqref{equation:equation:contraction_time_marginal_minimum_entropy_problem_hellinger_finite_volume_attainment_jump_kernel_definition}, and set
    \begin{equation*}
        k_\Lambda(\eta, \cdot)
        = \begin{cases}
            \widetilde{k}_\Lambda(\eta, \cdot), &\widetilde{k}_\Lambda(\eta, \Omega_\Lambda) < \infty, \\
            0, &\text{ otherwise}.
        \end{cases}
    \end{equation*}
    Then \(\mu_\Lambda(\d\eta) k_\Lambda(\eta, \cdot) = \mu_\Lambda(\d\eta) \widetilde{k}_\Lambda(\eta, \cdot) \), since, by \eqref{equation:contraction_time_marginal_minimum_entropy_problem_hellinger_finite_volume_attainment_symmetry_calculation},
\begin{equation}\label{equation:contraction_time_marginal_minimum_entropy_problem_hellinger_finite_volume_attainment_finiteness_jump_expectation}
         \mu_\Lambda\big[\widetilde k_\Lambda(\eta, \Omega_\Lambda)\big]
        = 2\sqrt{B^{(\Lambda)}_\mu D^{(\Lambda)}_\mu}[1]
        \leq  \Vert b\Vert_\infty \abs{\Lambda} + \mu_\Lambda[\eta(\Lambda)]
        < \infty.
    \end{equation}

    For a jump-hold construction of the process, see~\cite[Theorem~3.2.1, Proposition~4.3.4, and Section~7.2]{Jacobsen2006}. Alternatively, one can use a graphical construction with a Poisson random measure as in~\cite{JKSZ25}. Since the jump rates are possibly unbounded, both constructions only work up to an explosion time \(\tau_\infty\), which is a priori not necessarily infinite.
    Let \(W^{(\Lambda)}\) be the law of the Markov jump process killed at explosion. We now verify non-explosion; reversibility and hence time-stationarity then follow from the detailed balance~\eqref{equation:contraction_time_marginal_minimum_entropy_problem_hellinger_finite_volume_attainment_symmetry_calculation}.
  Writing $q_t$ for the transition kernel of the killed process, we have
$\mu_\Lambda(\mathrm{d}\eta)\,q_t(\eta,\mathrm{d}\zeta)
=\mu_\Lambda(\mathrm{d}\zeta)\,q_t(\zeta,\mathrm{d}\eta)$. Indeed, by the jump-hold
construction \cite[Section~7.2]{Jacobsen2006}, $q_t$ is the increasing limit of the kernels
obtained by allowing at most $N$ jumps in $[0,t]$, and each of these is
$\mu_\Lambda$-symmetric by \eqref{equation:contraction_time_marginal_minimum_entropy_problem_hellinger_finite_volume_attainment_symmetry_calculation}, since reversing the sequence of
visited states and holding times preserves their joint weight. Together with
\eqref{equation:contraction_time_marginal_minimum_entropy_problem_hellinger_finite_volume_attainment_finiteness_jump_expectation}, we get
    % Writing \(q_t\) for the transition kernel of the killed process, the detailed balance gives \(\mu_\Lambda(\d\eta) q_t(\eta, \d\zeta) = \mu_\Lambda(\d\zeta) q_t(\zeta, \d\eta)\), and together with \eqref{equation:contraction_time_marginal_minimum_entropy_problem_hellinger_finite_volume_attainment_finiteness_jump_expectation} we get
    \begin{equation*}
    \begin{split}
        W^{(\Lambda)}\Big[\int_0^{T \land \tau_\infty}\d t\,  k_\Lambda(\omega_t, \Omega_\Lambda)\Big]
        &= \int_0^T  \d t\, W^{(\Lambda)}\big[k_\Lambda(\omega_t, \Omega_\Lambda) \1_{\{\tau_\infty > t\}}\big] \\
        &= \int_0^T  \d t\,  \int \mu_\Lambda(\d\eta) \int q_t(\eta, \d\zeta)\, k_\Lambda(\zeta, \Omega_\Lambda) \\
        &= \int_0^T \d t\,   \int \mu_\Lambda(\d\eta)\, k_\Lambda(\eta, \Omega_\Lambda) \int q_t(\eta, \d\zeta) 
        \leq T \mu_\Lambda[k_\Lambda(\eta, \Omega_\Lambda)]
        < \infty.
    \end{split}
    \end{equation*}
    By jump compensation, the total number of jumps up to time \(T\) is therefore finite, which shows non-explosion.
\end{proof}

\begin{remark}
    Let us illustrate the construction of the process \(W^{(\Lambda)}\). The measure \(\mu_\Lambda\) has a Papangelou intensity if and only if \(D_\mu^{(\Lambda)} \ll B_\mu^{(\Lambda)}\). In our construction, this absolute continuity is not needed, and \(W^{(\Lambda)}\) is \(\mu_\Lambda\)-almost surely constant if and only if \(B_\mu^{(\Lambda)} \perp D_\mu^{(\Lambda)}\). Indeed, for every \(T > 0\) and measurable \(f \geq 0\), jump compensation, stationarity and \eqref{equation:contraction_time_marginal_minimum_entropy_problem_hellinger_finite_volume_attainment_symmetry_calculation} give
    \begin{equation*}
    \begin{split}
        W^{(\Lambda)}\Big[\sum_{0 < t \leq T\colon \omega_{t-} \neq \omega_t} f(\omega_{t-}, \omega_t) \Big]
        &= T \int \mu_\Lambda(\d\eta) \int k_\Lambda(\eta, \d\zeta) f(\eta, \zeta) \\
        &= T \int \sqrt{B^{(\Lambda)}_\mu D^{(\Lambda)}_\mu}(\d\eta, \d x)[f(\eta, \eta + \delta_x) + f(\eta + \delta_x, \eta)],
    \end{split}
    \end{equation*}
    and \(\sqrt{B^{(\Lambda)}_\mu D^{(\Lambda)}_\mu} = 0\) if and only if \(B^{(\Lambda)}_\mu \perp D^{(\Lambda)}_\mu\). In particular, as in the formal calculation with Papangelou intensities, the birth measure \(\mu_\Lambda(\d\eta) \, q(x, \eta) \, \1_\Lambda(x) \, \d x\) with the new birth rates \(q(x, \eta) = b(x, \eta) \sqrt{(1 - p_\Lambda(\eta, x))/p_\Lambda(\eta, x)}\) is the geometric mean \(\sqrt{B^{(\Lambda)}_\mu D^{(\Lambda)}_\mu}\), which minimises the dynamical entropy with respect to the process started in \(\mu_\Lambda\) with birth rates \(b(x, \eta_\Lambda)\) and death rate \(1\), and the corresponding death measure is then essentially determined by the requirement of time-stationarity, or even reversibility.

    We compute the minimising birth and death rates in the following concrete example of a measure \(\mu_\Lambda\) with \(B_\mu^{(\Lambda)} \not\perp D_\mu^{(\Lambda)}\) but \(D_\mu^{(\Lambda)} \not\ll B_\mu^{(\Lambda)}\). For simplicity of the formulas, we consider the free case \(b \equiv 1\), that is, $\beta=0$, and set
    \(\mu_\Lambda = \pi_\Lambda(\cdot \lvert \eta(\Lambda) \geq 1)\),
    so that \(\mu_\Lambda\) is a Poisson point process in \(\Lambda\) conditioned to be non-empty. Then, by Mecke's formula,
    \begin{equation*}
    \begin{split}
        B_\mu^{(\Lambda)}(\d\eta, \d x) &= \mu_\Lambda(\d\eta) \, \1_\Lambda(x)\,\d x, \\
        D_\mu^{(\Lambda)}(\d\eta, \d x) &= \frac{1}{1- \e^{-\abs{\Lambda}}}\, \pi_\Lambda(\d\eta) \, \1_\Lambda(x)\,\d x
        = B_\mu^{(\Lambda)}(\d\eta, \d x) + \frac{1}{\e^{\abs{\Lambda}} - 1}\, \delta_\emptyset(\d\eta) \, \1_\Lambda(x)\,\d x.
    \end{split}
    \end{equation*}
    Since \(\mu_\Lambda\) does not charge the empty configuration, the last term is singular with respect to \(B_\mu^{(\Lambda)}\), so that \(D_\mu^{(\Lambda)} \not\ll B_\mu^{(\Lambda)}\), and \(\mu_\Lambda\) has no Papangelou intensity. Nevertheless, \(\sqrt{B_\mu^{(\Lambda)} D_\mu^{(\Lambda)}} = B_\mu^{(\Lambda)} \neq 0\).
    It follows that, on the set of non-empty configurations, which has \(\mu_\Lambda\)-probability one, the process \(W^{(\Lambda)}\) has birth rate \(q(x, \eta) = 1\) and death rate \(d(x, \eta) = \1_{\{\eta(\Lambda) \geq 2\}}\).
\end{remark}

We now check that the finite-volume minimum is attained by the Markov process \(W^{(\Lambda)}\).
\begin{proof}[Proof of \zcref{lemma:contraction_time_marginal_minimum_entropy_problem_hellinger_finite_volume_attainment}]
    Let $W^{(\Lambda)}$ be the process from \zcref{lemma:optimal-jump-process}, which is time-stationary with time marginals $\mu_\Lambda$.
    To compute the relative entropy \(\Ent(W^{(\Lambda)}_{[0, T]} \vert \mu_\Lambda Q^{\Lambda,\beta})\), we express the density\linebreak \(\d W^{(\Lambda)}_{[0, T]}/\d(\mu_\Lambda Q^{\Lambda,\beta})\) in terms of the jump kernel \(k^b_\Lambda\) of the reference process \(\mu_\Lambda Q^{\Lambda,\beta}\), which is given by
    \begin{equation*}
        \int k^b_\Lambda(\eta, \d\zeta) f(\zeta)
        = \int_\Lambda b(x, \eta) f(\eta + \delta_x)  \, \d x
        + \sum_{x \in \eta_\Lambda}  f(\eta - \delta_x).
    \end{equation*}
    A martingale computation similar to the proof of \zcref{proposition:change_of_measure_a_la_girsanov}, now first localised by the total jump rates and using non-explosion, see also~\cite[Theorem~7.9.1 and (7.127)]{Jacobsen2006}, shows that
    \begin{equation*}
        \frac{\d W^{(\Lambda)}_{[0, T]}}{\d(\mu_\Lambda Q^{\Lambda,\beta})}(\omega)
        = \exp\Big(\int_0^T \d t\, (\lambda^b - \lambda)(\omega_t)  \Big) \prod_{0 < t \leq T \colon \omega_t \neq \omega_{t-}} r(\omega_{t-}, \omega_t),
    \end{equation*}
    with the total jump rates \(\lambda(\eta) = k_\Lambda(\eta, \Omega_\Lambda)\) and \(\lambda^b(\eta) = k^b_\Lambda(\eta, \Omega_\Lambda)\), and the rate ratio \(r\) given by
    \begin{equation*}
        r(\eta, \eta + \delta_x)
        = \sqrt{\frac{1 - p_\Lambda(\eta, x)}{p_\Lambda(\eta, x)}} ,
        \qquad r(\eta + \delta_x, \eta) = \sqrt{\frac{p_\Lambda(\eta, x)}{1 - p_\Lambda(\eta, x)}}.
    \end{equation*}
    It follows, by jump compensation and stationarity, that
    \begin{equation}\label{equation:contraction_time_marginal_minimum_entropy_problem_hellinger_finite_volume_attainment_entropy_calculation}
    \begin{split}
        \Ent(W^{(\Lambda)}_{[0, T]} \vert \mu_\Lambda Q^{\Lambda,\beta})
        &= W^{(\Lambda)}\Big[\int_0^T \d t\,  (\lambda^b - \lambda)(\omega_t) \Big] + W^{(\Lambda)}\Big[\sum_{0 < t \leq T \colon \omega_t \neq \omega_{t-}} \log r(\omega_{t-}, \omega_t) \Big] \\
        &= T \mu_\Lambda[\lambda^b - \lambda] + T \mu_\Lambda\Big[\int k_\Lambda(\eta, \d\zeta) \log r(\eta, \zeta) \Big].
    \end{split}
    \end{equation}
    By \eqref{equation:contraction_time_marginal_minimum_entropy_problem_hellinger_finite_volume_attainment_symmetry_calculation}, the first term on the right-hand side of \eqref{equation:contraction_time_marginal_minimum_entropy_problem_hellinger_finite_volume_attainment_entropy_calculation} is
    \begin{equation*}
    \begin{split}
        T \mu_\Lambda[\lambda^b - \lambda]
        &= T \Big(\mu_\Lambda\Big[\int_\Lambda b(x, \eta)\, \d x + \eta(\Lambda)\Big] - (\mu_\Lambda k_\Lambda)[1] \Big) \\
        &= T \Big(B^{(\Lambda)}_\mu[1] + D^{(\Lambda)}_\mu[1] - 2 \int  \sqrt{B^{(\Lambda)}_\mu  D^{(\Lambda)}_\mu}(\d\eta, \d x)  \Big)
        = 2 T \mathfrak{H}^2(B_\mu^{(\Lambda)}, D_\mu^{(\Lambda)}).
    \end{split}
    \end{equation*}
    For the second term, again by \eqref{equation:contraction_time_marginal_minimum_entropy_problem_hellinger_finite_volume_attainment_symmetry_calculation},
    \begin{equation}\label{equation:contraction_time_marginal_minimum_entropy_problem_hellinger_finite_volume_attainment_entropy_calculation_cancellation}
    \begin{split}
        &T \mu_\Lambda\Big[\int k_\Lambda(\eta, \d\zeta) \log r(\eta, \zeta) \Big] = T \int \sqrt{B^{(\Lambda)}_\mu  D^{(\Lambda)}_\mu}(\d\eta, \d x) \Big[\log \sqrt{\tfrac{1 - p_\Lambda(\eta, x)}{p_\Lambda(\eta, x)}} + \log \sqrt{\tfrac{p_\Lambda(\eta, x)}{1 - p_\Lambda(\eta, x)}} \Big]
        = 0,
    \end{split}
    \end{equation}
    so that \eqref{equation:contraction_time_marginal_minimum_entropy_problem_hellinger_finite_volume_attainment} follows. This cancellation is justified, since \(\sqrt{B_\mu^{(\Lambda)}D_\mu^{(\Lambda)}}\) is concentrated on \(\{0 < p_\Lambda < 1\}\), and, by \eqref{equation:contraction_time_marginal_minimum_entropy_problem_hellinger_finite_volume_attainment_symmetry_density_calculation},
    \begin{equation*}
    \begin{split}
        \mu_\Lambda\Big[\int k_\Lambda(\eta, \d\zeta) \abs{\log r(\eta, \zeta)} \Big]
        &=  \int \sqrt{p_\Lambda (1- p_\Lambda)} \Big\vert \log \frac{1- p_\Lambda}{p_\Lambda}\Big\vert \, \d(B^{(\Lambda)}_\mu + D^{(\Lambda)}_\mu) \\
        &\leq \frac{2}{\e} (B^{(\Lambda)}_\mu + D^{(\Lambda)}_\mu)[1]
        \leq \frac{2}{\e} \big(\Vert b\Vert_\infty \abs{\Lambda} + \mu_\Lambda[\eta(\Lambda)]\big)
        < \infty,
    \end{split}
    \end{equation*}
    where we used the scalar inequality
    \begin{equation*}
    \begin{split}
        \sup_{p \in (0,1)} \sqrt{p(1-p)}\, \abs{\log((1-p)/p)}
        &= \sup_{p \in (0,1/2)} \sqrt{p(1-p)}\, \abs{\log((1-p)/p)} \\
        &\leq \sup_{p \in (0,1/2)} \sqrt{p} \log(1/p)
        = 2 \sup_{p \in (0,1/2)} \sqrt{p} \log(1/\sqrt{p})
        \leq 2/\e,
    \end{split}
    \end{equation*}
    as desired. 
\end{proof}

To pass to infinite volume, we use the following bound for the dynamical specific entropy of a stationarised block construction, which is comparable with~\cite[Lemma~5.5]{GeorgiiZessin1993} in the static case.
\begin{lemma}[Entropy of a stationarised block construction]\label{lemma:contraction_time_marginal_minimum_entropy_problem_hellinger_existence_of_minimizer_entropy_bound_block_construction}
    Let \(\Lambda = [-\ell, \ell)^d\) be a half-open cube, and let \(W\) be a time-stationary probability measure on \(D(\R; \Om_\Lambda)\). Let \(P \in \PP(E)\) be the law obtained by tiling \(\R^d\) with shifted copies of \(\Lambda\), sampling independent copies of \(W\) in each of these blocks, and then shifting the origin uniformly in $\Lambda$.
    Then
    \begin{equation*}
        H(P)
        \leq \sup_{T > 0} \frac{\Ent(W_{[0, T]} \lvert W_0 Q^\Lambda)}{T \abs{\Lambda}}.
    \end{equation*}
\end{lemma}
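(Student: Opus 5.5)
We use the representation \eqref{eq:specific-entropy-limit} of \zcref{lem:entropy-superadditivity}: $H(P)=\lim_{n\to\infty} h_{\Lambda_n^\circ,\tau n}(P)/(\tau n|\Lambda_n|)$, and bound $h_{\Lambda_n^\circ,\tau n}(P)$ from above. Write $c:=\sup_{T>0}\Ent(W_{[0,T]}\lvert W_0Q^\Lambda)/(T|\Lambda|)$ and assume $c<\infty$. Write $P=|\Lambda|^{-1}\int_\Lambda \d y\,\mathbf P\circ\theta_y^{-1}$, where $\mathbf P$ is the law of the tiled path with independent copies of $W$ in the blocks $\Lambda+2\ell k$, $k\in\Z^d$. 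The law $P$ is space-time stationary, since $W$ is time-stationary.

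For a fixed offset $y$, the cube $\Lambda_n^\circ$ is the disjoint union of the interiors of the blocks $\Lambda+2\ell k-y$ fully contained in it, plus a boundary layer $S_n(y)$ of volume $O(\ell n^{d-1})$ formed by partial blocks. Because the blocks are independent and $W$ puts no atoms on block faces (this has to be checked; at worst one replaces the half-open cube by its interior, as in the last claim of \zcref{lem:entropy-superadditivity}), the law $(\mathbf P\circ\theta_y^{-1})_{\Lambda_n^\circ,T}$ is a product over full blocks and partial blocks. The reference measure $(\cdot)_{\Lambda_n^\circ,0}Q^{\Lambda_n^\circ}$ is also a product over these disjoint regions, conditionally on the initial configuration, and it factorises further since the initial configurations are independent across blocks. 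Additivity of the relative entropy for products then gives
\begin{equation*}
    h_{\Lambda_n^\circ,T}(\mathbf P\circ\theta_y^{-1}) = N_n(y)\,\Ent(W_{[0,T]}\lvert W_0Q^\Lambda)+\sum_{\text{partial blocks } C}\Ent\bigl(W_{C',[0,T]}\bigm\lvert W_{C',0}Q^{C'}\bigr),
\end{equation*}
where $C'\subset\Lambda$ is the corresponding sub-box. For each partial block, the entropy is at most $\Ent(W_{[0,T]}\lvert W_0Q^\Lambda)$. This follows from the spatial superadditivity argument \eqref{eq:entropy-spatial-conditioning}, which uses only the conditional independence of the free evolution in disjoint regions and convexity, and needs no stationarity of $W$ in space. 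The number of full and partial blocks together is at most $(n/(2\ell)+2)^d$. Hence the total is at most $(n/(2\ell)+2)^d\, c\,T|\Lambda|$, uniformly in $y$.

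The main obstacle is passing from the mixture over $y$ to $P$, since entropy is convex in the pair of measures but the reference measure here depends on the initial marginal. We use joint convexity of $(\mathbb P,\mathbb Q)\mapsto\Ent(\mathbb P\lvert\mathbb Q)$. The measure $P_{\Lambda_n^\circ,T}$ is the $y$-average of $(\mathbf P\circ\theta_y^{-1})_{\Lambda_n^\circ,T}$. Likewise, $P_{\Lambda_n^\circ,0}Q^{\Lambda_n^\circ}$ is the $y$-average of $(\mathbf P\circ\theta_y^{-1})_{\Lambda_n^\circ,0}Q^{\Lambda_n^\circ}$, because the kernel $\eta\mapsto Q^{\Lambda_n^\circ}_\eta$ is linear in the initial law. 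Joint convexity then gives
\begin{equation*}
    h_{\Lambda_n^\circ,T}(P)\le |\Lambda|^{-1}\int_\Lambda\d y\, h_{\Lambda_n^\circ,T}(\mathbf P\circ\theta_y^{-1})\le (n/(2\ell)+2)^d\,c\,T|\Lambda|.
\end{equation*}
Finally we take $T=\tau n$ with $\tau=1$, divide by $n|\Lambda_n|=n^{d+1}$ and let $n\to\infty$. Since $|\Lambda|=(2\ell)^d$, the prefactor $(n/(2\ell)+2)^d|\Lambda|/n^d$ tends to $1$, so the limit is at most $c$, which yields $H(P)\le c$.
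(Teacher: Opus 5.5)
Your argument is correct and is essentially the paper's proof. Both rest on the same ingredients: joint convexity of the relative entropy over the uniform offset (using that $\eta\mapsto Q^{A}_\eta$ is linear in the initial law), exact factorisation over the independent blocks, monotonicity in the volume for the blocks cut by the boundary, and the count $(n/(2\ell)+2)^d$ of blocks meeting the cube.

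The differences are only bookkeeping. You partition the observation cube into full and partial blocks, where the paper enlarges it to the union $C(A)$ of the blocks it meets. You conclude via the diagonal limit $T=n$ of \zcref{lem:entropy-superadditivity}, whereas the paper fixes $T$, lets $a\to\infty$ and uses spatial superadditivity.

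Your caveat about atoms on block faces can be dropped. Use the half-open pieces $\Lambda_n^\circ\cap(\Lambda+2\ell k-y)$ rather than block interiors: they partition $\Lambda_n^\circ$ exactly, and both laws factorise over them without any condition on $W$.
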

\begin{proof}
    Write
    \begin{equation*}
        P
        = \frac{1}{\abs{\Lambda}} \int_\Lambda \d u \, P^u,
    \end{equation*}
    where \(P^u\) is the law obtained by sampling independent copies of \(W\) in the shifted blocks
    \(C_j^u = u + 2\ell j + \Lambda\), \(j \in \Z^d\).
    The joint convexity of the relative entropy gives, for every cube \(A \subset \R^d\) and \(T > 0\),
    \begin{equation*}
        \Ent(P_{A, T} \lvert P_{A, 0} Q^{A})
        \leq \frac{1}{\abs{\Lambda}} \int_\Lambda \d u\, \Ent(P^u_{A, T} \lvert P^u_{A, 0} Q^{A}).
    \end{equation*}
    For a given cube \(A\), let \(C(A) := \bigcup_{j \in \Z^d \colon C_j^u \cap A \neq \emptyset} C_j^u\). Monotonicity in the volume and independence give
    \begin{equation*}
    \begin{split}
        \Ent(P^u_{A, T} \lvert P^u_{A ,0} Q^{A})
        &\leq \Ent(P^u_{C(A), T} \lvert P^u_{C(A), 0} Q^{C(A)})= \#\{j \in \Z^d \,\colon\, C_j^u \cap A \neq \emptyset \}\, \Ent(W_{[0, T]} \lvert W_0 Q^\Lambda).
    \end{split}
    \end{equation*}
    For $u\in\Lambda$ and $a>0$, let
$N_a(u):=\#\{j\in\mathbb{Z}^d\colon C^u_j\cap(0,a)^d\neq\emptyset\}$. Since the cubes
$C^u_j$ have side length $2\ell$ and those meeting $(0,a)^d$ cover it,
\begin{equation*}
  \Bigl(\frac{a}{2\ell}\Bigr)^d\le N_a(u)\le\Bigl(\frac{a}{2\ell}+2\Bigr)^d,
  \qquad u\in\Lambda,\ a>0,
\end{equation*}
so that $N_a(u)/a^d\to(2\ell)^{-d}=1/|\Lambda|$ as $a\to\infty$, boundedly in $u$
for $a\ge1$. Combining the two preceding displays with $A=(0,a)^d$ gives
\[
  \frac{h_{(0,a)^d,T}(P)}{Ta^d}
  \le\frac{\mathrm{Ent}(W_{[0,T]}|W_0Q^\Lambda)}{T|\Lambda|}
     \int_\Lambda\mathrm{d}u\,\frac{N_a(u)}{a^d},
\]
and, by dominated convergence, the right-hand side converges to
$\mathrm{Ent}(W_{[0,T]}|W_0Q^\Lambda)/(T|\Lambda|)$ as $a\to\infty$.

It remains to identify $H(P)$ with a limit in $a$ for fixed $T$. For $a\ge a_0>0$,
the cube $(0,a)^d$ contains $\lfloor a/a_0\rfloor^d$ disjoint translates of
$(0,a_0)^d$, so \eqref{eq:entropy-spatial-superadditivity} and spatial stationarity give
$h_{(0,a)^d,T}(P)\ge\lfloor a/a_0\rfloor^d\,h_{(0,a_0)^d,T}(P)$, and hence
\[
  \liminf_{a\to\infty}\frac{h_{(0,a)^d,T}(P)}{Ta^d}
  \ge\sup_{a_0>0}\frac{h_{(0,a_0)^d,T}(P)}{Ta_0^d}.
\]
Taking the supremum over $T>0$ and using the definition \eqref{eq:rate} of $H$, we
obtain
\[
  H(P)\le\sup_{T>0}\liminf_{a\to\infty}\frac{h_{(0,a)^d,T}(P)}{Ta^d}
  \le\sup_{T>0}\frac{\mathrm{Ent}(W_{[0,T]}|W_0Q^\Lambda)}{T|\Lambda|},
\]
as desired.
    % \newpage
    % Using \zcref{lem:entropy-superadditivity}, it follows that
    % \begin{equation*}
    % \begin{split}
    %     H(P)
    %     &= \sup_{T > 0} \liminf_{a \to \infty} \frac{h_{(0, a)^d, T}(P)}{Ta^d} \\
    %     &\leq \sup_{T > 0}  \frac{\Ent(W_{[0, T]} \lvert W_0 Q^\Lambda) }{T \abs{\Lambda}} \int_\Lambda\d u \, \liminf_{a \to \infty} \frac{\#\{j \in \Z^d \colon C_j^u \cap (0,a)^d \neq \emptyset \}}{a^d} =  \sup_{T > 0}  \frac{\Ent(W_{[0, T]} \lvert W_0 Q^\Lambda) }{T \abs{\Lambda}},
    % \end{split}
    % \end{equation*}
    % as desired.
\end{proof}

We are now in a position to construct the minimiser in infinite volume.
\begin{proof}[Proof of \zcref{lemma:contraction_time_marginal_minimum_entropy_problem_hellinger_existence_of_minimizer_in_infinite_volume}]
    Recall the process \(W^{(\Lambda_n)}\) constructed in \zcref{lemma:contraction_time_marginal_minimum_entropy_problem_hellinger_finite_volume_attainment}, and denote, by abuse of notation, also by \(W^{(\Lambda_n)}\) its time-stationary extension to the time interval $\R$.
    For \(n \in \N\), we tile \(\R^d\) with copies of \(\Lambda_n\), and define \(P^{(n)}\) by first placing independent copies of \(W^{(\Lambda_n)}\) into these and then shifting the origin uniformly in \(\Lambda_n\), so that \(P^{(n)}\in\PP(E)\).

    We show that \(\abs{\Lambda_n} H^\beta(P^{(n)})\) is bounded above by \(\Ent(W^{(\Lambda_n)}_{[0, 1]} \lvert  \mu_{\Lambda_n} Q^{\Lambda_n,\beta})\) plus a boundary-order term, that is,
    \begin{equation}\label{equation:contraction_time_marginal_minimum_entropy_problem_hellinger_existence_of_minimizer_in_infinite_volume_entropy_comparison}
        H^\beta(P^{(n)})
        \leq \frac{\Ent(W^{(\Lambda_n)}_{[0, 1]} \lvert  \mu_{\Lambda_n} Q^{\Lambda_n,\beta})}{\abs{\Lambda_n}} + c_{b, \mu} \frac{\abs{\partial^\range \Lambda_n}}{\abs{\Lambda_n}}
    \end{equation}
    for some constant \(c_{b, \mu} \in (0, \infty)\). By \zcref{lemma:contraction_time_marginal_minimum_entropy_problem_hellinger_finite_volume_attainment} and \zcref{lemma:contraction_time_marginal_minimum_entropy_problem_hellinger_thermodynamic_limit}, the right-hand side is bounded in $n$. The compactness of the sublevel sets of \(H^\beta\) therefore implies that there are a space-time stationary \(P\) and a subsequence \((P^{(n_k)})_{k \in \N}\) with \(P^{(n_k)} \to P\) weakly as $k\to\infty$. The limit \(P\) has time marginal \(P_0 = \mu\), by the continuity of \(\widetilde{P} \mapsto \widetilde{P}_0\) on \(\PP(E)\) and the weak convergence \((P^{(n)})_0 \to \mu\), which holds since a bounded observation window lies in one of the shifted copies of $\Lambda_n$ with probability tending to one as \(n \to \infty\).
    The lower semicontinuity of \(H^\beta\), the estimate \eqref{equation:contraction_time_marginal_minimum_entropy_problem_hellinger_existence_of_minimizer_in_infinite_volume_entropy_comparison}, \zcref{lemma:contraction_time_marginal_minimum_entropy_problem_hellinger_finite_volume_attainment} and \zcref{lemma:contraction_time_marginal_minimum_entropy_problem_hellinger_thermodynamic_limit} give
    \begin{equation*}
    \begin{split}
        H^\beta(P)
        &\leq \liminf_{k \to \infty} H^\beta(P^{(n_k)})
        \leq \liminf_{k \to \infty} \bigg( \frac{\Ent(W^{(\Lambda_{n_k})}_{[0, 1]} \lvert  \mu_{\Lambda_{n_k}} Q^{\Lambda_{n_k},\beta})}{\abs{\Lambda_{n_k}}} + c_{b, \mu} \frac{\abs{\partial^\range \Lambda_{n_k}}}{\abs{\Lambda_{n_k}}} \bigg) \\
        &= \liminf_{k \to \infty} \frac{2\mathfrak{H}^2(B_\mu^{(\Lambda_{n_k})}, D_\mu^{(\Lambda_{n_k})}) }{\abs{\Lambda_{n_k}}}
        = 2\mathfrak{H}^2(B_\mu, D_\mu),
    \end{split}
    \end{equation*}
    and the reverse inequality is the general lower bound \eqref{equation:contraction_time_marginal_minimum_entropy_problem_hellinger_infinite_volume_lower_bound}. This proves \eqref{equation:contraction_time_marginal_minimum_entropy_problem_hellinger_existence_of_minimizer_in_infinite_volume}.

    It remains to prove the estimate \eqref{equation:contraction_time_marginal_minimum_entropy_problem_hellinger_existence_of_minimizer_in_infinite_volume_entropy_comparison}. For the free evolution as reference measure, \zcref{lemma:contraction_time_marginal_minimum_entropy_problem_hellinger_existence_of_minimizer_entropy_bound_block_construction} and the linearity in \(T\) of the relative entropy of the stationary Markov process $W^{(\Lambda_n)}$, which follows from the chain rule, give
    \begin{equation}\label{equation:contraction_time_marginal_minimum_entropy_problem_hellinger_existence_of_minimizer_in_infinite_volume_entropy_comparison_iid_blocks}
        H(P^{(n)})
        \leq \sup_{T > 0} \frac{\Ent(W^{(\Lambda_n)}_{[0, T]} \lvert  \mu_{\Lambda_n} Q^{\Lambda_n})}{T\abs{\Lambda_n}} = \frac{\Ent(W^{(\Lambda_n)}_{[0, 1]} \lvert  \mu_{\Lambda_n} Q^{\Lambda_n})}{\abs{\Lambda_n}}.
    \end{equation}
    For the interaction term \(P^{(n)}[F]\) in \(H^\beta(P^{(n)})\), we show below that
\begin{equation}\label{equation:contraction_time_marginal_minimum_entropy_problem_hellinger_existence_of_minimizer_in_infinite_volume_entropy_comparison_2}
    \begin{split}
        &\Big\vert P^{(n)}[F] - \frac{1}{\abs{\Lambda_n}}\mu_{\Lambda_n}\Big[\int_{\Lambda_n}\d x\, [1-b_{\Lambda_n}(x, \eta)] \Big] - \frac{1}{\abs{\Lambda_n}}W^{(\Lambda_n)}\Big[ \sum_{\substack{(t, x) \text{ birth}\colon (t, x) \in (0, 1] \times \Lambda_n}} \hspace{-1cm}\log b_{\Lambda_n}(x, \omega_{t-}) \Big] \Big\vert \\
        &\leq \Big\vert P^{(n)}\big[1-b(0, \omega_{0})\big] - \frac{1}{\abs{\Lambda_n}}\mu_{\Lambda_n}\Big[\int_{\Lambda_n}\d x\, [1-b_{\Lambda_n}(x, \eta)] \Big]\Big\vert  + \Big\vert P^{(n)}\Big[\sum_{\substack{(t, x) \text{ birth}\colon (t, x) \in (0, 1] \times [0, 1)^d}} \hspace{-1cm}\log b(x, \omega_{t-}) \Big] \\
        &\qquad\qquad -  \frac{1}{\abs{\Lambda_n}}W^{(\Lambda_n)}\Big[ \sum_{\substack{(t, x) \text{ birth}\colon (t, x) \in (0, 1] \times \Lambda_n}}\hspace{-1cm} \log b_{\Lambda_n}(x, \omega_{t-}) \Big] \Big\vert \lesssim_{b, \mu} \frac{\abs{\partial^\range \Lambda_n}}{\abs{\Lambda_n}},
    \end{split}
    \end{equation}
    where $b_{\Lambda_n}(x,\eta)=b(x,\eta_{\Lambda_n})$ is the birth rate with empty boundary configuration. Then \eqref{equation:contraction_time_marginal_minimum_entropy_problem_hellinger_existence_of_minimizer_in_infinite_volume_entropy_comparison} follows from
    \begin{equation*}
    \begin{split}
        H^\beta(P^{(n)})
        &= H(P^{(n)}) - P^{(n)}[F] \\
        &\leq \frac{1}{\abs{\Lambda_n}}\bigg(\Ent(W^{(\Lambda_n)}_{[0, 1]} \lvert  \mu_{\Lambda_n} Q^{\Lambda_n}) - \mu_{\Lambda_n}\Big[\int_{\Lambda_n} [1-b_{\Lambda_n}(x, \eta)] \, \d x \Big] \\
        &\qquad\qquad - W^{(\Lambda_n)}\Big[ \sum_{\substack{(t, x) \text{ birth}\colon (t, x) \in (0, 1] \times \Lambda_n}} \log b_{\Lambda_n}(x, \omega_{t-}) \Big]\bigg) + c_{b, \mu} \frac{\abs{\partial^\range \Lambda_n}}{\abs{\Lambda_n}} \\
        &= \frac{\Ent(W^{(\Lambda_n)}_{[0, 1]} \lvert  \mu_{\Lambda_n} Q^{\Lambda_n,\beta})}{\abs{\Lambda_n}} + c_{b, \mu} \frac{\abs{\partial^\range \Lambda_n}}{\abs{\Lambda_n}},
    \end{split}
    \end{equation*}
    where we used for the last equality a change of measure as in \eqref{equation:interacting_case_conditional_entropy_rep_finite_volume}, and stationarity.

    Let us finally establish the bound \eqref{equation:contraction_time_marginal_minimum_entropy_problem_hellinger_existence_of_minimizer_in_infinite_volume_entropy_comparison_2}. We only explain how to obtain a bound of boundary order for the second term, since the first one is similar but easier.
    To compare the two expressions in the second term, we represent both on the same probability space. Let \(U\) be the uniform shift of the origin in \(\Lambda_n\), and write \(\mathbb{E}\) for the expectation with respect to \(U\) and the independent copies of \(W^{(\Lambda_n)}\), so that
    \begin{equation*}
        P^{(n)}\Big[\sum_{\substack{(t, x) \text{ birth}\colon (t, x) \in (0, 1] \times [0, 1)^d}} \log b(x, \omega_{t-}) \Big]
        = \mathbb{E}\Big[\sum_{\substack{(t, x) \text{ birth}\colon (t, x) \in (0, 1] \times [0, 1)^d}} \log b(x, \omega_{t-})  \Big]
    \end{equation*}
    and
    \begin{equation*}
        \frac{1}{\abs{\Lambda_n}} W^{(\Lambda_n)}\Big[\sum_{\substack{(t, x) \text{ birth}\colon (t, x) \in (0, 1] \times \Lambda_n}} \log b_{\Lambda_n}(x, \omega_{t-}) \Big]
        = \mathbb{E}\Big[\sum_{\substack{(t, x) \text{ birth}\colon (t, x) \in (0, 1] \times [0, 1)^d}} \log b_{U}(x, \omega_{t-})  \Big],
    \end{equation*}
    where \(b_U(x, \eta)\) denotes the birth rate with empty boundary configuration in the shifted copy of \(\Lambda_n\) that contains \(x\).
    By the finite range of \(b\),
    \begin{equation*}
    \begin{split}
    \mathbb{E}\Big[\sum_{\substack{(t, x) \text{ birth}\colon\\ (t, x) \in (0, 1] \times [0, 1)^d}} \abs{\log b(x, \omega_{t-}) - \log b_U(x, \omega_{t-})} \Big] 
    &\leq 2\Vert \log b\Vert_\infty\, \mathbb{E}\Big[\sum_{\substack{(t, x) \text{ birth}\colon\\ (t, x) \in (0, 1] \times [0, 1)^d}} \1_{\bigcup_{j \in \Z^d} (U + jn + \partial^\range \Lambda_n)}(x) \Big] \\
        &= \frac{2 \Vert\log b\Vert_\infty}{\abs{\Lambda_n}}\, W^{(\Lambda_n)}\Big[\sum_{\substack{(t, x) \text{ birth}\colon (t, x) \in (0, 1] \times \Lambda_n}} \1_{ \partial^\range \Lambda_n}(x) \Big].
    \end{split}
    \end{equation*}
    It remains to note that, for every \(A \subset \Lambda_n\), jump compensation, stationarity and \eqref{equation:contraction_time_marginal_minimum_entropy_problem_hellinger_finite_volume_attainment_symmetry_density_calculation} give
    \begin{equation*}
    \begin{split}
        W^{(\Lambda_n)}\Big[ \sum_{\substack{(t, x) \text{ birth}\colon (t, x) \in (0, 1] \times A}} 1 \Big]&= \mu_{\Lambda_n}\Big[\int_A b_{\Lambda_n}(x, \eta) \sqrt{\frac{1- p_{\Lambda_n}(\eta, x)}{p_{\Lambda_n}(\eta, x)}}\, \d x   \Big]
        = \sqrt{B_\mu^{(\Lambda_n)} D_\mu^{(\Lambda_n)}}(\Omega_{\Lambda_n} \times A) \\
        &\leq \sqrt{B_\mu^{(\Lambda_n)}(\Omega_{\Lambda_n} \times A) } \sqrt{D_\mu^{(\Lambda_n)}(\Omega_{\Lambda_n} \times A) }
        \leq \sqrt{\Vert b\Vert_\infty \abs{A}} \sqrt{\rho(\mu)\abs{A}}.
    \end{split}
    \end{equation*}
    For \(A = \partial^\range \Lambda_n\), this shows that the right-hand side of the previous display is at most
    \begin{equation*}
        2\Vert \log b\Vert_\infty \sqrt{\Vert b\Vert_\infty \rho(\mu)}\, \frac{\abs{\partial^\range \Lambda_n}}{\abs{\Lambda_n}},
    \end{equation*}
    which is of boundary order.
\end{proof}

\section{Weak large deviation principles and counterexamples}\label{section:counterexamples}
In this section we prove \zcref{theorem:weak-ldps} and show that, already for the free evolution, its weak large deviation principles in~(\ref{item:weak-ldp-simple}) and~(\ref{item:weak-ldp-Tocc}) cannot be strengthened to full ones. The upper bounds are soft consequences of \zcref{theorem:full-ldp,thm:marg-ldp}, and so are the lower bounds on simple configurations, since $\PP(E_{\rm s})$ and $\PP(\Oms)$ carry the topologies induced by the weak topologies on $\PP(E)$ and $\PP(\Om)$. The lower bound in $\mathcal T_{\rm occ}$ for the free evolution requires us to revisit the construction of \zcref{section:proof-lower-bound}, since the intensities of the stationary block law $\bar P$ and of the empirical process have to be controlled as well. The transition to $\beta\neq0$ and to the time marginals then follows the arguments of \zcref{subsection:strategy-interacting,subsection:strategy-contraction}. Throughout, we use the notation of \zcref{subsection:strategy-lower-bound,section:proof-lower-bound}, and we identify a law on $E_{\rm s}$ with the law on $E$ that it induces, which is supported by $E_{\rm s}$.

\subsection{Upper bounds}\label{subsection:weak-upper-bounds}
Let $\beta\in\R$ and $\xi\in\Omst$. In the graphical construction of \zcref{section:appendix-graphical}, every birth occurs at an atom of a Poisson random measure, so that, almost surely, the locations of all births are pairwise distinct and distinct from the points of $\xi$. Hence, if $\xi\in\Omsst$, then almost surely the configurations $\omega_t$ and $\omega_{t-}$ are simple for all $t\ge0$, and the same holds for the periodised path $\omega^{(n)}$, so that $R_n\in\PP(E_{\rm s})$ and $(R_n)_0\in\PP(\Oms)$. Moreover, by~\eqref{eq:rho} below, $\rho(R_n)=\rho((R_n)_0)$ is the total occupation time $A_n=\int_0^n\d t\, \omega_t(\Lambda_n)$ divided by $n\abs{\Lambda_n}$, which is finite almost surely, so that $R_n\in\PPP(E)$ and $(R_n)_0\in\PPP(\Om)$.

The canonical embedding of $\PP(E_{\rm s})$ into $\PP(E)$ is continuous for the weak topologies, because $E_{\rm s}$ carries the subspace topology, so that bounded continuous functions on $E$ restrict to bounded continuous functions on $E_{\rm s}$. The embedding of $(\PPP(E),\mathcal T_{\rm occ})$ into $\PP(E)$ is continuous, since $\mathcal T_{\rm occ}$ is finer than the weak topology. Hence both embeddings map compact sets to weakly compact sets, and the upper bound of \zcref{theorem:full-ldp}, applied to these images, yields the upper bounds in \zcref{theorem:weak-ldps}(\ref{item:weak-ldp-simple}) and~(\ref{item:weak-ldp-Tocc}). By the same continuity, $H^\beta$ is lower semicontinuous on $\PP(E_{\rm s})$ and on $(\PPP(E),\mathcal T_{\rm occ})$, see \zcref{lemma:interacting_case_rate_function_further_properties}, so that it is a rate function there. In the same way, the embeddings of $\PP(\Oms)$ and of $\PPP(\Om)$ into $\PP(\Om)$ are continuous, and \zcref{thm:marg-ldp} yields the upper bounds in \zcref{theorem:weak-ldps}(\ref{item:weak-ldp-marginals}), as well as the lower semicontinuity of $\mathfrak{I}^\beta$ on both spaces.

It remains to prove the lower bounds.

\subsection{The lower bound on simple configurations}\label{subsection:weak-lower-simple}
Let $\beta\in\R$ and $\xi\in\Omsst$. We equip $E_{\rm s}$ with the restriction of the metric $d_E$. Every open subset of $\PP(E_{\rm s})$ is the trace of a weakly open subset of $\PP(E)$, so that the lower bound in \zcref{theorem:weak-ldps}(\ref{item:weak-ldp-simple}) follows from that of \zcref{theorem:full-ldp}. Indeed, let $G\subset\PP(E_{\rm s})$ be open. By \zcref{lemma:open-neighbourhoods}(i), applied with $S=E_{\rm s}$, every $P\in G$ has a neighbourhood in $\PP(E_{\rm s})$ of the form $\{\widetilde P\colon|\widetilde P[f_i]-P[f_i]|<\delta,\ 1\le i\le p\}\subset G$ with $f_i\in\mathrm{BL}_1(E_{\rm s})$, and replacing the functions $f_i$ by extensions in $\mathrm{BL}_1(E)$, as provided by \zcref{lemma:open-neighbourhoods}(ii), defines a weakly open neighbourhood of $P$ in $\PP(E)$ whose elements supported by $E_{\rm s}$ belong to $G$. The union $G'$ of these neighbourhoods is weakly open in $\PP(E)$, contains $G$, and satisfies $G'\cap\PP(E_{\rm s})\subset G$. Since $R_n\in\PP(E_{\rm s})$ almost surely, see \zcref{subsection:weak-upper-bounds}, the lower bound of \zcref{theorem:full-ldp} gives
\begin{equation*}
    \liminf_{n\to\infty}\frac1{n|\Lambda_n|}\log Q^\beta_\xi(R_n\in G)=\liminf_{n\to\infty}\frac1{n|\Lambda_n|}\log Q^\beta_\xi(R_n\in G')\ge-\inf_{G'}H^\beta\ge-\inf_G H^\beta.
\end{equation*}
This argument does not apply in $\mathcal T_{\rm occ}$, which is finer than the weak topology, so that its open sets are in general not traces of weakly open sets.

\subsection{Intensities and the lower bound in \texorpdfstring{$\mathcal T_{\rm occ}$}{T\_occ}}\label{subsection:weak-lower-occupation}
We first collect some facts about intensities. Recall the pairing $\langle\eta,g\rangle=\int\eta(\d x)\,g(x)$ from \zcref{section:appendix-topology}. For $g\in C_c(\R^d)$ and $\omega\in E$, let
\begin{equation*}
    a_g(\omega):=\int_0^1\d t\,\langle\omega_t,g\rangle.
\end{equation*}

\begin{lemma}[Occupation functionals]\label{lem:occupation-functional}
\begin{enumerate}[(i)]
    \item For every $g\in C_c(\R^d)$, the function $a_g$ is finite and continuous on $E$.
    \item If $g\ge0$, then $P[a_g]=\rho(P)\int\d x\, g(x)$ for every $P\in\PP(E)$. Consequently, the intensity $\rho$ is weakly lower semicontinuous on $\PP(E)$.
    \item For every $n\in\N$ and every legal birth-and-death path $\omega\in D(\R_+;\Om)$,
    \begin{equation}\label{eq:rho}
        \rho(R_{n,\omega})=\frac{1}{n\abs{\Lambda_n}}\int_0^n\d t\,\omega_t(\Lambda_n).
    \end{equation}
    In particular, $\rho(R_n)=A_n/(n\abs{\Lambda_n})$, with the total occupation time $A_n$ from \zcref{subsection:probabilistic-control}.
\end{enumerate}
\end{lemma}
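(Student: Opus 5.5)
For~(i) I will reduce continuity of $a_g$ to convergence of the pairings $t\mapsto\langle\omega_t,g\rangle$ at almost every time. Finiteness is immediate: $t\mapsto\omega_t$ is càdlàg in the vague topology, so $t\mapsto\langle\omega_t,g\rangle$ is càdlàg and real-valued, hence bounded on $[0,1]$. For continuity, suppose $\omega^{(i)}\to\omega$ in $E$. Convergence in the local $J_1$ topology gives $\omega^{(i)}_t\to\omega_t$ vaguely at every continuity time $t$ of $\omega$, hence $\langle\omega^{(i)}_t,g\rangle\to\langle\omega_t,g\rangle$ for Lebesgue-almost every $t\in[0,1]$. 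To pass to the integral by dominated convergence I need a bound uniform in $i$. I will take it from the Skorokhod time changes: on a slightly larger interval $[-u,u]\supset[0,1]$ with $u$ a continuity time of $\omega$, we have $\sup_t d_\Omega(\omega^{(i)}_{\Psi_i(t)},\omega_t)\to0$. Vague relative compactness of $\{\omega_t\colon|t|\le u\}$ then yields $\sup_i\sup_{|t|\le u}\omega^{(i)}_t(K)<\infty$ for a compact $K\supset\supp g$. This gives $|\langle\omega^{(i)}_t,g\rangle|\le C\|g\|_\infty$, and dominated convergence concludes. The uniform mass bound along a $J_1$-convergent sequence is the one genuinely technical point, and I expect it to be the main obstacle. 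Failing that route, I would fall back on the tail argument of \zcref{lem:compactness}(ii) Step~2.

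For~(ii), Fubini and time stationarity give $P[a_g]=P[\langle\omega_0,g\rangle]$. Spatial stationarity makes the mean measure of $\omega_0$ a translation-invariant Radon measure, or infinite, which equals $\rho(P)$ times Lebesgue measure. This yields $P[a_g]=\rho(P)\int g$, including the case $\rho(P)=\infty$, since $g\ge0$. For lower semicontinuity, fix $g\ge0$ with $\int g=1$. Then $\rho(P)=\sup_m P[a_g\wedge m]$, and each $P\mapsto P[a_g\wedge m]$ is weakly continuous by~(i). A supremum of continuous maps is lower semicontinuous.

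For~(iii), I will use that $R_{n,\omega}$ is stationary, so $\rho(R_{n,\omega})=R_{n,\omega}[\omega_0([0,1)^d)]$. This equals
\begin{equation*}
\frac{1}{n|\Lambda_n|}\int_0^n\d s\int_{\Lambda_n}\d x\,\omega^{(n)}_s\bigl(x+[0,1)^d\bigr).
\end{equation*}
By spatial periodicity, $\int_{\Lambda_n}\d x\,\omega^{(n)}_s(x+[0,1)^d)=\omega^{(n)}_s(\Lambda_n)$, since each point is counted over a set of $x$ of unit volume modulo $n$. Moreover, $\omega^{(n)}_s(\Lambda_n)=\omega_s(\Lambda_n)$ for $s\in[0,n)$, which gives~\eqref{eq:rho}. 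The identity with $A_n$ is then just its definition.
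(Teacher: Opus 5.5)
Your proposal is correct and matches the paper's proof in all three parts, including the $\sup_m P[a_g\wedge m]$ argument for lower semicontinuity and the Fubini/periodicity computation in (iii). The uniform bound you flag as the main obstacle in (i) follows from the compactness you already invoke, and the paper uses it in essentially this form: since $\eta\mapsto\langle\eta,g\rangle$ is continuous and the closure of $\{\omega_t\colon|t|\le T\}$ is compact, $\sup_{|t|\le T}|\langle\omega^j_{\Psi_j(t)},g\rangle-\langle\omega_t,g\rangle|\to0$, which gives the almost-everywhere convergence and the domination at once (equivalently, every sequence $\omega^{(i)}_{s_i}$ with $i\to\infty$ has a subsequence converging to a point of that closure, so your mass bound on $K$ holds).
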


\begin{proof}
(i) The function $t\mapsto\langle\omega_t,g\rangle$ is c\`adl\`ag, hence bounded on $[0,1]$, so that $a_g(\omega)$ is finite. Let $\omega^j\to\omega$ in $E$, and choose $T>1$ such that $\omega$ is continuous at $\pm T$. Then there are increasing homeomorphisms $\Psi_j$ of $[-T,T]$ with $\sup_{|t|\le T}|\Psi_j(t)-t|\to0$ and $\sup_{|t|\le T}d_\Omega(\omega^j_{\Psi_j(t)},\omega_t)\to0$. Since $\eta\mapsto\langle\eta,g\rangle$ is continuous and the closure of $\{\omega_t\colon|t|\le T\}$ is compact, it follows that $\sup_{|t|\le T}|\langle\omega^j_{\Psi_j(t)},g\rangle-\langle\omega_t,g\rangle|\to0$. Consequently, $\langle\omega^j_s,g\rangle\to\langle\omega_s,g\rangle$ at every continuity time $s\in(-T,T)$ of $\omega$, hence for almost every $s\in[0,1]$, and the integrands are bounded uniformly in $j$ and $s\in[0,1]$. Dominated convergence gives $a_g(\omega^j)\to a_g(\omega)$, regardless of possible jumps of $\omega$ at the times $0$ and $1$.

(ii) For every $t$, the measure $B\mapsto P[\omega_t(B)]$ is translation invariant, and it equals $\rho(P)\abs{B}$ by stationarity. Hence $P[\langle\omega_t,g\rangle]=\rho(P)\int\d x\, g(x)$ for $g\ge0$, and Tonelli's theorem gives the formula for $P[a_g]$. Choosing $g\ge0$ with $\int\d x\,  g(x)=1$, we obtain $\rho(P)=\sup_{m\in\N}P[a_g\wedge m]$ by monotone convergence. By~(i), each map $P\mapsto P[a_g\wedge m]$ is weakly continuous, so that $\rho$ is weakly lower semicontinuous.

(iii) Since $\omega^{(n)}$ is periodic in space and $\omega^{(n)}_s(\Lambda_n)=\omega_s(\Lambda_n)$ for $0\le s<n$, Fubini's theorem gives $\int_{\Lambda_n}\d x\,(\theta_x\omega^{(n)}_s)([0,1)^d)=\omega_s(\Lambda_n)$. Integrating over $s\in[0,n)$ and dividing by $n\abs{\Lambda_n}$ gives~\eqref{eq:rho}.
\end{proof}

We now turn to the intensity of the block construction. Let $P\in\PP(E)$ with $H(P)<\infty$, so that $\rho(P)<\infty$ by \zcref{lem:finite-entropy-intensities}, and fix the cube $A=(0,a)^d$. Conditioning on $\omega_0(A)\le K$ changes the mean number of particles in $A$ at later times by at most $e_K$, uniformly in time, where
\begin{equation*}
    e_K := \sup_{t\geq 0} \big| P[\omega_t(A)\lvert  \omega_0(A)\leq K]-\rho(P)|A|\big|.
\end{equation*}

\begin{lemma}[Occupation under the block law]\label{lem:lb-density}
Let $P$, $a$, $K$, $\epsilon$ and $T$ be as in \zcref{lemma:block-law}. Then the following assertions hold.
\begin{enumerate}[(i)]
    \item For every $L>0$,
    \begin{equation}\label{eq:lb-uniform-population-mean}
        e_K \leq \frac{ P[\omega_0(A)\1_{\{\omega_0(A)>L\}}] +(L+\rho(P)|A|)P(\omega_0(A)>K) }{P(\omega_0(A)\leq K)},
    \end{equation}
    and consequently, for fixed $a$, $e_K\to0$ as $K\to\infty$.
    \item The block law has finite mean occupation, $\mathcal C_{\mathrm{block}}[\int_0^S\d t\,\omega_t(A)]<\infty$, and the mean occupation during the preparation and clearance phases satisfies
    \begin{equation}\label{eq:lb-connector-bounds}
        \mathcal C_{\mathrm{block}}\left[ \int_0^1\d t\,  \omega_t(A) + \int_{T+1}^{T+2}\d t \, \omega_t(A)\right]
        \leq K+|A|+\frac{\rho(P)|A|}{P(\omega_0(A)\leq K)}.
    \end{equation}
    \item The stationary block law has intensity
    \begin{equation}\label{eq:lb-cycle-density}
        \rho(\bar P) = \frac1{|A|S}\,\mathcal C_{\mathrm{block}}\Big[\int_0^S\d t\, \omega_t(A)\Big],
    \end{equation}
    and
    \begin{equation}\label{eq:lb-density-approximation}
        |\rho(\bar P)-\rho(P)| \le \frac{e_K}{|A|} + \frac{1}{T+2}\Big(\frac{K}{|A|}+1+\frac{\rho(P)}{P(\omega_0(A)\leq K)}+2\rho(P)\Big).
    \end{equation}
\end{enumerate}
\end{lemma}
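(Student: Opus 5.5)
For (i), I will split on the event $\{\omega_0(A)\le K\}$ and use stationarity. Abbreviate $p_K:=P(\omega_0(A)\le K)$ and $N_t:=\omega_t(A)$. Then
\begin{equation*}
P[N_t\lvert \omega_0(A)\le K]-\rho(P)|A| = \frac{P[N_t\1_{\{N_0\le K\}}]-p_K\, P[N_t]}{p_K} = \frac{P[N_t]\,P(N_0>K)-P[N_t\1_{\{N_0>K\}}]}{p_K},
\end{equation*}
using $P[N_t]=\rho(P)|A|$. The first term of the numerator is at most $\rho(P)|A|P(N_0>K)$. For the second, I write $N_t\le L+N_t\1_{\{N_t>L\}}$, which gives
\begin{equation*}
P[N_t\1_{\{N_0>K\}}]\le L\,P(N_0>K)+P[N_t\1_{\{N_t>L\}}].
\end{equation*}
By stationarity, $P[N_t\1_{\{N_t>L\}}]=P[N_0\1_{\{N_0>L\}}]$. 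The absolute value of the numerator is bounded by the larger of the two terms, hence by their sum, and this gives \eqref{eq:lb-uniform-population-mean} uniformly in $t$. For the limit, I first fix $L$ large so that the uniform-integrability term is small, which is possible since $\rho(P)<\infty$, and then let $K\to\infty$.

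For (ii), I will bound each phase separately. In the preparation phase, the path is $Q^A_\emptyset$ tilted only through its time-one marginal. Conditionally on the endpoint, particles present during $[0,1]$ are either alive at time $1$ (at most $K$) or were born and died inside $[0,1]$. I plan to handle the tilt by noting that the bridge of the free evolution given $\omega_1$ is the superposition of the endpoint particles with an independent Poisson process of births that die before time $1$. Its expected occupation is at most $|A|\int_0^1(1-\e^{-(1-s)})\,\d s\le|A|$, so the preparation contributes at most $K+|A|$. In the clearance phase, there are no births and particles only die, so the occupation over the unit interval is at most $\omega_{T+1}(A)$. The mean of this is bounded by \eqref{eq:lb-terminal-mean}, since smoothing conserves counts. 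Finiteness of the bulk occupation also follows from \eqref{eq:lb-terminal-mean} and Tonelli's theorem, which gives a bound of $T\rho(P)|A|/p_K$.

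For (iii), the identity \eqref{eq:lb-cycle-density} follows by applying Lemma~\ref{lem:occupation-functional}(ii) to $\bar P$ with a nonnegative $g$. I then compute $\bar P[a_g]$ from the definition \eqref{eq:lb-stationary-cycle-law}, using Fubini and the $a\Z^d\times S\Z$ periodicity of $\mathbf P$. The block faces carry no particles, so the spatial average over $A$ of $\langle\theta_x\omega_t,g\rangle$ integrates to $\omega_t(A)\int g$.

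Next I split $\int_0^S$ into the bulk interval and the two unit phases. The bulk equals $\int_0^T P[N_t\lvert N_0\le K]\,\d t$, which lies within $T(\rho(P)|A|\pm e_K)$. The phases are bounded by \eqref{eq:lb-connector-bounds}. Subtracting $\rho(P)$ and writing $T=S-2$, the error is
\begin{equation*}
\frac{T e_K}{|A|S}+\frac{2\rho(P)}{S}+\frac{1}{|A|S}\Big(K+|A|+\frac{\rho(P)|A|}{p_K}\Big),
\end{equation*}
which yields \eqref{eq:lb-density-approximation}.

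The main obstacle will be the preparation-phase bound in (ii): justifying that the tilt by the endpoint density leaves the conditional path law given $\omega_1$ unchanged, and that this bridge has the claimed Poisson structure with occupation bounded by $|A|$.
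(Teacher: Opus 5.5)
Your proposal is correct and follows the paper's proof essentially step by step. You use the same splitting on $\{\omega_0(A)>K\}$ with truncation at level $L$ and stationarity in (i), the same phase-by-phase occupation bounds in (ii), and the same bulk/connector decomposition with $\rho(P)|A|S=\rho(P)|A|T+2\rho(P)|A|$ in (iii). Your route to \eqref{eq:lb-cycle-density} through $a_g$ is only a variant of the paper's direct space-time averaging.

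The step you flag as the main obstacle is immediate from the construction. The density of $\mathcal C_{\mathrm{prep}}$ with respect to $Q^A_\emptyset$ depends only on $\omega_1$. Under $Q^A_\emptyset$, the births that die before time $1$ form a Poisson process independent of $\omega_1$ by the marking theorem.
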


\begin{proof}
(i) Let $f\ge0$ be measurable with $P[f]<\infty$, and let $L>0$. Splitting according to whether $f\circ\theta_z>L$ and using stationarity, we obtain
\begin{equation}\label{eq:lb-uniform-tail}
    \sup_{z\in\R^d\times\R} P[(f\circ\theta_z)\1_{\{\omega_0(A)> K\}}] \le P[f\1_{\{f>L\}}]+LP(\omega_0(A)>K).
\end{equation}
For $t\ge0$, we write
\begin{equation*}
    P[\omega_t(A)\lvert \omega_0(A)\le K]-\rho(P)|A|=\frac{\rho(P)|A|P(\omega_0(A)>K)-P[\omega_t(A)\1_{\{\omega_0(A)>K\}}]}{P(\omega_0(A)\le K)},
\end{equation*}
and~\eqref{eq:lb-uniform-tail}, applied to $f=\omega_0(A)$ and $z=(0,t)$, gives~\eqref{eq:lb-uniform-population-mean}. Since $P(\omega_0(A)>K)\to0$ as $K\to\infty$, we obtain $\limsup_{K\to\infty}e_K\le P[\omega_0(A)\1_{\{\omega_0(A)>L\}}]$ for every $L>0$, and the right-hand side tends to zero as $L\to\infty$, since $\rho(P)<\infty$.

(ii) During the preparation phase, the particles that are born or die before time $1$ form a Poisson point process that is independent of the configuration at time $1$, so that the tilt at time $1$ does not change their law. Their expected number is $|A|\int_0^1(1-\e^{-(1-t)})\,\d t=\e^{-1}|A|$. Moreover, at most $K$ particles are present at time $1$. Since the preparation phase has duration one, its integrated occupation is bounded by the number of individual particles present during this phase, whose expectation is at most $K+|A|$. During the clearance phase, there are no births, so that its integrated occupation is at most the number of particles present at time $1+T$. Since the smoothing preserves the number of particles in $A$, the expectation of this number is $P[\omega_T(A)\lvert\omega_0(A)\le K]$, which is bounded by $\rho(P)|A|/P(\omega_0(A)\le K)$ by~\eqref{eq:lb-terminal-mean}. This proves~\eqref{eq:lb-connector-bounds}. For the same reasons, the mean occupation during the bulk phase is $\int_0^T\d t\, P[\omega_t(A)\lvert\omega_0(A)\le K]\le T\rho(P)|A|/P(\omega_0(A)\le K)$, so that the total mean occupation is finite.

(iii) The identity~\eqref{eq:lb-cycle-density} follows from the definition~\eqref{eq:lb-stationary-cycle-law} of $\bar P$ by space-time averaging and Fubini's theorem, since the translates of $A$ tile $\R^d$ up to a null set. By the definition of $e_K$, the mean occupation during the bulk phase differs from $\rho(P)|A|T$ by at most $Te_K$, and by~\eqref{eq:lb-connector-bounds}, the mean occupation during the preparation and clearance phases lies between zero and $K+|A|+\rho(P)|A|/P(\omega_0(A)\le K)$. Since $\rho(P)|A|S=\rho(P)|A|T+2\rho(P)|A|$, dividing by $|A|S=|A|(T+2)$ gives~\eqref{eq:lb-density-approximation}.
\end{proof}

Under the auxiliary laws, the intensity of the empirical process concentrates around that of $\bar P$.

\begin{lemma}[Intensity of the empirical process]\label{lem:lb-occupation-concentration}
Let $\xi\in\Omst$, and let $P$, $a$, $K$, $\epsilon$ and $T$ be as in \zcref{lemma:block-law}. Then
\begin{equation}\label{eq:lb-occupation-concentration}
    \rho(R_n)\to \rho(\bar P) \qquad\text{in }\widehat Q_n\text{-probability}.
\end{equation}
\end{lemma}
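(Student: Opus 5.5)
By \eqref{eq:rho}, $\rho(R_n)=A_n/(n\abs{\Lambda_n})$ with $A_n=\int_0^n\d t\,\widehat\omega_t(\Lambda_n)$, where $\widehat\omega$ has law $\widehat Q_n$. The plan is to use the coupling from the proof of \zcref{lemma:G-typical-and-parameters}(a). There, the block paths $\zeta^{(i)}$, $i\in I_n$, are shared by $\widehat\omega$ and the infinite path $\widetilde\omega$. We split $A_n$ according to the time periods of the construction of $\widehat Q_n$ and show that only the block part matters.

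\emph{Negligible pieces.} During $[0,1]$ the conditioned evolution has no births in $\Lambda_n$, so the occupation is at most $\xi(\Lambda_n)$, which is $o(n\abs{\Lambda_n})$ since $\xi\in\Omst$. After time $1$ the corridor $\Lambda_n\setminus V_n$ stays empty. During the final interval $[1+\ell_nS,n]$ there are no births in $V_n$, and $V_n$ is empty at time $1+\ell_nS$. These pieces therefore contribute nothing, and also $\rho(R_n)$ only sees $\Lambda_n\times[0,n)$.

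\emph{Main term.} What remains is
\begin{equation*}
\sum_{i\in I_n}X_i,\qquad X_i:=\int_0^S\d t\,\zeta^{(i)}_t(A),
\end{equation*}
where the $X_i$ are i.i.d. with mean $\mathcal C_{\rm block}[\int_0^S\omega_t(A)\,\d t]=\abs{A}S\rho(\bar P)$. This mean is finite by \zcref{lem:lb-density}(ii) and identified with $\rho(\bar P)$ by \eqref{eq:lb-cycle-density}. The tile faces carry no particles almost surely, so the sum over tiles is exact.

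\emph{Conclusion.} The weak law of large numbers for i.i.d. integrable variables, which needs only the first moment, gives $\abs{I_n}^{-1}\sum_{i\in I_n}X_i\to\abs{A}S\rho(\bar P)$ in probability. Combined with $\abs{I_n}\abs{A}S/(n\abs{\Lambda_n})\to1$, this yields \eqref{eq:lb-occupation-concentration}.

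The main obstacle is integrability: $X_i$ need not have a second moment, so the variance argument of \zcref{lem:lb-cycle-field-lln} cannot be reused. That is why we invoke the $L^1$ weak law of large numbers. If one prefers truncation instead, one bounds $X_i\wedge m$ by the variance argument and controls the remainder $\mathcal C_{\rm block}[(X-m)_+]\to0$.
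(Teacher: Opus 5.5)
Your proposal is correct and matches the paper's proof. Both use \eqref{eq:rho} to split $\rho(R_n)$ into the initial-clearance term, bounded by $\xi(\Lambda_n)/(n\abs{\Lambda_n})$, plus the sum of the i.i.d.\ block occupations, and then apply a weak law of large numbers that needs only the first moment from \zcref{lem:lb-density}(ii) together with the identification \eqref{eq:lb-cycle-density}. The paper proves that law of large numbers explicitly by truncation at level $L$ and a variance bound, which is exactly your fallback. The coupling with $\widetilde\omega$ you mention is not needed, since under $\widehat Q_n$ the block paths are already i.i.d.\ with law $\mathcal C_{\rm block}$.
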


\begin{proof}
Let $\widehat\omega$ denote the path under $\widehat Q_n$, and let $\zeta^{(i)}$, $i\in I_n$, be the block paths used in its construction. After the initial clearance, the corridor $\Lambda_n\setminus V_n$ and the final time interval are empty, and the tile faces carry no particles. Hence~\eqref{eq:rho} gives
\begin{equation}\label{eq:lb-empirical-occupation}
    \rho(R_{n}) = \frac{1}{n|\Lambda_n|}\int_0^1\d t\, \widehat\omega_t(\Lambda_n) + \frac1{n|\Lambda_n|}\sum_{i\in I_n}\int_0^S\d t\, \zeta_t^{(i)}(A).
\end{equation}
During the initial clearance there are no births, so that the first term is at most $\xi(\Lambda_n)/(n|\Lambda_n|)$, which tends to zero since $\xi\in\Omst$. The block integrals $\int_0^S\d t\,\zeta^{(i)}_t(A)$ are independent and identically distributed, with finite mean by \zcref{lem:lb-density}(ii). Truncating them at a level $L>0$ and using the variance bound for the truncated, independent random variables, we obtain
\begin{equation*}
    \widehat Q_n\Big[\Big| \frac1{|I_n|}\sum_{i\in I_n}\int_0^S\d t\,  \zeta_t^{(i)}(A) - \mathcal C_{\mathrm{block}}\Big[\int_0^S\d t\, \omega_t(A)\Big] \Big|\Big]\leq 2 \mathcal C_{\mathrm{block}}\Big[\Big(\int_0^S\d t\, \omega_t(A)-L\Big)_+\Big] + \frac{L}{\sqrt{|I_n|}}.
\end{equation*}
Letting first $n\to\infty$ and then $L\to\infty$ shows that the left-hand side tends to zero. Since $|I_n|/(n|\Lambda_n|)\to (|A|S)^{-1}$, \eqref{eq:lb-cycle-density} and~\eqref{eq:lb-empirical-occupation} yield~\eqref{eq:lb-occupation-concentration}.
\end{proof}

\begin{proof}[Proof of the lower bound in \zcref{theorem:weak-ldps}(\ref{item:weak-ldp-Tocc}) for $\beta=0$]
Let $\xi\in\Omst$, let $G\subset\PPP(E)$ be $\mathcal T_{\rm occ}$-open, let $P\in G$ with $H(P)<\infty$, and let $\alpha>0$. It suffices to show that the conclusions~(i)--(iii) of \zcref{proposition:change-of-measure-sanov} hold for $G$; the proof of \zcref{thm:ldp-lower-bound-free} then applies verbatim. By the definition of $\mathcal T_{\rm occ}$ and by \zcref{lemma:open-neighbourhoods}(i), there are $f_1,\dots,f_p\in\mathrm{BL}_1(E)$ and $\delta\in(0,1]$ such that every $\widetilde P\in\PPP(E)$ with
\begin{equation*}
    \big| \widetilde P[f_i]-P[f_i]\big|<\delta,\quad 1\leq i\leq p, \qquad\text{and}\qquad \lvert\rho(\widetilde P)-\rho(P)\rvert<\delta
\end{equation*}
belongs to $G$. We choose the parameters as in the proof of \zcref{lemma:G-typical-and-parameters}(b), with two additional requirements: when choosing $K$, we also require $e_K/|A|\le\delta/16$, which is possible by \zcref{lem:lb-density}(i), and when choosing $T$, we also require
\begin{equation*}
    \frac{1}{T+2}\Big(\frac{K}{|A|}+1+\frac{\rho(P)}{P(\omega_0(A)\leq K)}+2\rho(P)\Big)\le\frac{\delta}{16}.
\end{equation*}
Then the entropy bound~(iii) holds, $\sup_{f\in\mathrm{BL}_1(E)}|\bar P[f]-P[f]|<\delta/2$, and $|\rho(\bar P)-\rho(P)|\le\delta/8$ by~\eqref{eq:lb-density-approximation}. By \zcref{lemma:G-typical-and-parameters}(a) and \zcref{lem:lb-occupation-concentration}, with $\widehat Q_n$-probability tending to one, $|R_n[f_i]-\bar P[f_i]|<\delta/2$ for $1\le i\le p$ and $|\rho(R_n)-\rho(\bar P)|<\delta/2$. On this event $R_n\in G$, which proves~(ii), while~(i) holds by \zcref{lemma:auxiliary-entropy}.
\end{proof}

\subsection{Interacting dynamics and time marginals}\label{subsection:weak-interacting-marginals}
It remains to prove the lower bound in~(\ref{item:weak-ldp-Tocc}) for $\beta\neq0$ and the lower bounds for the time marginals.

\begin{proof}[Proof of \zcref{theorem:weak-ldps}]
The upper bounds and the lower semicontinuity of the rate functions were shown in \zcref{subsection:weak-upper-bounds}, the lower bound in~(\ref{item:weak-ldp-simple}) in \zcref{subsection:weak-lower-simple}, and the lower bound in~(\ref{item:weak-ldp-Tocc}) for $\beta=0$ in \zcref{subsection:weak-lower-occupation}.

\emph{Lower bound in~(\ref{item:weak-ldp-Tocc}) for $\beta\neq0$.} By \zcref{lemma:interacting_case_boundary_conditions_play_no_role}, it suffices to consider the periodic dynamics $Q^\beta_n$. Step~2 of the proof of \zcref{theorem:full-ldp} for $\beta\neq0$ applies verbatim: with the tolerance $a>0$ from there, the set $G_j=G\cap\{\widetilde P\colon \widetilde P[F_j]>P[F_j]-a\}$ is open in $(\PPP(E),\mathcal T_{\rm occ})$, because $F_j$ is bounded and continuous on $E$, and the lower bound for $\beta=0$ from \zcref{subsection:weak-lower-occupation} takes the place of \zcref{thm:ldp-lower-bound-free}.

\emph{Lower bounds in~(\ref{item:weak-ldp-marginals}).} First let $\xi\in\Omsst$. Then $(R_n)_0\in\PP(\Oms)$ almost surely, and the argument of \zcref{subsection:weak-lower-simple}, with $(R_n)_0$, $\Oms$, $\Om$, $\mathfrak{I}^\beta$ and \zcref{thm:marg-ldp} in place of $R_n$, $E_{\rm s}$, $E$, $H^\beta$ and \zcref{theorem:full-ldp}, gives the lower bound on $\PP(\Oms)$. Now let $\xi\in\Omst$ and let $G\subset\PPP(\Om)$ be open. The map $P\mapsto P_0$ is continuous from $(\PPP(E),\mathcal T_{\rm occ})$ to $\PPP(\Om)$, because it is weakly continuous and $\rho(P_0)=\rho(P)$. Hence $\{P\in\PPP(E)\colon P_0\in G\}$ is $\mathcal T_{\rm occ}$-open, and the lower bound in~(\ref{item:weak-ldp-Tocc}) yields
\begin{equation*}
    \liminf_{n\to\infty}\frac1{n|\Lambda_n|}\log Q^\beta_\xi((R_n)_0\in G)\ge-\inf_{\mu\in G}\inf\{H^\beta(P)\colon P\in\PPP(E),\,P_0=\mu\}=-\inf_G\mathfrak{I}^\beta,
\end{equation*}
where the last identity holds because $H^\beta(P)<\infty$ implies $H(P)<\infty$ by~\eqref{equation:interacting_case_lower_bound_via_free_entropy}, and hence $\rho(P)<\infty$ by \zcref{lem:finite-entropy-intensities}.
\end{proof}

If $\xi\in\Omsst$, the lower bound in~(\ref{item:weak-ldp-Tocc}) also holds for the laws in $\PPP(E)$ that are supported by $E_{\rm s}$, equipped with the topology induced by $\mathcal T_{\rm occ}$. Indeed, every open subset of this space is the trace of a $\mathcal T_{\rm occ}$-open set, and $R_n\in\PP(E_{\rm s})$ almost surely, so that the estimate at the end of \zcref{subsection:weak-lower-simple} applies, with the lower bound in~(\ref{item:weak-ldp-Tocc}) in place of that of \zcref{theorem:full-ldp}.

For a law $\mu$ with $\rho(\mu)<\infty$ that is supported by simple configurations, the minimum in~\eqref{equation:contraction_rate_function} can also be taken over the laws supported by $E_{\rm s}$. Indeed, if $P_0=\mu$ and $H^\beta(P)<\infty$, then $H(P)<\infty$, so that $P_{A,[0,T]}\ll P_{A,0}Q^A$ for every cube $A$ and every $T>0$, as in the proof of \zcref{lem:finite-entropy-intensities}. Under $P_{A,0}Q^A$, the configurations in $A$ stay simple during $[0,T]$, since $P_{A,0}$ is supported by simple configurations and the free evolution almost surely creates no particles at occupied locations. Stationarity and a countable exhaustion by translated space-time boxes therefore give $P(E_{\rm s})=1$.

\subsection{Counterexamples}\label{subsection:counterexamples}
We now show that the weak large deviation principles in~(\ref{item:weak-ldp-simple}) and~(\ref{item:weak-ldp-Tocc}) of \zcref{theorem:weak-ldps} cannot be strengthened to full ones, already for the free evolution. Throughout this subsection, $\beta=0$. The main tool is the following lemma. A genuine birth or death changes the configuration by a single particle, whereas at a temporal reset of the periodised path all particles alive at time $n$ are replaced at once by the initial configuration. Large downward jumps of the paths charged by the empirical process can therefore only come from the resets, whose frequency vanishes as $n\to\infty$, so that every rate function satisfying the lower bound must be infinite on laws that charge such jumps. Fix $g\in C_c(\R^d)$ with $0\le g\le1$, $g=1$ on $[-2,2]^d$, and $\operatorname{supp}g\subset[-3,3]^d$. The size of a downward jump of the scalar path $t\mapsto\langle\omega_t,g\rangle$ at time $t$ is $\langle\omega_{t-},g\rangle-\langle\omega_t,g\rangle$.

\begin{lemma}[Large downward jumps]\label{lem:periodisation-jumps}
The jump events
\begin{equation*}
\begin{split}
    &\Big\{\omega\colon \langle\omega_{t-},g\rangle-\langle\omega_t,g\rangle>\tfrac32\text{ for some } t\in(-1,1)\Big\}\text{ and }\\
    &\Big\{\omega\colon \langle\omega_{t-},g\rangle-\langle\omega_t,g\rangle\ge2\text{ for some } t\in[-\tfrac12,\tfrac12]\Big\}
\end{split}
\end{equation*}
are open and closed, respectively, in $E$ and in $E_{\rm s}$. Let $\xi\in\Omst$, and suppose that the large deviation lower bound under $Q_\xi$ at speed $n|\Lambda_n|$ holds with a rate function $I$, either on $\PP(E_{\rm s})$ with the weak topology, where $\xi\in\Omsst$, or on $(\PPP(E),\mathcal T_{\rm occ})$, or on the subset of laws in $\PPP(E)$ supported by $E_{\rm s}$, where $\xi\in\Omsst$. Then $I(P)=\infty$ for every $P$ in the respective space that assigns positive probability to the open jump event.
\end{lemma}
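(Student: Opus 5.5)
The proof splits into a topological part and a probabilistic part.

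\emph{Topology.} I would work with the scalar path $X(\omega)_t:=\langle\omega_t,g\rangle$. The map $\omega\mapsto X(\omega)$ is continuous from $E$ to $D(\R;\R)$ with the local $J_1$ topology; the argument is the same as for the occupation functionals in \zcref{lem:occupation-functional}(i). It then suffices to check the claims for scalar c\`adl\`ag paths.

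The set of paths with a downward jump of size $>3/2$ at some time in the open interval $(-1,1)$ is $J_1$-open. Indeed, if $x_j\to x$, then time changes converging uniformly to the identity map the jump of $x$ near a time $t\in(-1,1)$ to jumps of $x_j$ near $t$ whose sizes converge; the strict inequality survives and the time stays in the open interval. Dually, the set with a downward jump $\ge2$ at some time in the compact interval $[-\tfrac12,\tfrac12]$ is closed. Limits of jumps of size $\ge 2$ at times $t_j$ accumulating at some $t$ produce a jump of size $\ge2$ at $t$, since in $J_1$ a limit path has a jump at $t$ of size equal to the limit of the jump sizes. Restricting to $E_{\rm s}$ preserves openness and closedness, because $E_{\rm s}$ carries the subspace topology.

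\emph{Probability.} Let $O$ denote the open jump event and let $P$ be a law in the respective space with $P(O)>0$. I would set $G:=\{\widetilde P\colon \widetilde P(O)>P(O)/2\}$. By Portmanteau this set is weakly open, hence open in each of the three topologies, since $\mathcal T_{\rm occ}$ is finer than the weak topology and the others are traces. The lower bound then gives
\[
-\inf_G I\le\liminf_n \frac{1}{n|\Lambda_n|}\log Q_\xi(R_n(O)>P(O)/2).
\]
So it suffices to show $Q_\xi(R_n(O)>c)=0$ for every $c>0$ and all large $n$; then the right-hand side is $-\infty$, and $I(P)=\infty$ follows.

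The key observation concerns how $\theta_{(x,s)}\omega^{(n)}$ can lie in $O$. This requires a time $t$ with $|t-s|<1$ at which $\langle\omega^{(n)}_{t-},g(\cdot-x)\rangle$ drops by more than $3/2$. Under $Q_\xi$, almost surely no two genuine events happen simultaneously, so a genuine death changes the pairing by at most $\|g\|_\infty=1<3/2$. Such a drop is therefore only possible at a reset time $t\in n\Z$. Hence the set of $(x,s)\in\Delta_n$ with $\theta_{(x,s)}\omega^{(n)}\in O$ is contained in $\Lambda_n\times\bigl([0,1)\cup(n-1,n]\bigr)$, which gives $R_n(O)\le 2/n$ deterministically, almost surely. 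For $n>2/c$ the probability vanishes.

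\emph{Main obstacle.} The main care is in the scalar $J_1$ topology step: the open/closed dichotomy depends on the strict versus non-strict thresholds and on the open versus closed time intervals. One also has to handle spatial periodisation copies, where two events can coincide in the periodised path. Since $\operatorname{supp}g\subset[-3,3]^d$ is much smaller than $\Lambda_n$ for large $n$, the periodised copies do not create simultaneous genuine events inside one support window.
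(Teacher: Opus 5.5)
Your proposal is correct and follows essentially the same route as the paper: it uses the same open/closed argument via time changes on the scalar path $\langle\omega_t,g\rangle$, the same Portmanteau neighbourhood $\{\widetilde P\colon \widetilde P(O)>P(O)/2\}$, and the same observation that only the temporal resets can produce a downward jump exceeding $3/2$, which gives $R_n(O)\le 2/n$ almost surely. The one step you cite rather than prove is that a nonzero limit of jump sizes at converging times is a jump of the limit path; the paper establishes exactly this by noting that a real c\`adl\`ag function has only finitely many jumps of size greater than one on a compact interval.
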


\begin{proof}
\emph{The jump events.} Suppose that $\omega^j\to\omega$ in $E$, and choose $T>1$ such that $\omega$ is continuous at $\pm T$. As in the proof of \zcref{lem:occupation-functional}(i), there are increasing homeomorphisms $\Psi_j$ of $[-T,T]$ and $\varepsilon_j\to0$ such that
\begin{equation*}
    \sup_{|t|\le T}|\Psi_j(t)-t|\longrightarrow0,
    \qquad
    \sup_{|t|\le T}\big|\langle\omega^j_{\Psi_j(t)},g\rangle-\langle\omega_t,g\rangle\big|\le\varepsilon_j.
\end{equation*}
Passing to left limits, we obtain, for $-T<t<T$,
\begin{equation*}
    \Big|\big(\langle\omega^j_{\Psi_j(t)-},g\rangle-\langle\omega^j_{\Psi_j(t)},g\rangle\big)-\big(\langle\omega_{t-},g\rangle-\langle\omega_t,g\rangle\big)\Big|\le2\varepsilon_j.
\end{equation*}
Thus a downward jump of size strictly greater than $3/2$ of $\omega$ at a time $t\in(-1,1)$ persists for $\omega^j$ at the time $\Psi_j(t)\in(-1,1)$, for all sufficiently large $j$. This proves that the first event is open.

To see that the second event is closed, suppose that every $\omega^j$ has a downward jump of size at least two at some time $t_j\in[-1/2,1/2]$, and put $s_j:=\Psi_j^{-1}(t_j)$. By the last display, $\omega$ has a downward jump of size at least $2-2\varepsilon_j>1$ at $s_j$, for all sufficiently large $j$. A real-valued c\`adl\`ag function has only finitely many jumps of size greater than one in a compact interval, since otherwise a sequence of such jump times would converge monotonically to some time, and the corresponding one-sided limit would force the values on both sides of these jumps to converge to the same limit. Consequently, $s_j=s$ is constant along a subsequence. Since $|s_j-t_j|\le\sup_{|t|\le T}|\Psi_j(t)-t|\to0$, we have $s\in[-1/2,1/2]$, and the jump of $\omega$ at $s$ has size at least $2$. Since $E$ is metrisable, the second event is closed. The assertions for $E_{\rm s}$ follow, since it carries the subspace topology.

\emph{Infinite rate.} Under $Q_\xi$, the birth and death times in $\Delta_n$ are pairwise distinct, almost surely, since the births form a Poisson point process and the lifetimes are independent and continuously distributed. For $n\ge10$, no translate of $\operatorname{supp}g\subset[-3,3]^d$ meets two spatial copies of the same particle. Since $0\le g\le1$, every jump of $t\mapsto\langle(\theta_{(x,s)}\omega^{(n)})_t,g\rangle$ that does not occur at a temporal reset therefore has size at most one, and only the resets, which are $n$ units of time apart, can produce the open jump event. For every spatial shift, the temporal shifts in $[0,n)$ that place a reset in $(-1,1)$ form a set of Lebesgue measure two, so that the $R_n$-probability of the open jump event is at most $2/n$, $Q_\xi$-almost surely.

Now let $P$ be such that the open jump event has $P$-probability $p>0$. By the Portmanteau theorem, the laws that assign probability larger than $p/2$ to this open event form an open neighbourhood $G$ of $P$ in the weak topology, and hence in each of the spaces in the statement. By the preceding paragraph, $Q_\xi(R_n\in G)=0$ for all $n\ge10$ with $n>4/p$. If the lower bound holds with the rate function $I$, then
\begin{equation*}
    -\infty=\liminf_{n\to\infty}\frac1{n|\Lambda_n|}\log Q_\xi(R_n\in G)\ge-\inf_{P'\in G}I(P'),
\end{equation*}
so that $\inf_GI=\infty$, and in particular $I(P)=\infty$.
\end{proof}

We now construct closed sets of finite exponential cost all of whose elements charge the open jump event. The first example uses paths with pairs of particles at distance of order $1/n$, whose weak limit has double points and therefore lies outside $\PP(E_{\rm s})$.

\begin{example}[No full large deviation principle on simple configurations]\label{example:no-full-ldp-simple}
For $\xi=\emptyset$, the empirical processes $(R_n)_{n\in\N}$ under $Q_\emptyset$ satisfy no full large deviation principle at speed $n|\Lambda_n|$ on $\PP(E_{\rm s})$, equipped with the weak topology, with any rate function.
\end{example}

\begin{proof}
Suppose that a full large deviation principle holds with a rate function $I$.

\emph{Step 1: The sets $K_n$.}
For $n\ge10$, enumerate the centres of the unit cubes $-\overline{n/2}+k+[0,1)^d$, $k\in\{0,\ldots,n-1\}^d$, which partition $\Lambda_n$, as $c_1,\ldots,c_{|\Lambda_n|}$. For $1\le j\le|\Lambda_n|$ and $1\le i\le2|\Lambda_n|$, set
\begin{equation*}
\begin{split}
    B_{n,2j-1}&:=B_{1/(100n)}\Big(c_j-\Big(\tfrac{1}{10n},0,\ldots,0\Big)\Big),\, 
    B_{n,2j}:=B_{1/(100n)}\Big(c_j+\Big(\tfrac{1}{10n},0,\ldots,0\Big)\Big),\\
    I_{n,i}&:=\Big[\tfrac{4i-3}{8|\Lambda_n|},\tfrac{4i-2}{8|\Lambda_n|}\Big].
\end{split}
\end{equation*}
The closed balls are pairwise disjoint and lie inside their unit cubes, and the time intervals lie in $(0,1)$ and are separated by gaps of positive length. Let $K_n$ be the set of all empirical processes $R_{n,\omega}$ of paths of the form
\begin{equation*}
    \omega_t=\sum_{i=1}^{2|\Lambda_n|}\1_{[t_i,\infty)}(t)\,\delta_{x_i},\quad t\ge0,\qquad\text{with } x_i\in B_{n,i},\ t_i\in I_{n,i},\ 1\le i\le2|\Lambda_n|.
\end{equation*}
These paths start with the empty configuration, and their only events are the prescribed births: two particles appear close to the centre of every unit cube during the time interval $(0,1)$ and stay alive until the reset at time $n$, see \zcref{fig:counterexample}. Their periodisations belong to $E_{\rm s}$, so that $K_n\subset\PP(E_{\rm s})$.

\emph{Step 2: Compactness of $K_n$.}
The parameter space $\prod_{i=1}^{2|\Lambda_n|}(B_{n,i}\times I_{n,i})$ is compact. For convergent parameters, consider the increasing piecewise linear time change of $[0,n]$ that fixes $0$ and $n$ and maps each limiting birth time to the corresponding approximating birth time, and extend it to $\R$ by translating its graph by $(hn,hn)$, $h\in\Z$. These time changes converge uniformly to the identity, and after this alignment the configurations converge vaguely, uniformly in time, since only finitely many particle positions contribute in every bounded spatial region. Since the limiting positions again lie in the disjoint balls $B_{n,i}$, the limiting path belongs to $E_{\rm s}$, and the periodised paths converge in $E_{\rm s}$. Every fixed space-time translation preserves this convergence, so that dominated convergence, applied to bounded continuous test functions in the definition~\eqref{eq:empirical-process} of $R_{n,\omega}$, proves that the map from the parameters to $R_{n,\omega}$ is continuous. Thus $K_n$ is compact.

\emph{Step 3: Closedness of $K=\bigcup_{n\ge10}K_n$.}
Let $X$ be uniformly distributed on $[0,1)^d$, and let $\gamma$ be the random constant path with $\gamma_t=2\sum_{z\in\Z^d}\delta_{z+X}$ for all $t\in\R$, which has a double point at every site of a randomly shifted lattice; its law is space-time stationary and assigns probability zero to $E_{\rm s}$. We claim that
\begin{equation}\label{eq:counterexample-uniform-convergence}
    \lim_{n\to\infty}\sup_{P\in K_n}\sup_{f\in\mathrm{BL}_1(E)}\big|P[f]-\mathbb E[f(\gamma)]\big|=0.
\end{equation}
Let $P=R_{n,\omega}\in K_n$, let $T>0$ and $J\in\N$, and let $n>2T+1$. The temporal shifts $s\in[0,n)$ for which $[s-T,s+T]$ meets the birth phase $[0,1]+n\Z$ form a set of measure $2T+1$, and all resets are contained in this exceptional set. For every other shift $s$ and every $x\in\Lambda_n$, the path $\theta_{(x,s)}\omega^{(n)}$ is constant on $[-T,T]$, and the two particles associated with each translated cube centre lie within distance $11/(100n)$ of this centre. We compare this path with the constant path that has two particles exactly at every translated cube centre. Since every $g_\ell$ is $1$-Lipschitz, and since the support of each of $g_1,\ldots,g_J$ is within distance one of a number of lattice points that is bounded independently of $n$, the definition of $d_\Omega$ gives a bound $C_J/n+2^{-J}$ on the distance of the two configurations, with a constant $C_J$ that depends only on $J$. By~\eqref{eq:lb-local-metric}, the $d_E$-distance of the two paths is at most $\e^{-T}+C_J/n+2^{-J}$. If $x$ is uniformly distributed on $\Lambda_n$, the translated cube centres form a uniformly shifted copy of the lattice $\Z^d$, so that the comparison path has the law of $\gamma$. Averaging over all shifts, we obtain, for every $f\in\mathrm{BL}_1(E)$,
\begin{equation*}
    \big|P[f]-\mathbb E[f(\gamma)]\big|\le\frac{2(2T+1)}{n}+\e^{-T}+\frac{C_J}{n}+2^{-J}.
\end{equation*}
Letting first $n\to\infty$ and then $T,J\to\infty$ proves~\eqref{eq:counterexample-uniform-convergence}.

Now let $P_j\in K_{n_j}$ converge weakly to some $P$ in $\PP(E_{\rm s})$. If $(n_j)_{j\in\N}$ were unbounded, then, along a subsequence, $P_j$ would converge weakly in $\PP(E)$ to the law of $\gamma$ by~\eqref{eq:counterexample-uniform-convergence}, whereas the continuity of the embedding of $\PP(E_{\rm s})$ into $\PP(E)$ forces $P_j\to P$ in $\PP(E)$. This is impossible, since $P(E_{\rm s})=1$, whereas the law of $\gamma$ assigns probability zero to $E_{\rm s}$. Hence $(n_j)_{j\in\N}$ is bounded, a subsequence of $(P_j)_{j\in\N}$ lies in a single set $K_n$, and the compactness of $K_n$ shows that $P\in K_n\subset K$. Since the weak topology on $\PP(E_{\rm s})$ is metrisable, $K$ is closed.

\emph{Step 4: Infinite rate on $K$.}
Let $P=R_{n,\omega}\in K_n$. For every spatial shift, the lattice of translated cube centres has a point in $[-1/2,1/2]^d$, whose two associated particles lie in $[-2,2]^d$, where $g=1$. At a temporal reset, all particles disappear, since these paths start with the empty configuration. Hence the scalar path $t\mapsto\langle\omega^{(n)}_t,g\rangle$, seen from any spatial shift, has a downward jump of size at least two at every reset. The temporal shifts placing a reset in $(-1,1)$ form a fraction $2/n$ of all shifts, so that $P$ assigns probability $2/n>0$ to the open jump event. By \zcref{lem:periodisation-jumps}, $I(P)=\infty$ for every $P\in K$.

\emph{Step 5: Probability estimate and contradiction.}
Under $Q_{\emptyset}$, consider the event that there is exactly one birth in each of the sets $B_{n,i}\times I_{n,i}$, no other birth in $\Delta_n$, and that every one of these births has a lifetime of at least $n$. By the independence properties of the marked Poisson point process of births, this event has probability
\begin{equation*}
    \e^{-3n|\Lambda_n|}\prod_{i=1}^{2|\Lambda_n|}|B_{n,i}|\,|I_{n,i}|
    =\e^{-3n|\Lambda_n|} \left(\frac{|B_{1/(100n)}(0)|}{8|\Lambda_n|}\right)^{2|\Lambda_n|}.
\end{equation*}
On this event, the path agrees on $\Lambda_n\times[0,n)$ with a path in the definition of $K_n$, so that $R_n\in K_n$. Since $|B_{1/(100n)}(0)|$ is a constant multiple of $n^{-d}$ and $|\Lambda_n|=n^d$, the logarithm of this probability equals $-3n|\Lambda_n|-O(|\Lambda_n|\log n)$. Therefore
\begin{equation*}
    \liminf_{n\to\infty}\frac1{n|\Lambda_n|}\log Q_{\emptyset}(R_n\in K)\ge-3,
\end{equation*}
whereas the upper bound for the closed set $K$, together with Step~4, gives
\begin{equation*}
    \limsup_{n\to\infty}\frac1{n|\Lambda_n|}\log Q_{\emptyset}(R_n\in K)=-\infty.
\end{equation*}
This is a contradiction.
\end{proof}

\begin{figure}
    \centering
    % Schematic for Example (no full LDP on simple configurations); time horizontal, space vertical,
% as in the authors' figures. Not to scale: the birth phase is magnified and the time axis is broken.
\begin{tikzpicture}[x=1cm,y=1cm,line cap=round]
    % birth phase (0,1) and its periodic copy (n,n+1)
    \fill[black!12] (0,0) rectangle (2,6);
    \fill[black!12] (8,0) rectangle (10,6);
    % unit cells
    \foreach \y in {0,1.5,3,4.5,6} {\draw[black!35] (0,\y) -- (10.6,\y);}
    \draw[thick] (0,0) -- (0,6);
    % temporal reset at time n
    \draw[thick,dashed] (8,-0.15) -- (8,6.15);
    \node[above] at (8,6.15) {\small reset};
    % lifelines: particle i is born at time t_i in (0,1), drawn at x=2t_i,
    % the two particles of cell j sit slightly below and above its centre
    \foreach \i/\x in {1/0.094,2/0.344,3/0.594,4/0.844,5/1.094,6/1.344,7/1.594,8/1.844} {
        \pgfmathtruncatemacro{\j}{(\i+1)/2}
        \pgfmathsetmacro{\yc}{1.5*\j-0.75}
        \pgfmathsetmacro{\s}{ifthenelse(mod(\i,2)==1,-1,1)}
        \pgfmathsetmacro{\y}{\yc+0.13*\s}
        \draw[line width=0.9pt] (\x,\y) -- (8,\y);
        \fill (\x,\y) circle (1.6pt);
        \pgfmathsetmacro{\xx}{8+\x}
        \draw[line width=0.9pt,black!55] (\xx,\y) -- (10.6,\y);
        \fill[black!55] (\xx,\y) circle (1.6pt);
    }
    % break of the time axis (not to scale)
    \fill[white] (4.75,-0.05) rectangle (5.05,6.05);
    % space axis
    \draw[<->,>=latex] (-0.35,0) -- (-0.35,6);
    \node[left] at (-0.4,3) {$\Lambda_n$};
    % time axis
    \draw (-0.1,-0.75) -- (4.75,-0.75);
    \draw[->,>=latex] (5.05,-0.75) -- (10.8,-0.75);
    \draw (4.65,-0.87) -- (4.85,-0.63);
    \draw (4.95,-0.87) -- (5.15,-0.63);
    \node[below] at (5.3,-0.8) {time};
    \foreach \x/\l in {0/0,2/1,8/n,10/n+1} {
        \draw[black!60] (\x,-0.1) -- (\x,-0.25);
        \node[below] at (\x,-0.25) {$\l$};
    }
\end{tikzpicture}
    \caption{Schematic picture of a path in the proof of \zcref{example:no-full-ldp-simple} for $d=1$, with space drawn vertically and the time axis broken and not to scale. During the time interval $(0,1)$, two particles are born at distance of order $1/n$ from the centre of every unit cell, at distinct times, and they stay alive until the temporal reset at time $n$, which removes all of them at once; the periodised path then repeats the same births. As $n\to\infty$, the pairs merge into double points.}
    \label{fig:counterexample}
\end{figure}

In the second example, the particles are concentrated in a small region, so that their weak limit, the law of the empty path, is excluded by a constraint on the intensity.

\begin{example}[No full large deviation principle with intensities]\label{example:no-full-ldp-occupation}
For every $\xi\in\Omst$, the empirical processes $(R_n)_{n\in\N}$ under $Q_\xi$ satisfy no full large deviation principle at speed $n|\Lambda_n|$ on $(\PPP(E),\mathcal T_{\rm occ})$ with any rate function. If $\xi\in\Omsst$, the same holds on the subset of laws in $\PPP(E)$ supported by $E_{\rm s}$, equipped with the topology induced by $\mathcal T_{\rm occ}$.
\end{example}

\begin{proof}
Suppose that a full large deviation principle holds with a rate function $I$. With $g$ as in \zcref{lem:periodisation-jumps}, the function $f:=a_g\wedge1$ is bounded and continuous on $E$ by \zcref{lem:occupation-functional}(i). For $P\in\PPP(E)$, \zcref{lem:occupation-functional}(ii) gives $P[a_g]=\rho(P)\int g(x)\,\d x$, so that $P[f]=0$ implies $\rho(P)=0$. For $n\in\N$, let $F_n$ be the set of all $P\in\PPP(E)$ such that
\begin{equation*}
    \rho(P)\ge1,\qquad P[f]\le\frac3n+\frac{7^d}{|\Lambda_n|},
\end{equation*}
and such that $P$ assigns probability at least $1/(n|\Lambda_n|)$ to the closed jump event of \zcref{lem:periodisation-jumps}. Each $F_n$ is $\mathcal T_{\rm occ}$-closed, by the continuity of $\rho$ and of $P\mapsto P[f]$, and by the Portmanteau theorem for the closed jump event. The union $F=\bigcup_{n\in\N}F_n$ is closed as well. Indeed, let $P\notin F$. If $P[f]>0$, the neighbourhood $\{P'\colon P'[f]>P[f]/2\}$ of $P$ is disjoint from $F_n$ for all sufficiently large $n$, so that it meets only finitely many of the closed sets $F_n$; since none of them contains $P$, some neighbourhood of $P$ is disjoint from $F$. If $P[f]=0$, then $\rho(P)=0$, and the neighbourhood $\{P'\colon\rho(P')<1\}$ of $P$ is disjoint from $F$. Since the closed jump event is contained in the open one, \zcref{lem:periodisation-jumps} gives $I(P)=\infty$ for every $P\in F$.

Under $Q_\xi$, consider the event that all initial particles in $\Lambda_n$ die before time $1$, that there are exactly $2|\Lambda_n|$ births in $[-1/8,1/8]^d\times[1,2]$, each with a lifetime of at least $n$, and that there is no other birth in $\Delta_n$. This event has probability
\begin{equation*}
    (1-\e^{-1})^{\xi(\Lambda_n)}\e^{-n|\Lambda_n|}\frac{(4^{-d}\e^{-n})^{2|\Lambda_n|}}{(2|\Lambda_n|)!},
\end{equation*}
whose logarithm, divided by $n|\Lambda_n|$, tends to $-3$, since $\xi\in\Omst$ and $|\Lambda_n|\log n=o(n|\Lambda_n|)$. We show that $R_n\in F_n$ on this event, for all sufficiently large $n$. First, by~\eqref{eq:rho}, $\rho(R_n)\ge2(1-2/n)\ge1$ for $n\ge4$. Second, the initial particles contribute to $f(\theta_{(x,s)}\omega^{(n)})$ only for a fraction at most $3/n$ of the temporal shifts, and every other non-zero value requires a spatial shift in a set of volume at most $7^d$, so that $R_n[f]\le3/n+7^d/|\Lambda_n|$. Third, for spatial shifts in $[-1/2,1/2]^d$, all the newly born particles have weight one just before a temporal reset, whereas the initial particles that reappear at the reset contribute at most $\xi([-4,4]^d)$. Thus the downward jump at the reset has size at least two for sufficiently large $n$. The temporal shifts placing the reset in $[-1/2,1/2]$ form a fraction $1/n$ of all shifts, so that $R_n$ assigns probability at least $1/(n|\Lambda_n|)$ to the closed jump event. Hence $R_n\in F_n\subset F$ for all sufficiently large $n$, and
\begin{equation*}
    \liminf_{n\to\infty}\frac1{n|\Lambda_n|}\log Q_\xi(R_n\in F)\ge-3,
\end{equation*}
whereas the upper bound for the closed set $F$ gives $-\infty$, a contradiction. For simple $\xi$, the newly born particles occupy distinct locations, almost surely, and the same argument applies to the subset of laws supported by $E_{\rm s}$.
\end{proof}

We conclude with a comment on exponential tightness. The compact sets $K_\alpha$ from \zcref{prop:exponential-tightness} may contain laws of infinite intensity, but a single additional estimate removes them. By \zcref{lem:occupation-functional}, the sets $\{P\colon\rho(P)\le c\}$ are weakly closed, and $\rho(R_n)=A_n/(n|\Lambda_n|)$. The bound~\eqref{eq:mgf} with $\lambda=1/2$ and $\psi = 0$ and the exponential Chebyshev inequality give
\begin{equation*}
    \limsup_{n\to\infty}\frac1{n|\Lambda_n|}\log Q_{\xi}(\rho(R_n)>\sigma)\le1-\frac\sigma2,\qquad\sigma>0.
\end{equation*}
Hence $K_\alpha\cap\{P\colon\rho(P)\le2(\alpha+1)\}$ is a weakly compact subset of $\PPP(E)$ that satisfies~\eqref{eq:exponential-tightness}, so that the empirical processes are exponentially tight in $\PPP(E)$ with the topology induced by the weak topology. They are, however, not exponentially tight in $(\PPP(E),\mathcal T_{\rm occ})$: otherwise, by~\cite[Lemma~1.2.18]{LDPbook}, the weak large deviation principle of \zcref{theorem:weak-ldps}(\ref{item:weak-ldp-Tocc}) would be a full one, contradicting \zcref{example:no-full-ldp-occupation}.

\subsection*{Disclosure of AI use.} 
This project was initiated in February 2026 as a collaboration between the authors. By the end of August 2026, we had developed, without the use of AI, the main proof strategies and key ideas, including the ideas of the block-path construction in the lower bound and of the transfer from the free evolution to the interacting process, and had worked out some of the details.
The details of the proofs were subsequently developed through a collaboration between the authors and AI, primarily using ChatGPT 6 Astra. The counterexamples in \zcref{subsection:counterexamples} were constructed entirely by AI and then checked and rewritten by the authors. 
We take full responsibility for the mathematical content and correctness of the paper.

\subsection*{Acknowledgements.} 

The authors would like to thank David Dereudre for the idea of introducing regularising corridors in the observation window, which led us to the construction of block paths. While working on this project JK was also employed at TU Braunschweig.  

\bibliographystyle{alpha}
\bibliography{references}

\appendix
\section{The topology of the path space}\label{section:appendix-topology}
In this appendix we collect the metrics on $\Om$ and on $E$ that we use for quantitative estimates, together with some elementary facts about bounded Lipschitz test functions and a local comparison estimate for paths.

For $\eta\in\Om$ and a bounded measurable function $g\colon\R^d\to\R$ with bounded support, we write $\langle\eta,g\rangle:=\int\eta(\d x)\,g(x)$. We fix a countable family $(g_j)_{j\ge1}\subset C_c(\R^d)$ of Lipschitz functions that determines vague convergence, normalised so that
\begin{equation*}
    \|g_j\|_\infty\le1, \qquad |g_j(x)-g_j(y)|\le\|x-y\|_\infty, \qquad x,y\in\R^d.
\end{equation*}
The metric
\begin{equation*}
    d_\Omega(\eta,\eta') := \sum_{j\ge1}2^{-j}\bigl(1\wedge|\langle\eta,g_j\rangle-\langle\eta',g_j\rangle|\bigr)
\end{equation*}
induces the vague topology on $\Om$.
For an open set $B \subset \mathbb{R}^d$, we equip $\Omega_B$ with the vague
topology on $B$, that is, the topology generated by the maps
$\eta \mapsto \langle \eta, g \rangle$ with $g \in C_c(B)$, and not with the 
topology induced by~$\Omega$. With this topology, the restriction map $\Omega \to \Omega_B$, $\eta \mapsto \eta_B$, is
continuous, since every $g \in C_c(B)$ extends by zero to a function in
$C_c(\mathbb{R}^d)$. 
Accordingly, a metric $d_{\Omega_B}$ is defined as $d_\Omega$, with a countable
family in $C_c(B)$ that determines vague convergence on $B$. The path spaces
$D([u,v];\Omega_B)$ carry the Skorokhod $J_1$ topology built from
$d_{\Omega_B}$.
For $u>0$, let $d_{J_1,u}$ be the Skorokhod $J_1$ metric on $D([-u,u];\Om)$ built from $d_\Omega$, that is,
\begin{equation*}
    d_{J_1,u}(\gamma,\gamma') := \inf_{\Psi}\max\Big\{\sup_{|t|\le u}|\Psi(t)-t|,\ \sup_{|t|\le u}d_\Omega\bigl(\gamma_t,\gamma'_{\Psi(t)}\bigr)\Big\},
\end{equation*}
where the infimum runs over all increasing homeomorphisms $\Psi$ of $[-u,u]$. On $E$ we use the bounded metric
\begin{equation}\label{eqn:definition-j1-metric}
    d_E(\omega,\omega') := \int_0^\infty\d u\, \e^{-u} \left( 1\wedge d_{J_1,u} \bigl(\omega|_{[-u,u]},\omega'|_{[-u,u]}\bigr) \right),
\end{equation}
which induces the local Skorokhod $J_1$ topology. On $\Oms$ and $E_{\rm s}$ we use the restrictions of $d_\Omega$ and $d_E$, respectively.

For a metric space $(S,d)$, we write $\mathrm{BL}_1(S)$ for the set of all functions $f\colon S\to\R$ with $\|f\|_\infty\le1$ and $|f(x)-f(y)|\le d(x,y)$ for all $x,y\in S$; for $S=E$, this is the set~\eqref{eq:BL1}. We use these functions for $S\in\{E,E_{\rm s},\Om,\Oms\}$, equipped with the metrics above. The following lemma shows that they suffice to describe the topology of weak convergence, and that they can be extended from a subspace.

\begin{lemma}[Bounded Lipschitz neighbourhoods]\label{lemma:open-neighbourhoods}
    Let $(S,d)$ be a metric space.
    \begin{enumerate}[(i)]
        \item Let $P$ be a Borel probability measure on $S$, and let $G$ be an open neighbourhood of $P$ in the space of Borel probability measures on $S$, equipped with the topology of weak convergence, or in a subset of this space, equipped with the induced topology. Then there are $p\in\N$, $f_1,\dots,f_p\in\mathrm{BL}_1(S)$ and $\delta>0$ such that every $\widetilde P$ in this space with $|\widetilde P[f_i]-P[f_i]|<\delta$ for $1\le i\le p$ belongs to $G$.
        \item If $S'\subset S$ is equipped with the restriction of $d$, then every $f\in\mathrm{BL}_1(S')$ has an extension $\widetilde f\in\mathrm{BL}_1(S)$, that is, $\widetilde f=f$ on $S'$.
    \end{enumerate}
\end{lemma}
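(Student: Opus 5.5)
For~(i), I would first reduce to the full space of Borel probability measures on $S$. If $G$ is open in a subset $\mathcal M$ with the induced topology, then $G=G'\cap\mathcal M$ for some weakly open $G'$. A bounded Lipschitz neighbourhood of $P$ contained in $G'$ then restricts to one contained in $G$. So assume $G$ is weakly open in the space of all Borel probability measures on $S$.

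The weak topology is generated by the sets $\{\widetilde P\colon |\widetilde P[h_j]-P[h_j]|<\varepsilon,\ 1\le j\le m\}$ with $h_j\in C_b(S)$. It therefore suffices to show that each such basic set contains a bounded Lipschitz neighbourhood. For this I would use the Portmanteau characterisation in its bounded Lipschitz form: on a metric space, $P_k\to P$ weakly if and only if $P_k[f]\to P[f]$ for all $f\in\mathrm{BL}_1(S)$. This holds because indicators of closed sets are decreasing limits of the functions $1\wedge(1-k\,d(\cdot,F))_+$, which are Lipschitz, and rescaling puts them in $\mathrm{BL}_1$.

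A sequential argument is not enough when the weak topology is not metrisable (for instance, $S$ non-separable). I would therefore argue directly with the Lipschitz approximation. Any $h\in C_b(S)$ can be approximated from below by the inf-convolutions $h_k(x)=\inf_y\{h(y)+k\,d(x,y)\}$ and from above analogously. These are $k$-Lipschitz and converge pointwise monotonically to $h$. Given $\varepsilon$, I choose $k$ with $P[h]-P[h_k]<\varepsilon/3$ and $P[h^k]-P[h]<\varepsilon/3$ by monotone convergence. If $|\widetilde P[h_k]-P[h_k]|$ and $|\widetilde P[h^k]-P[h^k]|$ are below $\varepsilon/3$, then
\[
\widetilde P[h]\ge \widetilde P[h_k]> P[h]-2\varepsilon/3,
\]
and symmetrically from above. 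Rescaling $h_k$ and $h^k$ by a constant $c$ so that they lie in $\mathrm{BL}_1(S)$, and replacing $\delta$ by $\delta/c$, gives the finitely many $f_i$ and the $\delta$. The only care needed is boundedness of the inf-convolution, which follows from $\inf h\le h_k\le h$.

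For~(ii), I would use the McShane extension $g(x)=\inf_{y\in S'}\{f(y)+d(x,y)\}$. It is $1$-Lipschitz on $S$, agrees with $f$ on $S'$ because $f$ is $1$-Lipschitz there, and is finite since $f\ge-1$. Truncating to $\widetilde f=(g\wedge1)\vee(-1)$ preserves both the Lipschitz constant and the values on $S'$, where $|f|\le1$. Hence $\widetilde f\in\mathrm{BL}_1(S)$.

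I expect the main obstacle to be the approximation step in~(i), namely making it uniform enough to work with finitely many test functions and without metrisability. The inf-convolution sandwich above handles this.
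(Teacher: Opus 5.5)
Your proposal is correct and follows essentially the paper's argument. In (i) the paper likewise sandwiches each $h\in C_b(S)$ between the $k$-Lipschitz inf- and sup-convolutions, chooses $k$ by monotone convergence and rescales (normalising $0\le h\le1$ and dividing by $k$); in (ii) it uses the same McShane extension, truncated only at $1$ because the infimum is automatically $\ge-1$, and your Portmanteau aside is not needed.
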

In particular, every open neighbourhood $G$ of $P$ in $\PP(E)$ contains a set $U(P;f_1,\dots,f_p;\delta)$ of the form~\eqref{eqn:open-u-lipschitz-test}, and the same holds with $E_{\rm s}$, $\Om$ or $\Oms$ in place of $E$.

\begin{proof}
(i) The topology of weak convergence is generated by the maps $\widetilde P\mapsto\widetilde P[f]$, $f\in C_b(S)$, so that $G$ contains a finite intersection of sets of the form $\{\widetilde P\colon|\widetilde P[f]-P[f]|<\varepsilon\}$ with $f\in C_b(S)$ and $\varepsilon>0$. It therefore suffices to show that each such set contains a set of the form described in~(i). Replacing $f$ by an affine transformation of $f$, we may assume that $0\le f\le1$. For $k\in\N$, consider the Lipschitz approximations
\begin{equation*}
    f_k(x):=\inf_{y\in S}\bigl(f(y)+k\,d(x,y)\bigr),\qquad h_k(x):=\sup_{y\in S}\bigl(f(y)-k\,d(x,y)\bigr),\qquad x\in S.
\end{equation*}
They are $k$-Lipschitz, satisfy $0\le f_k\le f\le h_k\le1$, and, since $f$ is continuous, $f_k\uparrow f$ and $h_k\downarrow f$ pointwise as $k\to\infty$. By monotone convergence, there is $k$ with $P[h_k]-P[f_k]<\varepsilon/2$. If $|\widetilde P[f_k]-P[f_k]|<\varepsilon/4$ and $|\widetilde P[h_k]-P[h_k]|<\varepsilon/4$, then
\begin{equation*}
    \widetilde P[f]\le\widetilde P[h_k]<P[h_k]+\frac\varepsilon4<P[f_k]+\frac{3\varepsilon}4\le P[f]+\frac{3\varepsilon}4,
\end{equation*}
and, in the same way, $\widetilde P[f]>P[f]-3\varepsilon/4$. Since $f_k/k$ and $h_k/k$ belong to $\mathrm{BL}_1(S)$, this proves~(i) with $\delta=\varepsilon/(4k)$.

(ii) The function
\begin{equation*}
    \widetilde f(x)=\min\Big\{1,\,\inf_{y\in S'}\bigl(f(y)+d(x,y)\bigr)\Big\},\qquad x\in S,
\end{equation*}
is $1$-Lipschitz as the minimum of $1$ and an infimum of $1$-Lipschitz functions. The infimum is bounded from below by $-1$, and it coincides with $f$ on $S'$, since $f(y)+d(x,y)\ge f(x)$ for $x,y\in S'$. Since $f\le1$, taking the minimum with $1$ does not change the values on $S'$. Hence $\widetilde f\in\mathrm{BL}_1(S)$ and $\widetilde f=f$ on $S'$.
\end{proof}

The following estimates show that agreement of two paths on a sufficiently large bounded space-time box makes their $d_E$-distance small, without any control of the particles outside this box. Here $B_r=(-r,r)^d$ denotes the open box of side length $2r$.

\begin{lemma}[Local comparison]\label{lem:lb-local-metric}
For every $R>0$ and all paths $\omega,\omega'\in E$,
\begin{equation}\label{eq:lb-local-metric}
    d_E(\omega,\omega') \leq \e^{-R} +\sup_{|t|\leq R}d_{\Omega}(\omega_t,\omega'_t).
\end{equation}
Moreover, fix $R>0$ and $J\in\N$, and choose $r>1$ such that the supports of $g_1,\ldots,g_J$ lie in $(-r+1,r-1)^d$.
\begin{enumerate}[(i)]
    \item If $\omega,\omega'\in E$ coincide on $B_r$ at all times $|t|\le R$, then
    \begin{equation}\label{eq:lb-cylinder-agreement}
        d_E(\omega,\omega')\leq \e^{-R}+2^{-J}.
    \end{equation}
    \item Let $\omega,\omega'$ be legal birth-and-death paths. Suppose that, at each time $t\in[-R,R]$, there is a one-to-one matching between atoms of $\omega_t$ and atoms of $\omega'_t$, counted with multiplicity, which covers all atoms of either configuration in the supports of $g_1,\ldots,g_J$, and that matched positions differ by at most $\epsilon\le1$ in the sup-norm. Then
    \begin{equation}\label{eq:lb-lifeline-metric}
        \sup_{|t|\leq R}d_{\Omega}(\omega_t,\omega'_t) \leq 2^{-J}+\epsilon L,
    \end{equation}
    where $L$ is the number of individual particles of $\omega$ present in $B_r$ at some time during $[-R,R]$.
\end{enumerate}
\end{lemma}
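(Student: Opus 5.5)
The plan is to bound the integrand in the definition~\eqref{eqn:definition-j1-metric} of $d_E$ separately for $u\le R$ and $u>R$, and then to bound the configuration distance test function by test function for~(i) and~(ii).

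For~\eqref{eq:lb-local-metric}, split the $u$-integral at $R$. On $u>R$, bound the integrand by one; this contributes $\int_R^\infty \e^{-u}\,\d u=\e^{-R}$. On $u\le R$, use the identity time change $\Psi=\mathrm{id}$ in the definition of $d_{J_1,u}$. This gives
\begin{equation*}
    d_{J_1,u}\bigl(\omega|_{[-u,u]},\omega'|_{[-u,u]}\bigr)\le \sup_{|t|\le u}d_\Omega(\omega_t,\omega'_t)\le \sup_{|t|\le R}d_\Omega(\omega_t,\omega'_t),
\end{equation*}
and integrating against $\e^{-u}\,\d u$ over $[0,R]$ gives a factor at most one. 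The only point to check is that the restriction of a path in $E$ to $[-u,u]$ is a legitimate element of $D([-u,u];\Om)$, on which $d_{J_1,u}$ is defined. This holds because the paths are c\`adl\`ag, and with the identity time change no endpoint issue arises.

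For~(i), apply~\eqref{eq:lb-local-metric}. If $\omega_t$ and $\omega'_t$ agree on $B_r$, then $\langle\omega_t,g_j\rangle=\langle\omega'_t,g_j\rangle$ for $j\le J$, since $\supp g_j\subset B_r$. Hence only the terms $j>J$ contribute to $d_\Omega$, and they sum to at most $\sum_{j>J}2^{-j}=2^{-J}$.

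For~(ii), fix $t$ and $j\le J$, and write $\langle\omega_t,g_j\rangle-\langle\omega'_t,g_j\rangle$ as a sum over matched pairs $(x,x')$ of $g_j(x)-g_j(x')$. Unmatched atoms lie outside $\supp g_j$ by assumption, so they contribute nothing. A matched pair contributes only if $x$ or $x'$ lies in $\supp g_j\subset(-r+1,r-1)^d$. Since $\|x-x'\|_\infty\le\epsilon\le1$, both points then lie in $B_r$. The Lipschitz bound gives $|g_j(x)-g_j(x')|\le\epsilon$ for each such pair, and the number of such pairs is at most $\omega_t(B_r)\le L$. Thus each term with $j\le J$ is at most $2^{-j}\epsilon L$, these sum to at most $\epsilon L$, and the tail $j>J$ is again at most $2^{-J}$.

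I expect no real obstacle. The one step needing care is counting atoms of $\omega_t$ in $B_r$ by individual particles, which is where legality of $\omega$ is used: particles do not move, so the atoms of $\omega_t$ in $B_r$ at a time $t\in[-R,R]$ are distinct individual particles present in $B_r$ during $[-R,R]$, and their number is at most $L$.
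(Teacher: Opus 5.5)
Your proposal is correct and follows the paper's proof essentially step for step. You split the $u$-integral at $R$ and use the identity time change, you note that the first $J$ pairings vanish when the paths agree on $B_r$, and in~(ii) you apply the Lipschitz bound to each matched pair whose $\omega$-atom is forced into $B_r$ by $\epsilon\le1$, so that the pairs are counted by $\omega_t(B_r)\le L$.
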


\begin{proof}
For $u\le R$, the identity time change shows that
\begin{equation*}
    d_{J_1,u}(\omega|_{[-u,u]},\omega'|_{[-u,u]})\le\sup_{|t|\le R}d_\Omega(\omega_t,\omega'_t),
\end{equation*}
while the integrand in~\eqref{eqn:definition-j1-metric} is at most $\e^{-u}$ for $u>R$. This gives~\eqref{eq:lb-local-metric}.

(i) If $\omega$ and $\omega'$ coincide on $B_r$ at time $t$, then $\langle\omega_t,g_j\rangle=\langle\omega'_t,g_j\rangle$ for $j\le J$, and the remaining terms in the definition of $d_\Omega$ contribute at most $\sum_{j>J}2^{-j}=2^{-J}$. Together with~\eqref{eq:lb-local-metric}, this gives~\eqref{eq:lb-cylinder-agreement}.

(ii) Fix $t\in[-R,R]$ and $j\le J$. Every matched pair that contributes to $\langle\omega_t,g_j\rangle-\langle\omega'_t,g_j\rangle$ has one of its two positions in the support of $g_j$, so that the position of the atom of $\omega_t$ lies in $B_r$, since matched positions differ by at most $\epsilon\le1$. As $g_j$ is $1$-Lipschitz with respect to the sup-norm, each such pair contributes at most $\epsilon$, and since every atom of $\omega_t$ in $B_r$ is an individual particle of $\omega$ present in $B_r$ at time $t$, there are at most $L$ such pairs. Hence $|\langle\omega_t,g_j\rangle-\langle\omega'_t,g_j\rangle|\le\epsilon L$ for $j\le J$, and summing over $j$ gives~\eqref{eq:lb-lifeline-metric}.
\end{proof}

\section{The graphical construction}\label{section:appendix-graphical}
In this appendix we recall the pathwise construction of the birth-and-death dynamics as a thinning of a space-time Poisson random measure, following~\cite[Theorem~2.13]{garcia2006spatial}. It defines the laws $Q^\beta_\xi$, and it shows that, almost surely, all births occur at distinct locations, which are distinct from the points of the initial configuration.

Let $\Ncal$ be a Poisson random measure on $\R^d\times[0,\infty)^3$ with intensity measure $\d x\otimes \d u \otimes\e^{-r}\d r\otimes\d s$. An atom $(x,u,r,s)$ of $\Ncal$ proposes the birth of a particle at the location $x$ at time $s$, which is accepted if $u\le b(x,X_{s-})$, where $X_{s-}$ is the current configuration; in this case, the new particle has lifetime $r$, which is unit-exponentially distributed. Since $b$ is bounded and has finite range, the assumptions of~\cite[Theorem~2.13]{garcia2006spatial} are satisfied. Following its proof, we first prescribe the lifetimes of the initial particles. For $\xi = \sum_{i\ge 1}\delta_{x_i} \in \Om$, let $\underline\xi = \sum_{i\ge 1}\delta_{(x_i,\tau_i)}$ be the point process on $\R^d\times[0,\infty)$ obtained by associating a lifetime $\tau_i>0$ with each point $x_i$ of $\xi$.

\begin{proposition}[Graphical representation]\label{prop:graphical-representation}
    Let $\Fcal = (\Fcal_t)_{t\ge 0}$ be a filtration such that $\Ncal$ is $\Fcal$-compatible, that is, for every bounded Borel set $A\subset\R^d\times [0,\infty)^2$ and all $s,t\geq 0$, the random variable $\Ncal(A\times[0,t])$ is $\Fcal_t$-measurable, and $\Ncal(A\times(t,t+s])$ is independent of $\Fcal_t$. Then there is a unique $\Fcal$-adapted process $X\Ssup{\underline\xi}=(X\Ssup{\underline\xi}_t)_{t\ge 0}$ with values in $\Om$ such that, for every Borel set $B \subset \R^d$ and every $t\ge0$,
    \begin{equation}\label{eq:sdeBD}
    \begin{split}
        X\Ssup{\underline\xi}_t(B) = &\int_{B\times [0,\infty)^2 \times [0,t]}\hspace{-0.5cm}\Ncal(\d x,\d u,\d r,\d s)\, \1_{[0,b(x,X\Ssup{\underline{\xi}}_{s-})]}(u)\,\1_{(t-s,\infty)}(r) + \int_{B\times [0,\infty)}\underline\xi(\d x,\d r)\, \1_{(t,\infty)}(r).
    \end{split}
    \end{equation}
\end{proposition}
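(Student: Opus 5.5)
The plan is a localisation argument. Because $b$ has finite range, the equation \eqref{eq:sdeBD} can be solved exactly on a finite region. Since $\xi$ is only locally finite and $\Ncal$ has infinitely many atoms in every time slab, the main work is to show that, on a short time interval, the infinite-volume system splits almost surely into finite independent pieces. We then solve inside each piece by a deterministic recursion over atoms ordered in time, and iterate in time.

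\emph{Step 1 (finite range, finite volume).} Fix a bounded region $\Lambda$ and consider only the atoms of $\Ncal$ with $x\in\Lambda$ and the initial particles in $\Lambda$, with some frozen configuration outside $\Lambda$ that is predictable in time. On $[0,t]$ there are almost surely finitely many such atoms, with distinct times $s_1<s_2<\dots$. We define $X$ recursively. Between atom times, particles die when their residual lifetimes $r$ or $\tau_i$ expire. At $s_k$, the proposed birth is accepted iff $u\le b(x,X_{s_k-})$, which is already determined. This gives existence and uniqueness for the local equation. Adaptedness holds because each decision uses only atoms with times at most $s_k$, and $\Fcal$-compatibility supplies the independence needed later.

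\emph{Step 2 (percolation of influence).} Following~\cite[Theorem~2.13]{garcia2006spatial}, we use that $b\le\Vert b\Vert_\infty$. Only atoms with $u\le\Vert b\Vert_\infty$ can ever produce a birth, and these form a Poisson process of intensity $\Vert b\Vert_\infty\,\d x\,\d s$. We fix a small time step $h>0$ and build a graph on these atoms in $\R^d\times[0,h]$, together with the particles alive at time $0$. Two elements are joined when their spatial distance is at most $\range$. The acceptance of an atom depends only on the elements in its connected component, through chains of atoms within distance $\range$ that are earlier in time. A comparison with subcritical continuum (Boolean) percolation shows that for $h$ small all components are almost surely finite. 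The density of atoms is of order $\Vert b\Vert_\infty h$, and for the initial particles we use local finiteness of $\xi$ together with the choice of $h$.

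I expect this step to be the main obstacle. The initial configuration can be arbitrarily dense. To handle it, we treat each component containing initial particles through the past-dependence chains: the chains are oriented backwards in time and only atoms branch, so the initial particles only enter as fixed leaves. Finiteness then reduces to the finiteness of backward chains of atoms, which is a subcritical branching bound with mean offspring at most $\Vert b\Vert_\infty|B_\range|h<1$.

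\emph{Step 3 (gluing and uniqueness).} On each finite cluster we solve by Step~1. Clusters do not interact, so these solutions glue to a locally finite solution on $[0,h]$. Any other adapted solution must agree on each cluster, by the recursion of Step~1, so the solution on $[0,h]$ is unique. We then restart at time $h$ with the configuration $X_h$, keeping the residual lifetimes. The time shift of $\Ncal$ remains Poisson and independent of $\Fcal_h$, and the bound from Step~2 is uniform, since it depends only on atom density and not on the configuration. Iterating over time steps of length $h$ gives the unique global adapted solution of \eqref{eq:sdeBD}. As a byproduct, accepted births occur at atoms of $\Ncal$, so almost surely their locations are pairwise distinct and differ from the points of $\xi$.
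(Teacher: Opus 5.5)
The paper gives no argument of its own here. It only notes that $b$ is bounded and has finite range, so that the hypotheses of \cite[Theorem~2.13]{garcia2006spatial} hold, and defers to that proof with the initial lifetimes prescribed. Your proposal is instead a self-contained, Harris-type pathwise construction: time slabs, finite clusters of potential births with $u\le\Vert b\Vert_\infty$, and a time-ordered recursion inside each cluster. It is correct once Step~2 is cleaned up. Compared with the citation, it makes explicit why every locally finite $\xi$ is admissible. With death rate $1$, the initial particles die at their prescribed times regardless of the configuration, so they only feed into acceptance decisions and never transmit influence. It also yields adaptedness, pathwise uniqueness and the distinctness of birth locations in one stroke. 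The citation buys brevity and places the result within Garcia and Kurtz's general framework.

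The cleanup is as follows. Consider the graph you first describe, in which any two elements at distance at most $\range$ are joined. It has an infinite component for every $h$ when, for instance, $\xi$ is a lattice of mesh smaller than $\range$. So the sentence invoking local finiteness of $\xi$ and the choice of $h$ is false, and it must be dropped rather than patched.

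Take as vertices only the atoms with $u\le\Vert b\Vert_\infty$ and $s\in(kh,(k+1)h]$, joined at spatial distance at most $\range$. This is a Gilbert graph of intensity $\Vert b\Vert_\infty h$, so for small $h$ its clusters are finite and disjoint. Each cluster, together with the finitely many particles of $X_{kh}$ within distance $\range$ of it, is then a finite system to which Step~1 applies. This is what justifies ``clusters do not interact''.

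Do not mix this with the backward-clan picture. Clans of different atoms overlap, so there you would argue by consistency of the recursion instead. If you prefer clans, the Mecke formula shows that the expected number of time-decreasing chains of length $k$ issued from an atom at time $s$ is $(\Vert b\Vert_\infty\abs{B_\range(0)}\,s)^k/k!$. Hence clans are almost surely finite on every bounded horizon, and no time slicing is needed at all.

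Finally, in Step~1 restrict to atoms with $u\le\Vert b\Vert_\infty$ from the start. Otherwise a bounded space-time window contains infinitely many atoms.
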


The first term in~\eqref{eq:sdeBD} counts the accepted births in $B$ during $[0,t]$ whose particles are still alive at time $t$, and the second term counts the initial particles in $B$ that are still alive at time $t$. If the initial lifetimes $\tau_i$ are chosen as independent unit-exponential random variables, independent of $\Ncal$, we obtain a Markov process $X\Ssup{\xi}$ that solves the martingale problem for the generator $\LL$ in~\eqref{eq:generator}, see~\cite[Proposition~2.9 and Corollary~2.11]{garcia2006spatial} and~\cite[Theorem~4.4.2]{EK86}. Its paths take values in $D(\R_+;\Om)$, and we write $Q^\beta_\xi$ for its law on this space; for $\beta=0$, we recover the free evolution $Q_\xi=Q^0_\xi$. Finite-volume versions of the dynamics, such as the dynamics in $\Lambda_n$ with periodic boundary conditions or the free evolution $Q^A_\xi$ in a cube $A$, are obtained in the same way, by restricting the spatial component of $\Ncal$ and of $\xi$ to the respective volume and by using the corresponding birth rate.

Only atoms $(x,u,r,s)$ of $\Ncal$ with $u\le\Vert b\Vert_\infty$ can lead to births. For every $M>0$, the locations $x$ of those atoms with $s\le M$ form a Poisson point process on $\R^d$ with intensity $M\Vert b\Vert_\infty\,\d x$, so that they are almost surely pairwise distinct and avoid the countable set of points of $\xi$. Letting $M\to\infty$, we see that, almost surely, all births occur at pairwise distinct locations that are not occupied by initial particles. In particular, if $\xi$ is simple, then almost surely the configurations $X\Ssup{\xi}_t$ and their left limits $X\Ssup{\xi}_{t-}$ are simple for all $t\ge0$.

\section{The one-cell representation}\label{section:appendix-one-cell}
In this appendix we sketch the proof of \zcref{prop:cond-entropy-representation}, with the notation of \zcref{subsection:one-cell}.

\begin{proof}[Proof sketch of \zcref{prop:cond-entropy-representation}]
Let $c_T(P)$ be the mean conditional entropy of the path in $C\times[0,T]$ given $\mathcal G^-_T$, relative to the free evolution in $C$ started from $\omega_0$, so that $c_1(P)=P[ h_{C\times[0,1]}(P_\omega\lvert  Q_{\omega_0})]$. For $I_m=\{-m,\ldots,m\}^d$, define $c_{m,T}(P)$ by replacing $\mathcal G^-_T$ with
\begin{equation*}
 \mathcal G_{m,T}
 =\bigvee_{z\in I_m}\mathcal F_{C_z\times[-mT,0]}
   \vee\bigvee_{\substack{z\in I_m\colon z\prec0}}
                       \mathcal F_{C_z\times[0,T]}.
\end{equation*}
These $\sigma$-algebras increase to $\mathcal G^-_T$ modulo null sets, and the entropy chain rule and martingale convergence give $c_{m,T}(P)\uparrow c_T(P)$. Partitioning $(0,n)^d\times[0,nT]$ into unit spatial cells and time slabs of length $T$, ordered chronologically and then lexicographically, and using that, conditional on the initial configuration, the free evolution factorises according to this order by spatial independence and the Markov property, we obtain
\begin{equation*}
    \left(1-\frac{2m}{n}\right)^d
    \left(1-\frac mn\right)c_{m,T}(P)
    \leq \frac{h_{(0,n)^d,nT}(P)}{n|\Lambda_n|}
    \leq c_T(P),\qquad n>2m.
\end{equation*}
Taking $n\to\infty$ and then $m\to\infty$ identifies the limit as $c_T(P)$. By \zcref{lem:entropy-superadditivity},
\begin{equation*}
    \frac{h_{(0,n)^d,nT}(P)}{n|\Lambda_n|T}
    \longrightarrow H(P),
\end{equation*}
so that $c_T(P)=T H(P)$. Conditioning initially on the entire temporal past increases the local conditional entropy while preserving its upper bound $n|\Lambda_n|c_1(P)$, which proves the second identity in \zcref{prop:cond-entropy-representation}.

For fixed $T$, the spatial chain rule with the entire temporal past retained gives
\begin{equation*}
    (n-2m)^dc_{m,T}(P)\leq P\left[ h_{(0,n)^d\times[0,T]}(P^p_\omega\lvert  Q_{\omega_0})\right] \leq n^dc_T(P),
\end{equation*}
and the same limits as above yield the third identity, since spatial stationarity allows us to use centred boxes. For the reconstruction of the path from the observations in the cells, one averages over spatial translates of the grid of cell boundaries: this shows that the grid carries no particles at any time in a prescribed countable dense set containing the endpoints, so that the observations in the open cells determine the c\`adl\`ag path. When $H(P)<\infty$, local absolute continuity with respect to the free evolution shows that the cell boundaries carry no particles at any time, so that open and closed observations are equivalent; when $H(P)=\infty$, the formulas for closed boxes follow by data processing.
\end{proof}

\end{document}